%% file: main.tex
\documentclass[12pt]{article}
\input{preamble}

\usetikzlibrary{arrows.meta,calc,decorations.pathreplacing}
\usepackage{nomencl} 
\usepackage{algorithm}
\usepackage{algpseudocode}
\counterwithin{algorithm}{section}

\makenomenclature


\usepackage{etoolbox}
\renewcommand\nomgroup[1]{%
  \item[\bfseries
  \ifstrequal{#1}{S}{Spaces and Manifolds}{%
  \ifstrequal{#1}{C}{Constants and Parameters}{%
  \ifstrequal{#1}{O}{Optimization}{%
  \ifstrequal{#1}{P}{Probability and Measure Theory}{%
  \ifstrequal{#1}{T}{Operators and Functions}{}}}}}%
]}

\usepackage{fancyhdr}
\begin{document}



\def\spacingset#1{\renewcommand{\baselinestretch}%
{#1}\small\normalsize} \spacingset{1}


\if0\blind
{
  \title{\bf Theoretical Foundations of \\ Principal Manifold Estimation \\ 
  with Non-Euclidean Templates}
\author[1,*]{Kun Meng}
\author[1]{Christopher Perez}

\affil[1]{Department of Statistics, Florida State University}
\affil[*]{\small Correspondence: 117 N. Woodward Ave, Tallahassee, FL 32306, USA. Email: \texttt{kmeng@fsu.edu}.}

  \maketitle
} \fi

\if1\blind
{
  \bigskip
  \bigskip
  \bigskip
  \begin{center}
    {\Large\bf Closed Manifold Estimation}
\end{center}
} \fi

\bigskip

\begin{abstract}
We develop a rigorous theoretical framework for principal manifold estimation that recovers a latent low-dimensional manifold from a point cloud observed in a high-dimensional ambient space. Our framework accommodates manifolds with general, potentially non-Euclidean topology, which can be inferred using tools from topological data analysis. Using the theory of Sobolev spaces on Riemannian manifolds, we establish that the proposed principal manifolds are well defined, prove convergence of the iterative algorithm used to compute them, and show consistency of the finite-sample estimator. Furthermore, we introduce a novel method for selecting the complexity level of a fitted manifold, which addresses the shortcomings of the classical fitting-error criterion. We also provide a detailed geometric interpretation of the penalty term in our framework. In addition to the theoretical developments, we present extensive numerical experiments supporting our results. This article provides theoretical foundations for approaches that have been used in applications such as robotics. More importantly, it extends these approaches to general topological settings with potential applications across a broad range of disciplines, including neuroimaging and shape data analysis.

\end{abstract}

\noindent%
{\it Keywords:} Manifold learning; Riemannian geometry; Sobolev spaces; topological data analysis.
\vfill

\newpage
\spacingset{1.5} 


\section{Introduction}\label{Introduction}

\textit{Manifold learning} refers to a broad class of methods for modeling high-dimensional data under the \textit{manifold hypothesis}---``high-dimensional data tend to lie in the vicinity of a low-dimensional manifold'' \citep{narayanan2010sample, fefferman2016testing, fefferman2025fitting}. In recent years, manifold learning has found applications across a wide range of scientific disciplines, including sleep stage assessment \citep[e.g.,][]{lederman2015alternating}, robotics \citep[e.g.,][]{gao2023k, gao2024bi}, neuroimaging \citep[e.g.,][]{yue2016parameterization, busch2023multi, zielinski2024longitudinal}, single-cell biology \citep[e.g.,][]{yao2024single, ding2023learning, ding2025kernel, liu2025assessing}, and metabolomics \citep[e.g.,][]{li2025manifold}. Within statistical methodology, manifold learning also plays a foundational role in numerous established frameworks. For example, shape data analysis focuses on the study of curves and surfaces, each of which is naturally modeled as a manifold \citep{kurtek2011elastic, kurtek2012statistical, srivastava2016functional}. In many applications, a necessary first step for downstream shape data analysis is to learn the underlying shape representation from an observed point cloud, a task that can naturally be formulated as a manifold learning problem.

Consider a point cloud (i.e., a set of data points) observed in a high-dimensional Euclidean space, i.e., in $\mathbb{R}^D$ with $D$ large (the \textit{ambient space} with the \textit{ambient dimension} $D$). As noted in the literature \citep[e.g.,][]{meng_principal_2021, yao2024single, li2025manifold}, there are two primary directions in manifold learning from a methodological viewpoint. The first focuses on \textit{nonlinear dimension reduction}, aiming to derive a low-dimensional representation from high-dimensional data. Numerous widely adopted methods exemplify this direction, e.g., ISOMAP \citep{tenenbaum2000global} and locally linear embedding \citep{roweis2000nonlinear, wu2018think}. However, these methods do not estimate the underlying manifold $\mathcal{M}$ as a submanifold of the ambient space $\mathbb{R}^D$. In contrast, the second direction, \textit{manifold fitting}, seeks to reconstruct the underlying manifold $\mathcal{M}$ as a submanifold of $\mbR^D$. This direction will be the primary focus of this article. One family of methods in manifold fitting is known as \textit{principal manifold estimation} (PME), which can be understood as a generalization of linear principal component analysis \citep[PCA;][]{pearson1901liii}. While many PME methods exhibit convincing empirical performance in numerical experiments and have found practical success in applications \citep[e.g.,][]{yue2016parameterization, gao2023k, gao2024bi, zielinski2024longitudinal}, their theoretical foundations remain underdeveloped (as detailed in Section~\ref{section: Contributions and Paper Organization}). Moreover, existing PME methods do not incorporate the topological information latent in data; as a result, they fail to account for the global topology of the underlying manifold and do not take advantage of the tools developed by the topological data analysis (TDA) community \citep[e.g.,][]{fasy2014confidence, bubenik2015statistical, fasy2021tda, roycraft2023bootstrapping, meng2025randomness}. In this article, we address these limitations by developing comprehensive and mathematically rigorous theoretical foundations for PME, while also providing a framework that naturally incorporates tools from TDA.


\subsection{Overview of Principal Manifold Estimation}

We first provide an overview of works under the PME umbrella that are closely related to our contributions in this paper. These works model the underlying manifold $\mathcal{M}$ as the image of an $\mathbb{R}^D$-valued function $\Bf^*$ satisfying the conditions specified in each work.

PME originated with the framework of \textit{principal curves} \citep{Hastie1984PrincipalCurves, hastie1989principal}, which fits a smooth curve that passes through the ``middle'' of a $D$-dimensional point cloud (e.g., Figure~\ref{fig:proj-curve}) and investigates the fitting-error functional
\begin{align}\label{eq: def of the fitting-error functional}
    \mathcal{D}(\boldsymbol{f}) = \mathbb{E} \left\{\Vert \boldsymbol{X}-\Pi_{\Bf}(\boldsymbol{X})\Vert^2\right\},
\end{align}
where $\boldsymbol{X}$ represents data observed in the ambient space $\mathbb{R}^D$, the function $\Bf: \mathfrak{I}\rightarrow\mathbb{R}^D$ is defined on a prespecified interval $\mathfrak{I}\subseteq\mathbb{R}^1$, and $\Pi_{\Bf}(\boldsymbol{X})$ denotes the nearest-point projection of $\boldsymbol{X}$ on the image $\Bf(\mathfrak{I}):=\{\Bf(m): m\in\mathfrak{I}\}$, illustrated in Figure \ref{fig:proj-curve}. The manifold hypothesis posits that data $\boldsymbol{X}$ lie near an underlying manifold $\boldsymbol{f}^*(\mathfrak{I})$, resulting in a small fitting error $\mathcal{D}(\boldsymbol{f}^*)$. A tentative approach is to use a minimizer of $\mathcal{D}(\boldsymbol{f})$ to estimate the underlying manifold, which may correspond to a critical point of $\mathcal{D}(\boldsymbol{f})$ (i.e., a function-valued point where the Gâteaux derivative of $\mathcal{D}(\boldsymbol{f})$ vanishes). In particular, a principal curve may be equivalently defined as the image of a function that is a critical point of $\mathcal{D}(\boldsymbol{f})$ \citep[][Proposition 4]{hastie1989principal}. 

While the principal curve framework resembles a regression model that minimizes fitting error, it does not assign roles to ``predictors'' and ``responses.'' Therefore, this framework is an unsupervised learning method, like linear PCA. Notably, the initial work by \cite{hastie1989principal} opens the door to nonlinear extensions of linear PCA. Following this initial work, the principal curve framework has been further developed by numerous researchers from both theoretical and applied perspectives \citep[e.g.,][]{tibshirani1992principal, banfield1992ice, leblanc1994adaptive, de1999principal, delicado2001another, caffo2008case, ozertem2011locally, gerber2013regularization, yue2016parameterization, kirov2017multiple, lee2020spherical, lee2023robust}.

However, as \cite{duchamp1996extremal} show by computing the second variation of $\mathcal{D}(\boldsymbol{f})$, this framework has a theoretical flaw: principal curves are never local minima of $\mathcal{D}(\boldsymbol{f})$, while they are critical points of $\mathcal{D}(\boldsymbol{f})$. Specifically, they are saddle points, and $\mathcal{D}(\boldsymbol{f})$ lacks a minimizer. This property stands in contrast to the regression setting and underscores a fundamental distinction between nonlinear supervised and unsupervised learning. To address the nonexistence of a minimizer, \cite{kegl2000learning} introduce a penalty term on curve length. Specifically, they define a principal curve as a minimizer of the following functional and establish its existence rigorously.
\begin{align}\label{eq: the loss utilized by Kegl et al.}
    \mathcal{D}(\boldsymbol{f}) + \lambda \cdot \int\Vert \Bf'(m) \Vert \, dm,
\end{align}
where the first term is the fitting-error functional defined in \eqref{eq: def of the fitting-error functional}, the integral of the derivative in the second term is equal to the length of the curve $\boldsymbol{f}(\mathfrak{I})$ \citep[][Chapter 1]{doCarmo1976DifferentialGeometry}, and $\lambda>0$ is a tuning parameter. \cite{smola2001regularized} propose substituting the $L^1$ norm in \eqref{eq: the loss utilized by Kegl et al.} with a Hilbert space norm and replacing the first derivative operator with a projection operator, which leads to the application of reproducing kernel Hilbert spaces \citep[RKHSs;][]{wahba1990spline, chen2025probabilistic}.

Motivated by the approaches of \cite{kegl2000learning} and \cite{smola2001regularized}, \cite{meng_principal_2021} replace the penalty term $\int\Vert \Bf'(m) \Vert \, dm$ in \eqref{eq: the loss utilized by Kegl et al.} with a \textit{roughness} penalty $\int\Vert \Bf''(m) \Vert^2 \, dm$, defined as the integral of the squared second derivative, which is an unsupervised analogue of smoothing splines \citep{wahba1990spline, ma2015efficient, meng2020more}. More importantly, they extend the classical notion of principal curves (intrinsic dimension $d=1$) to \textit{principal manifolds} of intrinsic dimension $d<4$ by minimizing the following penalized functional in an RKHS, subject to appropriate constraints
\begin{align}\label{eq: def of the ME principal manifolds}
    \mathcal{L}_\lambda(\Bf)=\mathbb{E} \left\{\Vert \boldsymbol{X}-\Pi_{\Bf}(\boldsymbol{X})\Vert^2\right\} + \lambda \cdot \int \Vert \nabla^2\Bf(\boldsymbol{m})\Vert^2 \;d\boldsymbol{m},
\end{align}
where $\Bf: \mathbb{R}^d\rightarrow\mathbb{R}^D$ with $d<\min\{4,D\}$, the first term is a higher-dimensional analogue of the fitting error defined in \eqref{eq: def of the fitting-error functional}, and $\nabla^2=(\partial_i\partial_j)_{1\le i,j \le d}$ denotes the Hessian matrix defined using ordinary Euclidean coordinates. Although the framework introduced by \cite{meng_principal_2021} has been applied successfully to both robotics \citep{gao2023k, gao2024bi} and neuroimaging \citep{zielinski2024longitudinal}, its theoretical foundation remains underdeveloped, as will be elaborated in Section \ref{section: Contributions and Paper Organization}.

\begin{figure}[ht]
\centering
\begin{tikzpicture}[x=1cm,y=1cm,
  font=\small,
  >=Latex,
  curve/.style={black, line width=1.5pt, line cap=round, line join=round},
  seg/.style={blue!75!black, line width=1.5pt, line cap=round},
  data/.style={circle, fill=orange!85!black, inner sep=0pt, minimum size=7pt},
  proj/.style={circle, fill=red!80!black, inner sep=0pt, minimum size=8.5pt},
  lab/.style={inner sep=1pt}
]

\coordinate (pm) at (7.55,1.55);   
\coordinate (pi) at (8.2,2.1);   
\coordinate (pp) at (9.55,2.55);   

\def\dist{1.45}     
\def\tanlen{0.65}   

\path let \p1=(pp), \p2=(pm), \p3=(pi) in
  \pgfextra{
    \pgfmathsetmacro{\dx}{\x1-\x2}
    \pgfmathsetmacro{\dy}{\y1-\y2}
    \pgfmathsetmacro{\len}{sqrt(\dx*\dx+\dy*\dy)}
    \pgfmathsetmacro{\tx}{\dx/\len}
    \pgfmathsetmacro{\ty}{\dy/\len}
    \pgfmathsetmacro{\nx}{-\ty}
    \pgfmathsetmacro{\ny}{ \tx}
  }
  coordinate (X)  at ($ (pi) + (\nx*\dist,\ny*\dist) $)
  coordinate (Ta) at ($ (pi) + (\tx*\tanlen,\ty*\tanlen) $)
  coordinate (Tb) at ($ (pi) - (\tx*\tanlen,\ty*\tanlen) $);

\draw[curve]
  (0.2,2.0)
    .. controls (0.9,3.4) and (2.0,3.5) .. (2.6,3.4)
    .. controls (3.3,3.3) and (3.9,2.5) .. (4.05,2.2)
    .. controls (4.6,1.2) and (5.5,0.85) .. (5.6,0.8)
    .. controls (6.05,0.6) and (6.8,0.9) .. (7.0,1.07)
    .. controls (7.1,1.1) and (7.4,1.4) .. (pm)
    .. controls (7.80,1.90) and (8.15,2.10) .. (pi)
    .. controls (8.90,2.40) and (9.20,2.50) .. (pp)
    .. controls (10.1,2.63) and (11.2,3.1) .. (12.2,1.0);

\node[lab, black!100] at (2.0,1.95) {$\boldsymbol{f}(\mathfrak I)\subseteq \mathbb{R}^D$};

\foreach \p in {
  (0.35,2.10),
  (1.25,3.55),
  (2.25,3.05),
  (3.80,2.20),
  (5.10,0.7),
  (7.6,1.65),
  (7.10,1.1),
  (9.95,2.8),
  (11.85,1.5)
}{
  \node[data] at \p {};
}

\node[data] at (X) {};
\node[proj] at (pi) {};

\node[lab] at ($(X)+(0.40,0.15)$) {$\boldsymbol{X}$};
\node[lab] at ($(pi)+(1.6,-0.5)$) {$\Pi_{\boldsymbol{f}}(\boldsymbol{X})=\Bf(\Bpi_{\Bf}\left(\boldsymbol{X})\right)$};

\draw[seg] (X) -- (pi);

\draw[black!55, line width=1.0pt, line cap=round] (Tb) -- (Ta);
\pic[draw=black!55, line width=1.0pt, angle radius=8.5pt]
  {right angle = X--pi--Ta};

\draw[decorate, decoration={brace, amplitude=7pt, raise=2pt},
      line width=1.0pt, black!60]
  (pi) -- (X)
  node[midway, xshift=-1.7cm, yshift=-0.20cm, lab, black!100]
  {$\|\boldsymbol{X}-\Pi_{\boldsymbol{f}}(\boldsymbol{X})\|$};

\end{tikzpicture}
\caption{\footnotesize A curve $f(\mathfrak I)$ (black) fitted to a point cloud (orange), represented as the image of a function $\Bf:\mathfrak{I}\rightarrow\mathbb{R}^D$. For an observation \(\boldsymbol X\), the point \(\Pi_{\boldsymbol{f}}(\boldsymbol{X})\) (red) is its nearest-point projection onto the curve, and $\Bpi_{\Bf}(\boldsymbol{X})$ is the \textit{projection index} defined in Section \ref{section: Projection Indices}. The residual \(\boldsymbol{X} - \Pi_{\boldsymbol{f}}(\boldsymbol{X})\) (blue) is perpendicular to the curve at the projection \(\Pi_{\boldsymbol{f}}(\boldsymbol{X})\) (right-angle marker), and the brace indicates the error \(\|\boldsymbol X-\Pi_f(\boldsymbol X)\|\).}
\label{fig:proj-curve}
\end{figure}

\subsection{Contributions}\label{section: Contributions and Paper Organization}

We use the work of \cite{meng_principal_2021} as an illustrative example to highlight some underdeveloped aspects of the PME framework and to demonstrate how the present article addresses these limitations.

\subsubsection{Non-Euclidean Templates}\label{section: Non-Euclidean Template} 

Recall that the PME framework models an underlying manifold as the image of an $\mathbb{R}^D$-valued function $\Bf^*$. A basic limitation of many existing PME works is that the domain, denoted by $\M$, on which such a function $\Bf^*$ is defined is usually taken to be a subset of Euclidean space. For example, the principal curves proposed by \citep{hastie1989principal} are defined on an interval such as $\M=[0,1]$, and the work of \cite{meng_principal_2021} uses $\M=\mathbb{R}^d$ with $d<\min\{4,D\}$ as the domain. While such choices are convenient, they are too restrictive for many latent manifolds arising in practice, especially when the latent manifold has non-Euclidean topology \citep[e.g., the closed surface of a hippocampus;][]{zhang2023lesa}. Specifically, the shape data analysis community \citep{srivastava2016functional} suggests choosing the domain $\M$, referred to as a \textit{template}, to be a manifold that is diffeomorphic to the underlying manifold. For example, one may choose $\M = \mathbb{S}^2$, the unit sphere, when the underlying manifold is a closed surface with genus zero \citep[e.g.,][]{kurtek2011elastic}. 

Accordingly, when the template $\M$ is non-Euclidean, the Hessian operator $\nabla^2$ utilized by \cite{meng_principal_2021} in \eqref{eq: def of the ME principal manifolds} cannot be defined using ordinary Euclidean coordinates. Moreover, the integration $\int(\cdot)\,d\boldsymbol{m}$ in \eqref{eq: def of the ME principal manifolds} should also be reformulated to reflect the geometry of $\M$.

To this end, we propose a framework that allows the template $\M$ to be any compact Riemannian manifold, thereby extending the existing PME works beyond the Euclidean templates. Such an extension is necessary when the underlying manifold of interest has non-Euclidean topology. Importantly, this extension incorporates the information about the global topology of the underlying manifold, which can be learned using TDA tools \citep[e.g.,][]{fasy2014confidence}, and remains consistent with the shape representation approach used in shape data analysis \citep{srivastava2016functional}.

\subsubsection{Existence of Minimizer}\label{section: Existence of Minimizer} 

Similar to the approach of \cite{kegl2000learning}, \cite{meng_principal_2021} define a principal manifold as a minimizer of the penalized functional in \eqref{eq: def of the ME principal manifolds}. However, unlike \cite{kegl2000learning}, their work lacks a proof of the existence of such a minimizer. This gap may weaken the theoretical foundation of the framework developed by \cite{meng_principal_2021}. 

In this article, we first generalize the functional in \eqref{eq: def of the ME principal manifolds} to accommodate functions defined on a general compact Riemannian manifold $\M$, which need not possess Euclidean topology. We then define a principal manifold as a minimizer of this generalized functional. Importantly, using Sobolev space theory \citep{hebey2000nonlinear}, we provide a rigorous proof of the existence of such a minimizer, which serves as the cornerstone of the generalized PME framework.

\subsubsection{Convergence of the Iterative Estimation Algorithm}\label{section: Convergence of the Iterative Estimation Algorithm} 

Unlike in the regression setting, there is currently no direct method for computing a minimizer of the functional in \eqref{eq: def of the ME principal manifolds} or of its generalized counterpart for a non-Euclidean template. \cite{smola2001regularized} suggest an iterative procedure, known as the \textit{projection-adaptation (PA) algorithm}, to approximate such a minimizer numerically. Using this algorithm, \cite{meng_principal_2021} conduct a range of numerical experiments and observe empirical convergence in their numerical experiments. However, a rigorous proof of the convergence of the PA algorithm remains unavailable. It is worth noting that the PA algorithm generalizes the \textit{principal curve algorithm} \citep{Hastie1984PrincipalCurves, hastie1989principal}, for which a theoretical proof of convergence is likewise still unavailable.

As discussed in Sections~\ref{section: Non-Euclidean Template} and \ref{section: Existence of Minimizer}, this article extends the functional in \eqref{eq: def of the ME principal manifolds} to a general template manifold that need not have Euclidean topology. We then prove that, provided the initialization lies within a suitable neighborhood of a minimizer, the iterations generated by the PA algorithm converge to a minimizer of the generalized functional. To our knowledge, this is the first convergence result for the PA algorithm in the PME literature.

\subsubsection{Consistency}\label{section: Introduction, Consistency} 

In the work of \cite{meng_principal_2021}, a principal manifold is first defined at the population level with respect to an underlying distribution $\mathbb{P}$. In practice, however, when only a sample $\{\boldsymbol{X}_i\}_{i\in[N]}\subseteq\mathbb{R}^D$ is observed (where $[N]:=\{1,\ldots,N\}$), the principal manifold is computed at the empirical level using the empirical distribution $\mathbb{P}_N=\frac{1}{N}\sum_{i=1}^N \delta_{\boldsymbol{X}_i}$. This raises a consistency question that remains unresolved in the work of \cite{meng_principal_2021}---\textit{does the principal manifold computed at the empirical level converge to its population-level counterpart as the sample size $N\rightarrow\infty$?} In this article, we answer this question in the affirmative by establishing a rigorous proof of consistency using the \textit{argmax theorem} \citep[][Section 3.2]{van1996weak} and the \textit{Rellich--Kondrachov compact embedding theorem} \citep[Theorem~\ref{thm: Rellich–Kondrachov embedding};][]{hebey2000nonlinear}.

\subsubsection{Geometric Interpretation of the Penalty} 

The penalty term in \eqref{eq: def of the ME principal manifolds}, used by \cite{meng_principal_2021}, is motivated by thin-plate splines \citep{Duchon1977Splines, wahba1990spline}. However, their work refers to this term simply as the ``roughness'' of a fitted manifold and does not offer a precise interpretation in terms of the geometry of the manifold. Consequently, it remains unclear which geometric features of the fitted manifold are being penalized by this term. In this article, we first generalize the penalty term in \eqref{eq: def of the ME principal manifolds} to accommodate a general template $\M$. We then provide a precise geometric interpretation of the generalized penalty using the second fundamental form and the Riemannian metric of the fitted manifold. In addition, we examine the relationship between the generalized penalty, the curve-length penalty introduced by \cite{kegl2000learning}, and the Laplace-Beltrami operator on $\M$. This interpretation yields a deeper geometric understanding of the PME framework.

\subsubsection{Model Complexity Selection}\label{section: Model Complexity Selection} 

The work of \cite{meng_principal_2021} does not fully address the selection of the tuning parameter $\lambda$ in \eqref{eq: def of the ME principal manifolds}. In particular, it continues to use the fitting-error functional defined in \eqref{eq: def of the fitting-error functional} to assess the quality of a fit. As noted above, \cite{duchamp1996extremal} point out that this functional does not have a minimizer and may lead to overfitting. More broadly, the selection of model complexity has long been recognized as an unresolved problem in the PME literature, as observed by \cite{duchamp1996extremal}---``to our knowledge, nobody has as yet suggested a reasonably motivated automatic method for choice of model complexity in the context of manifold estimation or orthogonal distance regression. This remains an important open problem.'' In this article, we propose a method for selecting the tuning parameter $\lambda$, avoiding the use of the fitting-error functional defined in \eqref{eq: def of the fitting-error functional}, and provide a corresponding theoretical justification.

\subsubsection{Notations and Paper Organization} Throughout this article, we adopt the following standard notation: (i) for any positive integer $I$, denote $[I]:=\{1,\ldots,I\}$; (ii) $\overset{iid}{\sim}$ indicates that observations are independent and identically distributed (iid) draws from a distribution.

We organize the remainder of this article as follows. Section~\ref{section: Mathematical Preliminaries} introduces the necessary preliminaries from Riemannian geometry, topology, and Sobolev space theory, together with the assumptions used throughout the paper. These preliminaries are essential for formulating our proposed generalization of the functional in \eqref{eq: def of the ME principal manifolds} to accommodate non-Euclidean templates. Section~\ref{section: Principal Manifold Estimation} presents the theoretical foundations of PME developed in this article, addressing the issues discussed in Sections~\ref{section: Existence of Minimizer}, \ref{section: Convergence of the Iterative Estimation Algorithm}, and \ref{section: Introduction, Consistency}. Section~\ref{section: Geometric Interpretation of the Penalty} provides a detailed geometric interpretation of the penalty term in our proposed framework. Section~\ref{section: Tuning Parameter Selection} proposes a novel method for selecting the tuning parameter, thereby addressing the issue discussed in Section~\ref{section: Model Complexity Selection}. Section~\ref{section: discussions and future research} concludes the paper and discusses directions for future research. Appendix~\ref{appendix: Mathematical Preparations} provides a list of the notation used in this article for the reader’s convenience. Appendix~\ref{appendix: Reproducing Kernel Hilbert Spaces} presents details on the application of RKHS theory to our proposed PME framework. Appendix~\ref{appendix: Projection-Adaptation Algorithm} contains details on the implementation and initialization of the PA algorithm. Appendix~\ref{appendix: additional simulations} provides additional numerical experiments. Appendix~\ref{Appendix: Proofs} contains the proofs of all theorems, lemmas, and key equations.

\section{Mathematical Preliminaries}\label{section: Mathematical Preliminaries}

This section collects the mathematical preliminaries used throughout the paper. We also explain the role of TDA \citep{fasy2014confidence, fasy2021tda} in the PME framework and extend the notion of the \textit{projection index} previously introduced by \cite{Hastie1984PrincipalCurves}, \cite{hastie1989principal}, and \cite{meng_principal_2021}.

\subsection{Riemannian Manifolds and Their Topology}\label{section: Riemannian Manifolds and Their Topology}

\subsubsection{Template Manifolds} In the PME framework, the underlying manifold of interest is modeled as the image of an $\mathbb{R}^D$-valued map defined on a ``template.'' For instance, \cite{hastie1989principal} primarily take the compact interval $[0,1]$ as the template (e.g., the curve in Figure~\ref{fig:proj-curve} is the image of such a map $\boldsymbol{f}:[0,1]\rightarrow\mathbb{R}^2$), and \cite{meng_principal_2021} use $\mathbb{R}^d$ with $d<\min\{4,D\}$ as the template. In this article, we treat the template as a general Riemannian manifold $(\mathfrak{M},g)$, which we call a \emph{template manifold}. Specifically, we assume the following throughout the article.
\begin{assumption}\label{assumption: assumption on the template manifold}
    The template manifold $(\mathfrak{M},g)$ is compact, orientable, connected, and smooth. Additionally, its boundary $\partial\M$ is either empty or smooth.
\end{assumption}
\noindent In this article, the Riemannian metric $g$ may be selected arbitrarily, provided that Assumption \ref{assumption: assumption on the template manifold} is satisfied. The book by \cite{lee2018riemannian} serves as a comprehensive reference for the foundational concepts related to Riemannian manifolds.

The compactness requirement in Assumption \ref{assumption: assumption on the template manifold} is typically appropriate in applications and is widely adopted in the theoretical manifold learning literature \citep[e.g.,][]{wu2018think, dunson2021inferring}. Although \cite{meng_principal_2021} consider a non-compact template manifold $\mathbb{R}^d$, they assume the support of the data-generating distribution is compact; thus, our compactness condition is consistent with the spirit of their framework. The orientability assumption on the template manifold helps avoid pathological cases in which TDA descriptors \citep[e.g., estimated Betti numbers,][]{fasy2014confidence, fasy2021tda} fail to distinguish certain manifolds, as discussed later in Section \ref{section: Topology}. The smoothness assumption is standard in the literature \citep[e.g.,][]{hastie1989principal, wu2018think, fefferman2025fitting, dunson2021inferring}, and it allows us to study the curvature of the underlying manifold of interest. Lastly, the assumption of connectedness can be verified using TDA \citep{fasy2014confidence, fasy2021tda}, as discussed in Section \ref{section: Topology}.

In manifold learning tasks, the template manifold is specified by the user (e.g., the unit interval $[0,1]$ employed by \cite{hastie1989principal}) and may potentially be informed by TDA descriptors (as discussed later in Section \ref{section: Topology}). Beyond the unit interval $[0,1]$, common choices for template manifolds include the unit circle $\mathbb{S}^1$ and the unit sphere $\mathbb{S}^2$.

\subsubsection{Topology}\label{section: Topology} 
Suppose $\mathcal{M}$ is a submanifold of $\mathbb{R}^D$ representing the underlying manifold of interest. That is, the data points observed in $\mathbb{R}^D$ lie in a small neighborhood of $\mathcal{M}$. Throughout this article, we work under the following topological assumption
\begin{assumption}\label{assumption: topological assumption}
    The underlying manifold $\mathcal{M}$ is diffeomorphic to the prespecified template manifold $\mathfrak{M}$.
\end{assumption}
\noindent Assumption \ref{assumption: topological assumption} is often artificially imposed in the manifold learning literature rather than inferred from observed data \citep[e.g.,][]{hastie1989principal}. With advances in TDA over the past two decades \citep[e.g.,][]{fasy2014confidence}, in many cases, we can learn the topology of $\mathcal{M}$ and choose a template manifold accordingly under Assumptions \ref{assumption: assumption on the template manifold} and \ref{assumption: topological assumption}. Specifically, the \texttt{R} package \texttt{TDA} \citep{fasy2021tda}, in conjunction with observed data, can be implemented to learn the Betti numbers $\{\beta_k(\mathcal{M})\}_{k\ge0}$ of the underlying manifold $\mathcal{M}$. Informally, \(\beta_k(\mathcal{M})\) is the number of \(k\)-dimensional homology features of \(\mathcal{M}\). Based on the classification of curves and surfaces \citep[e.g.,][]{lee2000introduction}, one- and two-dimensional template manifolds are selected according to the learned Betti numbers, as described below.
\begin{enumerate}
    \item $\beta_0(\mathcal{M})=$ the number of connected components of $\mathcal{M}$, and $\beta_0(\mathcal{M})=1$ means that $\mathcal{M}$ is connected. When $\beta_0(\mathcal{M})>1$, we may apply clustering methods to identify each connected component of $\mathcal{M}$ and focus on estimating individual connected components. Hence, we assume $\beta_0(\mathcal{M})=1$ hereafter, which is consistent with Assumptions \ref{assumption: assumption on the template manifold} and \ref{assumption: topological assumption}. 

    \item When $\dim(\mathcal{M})=1$, the first Betti number $\beta_1(\mathcal{M})=$ the number of circle components. Hence, $\beta_1(\mathcal{M})=0$ indicates an interval-type template, which suggests the choice $\mathfrak{M}=[0,1]$, and $\beta_1(\mathcal{M})=1$ indicates a circle-type template, which suggests the choice $\mathfrak{M}=\mathbb{S}^1$. Figure \ref{fig:interval_S1_full} provides an example. For the point cloud (black dots) in panel (b), its persistence diagram \citep[PD;][]{edelsbrunner2010computational} is shown in panel (a). The PD indicates that the manifold $\mathcal{M}$ underlying the point cloud has no significant one-dimensional homology feature, suggesting that \(\beta_1(\mathcal{M})=0\) \citep[the notion of significance is explained by][]{fasy2014confidence}. Accordingly, we choose \([0,1]\) as the template for fitting the point cloud. By contrast, the PD of the point cloud in panel (e) exhibits a significant one-dimensional homology feature, indicating that \(\beta_1(\mathcal{M})=1\). We therefore choose \(\mathbb{S}^1\) as the template for fitting the points in panel (e).

    \item When $\dim(\mathcal{M})=2$ and $\mathcal{M}$ is assumed closed \citep[i.e., compact and no boundary, a standard assumption in the literature, e.g.,][]{wu2018think, dunson2021spectral, dunson2022graph, dunson2021inferring}, the orientability (see Assumptions \ref{assumption: assumption on the template manifold} and \ref{assumption: topological assumption}) implies $\beta_2(\mathcal{M})=1$ and $\beta_1(\mathcal{M})=$ twice the genus of $\mathcal{M}$ (intuitively, the number of ``holes'' of $\mathcal{M}$). Hence, $\beta_1(\mathcal{M})=0$ suggests $\mathfrak{M}=$ the unit sphere $\mathbb{S}^2$ (no hole), and $\beta_1(\mathcal{M})=2$ suggests the torus $\mathbb{T}^2$ (one hole). Figure \ref{fig:sph_results_panel_8} provides an illustrative example. Panel (vi) of Figure \ref{fig:sph_results_panel_8} displays the PD for the point cloud (black dots) shown in the second row (the four panels depict the same point cloud). The PD indicates that the underlying manifold $\mathcal{M}$ has no significant one-dimensional homological feature, suggesting that \(\beta_1(\mathcal{M})=0\). Accordingly, we choose \(\mathbb{S}^2\) as the template for fitting the point cloud.
\end{enumerate}
In general, Betti numbers alone are insufficient to determine the topology of a three-dimensional manifold. The classification of compact three-dimensional manifolds up to homeomorphism is a deep area of pure mathematics \citep[e.g.,][]{morgan2007ricci}.
\begin{figure}[ht]
    \centering
    \includegraphics[scale=0.7]{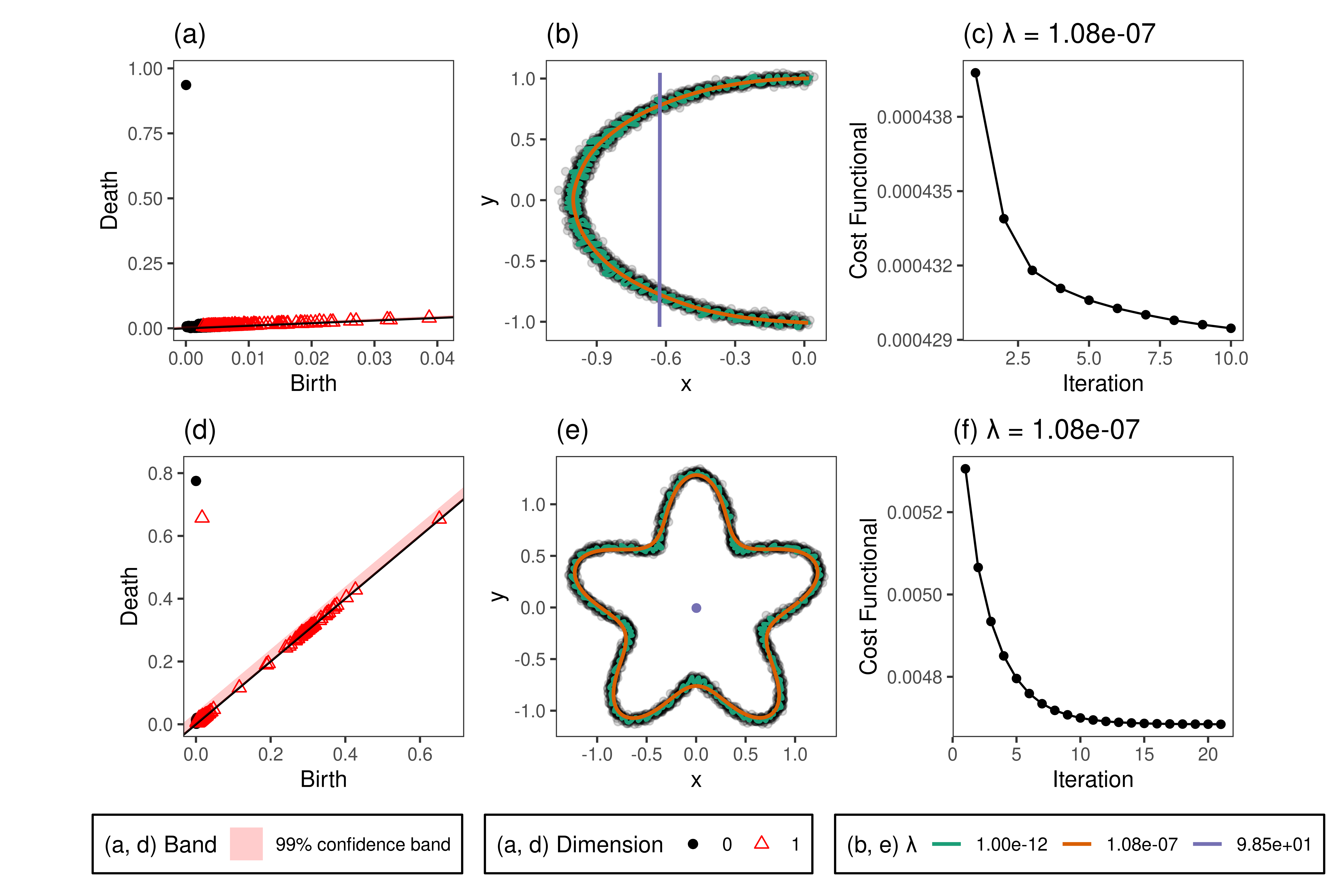}
    \vspace{-10pt}
    \caption{\footnotesize The point clouds in panels (b,e) are generated via the mechanisms described in Appendices \ref{appendix: Half Circle Point Cloud} and \ref{appendix: Boundary of a Flower/Star}, respectively. The persistent diagrams \citep[PDs;][]{fasy2014confidence} corresponding to these two point clouds are shown in panels (a,d), respectively. The significant homology features in the PDs correctly identify the topologies of the manifolds underlying the point clouds. The curves displayed in panels (b,e) are fitted using the PA algorithm (Algorithm~\ref{alg: empirical pme}) and correspond to an excessively small, moderate, and excessively large $\lambda$ value. These numerical results validate Theorem \ref{thm: PME with lambda=infty}. Note that the moderate value of \(\lambda\) is not optimal. The optimal choice of $\lambda$ is described in Section \ref{section: Tuning Parameter Selection} and indicated in Figure \ref{fig:pme_demo_cv_lambda_selection}. Panels (c,f) display the nonincreasing values of the cost functional \(\{\mathcal{L}_{N,\lambda}(\Bf^{(n)}_{N,\lambda})\}_{n\in\mathbb{N}}\), where the penalty term $\Vert \nabla{}^2 \Bf^{(n)}_{N,\lambda} \Vert^2_{L^2(\M)}$ is computed using Lemma \ref{lemma:  closed form penalty circle}. Theorem~\ref{thm: the convergence theorem of the core iterative algorithm} implies that $\mathcal{L}_{N,\lambda}(\Bf^{(n)}_{N,\lambda})$ converges to $\mathcal{L}_{N,\lambda}(\Bf^{*}_{N,\lambda})$, as the number of iterations $n\rightarrow\infty$, under the regularity conditions specified in Theorem~\ref{thm: the convergence theorem of the core iterative algorithm}, where $\Bf^*_{N,\lambda}=\argmin_{\Bf\in\mathscr{F}(\mathbb{P}_N)} \,\mathcal{L}_{N,\lambda}(\Bf)$ and $\mathbb{P}_N:=\frac{1}{N}\sum_{i=1}^N \delta_{\boldsymbol{X}_i}$.}
    \label{fig:interval_S1_full}
\end{figure}

\subsubsection{Norms} We introduce the pointwise and $L^2$ norms of tensor fields in preparation for defining Sobolev spaces. Let $T$ be a $(0,2)$-tensor field on the template manifold $(\M,g)$, i.e., $T \in \Gamma(T^*\M \otimes T^*\M)$, where $\Gamma(\cdot)$ denotes the space of smooth sections of a tensor bundle.
The pointwise norm of $T$ with respect to the Riemannian metric $g$ is defined by
\begin{align}\label{eq: vert g norm using bases}
    \lvert T(\boldsymbol{m})\rvert_g^2 := \sum_{i,k=1}^d \bigl(T(e_i,e_k)\bigr)^2 \ \ \text{ for any }\boldsymbol{m}\in\mathfrak{M},
\end{align}
where $\{e_k\}_{k\in[d]}$ is any $g$-orthonormal basis of the tangent space $T_{\boldsymbol{m}}\M$ at $\boldsymbol{m}$. Equivalently, in local coordinates with components $T_{ik}$ and inverse metric $(g^{ik})$, 
\begin{align}\label{eq: vert cdot vert g}
    \lvert T(\boldsymbol{m})\rvert_g^2 = \sum_{i, k, a, b} g^{ia}(\boldsymbol{m})\,g^{kb}(\boldsymbol{m})\,T_{ik}(\boldsymbol{m})\,T_{ab}(\boldsymbol{m}) \quad \text{ for all }\boldsymbol{m}\in\M.
\end{align}
More generally, let $\boldsymbol{J} = (J_1,\dots,J_D) \in \bigl(\Gamma(T^*\M \otimes T^*\M)\bigr)^D$, i.e., each component $J_j \in \Gamma(T^*\M \otimes T^*\M)$. The pointwise norm of $\boldsymbol{J}$ is defined by $\lvert \boldsymbol{J}(\boldsymbol{m})\rvert_g^2
:= \sum_{j=1}^D \lvert J_j(\boldsymbol{m})\rvert_g^2$. The corresponding $L^2$-norm is
\begin{align}\label{eq: def of L2 norm of a (0,2) tensor}
    \lVert \boldsymbol{J}\rVert_{L^2(\M)}^2
:= \int_\M \lvert \boldsymbol{J}(\boldsymbol{m})\rvert_g^2 \,\,d\vol_g(\boldsymbol{m}),
\end{align}
where $d\vol_g$ denotes the Riemannian volume measure induced by the metric $g$ \citep[][Proposition 2.41]{lee2018riemannian}. Further information regarding the introduced norms is available in \cite{hebey2000nonlinear}.

\subsection{Sobolev Spaces on Riemannian Manifolds}\label{section: Sobolev Spaces on Riemannian Manifolds}

Sobolev spaces, initially defined on Euclidean spaces, are motivated by partial differential equations \citep[e.g.,][]{evans1998pde, li2016global}. Because many Sobolev spaces are also RKHSs, their theory has found applications in statistics \citep{wahba1990spline}. Advances in geometric analysis have spurred the development of Sobolev spaces on Riemannian manifolds \citep{hebey2000nonlinear}. In this section, we provide the preliminaries of Sobolev spaces on compact Riemannian manifolds that are necessary for our proposed theoretical foundations for PME.

Hereafter, $\nabla$ denotes the Levi--Civita connection of the metric $g$ on $\M$ \citep[][Section 1.2]{hebey2000nonlinear}. Let $\boldsymbol{f} = (f_1,\dots,f_D) : \M \to \mathbb{R}^D$ be a smooth map. Naturally, its $L^2$ norm is defined as 
\begin{align}\label{eq: L2 norm on M}
    \Vert \Bf \Vert_{L^2(\M)}^2 := \sum_{j=1}^D \int_{\M} \vert f_j(\boldsymbol{m}) \vert^2  \,\,d\vol_g(\boldsymbol{m}).
\end{align}
For each scalar component $f_j:\M\to\R$, its \textit{Hessian} is the symmetric $(0,2)$-tensor defined by
\begin{align}\label{eq: def of Hess}
    \nabla{}^2 f_j(u,v) := u(v f_j) - (\nabla_u v) f_j \ \ \text{ for all } u,v\in \Gamma(T\M).
\end{align}
Define the vector-valued \textit{Hessian} of $\boldsymbol{f}$ by stacking the components, i.e.,
\begin{align}\label{eq: def of nabla f}
\nabla{}^2 \boldsymbol{f} &:= \big(\nabla{}^2 f_1,\dots,\nabla{}^2 f_D\big) \in \bigl(\Gamma(T^*\M \otimes T^*\M)\bigr)^D.
\end{align}
By \eqref{eq: def of L2 norm of a (0,2) tensor}, its $L^2$ norm is defined by
\begin{align}\label{eq: L2 norm of Hf}
\Vert \nabla{}^2 \boldsymbol{f} \Vert^2_{L^2(\M)} &:= \int_{\M} | \nabla{}^2 \boldsymbol{f}(\boldsymbol{m}) |^2_g \,\, d\vol_g(\boldsymbol{m}) = \sum_{j=1}^D\int_{\M} | \nabla{}^2 f_j(\boldsymbol{m}) |^2_g \,\, d\vol_g(\boldsymbol{m}).
\end{align}
Then, we define the Sobolev norm $\Vert \cdot\Vert_{H^2(\M)}$ by
\begin{align*}
    \Vert \Bf \Vert_{H^2(\M)}:= \left( \Vert \Bf \Vert_{L^2(\M)}^2 + \Vert \nabla{}^2 \boldsymbol{f} \Vert_{L^2(\M)}^2 \right)^{1/2}.
\end{align*}
The \textit{Sobolev space} $H^2(\M;\,\mathbb{R}^D)$ is the completion of $C^{\infty}(\mathfrak{M};\,\mathbb{R}^D)$ with respect to $\Vert \cdot\Vert_{H^2(\M)}$, where $C^{\infty}(\mathfrak{M};\,\mathbb{R}^D)$ denotes the collection of all smooth $\M \to \mathbb{R}^D$ maps \citep[][Definition 2.1]{hebey2000nonlinear}. The Sobolev space $H^2(\M;\mathbb{R}^1)$ of scalar-valued functions is defined analogously; informally, it is the special case $D=1$. Throughout this article, by a slight abuse of notation, we use $H^2(\M)$ to denote either $H^2(\M;\mathbb{R}^D)$ or $H^2(\M;\mathbb{R}^1)$, whenever no confusion is likely to arise. The norm $\Vert \cdot\Vert_{H^2(\M)}$ is formally defined using the Riemannian metric $g$, as shown in \eqref{eq: L2 norm on M} and \eqref{eq: L2 norm of Hf}. Nevertheless, since $\M$ is compact, the Sobolev space $H^2(\M)$, considered as a set, does not depend on the choice of metric $g$ \citep[][Proposition 2.2]{hebey2000nonlinear}.

We next introduce the space of continuous functions and its norm. Let $C(\M;\mathbb{R}^D)$ denote the set of all continuous maps from $\M$ into $\mathbb{R}^D$. For each $\Bf\in C(\M;\mathbb{R}^D)$, define its supremum norm by $\|\Bf\|_{C(\M;\mathbb{R}^D)}:=\max_{\boldsymbol{m}\in\M}\|\Bf(\boldsymbol{m})\|$. Similarly, let $C(\M;\mathbb{R}^1)$ denote the space of all scalar-valued continuous functions on $\M$, equipped with the corresponding supremum norm. By a slight abuse of notation, we write $C(\M)$ to denote either $C(\M;\mathbb{R}^D)$ or $C(\M;\mathbb{R})$.

Hereafter, we assume that $d:=\dim(\M)$, the \textit{intrinsic dimension}, is less than four, i.e., $d<\min\{4,D\}$. This dimension restriction is motivated in part by the need to avoid exotic smooth structures \citep[e.g., Milnor’s exotic spheres,][]{milnor1956manifolds}. Specifically, according to Moise's theorem \citep{moise1952affine}, every topological manifold of dimension less than four admits a unique smooth structure up to diffeomorphism, thereby ruling out exotic smooth structures in these dimensions. More importantly, when the intrinsic dimension is less than four, we have the following theorem.
\begin{theorem}[Rellich–Kondrachov theorem]\label{thm: Rellich–Kondrachov embedding}
    Let $(\mathfrak{M},g)$ be a compact $d$-dimensional Riemannian manifold with $d<4$. Suppose its boundary $\partial \mathfrak{M}$ is either empty (i.e., $\mathfrak{M}$ is a closed manifold in this case) or smooth. Then, $H^2(\mathfrak{M}) \subseteq C(\mathfrak{M})$, and the inclusion map $\iota:\, H^2(\mathfrak{M}) \rightarrow C(\mathfrak{M})$ is compact with respect to the topologies of $H^2(\mathfrak{M})$ and $C(\mathfrak{M})$.
\end{theorem}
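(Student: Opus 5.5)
The plan is to reduce the statement to the classical Euclidean Rellich–Kondrachov and Sobolev–Morrey embeddings via a finite atlas and a partition of unity, which is available because $\M$ is compact. First, cover $\M$ by finitely many coordinate charts $(U_\alpha,\varphi_\alpha)_{\alpha\in[A]}$. Each $\varphi_\alpha(U_\alpha)$ is either a ball in $\mathbb{R}^d$ or, for charts meeting $\partial\M$, a half-ball. Choose the charts so that on each chart the metric coefficients $g_{ik}$, their inverses, and the Christoffel symbols are bounded, together with finitely many derivatives. Then take a smooth partition of unity $\{\rho_\alpha\}$ subordinate to the cover. For smooth $\Bf$, each localized function $(\rho_\alpha \Bf)\circ\varphi_\alpha^{-1}$ is compactly supported in the chart image, or supported up to the flat boundary piece in the half-ball case.

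The key analytic step is norm equivalence. We need the local Euclidean $W^{2,2}$ norm of $(\rho_\alpha\Bf)\circ\varphi_\alpha^{-1}$ to be bounded by $C\|\Bf\|_{H^2(\M)}$. In coordinates,
\[
\nabla^2 f(\partial_i,\partial_k)=\partial_i\partial_k f-\Gamma^l_{ik}\partial_l f .
\]
Hence the Euclidean second derivatives are controlled by $|\nabla^2 f|_g$ plus first derivatives, and the Leibniz rule with $\rho_\alpha$ produces further first- and zeroth-order terms. The paper's $H^2$ norm contains only $\|\Bf\|_{L^2}$ and $\|\nabla^2\Bf\|_{L^2}$, so the first-derivative terms must be absorbed. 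I would do this with an interpolation (Ehrling-type) inequality
\[
\|\nabla f\|_{L^2}^2\le \varepsilon\|\nabla^2 f\|_{L^2}^2+C_\varepsilon\|f\|_{L^2}^2 .
\]
In the closed case this follows from integration by parts, $\int|\nabla f|^2=-\int f\,\Delta f$, together with $|\Delta f|\le\sqrt d\,|\nabla^2 f|_g$. In the boundary case I would instead use the Euclidean interpolation inequality chartwise on half-balls. Alternatively, the bound is taken directly from \cite{hebey2000nonlinear}, where it is shown that the norm including $\nabla f$ is equivalent to this one. I expect this norm-equivalence step, especially near a nonempty smooth boundary, to be the main technical obstacle.

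Given the local bounds, apply the Euclidean results on each chart image. On a ball, or on a half-ball after extending the function across the flat boundary by a standard reflection or Stein extension operator, the condition $2>d/2$, which holds exactly because $d<4$, gives Morrey's embedding $W^{2,2}\hookrightarrow C^{0,\gamma}$ for some $\gamma>0$. We then sum $\Bf=\sum_\alpha \rho_\alpha\Bf$. This yields $\|\Bf\|_{C(\M)}\le C\|\Bf\|_{H^2(\M)}$ for all smooth $\Bf$, together with a uniform Hölder modulus. By density, the identity on $C^\infty$ extends to a bounded injective map $\iota:H^2(\M)\to C(\M)$. Injectivity holds because $H^2$ limits are also $L^2$ limits, and a function equal to zero in $L^2$ and continuous must vanish everywhere. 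This proves $H^2(\M)\subseteq C(\M)$.

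Compactness then follows from Arzelà–Ascoli. A bounded sequence in $H^2(\M)$ is uniformly bounded in sup-norm, and it is equi-Hölder in each chart with uniform constants. Since there are finitely many charts and the $\rho_\alpha$ are smooth, we get equicontinuity on $\M$ with respect to the Riemannian distance. $\M$ is a compact metric space, so Arzelà–Ascoli provides a subsequence converging in $C(\M)$. Hence $\iota$ is compact.
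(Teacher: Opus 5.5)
Your proposal is correct and follows essentially the same route as the paper: a finite atlas with a partition of unity, the Euclidean Sobolev--Morrey embedding $W^{2,2}\hookrightarrow C^{0,\gamma}$ on each chart (which is where $d<4$ enters), patching to a global sup-norm and uniform H\"older bound, and then Arzel\`a--Ascoli for compactness. You are more careful than the paper on one point: since $\|\cdot\|_{H^2(\mathfrak{M})}$ has no first-derivative term, the chartwise bound $\|u_s\|_{H^2(U_s)}\le C_1'\|u\|_{H^2(\mathfrak{M})}$, which the paper justifies only by change of variables and the Leibniz rule, in fact requires absorbing $\|\nabla u\|_{L^2}$ via an interpolation inequality, and your integration-by-parts argument (closed case) and chartwise Ehrling argument (boundary case) supply this correctly.
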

\noindent Theorem \ref{thm: Rellich–Kondrachov embedding} holds in both cases $H^2(\mathfrak{M})=H^2(\mathfrak{M};\mathbb{R}^1)$ and $H^2(\mathfrak{M})=H^2(\mathfrak{M};\mathbb{R}^D)$. It is implicitly stated and proved by \cite{hebey2000nonlinear}. For the reader’s convenience, we provide a complete and explicit proof of Theorem \ref{thm: Rellich–Kondrachov embedding} in Appendix \ref{Appendix: Proofs}. The compactness of the inclusion map $\iota$ in Theorem \ref{thm: Rellich–Kondrachov embedding} plays a critical role in our proposed theoretical foundations for PME and will be utilized in the proofs of several key theorems throughout this article. Furthermore, the compactness of the inclusion map (equivalently, Lemma~\ref{lemma: general Sobolev inequalities on manifolds}) leads to the following inequality
\begin{align*}
    \max_{\boldsymbol{m}\in\mathfrak{M}}\Vert \Bf(\boldsymbol{m})\Vert =:\Vert \Bf \Vert_{C(\mathfrak{M})} \le C\cdot \Vert \Bf \Vert_{H^2(\M)} \ \ \text{ for all }\Bf\in H^2(\mathfrak{M}),
\end{align*}
where $C$ is a constant, thereby demonstrating that $H^2(\mathfrak{M})$ is an RKHS. The RKHS structure of $H^2(\mathfrak{M})$ enables the application of the \textit{representer theorem} \citep[][Theorem 1.3.1]{wahba1990spline}, which is crucial for the estimation of principal manifolds (see Section \ref{section: Empirical Version} and Appendix \ref{appendix: Reproducing Kernel Hilbert Spaces}).

\subsection{Projection Indices}\label{section: Projection Indices}

In this section, we generalize the notion of the \textit{projection index}, first introduced by \cite{Hastie1984PrincipalCurves}. Given a point $\boldsymbol{x}\in\mathbb{R}^D$ and a continuous map $\Bf:\M\rightarrow\mathbb{R}^D$, the projection index $\Bpi_{\Bf}(\boldsymbol{x})$ may be viewed informally as a point $\tilde{\boldsymbol{m}}\in\mathfrak{M}$ for which $\Bf(\tilde{\boldsymbol{m}})$ is nearest to $\boldsymbol{x}$, as illustrated in Figure~\ref{fig:proj-curve}. Let $\Bf\in H^2(\M)$, and Theorem \ref{thm: Rellich–Kondrachov embedding} implies that $\Bf$ is continuous. Then, the following set of arguments of the minima is nonempty and compact
\begin{align*}
    \argmin_{\boldsymbol{m}\in\mathfrak{M}} \Vert \boldsymbol{x}-\Bf(\boldsymbol{m})\Vert :=\left\{ \boldsymbol{m}\in\mathfrak{M}:\, \Vert \boldsymbol{x}-\Bf(\boldsymbol{m})\Vert = \min_{\boldsymbol{m}'\in\mathfrak{M}} \Vert \boldsymbol{x}-\Bf(\boldsymbol{m}')\Vert \right\}.
\end{align*}
Note that this set may contain multiple elements (e.g., see Figure \ref{fig:ellipse-medial-axis}). When the template manifold $\M$ is the one-dimensional compact interval $[0,1]$, \cite{Hastie1984PrincipalCurves} and \cite{hastie1989principal} define the projection index $\Bpi_{\Bf}: \mathbb{\R}^D\rightarrow [0,1]$ of $\Bf$ by 
\begin{align}\label{eq: projection index defined by HS}
    \Bpi_{\Bf}(\boldsymbol{x}):=\max\left\{\argmin_{m\in[0,1]} \Vert \boldsymbol{x}-\Bf(m)\Vert\right\}.
\end{align}
Importantly, \cite{Hastie1984PrincipalCurves} shows that $\Bpi_{\Bf}$ is a measurable function of $\boldsymbol{x}$ \citep[][Theorem~4.1]{Hastie1984PrincipalCurves}. \cite{meng_principal_2021} extend the projection index $\Bpi_{\Bf}$ to cases where the intrinsic dimension of the template manifold is greater than one and demonstrate the associated measurability. When a data point is represented as an $\mathbb{R}^D$-valued random variable $\boldsymbol{X}$, the measurability of the projection index $\Bpi_{\Bf}$ guarantees that $\Bpi_{\Bf}(\boldsymbol{X})$ is also a random variable in the measure-theoretic sense. This property provides a theoretical foundation for certain statistical summaries in applied contexts, such as the parameterization of manifold-like white matter tracts constructed by \cite{yue2016parameterization}.

In the PME literature, it is sufficient for the projection index $\Bpi_{\Bf}$ of $\Bf$ to satisfy only two properties: (i) $\Bpi_{\Bf}(\boldsymbol{x})\in \argmin_{\boldsymbol{m}\in\mathfrak{M}} \Vert \boldsymbol{x}-\Bf(\boldsymbol{m})\Vert$ and (ii) that $\Bpi_{\Bf}:\mathbb{R}^D\rightarrow\M$ is measurable, regardless of its specific definition. Therefore, the technical definition of $\Bpi_{\Bf}$ via (sequential) maximization by \cite{Hastie1984PrincipalCurves}, \cite{hastie1989principal}, and \cite{meng_principal_2021} (e.g., \eqref{eq: projection index defined by HS}) is not necessary and may preclude other valid constructions of $\Bpi_{\Bf}$. Furthermore, their definitions rely heavily on the geometric structures of their selected template manifolds (e.g., $[0,1]$) and lack generalizability. Accordingly, we propose a generalized definition of the projection index $\Bpi_{\Bf}$ using the following lemma.
\begin{lemma}\label{lemma: Borel measurable selection of nearest-point}
    Let $\Bf\in C(\mathfrak{M};\,\mathbb{R}^D)$. Define a set-valued function
    \begin{align}\label{eq: the set-valued projection}
    \boldsymbol{\Psi}:\ \ &\mathbb{R}^D\rightarrow2^{\M}, \ \ \ \boldsymbol{x} \mapsto \boldsymbol{\Psi}(\boldsymbol{x}) := \argmin_{\boldsymbol{m}\in\mathfrak{M}} \Vert \boldsymbol{x}-\Bf(\boldsymbol{m})\Vert.
    \end{align}
Then, $\boldsymbol{\Psi}$ admits a Borel measurable selection
$\Bpi_{\Bf}:\mathbb{R}^D\to\mathfrak{M}$ with $\Bpi_{\Bf}(\boldsymbol{x})\in\boldsymbol{\Psi}(\boldsymbol{x})$
for all $\boldsymbol{x}\in\mathbb{R}^D$. That is, the Borel measurable function $\Bpi_{\Bf}$ satisfies
\begin{align}\label{eq: Borel measurable selection of nearest-point}
     \Vert \boldsymbol{x}-\Bf\left(\Bpi_{\Bf}(\boldsymbol{x})\right)\Vert = \min_{\boldsymbol{m}\in\mathfrak{M}} \Vert \boldsymbol{x}-\Bf(\boldsymbol{m})\Vert \ \ \text{ for all }\boldsymbol{x}\in\mathbb{R}^D.
\end{align}
\end{lemma}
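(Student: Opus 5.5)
The plan is to invoke the Kuratowski--Ryll-Nardzewski measurable selection theorem. It says that a weakly measurable correspondence from a measurable space into a Polish space, with nonempty closed values, admits a measurable selection. Here $\M$ is a compact metrizable space: compact manifold, possibly with smooth boundary, metrized by the Riemannian distance or by any embedding. It is therefore Polish. The domain $\mathbb{R}^D$ carries its Borel $\sigma$-algebra.

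\textbf{Step 1: values of $\boldsymbol{\Psi}$.} Since $\Bf$ is continuous, $\boldsymbol{m}\mapsto\Vert\boldsymbol{x}-\Bf(\boldsymbol{m})\Vert$ is continuous on the compact set $\M$. Hence the minimum is attained, so $\boldsymbol{\Psi}(\boldsymbol{x})$ is nonempty, and it is closed as the preimage of a point under a continuous map.

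\textbf{Step 2: graph of $\boldsymbol{\Psi}$.} Define
\[
\phi(\boldsymbol{x},\boldsymbol{m}) := \Vert\boldsymbol{x}-\Bf(\boldsymbol{m})\Vert - \min_{\boldsymbol{m}'\in\M}\Vert\boldsymbol{x}-\Bf(\boldsymbol{m}')\Vert .
\]
The first term is jointly continuous in $(\boldsymbol{x},\boldsymbol{m})$. The function $\boldsymbol{x}\mapsto\min_{\boldsymbol{m}'}\Vert\boldsymbol{x}-\Bf(\boldsymbol{m}')\Vert$ is the distance from $\boldsymbol{x}$ to the compact set $\Bf(\M)$, so it is $1$-Lipschitz. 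Thus $\phi$ is continuous, and the graph $\{\phi=0\}$ is closed in $\mathbb{R}^D\times\M$. In other words, $\boldsymbol{\Psi}$ has closed graph, and since $\M$ is compact, it is upper hemicontinuous.

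\textbf{Step 3: weak measurability.} For an open set $U\subseteq\M$, write $U=\bigcup_k K_k$ with $K_k$ compact; this is possible because $\M$ is metrizable and $U$ is $\sigma$-compact. Then
\[
\{\boldsymbol{x}:\boldsymbol{\Psi}(\boldsymbol{x})\cap U\neq\emptyset\}=\bigcup_k\{\boldsymbol{x}:\boldsymbol{\Psi}(\boldsymbol{x})\cap K_k\neq\emptyset\}.
\]
Each set on the right is the projection onto $\mathbb{R}^D$ of the closed set $\mathrm{graph}(\boldsymbol{\Psi})\cap(\mathbb{R}^D\times K_k)$. Because $K_k$ is compact, this projection is closed. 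Concretely, if $\boldsymbol{x}_n\to\boldsymbol{x}$ with $\boldsymbol{m}_n\in\boldsymbol{\Psi}(\boldsymbol{x}_n)\cap K_k$, extract a convergent subsequence $\boldsymbol{m}_n\to\boldsymbol{m}\in K_k$ and use continuity of $\phi$ to get $\boldsymbol{m}\in\boldsymbol{\Psi}(\boldsymbol{x})$. Hence the lower inverse of $U$ is $F_\sigma$, and in particular Borel.

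\textbf{Step 4: selection.} Kuratowski--Ryll-Nardzewski now yields a Borel map $\Bpi_{\Bf}$ with $\Bpi_{\Bf}(\boldsymbol{x})\in\boldsymbol{\Psi}(\boldsymbol{x})$ for every $\boldsymbol{x}$. This is exactly \eqref{eq: Borel measurable selection of nearest-point}.

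The only real obstacle is Step 3, the weak measurability. The compactness argument above handles it cleanly. As an alternative, one could give a constructive, self-contained proof that avoids the theorem. Embed $\M$ in some $\mathbb{R}^K$ by Whitney, and select lexicographic maxima of coordinates in the nested compact argmin sets, in the style of \eqref{eq: projection index defined by HS}. Each coordinate stage is upper semicontinuous, which again gives Borel measurability. I would cite the theorem and mention this alternative.
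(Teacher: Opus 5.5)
Your proof is correct and is essentially the paper's. The paper also checks the Kuratowski--Ryll-Nardzewski hypotheses by writing the hit set $\{\boldsymbol{x}:\boldsymbol{\Psi}(\boldsymbol{x})\cap U\neq\emptyset\}$ as a countable union, over basis sets with $\overline{B_j}\subseteq U$, of the closed sets $E_j=\{\boldsymbol{x}: d(\boldsymbol{x},\Bf(\overline{B_j}))=d(\boldsymbol{x},\Bf(\M))\}$; these are exactly your hit sets of compact pieces, with closedness read off from continuity of distance functions instead of your closed-graph projection argument. One caveat concerns only your lexicographic aside: only the first stage $\boldsymbol{x}\mapsto\max\{m_1:\boldsymbol{m}\in\boldsymbol{\Psi}(\boldsymbol{x})\}$ is guaranteed upper semicontinuous, because the restricted argmin maps at later stages need not have closed graph (e.g., when $\boldsymbol{x}$ crosses a medial axis), so those stages need a separate measurability argument.
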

\noindent The identity \eqref{eq: Borel measurable selection of nearest-point} obviously remains unchanged regardless of the chosen measurable selection. Lemma \ref{lemma: Borel measurable selection of nearest-point} is derived from the Kuratowski--Ryll-Nardzewski measurable selection theorem \citep[][Theorem 5.2.1]{srivastava1998course}. Building on the theoretical foundation established in Lemma \ref{lemma: Borel measurable selection of nearest-point}, we now define the projection indices as follows.
\begin{definition}\label{def: projection index}
    Any Borel measurable function $\Bpi_{\Bf}$ satisfying \eqref{eq: Borel measurable selection of nearest-point} is said to be a projection index of $\Bf$.
\end{definition}
\noindent Definition \ref{def: projection index} generalizes the projection indices constructed by \cite{Hastie1984PrincipalCurves}, \cite{hastie1989principal}, and \cite{meng_principal_2021}.

\begin{figure}[t]
\centering
\begin{tikzpicture}[line cap=round,line join=round,scale=1]

\pgfmathsetmacro{\a}{3.0}   
\pgfmathsetmacro{\b}{1.5}   

\pgfmathsetmacro{\cusp}{(\a*\a-\b*\b)/\a}

\draw[blue!70!black, very thick] (0,0) ellipse [x radius=\a, y radius=\b];

\draw[red!70!black, very thick] (-\cusp,0) -- (\cusp,0);

\node[blue!70!black] at (0,\b+0.45) {ellipse};

\node[red!70!black] at (0,-0.35) {medial axis};

\draw[->, gray!70] (0,0) -- (\a,0) node[midway,below] {$a$};
\draw[->, gray!70] (0,0) -- (0,\b) node[midway,left] {$b$};

\fill[black] (-\cusp,0) circle (1.3pt);
\fill[black] (\cusp,0) circle (1.3pt);
\node[black] at (-\cusp,0.35) {\footnotesize $-\frac{a^2-b^2}{a}$};
\node[black] at (\cusp,0.35) {\footnotesize $\frac{a^2-b^2}{a}$};

\end{tikzpicture}
\caption{\footnotesize The blue curve is an ellipse with semi-axes $a>b$. The red segment is its medial axis \citep{blum1967transformation}, i.e., the set of points that admit more than one nearest point on the ellipse; its endpoints occur at $x=\pm (a^{2}-b^{2})/a$. For a point $\boldsymbol{x}$ away from the medial axis, $\argmin_{\boldsymbol{m}\in\mathfrak{M}} \Vert \boldsymbol{x}-\Bf(\boldsymbol{m})\Vert$ is a singleton, where $\M=\mathbb{S}^1$ is the template manifold homeomorphic to the ellipse, and $\boldsymbol{f}:\mathbb{S}^1\rightarrow\mathbb{R}^2$ parametrizes the ellipse. Then, $\boldsymbol{\pi}_{\Bf}(\boldsymbol{x})$ is equal to the point in the singleton. In contrast, points $\boldsymbol{x}$ on the medial axis have at least two closest points on the ellipse, and $\boldsymbol{\pi}_{\Bf}(\boldsymbol{x})$ is defined via a measurable selection (see Lemma \ref{lemma: Borel measurable selection of nearest-point}).}\label{fig:ellipse-medial-axis}
\end{figure}

\section{Principal Manifold Estimation}\label{section: Principal Manifold Estimation}

This and the following sections present the core theoretical foundations of the PME framework. Suppose the observed $\mathbb{R}^D$-valued data $\boldsymbol{X}$ are generated from a distribution $\mathbb{P}$ supported in the vicinity of an underlying manifold $\mathcal{M}$ (see the ``manifold hypothesis'' at the beginning of Section \ref{Introduction}), where $\mathcal{M}$ is a submanifold of $\mathbb{R}^D$. Without loss of generality for most applications, we assume that the support $\mathrm{supp}(\mathbb{P})$ of the distribution $\mathbb{P}$ is compact. Let $(\M, g)$ denote a prespecified template manifold with $d=\dim(\M)<\min\{4, D\}$, which may be selected based on the TDA discussion in Section \ref{section: Topology}. The template and underlying manifolds are assumed to satisfy Assumptions \ref{assumption: assumption on the template manifold} and \ref{assumption: topological assumption}. The PME framework \citep[e.g.,][]{kegl2000learning, meng_principal_2021} estimates the underlying manifold $\mathcal{M}$ in two steps: first, it constructs a collection of manifolds $\{\widehat{\mathcal{M}}_\lambda\}_{\lambda>0}$ indexed by a tuning parameter $\lambda$; second, it selects an appropriate manifold from this collection as an estimate of the underlying manifold $\mathcal{M}$. This section addresses the first step. Section \ref{section: Tuning Parameter Selection} discusses the selection of $\lambda$.

\subsection{Principal Manifolds via Minimization}\label{section: Manifolds via Minimization}

The PME framework constructs the collection of manifolds $\{\widehat{\mathcal{M}}_\lambda\}_{\lambda>0}$ from a family of functions satisfying some regularity properties. In this article, we construct the manifold collection using a family of Sobolev functions $\{\boldsymbol{f}^*_\lambda\}_{\lambda>0}$. Specifically, for each $\lambda>0$, we define $\widehat{\mathcal{M}}_\lambda:=\Bf^*_\lambda(\M)=\{\boldsymbol{f}^*_\lambda(\boldsymbol{m}):\boldsymbol{m}\in\M\}$ to be a \textit{principal manifold}, where the function $\Bf^*_\lambda$ is given by
\begin{align}\label{eq: the core min problem}
\begin{aligned}
    & \Bf_{\lambda}^*:=\argmin_{\Bf\in\mathscr{F}(\mathbb{P})} \,\mathcal{L}_\lambda(\Bf), \\
    &\text{where }\ \ \mathcal{L}_{\lambda}(\Bf):= \mathbb{E} \left\{\Vert \boldsymbol{X}-\Bf\left(\Bpi_{\Bf}(\boldsymbol{X})\right)\Vert^2\right\} + \lambda\cdot \Vert \nabla{}^2 \boldsymbol{f} \Vert^2_{L^2(\M)} ,\\
    & \mathscr{F}(\mathbb{P}):=\left\{\Bf\in H^2(\mathfrak{M};\,\mathbb{R}^D):\,\max_{\boldsymbol{m}\in\mathfrak{M}}\Vert \Bf(\boldsymbol{m})\Vert\le 2\cdot\operatorname{rad}_0 \left(\operatorname{supp}(\mathbb{P})\right)\right\},
\end{aligned}
\end{align}
the expectation $\mathbb{E}(\cdot)=\int(\cdot)\,d\mathbb{P}$ is taken with respect to the data-generating distribution $\mathbb{P}$, and $\operatorname{rad}_0(\operatorname{supp}(\mathbb{P}))$ denotes the outer radius of the compact support $\operatorname{supp}(\mathbb{P})$, i.e., 
\begin{align*}
    \operatorname{rad}_0\big(\operatorname{supp}(\mathbb{P})\big) := \sup_{\boldsymbol{x}\in \operatorname{supp}(\mathbb{P})}\, \Vert \boldsymbol{x} \Vert.
\end{align*}
The loss function $\mathcal{L}_{\lambda}(\Bf)$ in \eqref{eq: the core min problem} comprises two terms. The first term, $\mathbb{E} \{\Vert \boldsymbol{X}-\Bf\left(\Bpi_{\Bf}(\boldsymbol{X})\right)\Vert^2 \}$, represents the average of the squared fitting error, illustrated in Figure \ref{fig:proj-curve}. This term quantifies the fidelity of the image $\Bf^*_\lambda(\M):=\{\Bf^*_\lambda(\boldsymbol{m}):\boldsymbol{m}\in\M\}$ to data $\boldsymbol{X}$. A small fitting error is at risk of overfitting \citep{duchamp1996extremal}, which is addressed by the second term---the penalty. The penalty term penalizes (i) the curvature of manifold $\Bf^*_\lambda(\M)$ and (ii) the dissimilarity between the metrics of the template manifold $(\M,g)$ and the fitted submanifold $\Bf^*_\lambda(\M)\subseteq\mathbb{R}^D$; (iii) furthermore, when $d=1$ and $\M=\mathbb{S}^1$, the penalty term also penalizes the length of the fitted curve, serving the role of the penalty introduced by \cite{kegl2000learning}. Section~\ref{section: Geometric Interpretation of the Penalty} provides a detailed interpretation of the penalty term $\Vert \nabla{}^2 \boldsymbol{f} \Vert^2_{L^2(\M)}$ from the viewpoints of \textit{Poincaré–Wirtinger’s inequality} \citep[Lemma \ref{lemma: Poincaré--Wirtinger's inequality};][]{brezis2011functional} and Riemannian geometry. Section~\ref{section: Geometric Interpretation of the Penalty} also justifies defining the penalty using the Hessian $\nabla^2$ rather than the \textit{Laplace-Beltrami operator}. Lastly, the constraint $\max_{\boldsymbol{m}\in\mathfrak{M}}\Vert \Bf(\boldsymbol{m})\Vert\le 2\cdot\operatorname{rad}_0 \left(\operatorname{supp}(\mathbb{P})\right)$ requires that every part of the fitted manifold $\Bf^*_\lambda(\M)$ lies within a bounded distance of the data support $\operatorname{supp}(\mathbb{P})$. The factor 2 is included for illustrative purposes and can be replaced by any sufficiently large positive constant.

The minimization construction in \eqref{eq: the core min problem} is a modified analogue of the framework introduced by \cite{meng_principal_2021}, e.g., see \eqref{eq: def of the ME principal manifolds}. The work by \cite{meng_principal_2021} does not provide a theoretical guarantee of the existence of the minimizers they propose. In contrast, our following theorem guarantees that the minimizer $\Bf^*_{\lambda}$ in \eqref{eq: the core min problem} is well defined, i.e., a global minimizer defined in \eqref{eq: the core min problem} exists.
\begin{theorem}\label{thm: existence of f star}
    For every $\lambda>0$, there exists an $\Bf^*_{\lambda}\in\mathscr{F}(\mathbb{P})$ such that $\mathcal{L}_\lambda(\Bf^*_\lambda) = \min_{\Bf\in\mathscr{F}(\mathbb{P})} \mathcal{L}_\lambda(\Bf)$.
\end{theorem}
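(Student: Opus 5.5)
We argue by the direct method of the calculus of variations, working in the Hilbert space $H^2(\M;\mathbb{R}^D)$. First we rewrite the fitting term so that it does not depend on the choice of projection index. By \eqref{eq: Borel measurable selection of nearest-point}, $\Vert \boldsymbol{X}-\Bf(\Bpi_{\Bf}(\boldsymbol{X}))\Vert^2 = d_{\Bf}(\boldsymbol{X})^2$, where $d_{\Bf}(\boldsymbol{x}):=\min_{\boldsymbol{m}\in\M}\Vert \boldsymbol{x}-\Bf(\boldsymbol{m})\Vert$. Hence $\mathcal{L}_\lambda(\Bf)=\mathbb{E}\{d_{\Bf}(\boldsymbol{X})^2\}+\lambda\Vert\nabla^2\Bf\Vert^2_{L^2(\M)}$.

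Two elementary facts follow. The map $\boldsymbol{x}\mapsto d_{\Bf}(\boldsymbol{x})$ is $1$-Lipschitz, so it is measurable. Moreover $|d_{\Bf}(\boldsymbol{x})-d_{\boldsymbol{h}}(\boldsymbol{x})|\le \Vert\Bf-\boldsymbol{h}\Vert_{C(\M)}$. On $\mathscr{F}(\mathbb{P})$ and for $\boldsymbol{x}\in\operatorname{supp}(\mathbb{P})$ we also have $d_{\Bf}(\boldsymbol{x})\le 3\operatorname{rad}_0$. Combining these gives
\[
\left|\mathbb{E}\{d_{\Bf}^2\}-\mathbb{E}\{d_{\boldsymbol{h}}^2\}\right|\le 6\operatorname{rad}_0\,\Vert \Bf-\boldsymbol{h}\Vert_{C(\M)},
\]
so the fitting term is continuous on $\mathscr{F}(\mathbb{P})$ in the sup-norm topology. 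Note also that $\mathscr{F}(\mathbb{P})$ is nonempty, since it contains the zero map. Let $\mu:=\inf_{\mathscr{F}(\mathbb{P})}\mathcal{L}_\lambda$; then $0\le\mu\le \mathcal{L}_\lambda(\boldsymbol{0})<\infty$.

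Next, take a minimizing sequence $\{\Bf_n\}\subseteq\mathscr{F}(\mathbb{P})$ and show it is bounded in $H^2(\M)$. The Hessian parts satisfy $\lambda\Vert\nabla^2\Bf_n\Vert^2_{L^2(\M)}\le \mathcal{L}_\lambda(\Bf_n)\le \mu+1$ for large $n$. The $L^2$ parts satisfy $\Vert\Bf_n\Vert_{L^2(\M)}^2\le \mathrm{vol}_g(\M)\,(2\operatorname{rad}_0)^2$, because of the sup-norm constraint and compactness of $\M$. Thus $\sup_n\Vert\Bf_n\Vert_{H^2(\M)}<\infty$.

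Since $H^2(\M)$ is a Hilbert space, we can pass to a subsequence with $\Bf_n\rightharpoonup\Bf^*$ weakly in $H^2(\M)$. By Theorem~\ref{thm: Rellich–Kondrachov embedding}, the compact inclusion $\iota$ maps weakly convergent sequences to strongly convergent ones, so $\Bf_n\to\Bf^*$ in $C(\M)$. The limit identification is consistent because weak convergence in $H^2$ implies weak convergence in $C(\M)$ under the bounded operator $\iota$.

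This is the main point, which is closedness plus lower semicontinuity. First, uniform convergence preserves the pointwise bound $\max_{\boldsymbol{m}}\Vert\Bf^*(\boldsymbol{m})\Vert\le 2\operatorname{rad}_0$, so $\Bf^*\in\mathscr{F}(\mathbb{P})$. Second, the fitting term converges by the Lipschitz estimate above. Third, for the penalty, the map $\Bf\mapsto\nabla^2\Bf$ is bounded linear from $H^2(\M)$ to $L^2$ tensor fields (its norm is dominated by $\Vert\cdot\Vert_{H^2(\M)}$). Hence $\nabla^2\Bf_n\rightharpoonup\nabla^2\Bf^*$ weakly in $L^2$, and weak lower semicontinuity of the Hilbert norm gives $\Vert\nabla^2\Bf^*\Vert^2_{L^2(\M)}\le\liminf_n\Vert\nabla^2\Bf_n\Vert^2_{L^2(\M)}$. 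Putting these together, $\mathcal{L}_\lambda(\Bf^*)\le\liminf_n\mathcal{L}_\lambda(\Bf_n)=\mu$, so $\Bf^*=:\Bf^*_\lambda$ attains the minimum.

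The only delicate point is the fitting term, because $\Bpi_{\Bf}$ itself is discontinuous in $\Bf$. This obstacle is removed by the reduction to the distance function $d_{\Bf}$ and its sup-norm Lipschitz bound, which is exactly where the compact embedding of Theorem~\ref{thm: Rellich–Kondrachov embedding} (valid for $d<4$) is indispensable.
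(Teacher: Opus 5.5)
Your proposal is correct and follows essentially the same route as the paper: a minimizing sequence bounded in $H^2(\M)$, a weakly convergent subsequence upgraded to uniform convergence by the Rellich--Kondrachov compact embedding, membership of the limit in $\mathscr{F}(\mathbb{P})$, continuity of the fitting term via the $6\cdot\operatorname{rad}_0$ sup-norm Lipschitz bound (the paper's Lemma~\ref{lemma: continuity of Delta wrt f}), and weak lower semicontinuity of the penalty. The differences are cosmetic. You derive the Lipschitz bound directly from the distance function $d_{\Bf}$ rather than through projection indices. You also obtain lower semicontinuity of $\Vert\nabla^2\cdot\Vert_{L^2(\M)}$ from weak continuity of the bounded Hessian map, whereas the paper subtracts the strongly convergent $L^2$ part from the weakly lower semicontinuous $H^2$ norm.
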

\noindent Note that the existence of the minimizer $\Bf^*_{\lambda}$, as established in Theorem \ref{thm: existence of f star}, requires only a minimal condition on the data-generating distribution $\mathbb{P}$---the support $\mathrm{supp}(\mathbb{P})$ is compact.

Here, we examine two extremes, $\lambda=0$ and $\lambda\rightarrow\infty$. When $\lambda=0$, the minimization in \eqref{eq: the core min problem} is essentially equivalent to the framework proposed by \cite{hastie1989principal}, as indicated by \eqref{eq: def of the fitting-error functional}. \cite{duchamp1996extremal} demonstrate that the loss function $\mathcal{L}_{\lambda}(\Bf)$ in \eqref{eq: the core min problem} generally does not have a minimizer when $\lambda=0$, which explains the overfitting results previously observed by \cite{hastie1989principal}. The overfitting phenomenon is also evident in Figure \ref{fig:interval_S1_full}(b,e), where the wiggly green curves correspond to an excessively small $\lambda$. When $\lambda\rightarrow\infty$, the following results establish connections between the proposed PME minimizers in \eqref{eq: the core min problem}, linear PCA, and the expected value $\mathbb{E}\boldsymbol{X}$.
\begin{theorem}\label{thm: PME with lambda=infty}
    Let $\Bf^*_{\lambda}$ be a minimizer defined in \eqref{eq: the core min problem}. Under Assumption \ref{assumption: assumption on the template manifold}, we have
    \begin{enumerate}
        \item When $\partial \M=\emptyset$, $\boldsymbol{f}_\lambda^*$ converges to the constant function $\boldsymbol{m}\mapsto \mathbb{E}\boldsymbol{X}$ as $\lambda\to\infty$ in the supremum-norm topology, i.e., $\max_{\boldsymbol{m}\in\M} \Vert \Bf^*_\lambda(\boldsymbol{m})-\mathbb{E}\boldsymbol{X}\Vert\rightarrow0$.
        \item Assume that the $d$-th largest eigenvalue of the covariance matrix $\mathrm{Cov}(\boldsymbol{X})$ is strictly greater than the $(d+1)$-th largest eigenvalue. If $\mathfrak M$ is a simply connected flat manifold (i.e., its Riemann curvature tensor vanishes everywhere), then $\boldsymbol{f}_\lambda^*$ converges, as $\lambda\to\infty$, to a subset $\mathscr A\subseteq \mathscr F(\mathbb P)$ in the following sense: for every open set $\mathscr V\subseteq \mathscr F(\mathbb P)$ with respect to the supremum-norm topology such that $\mathscr A\subseteq \mathscr V$, there exists $\Lambda>0$ such that $\boldsymbol{f}_\lambda^*\in \mathscr V$ for all $\lambda>\Lambda$. Moreover, for every $\boldsymbol{f}_\infty^*\in \mathscr A$, its image $\{\boldsymbol{f}_\infty^*(\boldsymbol m):\boldsymbol m\in\mathfrak M\} \subseteq \{ \mathbb E\boldsymbol X+\sum_{j=1}^d \alpha_j \boldsymbol v_j:\ \alpha_1,\ldots,\alpha_d\in\mathbb R \}$, where $\boldsymbol v_1,\ldots,\boldsymbol v_d$ are eigenvectors of $\mathrm{Cov}(\boldsymbol X)$ corresponding to its largest $d$ eigenvalues.
    \end{enumerate}
\end{theorem}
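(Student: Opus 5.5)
Both parts follow from one variational comparison: a uniform bound on the penalty, compactness from Theorem~\ref{thm: Rellich–Kondrachov embedding}, and identification of the limits.

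\textbf{Step 1 (penalty bound).} For any competitor $\boldsymbol{h}\in\mathscr{F}(\mathbb{P})$ with $\nabla^2\boldsymbol{h}=0$, minimality of $\Bf^*_\lambda$ gives
\begin{align*}
\mathbb{E}\|\boldsymbol{X}-\Pi_{\Bf^*_\lambda}(\boldsymbol{X})\|^2+\lambda\|\nabla^2\Bf^*_\lambda\|^2_{L^2(\M)}\le \mathcal{L}_\lambda(\boldsymbol{h})=\mathbb{E}\|\boldsymbol{X}-\Pi_{\boldsymbol{h}}(\boldsymbol{X})\|^2 .
\end{align*}
Take $\boldsymbol{h}\equiv\mathbb{E}\boldsymbol{X}$, which lies in $\mathscr{F}(\mathbb{P})$ because $\|\mathbb{E}\boldsymbol{X}\|\le\operatorname{rad}_0$. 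Then $\|\nabla^2\Bf^*_\lambda\|^2_{L^2}\le \mathrm{tr}\,\mathrm{Cov}(\boldsymbol{X})/\lambda\to0$. Since $\|\Bf^*_\lambda\|_{C(\M)}\le 2\operatorname{rad}_0$, the family is bounded in $H^2(\M)$. By Theorem~\ref{thm: Rellich–Kondrachov embedding}, every sequence $\lambda_n\to\infty$ has a subsequence converging uniformly to some $\Bf_\infty$. Taking a further weakly convergent subsequence in $H^2$ and using weak lower semicontinuity of the norm gives $\nabla^2\Bf_\infty=0$.

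\textbf{Step 2 (closed case).} When $\partial\M=\emptyset$, a function with $\nabla^2 f=0$ has $\Delta_g f=0$. Integrating $f\Delta_g f$ over the closed manifold gives $\int|\nabla f|^2=0$, so $f$ is constant because $\M$ is connected. Hence $\Bf_\infty\equiv\boldsymbol{c}$. The fitting-error functional is continuous under uniform convergence, since $\big|\min_{\boldsymbol m}\|\boldsymbol{x}-\Bf(\boldsymbol m)\|-\min_{\boldsymbol m}\|\boldsymbol{x}-\boldsymbol{g}(\boldsymbol m)\|\big|\le\|\Bf-\boldsymbol{g}\|_{C}$ and everything is bounded. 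Passing to the limit in the inequality gives $\mathbb{E}\|\boldsymbol{X}-\boldsymbol{c}\|^2\le\mathbb{E}\|\boldsymbol{X}-\mathbb{E}\boldsymbol{X}\|^2$, which forces $\boldsymbol{c}=\mathbb{E}\boldsymbol{X}$. Every subsequential limit is therefore the same, so the whole family converges uniformly.

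\textbf{Step 3 (flat simply connected case).} Parallel vector fields exist globally on $\M$ because it is flat and simply connected. This gives a developing map, an isometric immersion $\boldsymbol\phi:\M\to\mathbb{R}^d$ with parallel differential. A map with $\nabla^2\Bf=0$ has parallel $d\Bf$, so it is affine: $\Bf=\boldsymbol b+A\boldsymbol\phi$ with $A\in\mathbb{R}^{D\times d}$. Define
\[
\mathscr{A}:=\operatorname{argmin}\Big\{\mathbb{E}\|\boldsymbol{X}-\Pi_{\boldsymbol h}(\boldsymbol X)\|^2:\ \boldsymbol h\in\mathscr{F}(\mathbb{P}),\ \nabla^2\boldsymbol h=0\Big\}.
\]
\begin{itemize}
\item \emph{Limits lie in $\mathscr{A}$.} Fix any admissible affine $\boldsymbol h$. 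Plugging it into the Step 1 inequality and using the continuity of the fitting error shows that every subsequential limit $\Bf_\infty$ has fitting error at most that of $\boldsymbol h$, so $\Bf_\infty\in\mathscr{A}$.
\item \emph{Convergence to $\mathscr{A}$.} Suppose some open $\mathscr V\supseteq\mathscr A$ fails to eventually contain $\Bf^*_\lambda$. Then there is a sequence outside $\mathscr V$, and by Step 1 a subsequence of it converges uniformly into $\mathscr{A}$. Since $\mathscr V$ is open, this is a contradiction.
\end{itemize}

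\textbf{Main obstacle: locating the images of $\mathscr{A}$.} Each image $\boldsymbol h(\M)$ lies in an affine subspace of dimension at most $d$. For any affine subspace $P$ of dimension at most $d$, the distance to a subset of $P$ is at least the distance to $P$ itself, so the fitting error is at least $\mathbb{E}\,\mathrm{dist}(\boldsymbol X,P)^2\ge\sum_{j>d}\mu_j$, where $\mu_j$ are the eigenvalues of $\mathrm{Cov}(\boldsymbol{X})$. The comparison construction I would attempt is an affine map onto the PCA plane $P_d=\mathbb{E}\boldsymbol X+\mathrm{span}\{\boldsymbol v_1,\dots,\boldsymbol v_d\}$, scaled so that its image covers the projected support while staying inside the radius constraint.

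Because the radius constraint allows only radius $2\operatorname{rad}_0$, it is not clear that this map attains the PCA lower bound exactly. I would first try to show that covering the projected support suffices. The projection of $\operatorname{supp}(\mathbb P)$ onto $P_d$ has norm at most $\operatorname{rad}_0$, and since $\boldsymbol\phi$ is a local isometry on a compact domain, a suitable scaling of it can cover that set within radius $2\operatorname{rad}_0$.

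If exact attainment fails, the fallback is to use the eigengap. Equality in $\mathbb{E}\,\mathrm{dist}(\boldsymbol X,P)^2\ge\sum_{j>d}\mu_j$ together with a strict eigengap forces $P=P_d$. I would then argue that minimizers in $\mathscr{A}$ must have images inside $P_d$, which is the required containment $\boldsymbol{f}^*_\infty(\M)\subseteq\{\mathbb E\boldsymbol X+\sum_{j=1}^d\alpha_j\boldsymbol v_j\}$.
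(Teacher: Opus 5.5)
Your Steps 1--3, through the definition of $\mathscr{A}$ and the two bullet points, follow the paper's proof essentially step for step. This covers the uniform $H^2$ bound from comparison with a fixed competitor, Rellich--Kondrachov compactness, and weak lower semicontinuity forcing $\nabla^2\Bf_\infty=0$. It also covers constancy via $\Delta_g$ and Green's identity on a closed manifold, the comparison placing subsequential limits in $\mathscr{A}$, and the subsequence argument for convergence to a set. Your developing-map formulation is in fact more accurate than the paper's claim that $\M$ is globally isometric to a domain in $\mathbb{R}^d$. That claim fails, e.g., for a smoothed annular sector of angle greater than $2\pi$, whose developing map is not injective; the affine conclusion is unaffected.

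\textbf{The gap.} The genuine gap is the final claim of (ii), that every element of $\mathscr{A}$ has image in $P_d$. Your fallback is circular. The equality case of $\mathbb{E}\,\mathrm{dist}(\boldsymbol X,P)^2\ge\sum_{j>d}\mu_j$ is available only if some admissible affine map attains $\sum_{j>d}\mu_j$, which is exactly the attainment you have not proved. If attainment fails, every element of $\mathscr{A}$ has error strictly above $\sum_{j>d}\mu_j$, and nothing you say forces its affine hull to be $P_d$. Projecting the image orthogonally onto $P_d$ does not obviously help either. It can move image points farther from data lying off $P_d$, and, when $P_d$ misses the origin, it can increase norms beyond the constraint.

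When attainment does hold, your eigengap argument is correct (with $\mu_d>0$ excluding affine hulls of dimension $<d$), and it does hold for $d=1$. Write $V=\mathrm{span}\{\boldsymbol v_1,\ldots,\boldsymbol v_d\}$ and $\boldsymbol p_0$ for the component of $\mathbb E\boldsymbol X$ orthogonal to $V$. The projection of $\boldsymbol x$ onto $P_d$ is then $\boldsymbol p_0+\Pi_V\boldsymbol x$. Its norm can exceed $\operatorname{rad}_0$, contrary to your claim, but never $\sqrt2\operatorname{rad}_0$. So mapping the interval $\M$ affinely onto $\{\boldsymbol p_0+s\boldsymbol v_1:\ |s|\le\operatorname{rad}_0\}$ is admissible and covers the projected support.

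\textbf{Failure for $d\ge 2$.} Here the covering step can fail outright, because $\boldsymbol\phi(\M)$ need not be convex and no affine map repairs that. Take $\M$ a smoothed annular sector $\{1\le|z|\le1+\delta,\ 0\le\arg z\le3\pi/2\}$ with $\delta<1/2$; it is flat, simply connected, and has smooth boundary. Take $\mathbb P$ uniform on $\{x_1^2+x_2^2+x_3^2/\eta^2\le1\}\subset\mathbb R^3$ with $\eta<1$. Then $\operatorname{rad}_0=1$, $\mathbb E\boldsymbol X=\boldsymbol 0$, $\mu_1=\mu_2=1/5>\mu_3=\eta^2/5$, and $P_2=\{x_3=0\}$. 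Attaining $\mu_3$ forces $\boldsymbol h(\M)$ to contain the closed unit disk of $P_2$. But an affine image of this band that contains a unit disk has a singular value at least $2/\delta$. Since the band has width at least $1$ in every direction, the image has diameter at least $2/\delta>4$, violating $\Vert\boldsymbol h\Vert_{C(\M)}\le2$. So attainment fails and your argument says nothing here. Completing (ii) needs either an extra hypothesis guaranteeing that some admissible affine map covers the projected support, or a genuinely new argument for the non-attainment regime.

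The paper's proof does not supply such an argument. It disposes of this step by asserting that, once restricted to affine maps, the problem ``is exactly the population PCA optimization problem.'' That tacitly treats the image of $\boldsymbol h$ as a whole $d$-plane and ignores the sup-norm constraint in $\mathscr F(\mathbb P)$. The obstacle you flagged is real, but as written your proposal does not overcome it.
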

\noindent The results of Theorem~\ref{thm: PME with lambda=infty} are illustrated in Figures~\ref{fig:interval_S1_full} and \ref{fig:2d3D_star_moon}. In panels (b,e) of Figure~\ref{fig:interval_S1_full}, when the tuning parameter $\lambda$ is excessively large, the fitted manifold reduces either to a straight line (corresponding to PCA) or to a point (corresponding to the expectation $\mathbb{E}\boldsymbol{X}$). Figure~\ref{fig:2d3D_star_moon} shows that the fitted closed surface gradually shrinks to a point—visually, the center of the point cloud—as the tuning parameter $\lambda$ increases. Appendix~\ref{appendix: additional simulations} presents additional numerical experiments that validate the two conclusions of Theorem~\ref{thm: PME with lambda=infty}. The main idea underlying the proof of Theorem~\ref{thm: PME with lambda=infty} (given in Appendix \ref{Appendix: Proofs}) is that the topology of the template manifold $\M$ influences the structure of the solutions to the differential equation $\nabla^2\Bf=0$. 

While the extreme case of $\lambda=0$ results in overfitting, Theorem \ref{thm: PME with lambda=infty} shows that the opposite extreme, $\lambda\rightarrow\infty$, can lead to underfitting unless the underlying manifold is flat or consists of a single point, in which case the mean or PCA of data $\boldsymbol{X}$ is satisfactory to a user. Consequently, selecting an optimal value of $\lambda$ between these extremes is necessary, which will be addressed in Section \ref{section: Tuning Parameter Selection}. Furthermore, the minimizer $\Bf^*_{\lambda}$ varies continuously with $\lambda$, allowing for stability in the estimate as $\lambda$ is varied (see Theorem \ref{thm: continuity of the minimizer wrt the tuning parameter} in Section \ref{section: Tuning Parameter Selection} for further details).

\begin{figure}[ht]
    \centering
    \includegraphics[width=1.0\linewidth]{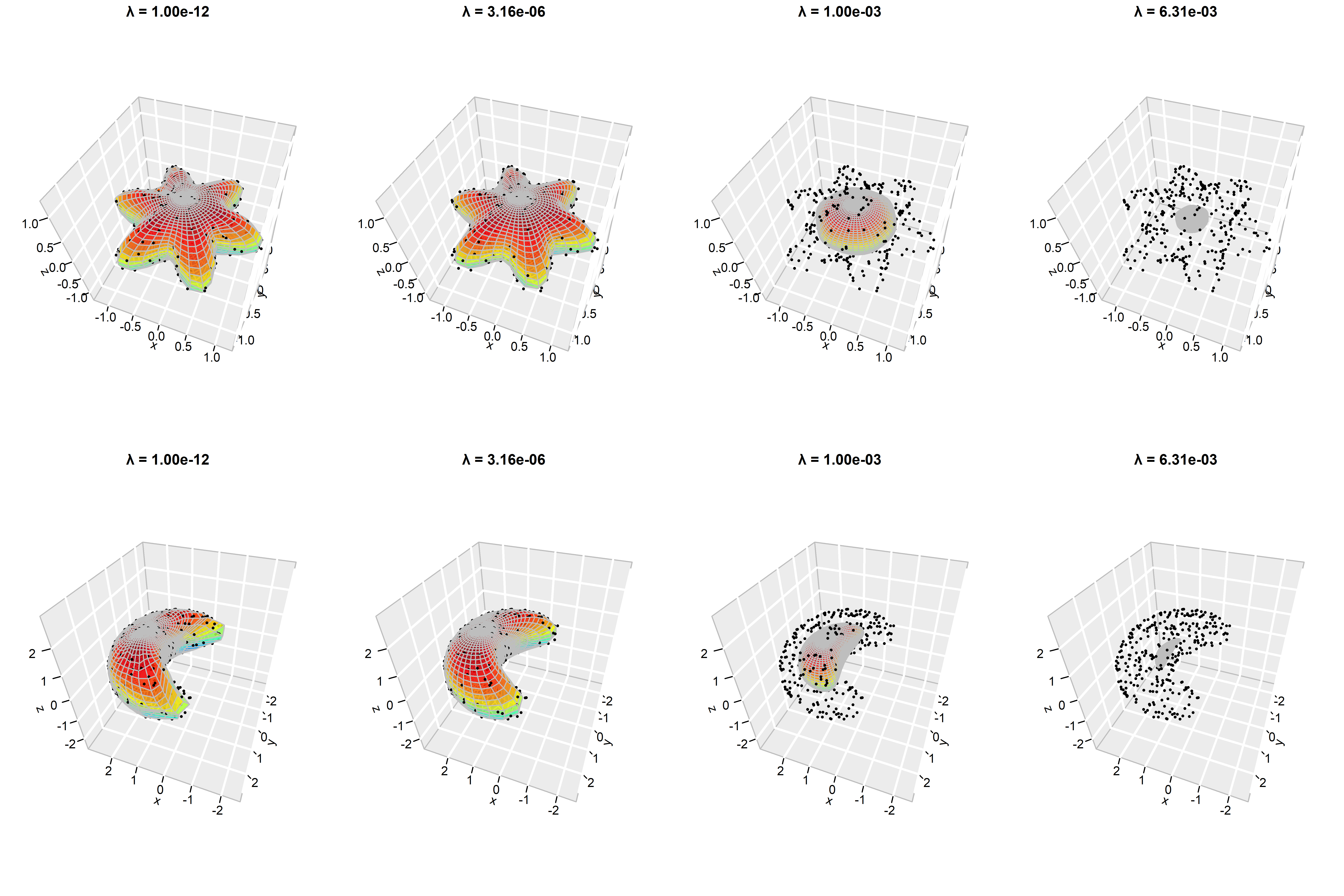}
    \vspace{-42pt}
    \caption{\footnotesize The star- and cashew-shaped point clouds in the first and second rows are generated via the mechanisms described in Appendices \ref{appendix: Surface of a Flower/Star (2d, 3D)} and \ref{section: Surface of a Moon/Cashew (2d, 3D)}, respectively. We apply the PA algorithm to the point clouds. The top row is initialized using only spherical normalization, and the bottom row using ISOMAP followed by spherical normalization (see Appendix \ref{section: Initialization of the PA algorithm} for details). Each column displays the fitting results corresponding to a prespecified value of $\lambda$. This figure shows that a fitted closed surface gradually shrinks to the center of a point cloud as $\lambda$ approaches $\infty$, as stated in Theorem \ref{thm: PME with lambda=infty}.}
    \label{fig:2d3D_star_moon}
\end{figure}

\subsection{Projection-Adaptation Algorithm}\label{section: Projection-Adaptation Algorithm}

Computing the minimizer $\Bf_\lambda^*$ defined in \eqref{eq: the core min problem} is generally nontrivial, except in the extreme case where $\lambda\rightarrow\infty$ (see Theorem \ref{thm: PME with lambda=infty}). In this section, we develop the theoretical foundation for an iterative algorithm used to numerically approximate the minimizer. This algorithm was first applied to the PME framework by \cite{Hastie1984PrincipalCurves} and \cite{hastie1989principal}, referred to as the \textit{principal curve algorithm}. Despite its widespread use \citep[e.g.,][]{banfield1992ice, yue2016parameterization}, theoretical guarantees for this algorithm remain limited. As noted by \cite{banfield1992ice}, ``there is no formal proof that the algorithm converges, but \cite{hastie1989principal} report that they have had no convergence problems with more than 40 real and simulated examples.'' \cite{smola2001regularized} outline a generalization, termed the \textit{projection–adaptation (PA) algorithm}, which alternates between a projection step (P-step) and an adaptation step (A-step). \cite{meng_principal_2021} adopt the PA algorithm within their PME framework and report numerical convergence across extensive numerical experiments. Nevertheless, a precise convergence statement and a rigorous proof are still lacking. Here, we first describe the PA algorithm within the framework introduced in Section \ref{section: Manifolds via Minimization}. We then state and prove our main theorem, which establishes uniform convergence of the PA iterations. 

To estimate the minimizer $\Bf^*_\lambda$ described in equation \eqref{eq: the core min problem}, we need the following ``bivariate'' functional defined for all $\Bf, \Bg\in\mathscr{F}(\mathbb{P})$
\begin{align}\label{eq: def of the Q functional}
    \mathcal{Q}_\lambda(\Bf\,\vert\,\Bg) := \mathbb{E} \left\{\Vert \boldsymbol{X}-\Bf\left(\Bpi_{\Bg}(\boldsymbol{X})\right)\Vert^2\right\} + \lambda\cdot \Vert \nabla{}^2 \boldsymbol{f} \Vert^2_{L^2(\M)}.
\end{align}
In general, $\mathcal{Q}_\lambda(\Bf\,\vert\,\Bg) \ne \mathcal{Q}_\lambda(\Bg\,\vert\,\Bf)$. Obviously, $\mathcal{L}_\lambda(\Bf)=\mathcal{Q}_\lambda(\Bf\,\vert\,\Bf)$ for all $\Bf\in\mathscr{F}(\mathbb{P})$. The minimizer $\Bf^*_\lambda$ defined in \eqref{eq: the core min problem} can be approximated through the following iterative algorithm.
\begin{align}\label{eq: the core iterative algorithm}
    \Bf_\lambda^{(n+1)}:= \argmin_{\Bf\in\mathscr{F}(\mathbb{P})} \mathcal{Q}_\lambda(\Bf\,\vert\,\Bf^{(n)}_\lambda)=\mathcal{T}_\lambda(\Bf^{(n)}_\lambda), \quad \text{ for }n=0,1,2,\ldots
\end{align}
where the updating operator $\mathcal{T}_\lambda$ is defined by $\mathcal{T}_\lambda(\Bg):=\argmin_{\Bf\in\mathscr{F}(\mathbb{P})} \mathcal{Q}_\lambda(\Bf\,\vert\,\Bg)$. We refer to the computation of the projection $\Bpi_{\Bf^{(n)}_\lambda}(\boldsymbol{X})$ as the P-step; practical methods for this computation are given in Appendix \ref{section: Computing Projecting Indices}. We refer to the minimization in \eqref{eq: the core iterative algorithm} as the A-step, whose minimizer exists and is unique by Lemma~\ref{lemma: properties of Q and T functionals}. The PA algorithm in \eqref{eq: the core iterative algorithm} is equivalent to the \textit{principal curve algorithm} proposed by \cite{hastie1989principal} when $\lambda=0$ \citep[e.g.,][Theorem 6]{meng_principal_2021}. 

The minimization in \eqref{eq: the core min problem} is an unsupervised learning task that generalizes PCA (see Theorem \ref{thm: PME with lambda=infty}). In contrast, each iteration of the PA algorithm in \eqref{eq: the core iterative algorithm} is a supervised learning problem, namely, nonlinear regression. Specifically, at a given iteration, the P-step provides the predictor (i.e., the projection index $\Bpi_{\Bf^{(n)}_\lambda}(\boldsymbol{X})$), and the data point $\boldsymbol{X}$ serves as the response (see the combination of \eqref{eq: def of the Q functional} and \eqref{eq: the core iterative algorithm}). This regression problem is well-studied in the literature \citep[e.g.,][]{wahba1990spline} and can be solved efficiently by exploiting the fact that $H^2(\M)$ is an RKHS when $\dim(\M)<4$ (see Theorem \ref{thm: Rellich–Kondrachov embedding}). Details of the regression fit are provided in Section \ref{section: Empirical Version}. Theorem \ref{thm: the convergence theorem of the core iterative algorithm} (presented later) shows that the unsupervised minimization in \eqref{eq: the core min problem} can be carried out by iteratively solving these nonlinear regression subproblems.

\begin{figure}[ht]
    \centering
\begin{tikzpicture}[scale=1.0, transform shape, x=1.05cm, y=1.05cm,
                    line cap=round, line join=round]

\definecolor{myblue}{RGB}{0,110,255}
\definecolor{myorange}{RGB}{245,140,30}

\draw[-{Stealth[length=3.2mm,width=1.0mm]}, very thick] (0,0) -- (8.7,0);
\draw[-{Stealth[length=3.2mm,width=1.0mm]}, very thick] (0,0) -- (0,4.2);

\node[anchor=west] at (8, 0.25) {\footnotesize $\mathscr{F}(\mathbb{P})$};
\node[anchor=south] at (1.7,3.7) {\footnotesize $\mathcal{L}_{\lambda}(\boldsymbol{f})
$ for $\boldsymbol{f}\in\mathscr{F}(\mathbb{P})$};

\draw[black, very thick, dashed]
  (0.1, 2.8) .. controls (0.2, 3.0) and (0.24,3.6) .. (1.05, 3.32);

\draw[black, very thick]
  (1.1,3.3) .. controls (1.95,3.0) and (2.35,2.35) .. (2.90,1.55)
              .. controls (3.35,0.95) and (4.05,0.72) .. (4.55,0.80)
              .. controls (5.15,0.92) and (5.85,1.55) .. (6.35,2.45)
              .. controls (6.75,3.15) and (6.9,3.85) .. (7.10, 4.1);

\draw[black, very thick, dashed]
  (7.15,4.18) .. controls (7.6, 4.8) and (7.8, 4) .. (8.0, 3.7)
              .. controls (8.1,3.5) and (8.2,3.3) .. (8.5, 3.2);

\fill[red] (4.38,0.78) circle (2.6pt);
\node[anchor=north] at (4.15,0.8) {\footnotesize $\boldsymbol{f}_\lambda^\ast$};

\coordinate (p0) at (6.86,3.62);
\coordinate (p1) at (6.5, 2.75);
\coordinate (p2) at (6, 1.88);
\coordinate (p4) at (5, 0.97);

\fill[myorange] (p0) circle (2.4pt);
\fill[myorange] (p1) circle (2.4pt);
\fill[myorange] (p2) circle (2.4pt);
\fill[myorange] (p4) circle (2.4pt);

\draw[myblue, thick, -{Stealth[length=3.0mm,width=2.2mm]}]
  (p0) to[bend right=25] (p1);
\draw[myblue, thick, -{Stealth[length=3.0mm,width=2.2mm]}]
  (p1) to[bend right=25] (p2);
\draw[myblue, thick, -{Stealth[length=3.0mm,width=2.2mm]}]
  (p2) to[bend right=25] (p4);

\node[anchor=south west] at ($(p0)+(-0.6, 0)$) {\footnotesize $\boldsymbol{f}_\lambda^{(0)}$};
\node[anchor=west]       at ($(p1)+(-2.45, 0.1)$) {\footnotesize $\mathcal{T}_\lambda(\boldsymbol{f}_\lambda^{(0)})=\boldsymbol{f}_\lambda^{(1)}$};
\node[anchor=east]       at ($(p2)+(0.8, -0.1)$) {\footnotesize $\boldsymbol{f}_\lambda^{(2)}$};
\node[anchor=west]       at ($(p4)+(-0.27,-0.25)$) {\footnotesize$\boldsymbol{f}_\lambda^{(3)}=\mathcal{T}_\lambda(\boldsymbol{f}_\lambda^{(2)})$};

\coordinate (xL) at (1.10,0);   
\coordinate (xR) at (7.10,0);   

\draw[mygreen, thick, dashed] (1.10,0) -- (1.10,3.27);
\draw[mygreen, thick, dashed] (7.10,0) -- (7.10, 4.1);

\draw[mygreen, thick, decorate,
      decoration={brace, mirror, amplitude=7pt}]
  (1.10, -0.1) -- (7.10, -0.1);

\node[mygreen, anchor=north] at (4.05,-0.25) {\footnotesize $\mathscr{U}$};

\end{tikzpicture}
\vspace{-15pt}
\caption{\footnotesize Illustration of the PA algorithm and the role of Assumption~\ref{assumption: Assumption for the convergence of the iterative algorithm}. The vertical axis denotes the objective $\mathcal{L}_\lambda(f)$ and the horizontal axis represents the function space $\mathscr F(\mathbb{P})$ (illustration only, not to scale). The region $\mathscr U$ (green bracket; boundaries indicated by dashed vertical lines) is a ``basin of attraction'' in which the updating operator $\mathcal T_\lambda$ is intended to operate. Starting from an initialization $\Bf_\lambda^{(0)}\in\mathscr U$, successive iterations $\Bf_\lambda^{(n+1)}=\mathcal{T}_\lambda(\Bf_\lambda^{(n)})$ (orange points connected by blue arrows) decrease the objective and move toward the unique global minimizer $\Bf_\lambda^\ast\in\mathscr U$ (red point). The dashed portions of the objective curve indicate behavior outside $\mathscr U$, where additional stationary points or irregularities may occur and where the convergence guarantee is not
asserted.}\label{fig:placeholder}
\end{figure}
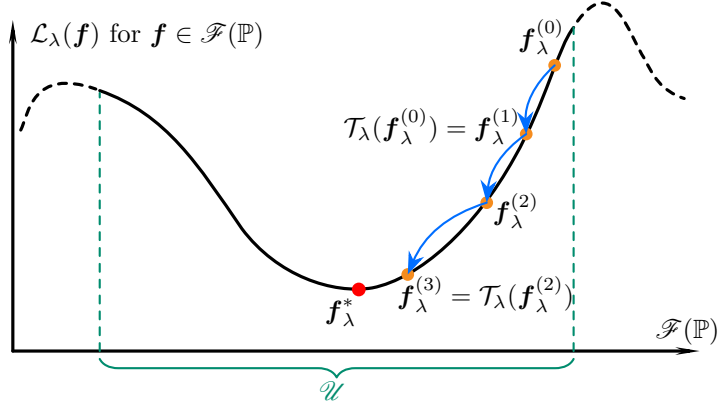

Lemma \ref{lemma: the connection between L and Q} in Appendix \ref{Appendix: Proofs} shows that the scalar-valued sequence $\{\mathcal{L}_\lambda(\Bf_\lambda^{(n)})\}_{n\in\mathbb{N}}$ is nonincreasing, illustrated in Figures \ref{fig:placeholder} and \ref{fig:interval_S1_full}(c,f). Hence, the scalar-valued limit $\lim_{n\rightarrow\infty} \mathcal{L}_\lambda(\Bf^{(n)})$ exists. However, the existence of this limit does not guarantee the convergence of $\lim_{n\rightarrow\infty} \Bf^{(n)}_\lambda$. The following assumptions are needed to ensure the uniform convergence of $\{\Bf^{(n)}_\lambda\}_{n\in\mathbb{N}}$.
\begin{assumption}\label{assumption: Assumption for the convergence of the iterative algorithm}
There exists a region $\mU\subseteq\mathscr{F}(\mathbb{P})$ with the following properties:
\begin{enumerate}
\item (Closedness.) $\mU$ is closed in $C(\mathfrak{M};\,\mathbb{R}^D)$ under the supremum norm $\Vert\cdot\Vert_{C(\M)}$.
    \item (Invariance.) $\mathcal{T}_\lambda(\Bg)\in\mU$ for all $\Bg\in\mU$, i.e., $\mathcal{T}_\lambda(\mU)\subseteq\mU$.
    \item (Unique global minimizer.) $\mU$ contains exactly one global minimizer $\Bf^*_\lambda$.
    \item (Unique fixed point.) If $\Bg\in\mU$ satisfies $\Bg=\mathcal{T}_\lambda(\Bg)$, then $\Bg=\Bf^*_\lambda$. That is, within the region $\mU$, a fixed point of $\mathcal{T}_\lambda$ must be the unique minimizer $\Bf^*_\lambda$ in this region.
\end{enumerate}
\end{assumption}

\noindent Assumption~\ref{assumption: Assumption for the convergence of the iterative algorithm} posits the existence of a ``good'' region $\mU$ that serves as a basin of attraction around the global minimizer $\Bf_\lambda^\ast$ (see Figure~\ref{fig:placeholder}). In Theorem~\ref{thm: the convergence theorem of the core iterative algorithm}, we will show that, provided the initialization $\Bf_\lambda^{(0)} \in \mathscr U$ (i.e., it is sufficiently close to $\Bf_\lambda^\ast$), the iterations from the PA algorithm converge to $\Bf_\lambda^\ast$. Appendix \ref{section: Initialization of the PA algorithm} presents a detailed discussion on practical methods for obtaining effective initializations. The invariance condition $\mathcal T_\lambda(\mathscr U)\subseteq \mathscr U$ ensures that once the algorithm enters $\mathscr U$, all subsequent iterations remain in the region where the PA algorithm is intended to operate. Moreover, the third item of Assumption~\ref{assumption: Assumption for the convergence of the iterative algorithm} guarantees that $\mathscr U$ contains a unique global minimizer, thereby avoiding ambiguity of the target solution within this region. Finally, the last item of Assumption~\ref{assumption: Assumption for the convergence of the iterative algorithm} rules out spurious fixed-point solutions in $\mathscr U$ by requiring that $\Bf_\lambda^\ast$ be the only fixed point of the updating operator $\mathcal T_\lambda$ in $\mU$. Consequently, the algorithm cannot stall at any point in $\mU$ other than $\Bf_\lambda^\ast$ (see Figure~\ref{fig:placeholder} for an illustration). While the existence of the minimizer $\Bf_\lambda^\ast$ (see Theorem \ref{thm: existence of f star}) depends solely on the compactness of the support of the data-generating distribution $\mathbb{P}$, Assumption \ref{assumption: Assumption for the convergence of the iterative algorithm} imposes further requirements on $\mathbb P$ to ensure that the PA algorithm is a valid method for estimating the minimizer $\Bf_\lambda^\ast$. In addition to Assumption~\ref{assumption: Assumption for the convergence of the iterative algorithm}, we need the following assumption regarding the relationship between the shape of the support $\mathrm{supp}(\mathbb{P})$ and the potential limit $\lim_{n\rightarrow\infty} \Bf_\lambda^{(n)} = \Bf^*_\lambda$.
\begin{assumption}\label{assumption: singleton assumption for the sequence}
Let $\{\Bf_\lambda^{(n)}\}_{n\in\mathbb{N}}$ be a sequence generated by the iterative algorithm in \eqref{eq: the core iterative algorithm}, and let $\Bf_\lambda^{(\infty)}$ be an arbitrary accumulation point under the supremum norm, i.e., there exists a subsequence $\{\Bf_\lambda^{(n,k)}\}_{k\in\mathbb{N}}$ such that $\lim_{k\rightarrow\infty}\Vert \Bf_\lambda^{(n,k)} - \Bf_\lambda^{(\infty)} \Vert_{C(\M)} = 0$. We assume that $\argmin_{\boldsymbol{m}\in\mathfrak{M}} \Vert \boldsymbol{x} - \Bf_\lambda^{(\infty)}(\boldsymbol{m})\Vert$ is a singleton for every $\boldsymbol{x}\in\mathrm{supp}(\mathbb{P})$.
\end{assumption}
\noindent Assumption~\ref{assumption: singleton assumption for the sequence} is a condition imposed for the potential limit $\lim_{n\rightarrow\infty} \Bf_\lambda^{(n)} = \Bf^*_\lambda$ that will be of interest in Theorem~\ref{thm: the convergence theorem of the core iterative algorithm}. For the image $\widehat{\mathcal{M}}_\lambda:=\Bf^*_\lambda(\M)\subset\mathbb R^D$, the assumption requires that each $\boldsymbol{x}\in\operatorname{supp}(\mathbb{P})$ admits a unique nearest point in $\widehat{\mathcal{M}}_\lambda$ (hence, the projection index $\boldsymbol{\pi}_{\Bf^*_\lambda}(\boldsymbol{x})$ is defined as the unique nearest point). Recall that the \emph{medial axis} of a compact set $\mathcal{A}$ is the collection of all points having more than one closest point in $\mathcal{A}$ \citep[e.g., see Figure~\ref{fig:ellipse-medial-axis};][]{blum1967transformation}. Then, Assumption~\ref{assumption: singleton assumption for the sequence} can be read as requiring that $\operatorname{supp}(\mathbb{P})$ avoid the medial axis of $\widehat{\mathcal{M}}_\lambda$. Under the manifold hypothesis---namely, that the data are generated by a low-dimensional manifold contaminated by noise---this amounts to assuming that the noise level is not so large as to push a non-negligible portion of $\mathrm{supp}(\mathbb{P})$ into the medial-axis region of $\widehat{\mathcal{M}}_\lambda$ (e.g., see Figure~\ref{fig:ellipse-medial-axis}). Assumption~\ref{assumption: singleton assumption for the sequence} is also naturally interpreted through the ``reach'' and ``tubular-neighborhood'' viewpoint. The \emph{reach}, $\mathrm{reach}(\mathcal{A})$, of a compact set $\mathcal{A}$ is the largest $r\ge 0$ such that every point $\boldsymbol{x}$ with $\mathrm{dist}(\boldsymbol{x},\mathcal{A})<r$ has a unique nearest point on $\mathcal{A}$, i.e., the nearest-point projection is well-defined throughout the \emph{tubular neighborhood} $\{\boldsymbol{x}:\mathrm{dist}(\boldsymbol{x},\mathcal{A})<r\}$ \citep{Federer1969Geometric, fefferman2016testing}. Then, we have the following sufficient condition: when the data lie in a tubular neighborhood of $\widehat{\mathcal{M}}_\lambda$ with a radius less than $\mathrm{reach}(\widehat{\mathcal{M}}_\lambda)$, Assumption~\ref{assumption: singleton assumption for the sequence} is satisfied. In the manifold learning literature, the reach of an underlying manifold is commonly assumed to be reasonably large, e.g., \cite{genovese2012minimax}, \cite{fefferman2016testing}, and \cite{fefferman2025fitting} focus on submanifolds endowed with reaches bounded uniformly from below.

Building on Assumptions \ref{assumption: Assumption for the convergence of the iterative algorithm} and \ref{assumption: singleton assumption for the sequence}, we establish the uniform convergence of the PA algorithm dynamics $\{\Bf_\lambda^{(n)}\}_{n\in\mathbb{N}}$ to the minimizer $\Bf^*_\lambda$ defined in \eqref{eq: the core min problem}, as established below.
\begin{theorem}\label{thm: the convergence theorem of the core iterative algorithm}
Assume that Assumption \ref{assumption: Assumption for the convergence of the iterative algorithm} holds. Let $\mU\subseteq\mathscr{F}(\mathbb{P})$ be the region specified in Assumption \ref{assumption: Assumption for the convergence of the iterative algorithm}, and let $\Bf^*_\lambda$ be the unique minimizer in $\mU$. Let $\{\Bf_\lambda^{(n)}\}_{n\in\mathbb{N}}$ denote the sequence generated by \eqref{eq: the core iterative algorithm}, initialized at $\Bf_\lambda^{(0)}$. If $\Bf_\lambda^{(0)}\in\mU$ and Assumption \ref{assumption: singleton assumption for the sequence} is satisfied, we have the following convergence results for the sequence $\{\Bf_\lambda^{(n)}\}_{n\in\mathbb{N}}$
\begin{align*}
\lim_{n\rightarrow\infty}\max_{\boldsymbol{m}\in\mathfrak{M}}\Vert \Bf_\lambda^{(n)}(\boldsymbol{m}) - \Bf^*_\lambda(\boldsymbol{m}) \Vert = 0 \ \ \text{ and }\ \ \lim_{n\rightarrow\infty}\mathcal{L}_\lambda (\Bf_\lambda^{(n)} ) = \mathcal{L}_\lambda\left(\Bf^*_\lambda\right) = \min_{\Bf\in\mathscr{F}(\mathbb{P})} \,\mathcal{L}_\lambda(\Bf).
\end{align*}
\end{theorem}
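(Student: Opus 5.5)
The plan is to follow the classical Zangwill/Meyer global convergence scheme for descent algorithms, combined with compactness coming from Theorem~\ref{thm: Rellich–Kondrachov embedding}.

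\emph{Step 1: monotonicity and bounded penalties.} By Lemma~\ref{lemma: the connection between L and Q}, the values $\mathcal{L}_\lambda(\Bf^{(n)}_\lambda)$ are nonincreasing, since
$\mathcal{L}_\lambda(\Bf^{(n+1)}_\lambda)\le \mathcal{Q}_\lambda(\Bf^{(n+1)}_\lambda\,\vert\,\Bf^{(n)}_\lambda)\le \mathcal{Q}_\lambda(\Bf^{(n)}_\lambda\,\vert\,\Bf^{(n)}_\lambda)=\mathcal{L}_\lambda(\Bf^{(n)}_\lambda)$. Hence they converge to some $L_\infty\ge \mathcal{L}_\lambda(\Bf^*_\lambda)$. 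In particular $\lambda\Vert\nabla^2\Bf^{(n)}_\lambda\Vert^2_{L^2}\le \mathcal{L}_\lambda(\Bf^{(0)}_\lambda)$. Together with the sup bound built into $\mathscr{F}(\mathbb{P})$, which bounds the $L^2$ norm, this shows $\{\Bf^{(n)}_\lambda\}$ is bounded in $H^2(\M)$.

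\emph{Step 2: compactness.} Theorem~\ref{thm: Rellich–Kondrachov embedding} then makes the sequence relatively compact in $C(\M)$. Every subsequence has a further subsequence converging uniformly to some $\Bf^{(\infty)}_\lambda$. Since $\mU$ is closed in $C(\M)$ and invariant under $\mathcal{T}_\lambda$, we get $\Bf^{(\infty)}_\lambda\in\mU$. Weak $H^2$ compactness and lower semicontinuity of the norm also give $\Bf^{(\infty)}_\lambda\in H^2$, so $\Bf^{(\infty)}_\lambda\in\mathscr{F}(\mathbb{P})$.

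\emph{Step 3: every accumulation point is a fixed point.} This is the main obstacle. Take $\Bf^{(n_k)}_\lambda\to\Bf^{(\infty)}_\lambda$, and pass to a further subsequence so that $\Bf^{(n_k+1)}_\lambda=\mathcal{T}_\lambda(\Bf^{(n_k)}_\lambda)\to\boldsymbol{h}$ uniformly; again $\boldsymbol{h}\in\mU$. The crux is continuity of the P-step at $\Bf^{(\infty)}_\lambda$, and this is where Assumption~\ref{assumption: singleton assumption for the sequence} enters.

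For each $\boldsymbol{x}\in\mathrm{supp}(\mathbb{P})$, argmin sets of $\Vert\boldsymbol{x}-\Bf^{(n_k)}_\lambda(\cdot)\Vert$ accumulate only in the argmin set for $\Bf^{(\infty)}_\lambda$, by uniform convergence and compactness of $\M$. That limiting set is a singleton, so $\Bpi_{\Bf^{(n_k)}_\lambda}(\boldsymbol{x})\to\Bpi_{\Bf^{(\infty)}_\lambda}(\boldsymbol{x})$ pointwise on the support. By dominated convergence (everything is bounded) and equicontinuity of uniformly convergent continuous maps, we get for each fixed $\Bf\in\mathscr{F}(\mathbb{P})$
\[
\mathcal{Q}_\lambda(\Bf\,\vert\,\Bf^{(n_k)}_\lambda)\to\mathcal{Q}_\lambda(\Bf\,\vert\,\Bf^{(\infty)}_\lambda).
\]

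We then pass to the limit in the optimality inequality $\mathcal{Q}_\lambda(\Bf^{(n_k+1)}_\lambda\vert\Bf^{(n_k)}_\lambda)\le \mathcal{Q}_\lambda(\Bf\vert\Bf^{(n_k)}_\lambda)$. The fidelity term converges because both arguments converge uniformly. The penalty is weakly lower semicontinuous along the $H^2$-weakly convergent subsequence. This yields $\mathcal{Q}_\lambda(\boldsymbol{h}\vert\Bf^{(\infty)}_\lambda)\le\mathcal{Q}_\lambda(\Bf\vert\Bf^{(\infty)}_\lambda)$ for all $\Bf$, i.e.\ $\boldsymbol{h}=\mathcal{T}_\lambda(\Bf^{(\infty)}_\lambda)$ by the uniqueness in Lemma~\ref{lemma: properties of Q and T functionals}.

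Next we show $\boldsymbol{h}=\Bf^{(\infty)}_\lambda$. The same limiting argument with $\Bf=\Bf^{(n_k)}_\lambda$, which converges in $C$ and has bounded penalty, together with convergence of the values to $L_\infty$, gives
\[
\mathcal{Q}_\lambda(\boldsymbol{h}\vert\Bf^{(\infty)}_\lambda)\le\mathcal{L}_\lambda(\boldsymbol{h})\le L_\infty\le \mathcal{Q}_\lambda(\Bf^{(\infty)}_\lambda\vert\Bf^{(\infty)}_\lambda)=\mathcal{L}_\lambda(\Bf^{(\infty)}_\lambda).
\]
Here I would need $\mathcal{L}_\lambda(\Bf^{(\infty)}_\lambda)\le L_\infty$. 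The fidelity part is continuous by the singleton property. For the penalty, lower semicontinuity gives only an inequality, so I would instead use the strict convexity of $\mathcal{Q}_\lambda(\cdot\vert\Bf^{(\infty)}_\lambda)$ (quadratic fidelity plus quadratic penalty). Equality of the value at $\Bf^{(\infty)}_\lambda$ with the minimal value forces $\Bf^{(\infty)}_\lambda=\mathcal{T}_\lambda(\Bf^{(\infty)}_\lambda)$. If the chain is not tight, the fallback is to use the strong-convexity gap $\mathcal{Q}(\Bf^{(n)}\vert\Bf^{(n)})-\mathcal{Q}(\Bf^{(n+1)}\vert\Bf^{(n)})\ge c\Vert\Bf^{(n)}-\Bf^{(n+1)}\Vert_{L^2(\mathbb{P}\circ\Bpi^{-1})}^2+\lambda\Vert\nabla^2(\Bf^{(n)}-\Bf^{(n+1)})\Vert^2$. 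Its left side is summable by monotonicity, so consecutive iterates differ by $o(1)$, and hence $\boldsymbol{h}=\Bf^{(\infty)}_\lambda$ directly.

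\emph{Step 4: conclusion.} Assumption~\ref{assumption: Assumption for the convergence of the iterative algorithm} (unique fixed point in $\mU$) gives $\Bf^{(\infty)}_\lambda=\Bf^*_\lambda$. Since every subsequence has a further subsequence converging uniformly to the same limit $\Bf^*_\lambda$, the whole sequence converges in $C(\M)$. Finally, $L_\infty=\mathcal{L}_\lambda(\Bf^*_\lambda)$. The inequality $\ge$ is from minimality. For $\le$, the fidelity term converges by Step 3's projection continuity, and the penalties satisfy $\lambda\Vert\nabla^2\Bf^{(n+1)}_\lambda\Vert^2\le\mathcal{Q}_\lambda(\Bf^*_\lambda\vert\Bf^{(n)}_\lambda)-\text{fidelity}$. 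The right side tends to $\mathcal{L}_\lambda(\Bf^*_\lambda)$ minus its fidelity, giving the limsup bound.
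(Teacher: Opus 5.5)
\textbf{Genuine gap: Step~3, second half (proving $\boldsymbol{h}=\Bf^{(\infty)}_\lambda$).} Your displayed chain fails for two reasons.
\begin{enumerate}
\item Its first link is reversed: Lemma~\ref{lemma: the connection between L and Q} gives $\mathcal{L}_\lambda(\boldsymbol{h})\le\mathcal{Q}_\lambda(\boldsymbol{h}\,\vert\,\Bf^{(\infty)}_\lambda)$.
\item Even granting every link, the chain only bounds $\mathcal{Q}_\lambda(\boldsymbol{h}\,\vert\,\Bf^{(\infty)}_\lambda)$ from above by $\mathcal{L}_\lambda(\Bf^{(\infty)}_\lambda)$. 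That holds automatically because $\boldsymbol{h}$ is the minimizer, so no equality can follow.
\end{enumerate}
You also misjudge lower semicontinuity. It gives exactly $\mathcal{L}_\lambda(\Bf^{(\infty)}_\lambda)\le L_\infty$, which is the inequality you say you need. Strict convexity cannot supply a missing equality of values; it only converts such an equality into an equality of functions.

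What is actually missing is a \emph{lower} bound by $L_\infty$. It comes from the trick you already use in Step~4: test the A-step against a fixed competitor. The squeeze $\mathcal{L}_\lambda(\Bf^{(n_k+1)}_\lambda)\le\mathcal{Q}_\lambda(\Bf^{(n_k+1)}_\lambda\,\vert\,\Bf^{(n_k)}_\lambda)\le\mathcal{L}_\lambda(\Bf^{(n_k)}_\lambda)$ shows the middle term tends to $L_\infty$. Optimality of $\mathcal{T}_\lambda(\Bf^{(n_k)}_\lambda)$ together with Lemma~\ref{lemma: convergence of projection the fitting term of Q} then gives
\begin{align*}
\mathcal{L}_\lambda(\Bf^{(\infty)}_\lambda)\le L_\infty
&=\lim_{k\to\infty}\mathcal{Q}_\lambda(\Bf^{(n_k+1)}_\lambda\,\vert\,\Bf^{(n_k)}_\lambda)
\le\lim_{k\to\infty}\mathcal{Q}_\lambda(\boldsymbol{h}\,\vert\,\Bf^{(n_k)}_\lambda)\\
&=\mathcal{Q}_\lambda(\boldsymbol{h}\,\vert\,\Bf^{(\infty)}_\lambda)
\le\mathcal{Q}_\lambda(\Bf^{(\infty)}_\lambda\,\vert\,\Bf^{(\infty)}_\lambda)
=\mathcal{L}_\lambda(\Bf^{(\infty)}_\lambda).
\end{align*}
All of these are therefore equalities, so $\Bf^{(\infty)}_\lambda$ also minimizes $\mathcal{Q}_\lambda(\cdot\,\vert\,\Bf^{(\infty)}_\lambda)$. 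Lemma~\ref{lemma: properties of Q and T functionals} then gives $\Bf^{(\infty)}_\lambda=\boldsymbol{h}=\mathcal{T}_\lambda(\Bf^{(\infty)}_\lambda)$. If you use an arbitrary fixed competitor in place of $\boldsymbol{h}$, you get $L_\infty\le\min_{\Bf}\mathcal{Q}_\lambda(\Bf\,\vert\,\Bf^{(\infty)}_\lambda)$ directly, so the shifted subsequence and continuity of $\mathcal{T}_\lambda$ are not even needed.

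The paper reaches the same point differently. It proves $\mathcal{L}_\lambda=L_\infty$ at both accumulation points $\Bf^{(\infty)}_\lambda$ and $\mathcal{T}_\lambda(\Bf^{(\infty)}_\lambda)$, each time testing against the limit itself and invoking Assumption~\ref{assumption: singleton assumption for the sequence} there. It then closes $\mathcal{L}_\lambda(\mathcal{T}_\lambda(\Bf^{(\infty)}_\lambda))\le\mathcal{Q}_\lambda(\mathcal{T}_\lambda(\Bf^{(\infty)}_\lambda)\,\vert\,\Bf^{(\infty)}_\lambda)\le\mathcal{L}_\lambda(\Bf^{(\infty)}_\lambda)$. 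The chain above needs the singleton property only at $\Bf^{(\infty)}_\lambda$.

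\textbf{Your fallback is not ``direct'' either.} The sufficient-decrease inequality is correct, with $c=1$, by the variational inequality for a quadratic minimized over the convex set $\mathscr{F}(\mathbb{P})$. The telescoping bound is also fine. But what it makes small is
\[
\mathbb{E}\Vert(\Bf^{(n)}_\lambda-\Bf^{(n+1)}_\lambda)(\Bpi_{\Bf^{(n)}_\lambda}(\boldsymbol{X}))\Vert^2+\lambda\Vert\nabla^2(\Bf^{(n)}_\lambda-\Bf^{(n+1)}_\lambda)\Vert^2_{L^2(\M)},
\]
a seminorm whose underlying measure changes with $n$. By itself it does not give $\Vert\Bf^{(n)}_\lambda-\Bf^{(n+1)}_\lambda\Vert_{C(\M)}\to0$: it cannot detect elements of the null space of $\nabla^2$ that vanish on the projected data, and these can be nonconstant when $\partial\M\neq\emptyset$. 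Passing to the limit along $n_k$ yields only $\nabla^2(\Bf^{(\infty)}_\lambda-\boldsymbol{h})=0$ and $(\Bf^{(\infty)}_\lambda-\boldsymbol{h})\circ\Bpi_{\Bf^{(\infty)}_\lambda}=\boldsymbol{0}$ $\mathbb{P}$-a.s. You must then observe that this forces $\mathcal{Q}_\lambda(\Bf^{(\infty)}_\lambda\,\vert\,\Bf^{(\infty)}_\lambda)=\mathcal{Q}_\lambda(\boldsymbol{h}\,\vert\,\Bf^{(\infty)}_\lambda)$ and invoke uniqueness. With that repair the fallback is a valid alternative, though the corrected chain is shorter.

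Steps~1, 2 and~4, and the first half of Step~3, are fine and match the paper.
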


\subsection{Empirical Version}\label{section: Empirical Version}

In practice, the underlying distribution $\mathbb{P}$ is unknown, and only finitely many observations $\{\boldsymbol{X}_{i}\}_{i\in[N]}$ in the ambient space $\mathbb{R}^D$ are available, where $N$ is the sample size. We assume these observations are iid draws from $\mathbb{P}$. The population-level framework developed in Sections~\ref{section: Manifolds via Minimization} and~\ref{section: Projection-Adaptation Algorithm} is then specialized by replacing $\mathbb{P}$ with the empirical distribution $\mathbb{P}_N:=\frac{1}{N}\sum_{i=1}^N \delta_{\boldsymbol{X}_i}$. Accordingly, the population expectation \(\mathbb{E}(\cdot)=\int(\cdot)d\mathbb{P}\) is replaced by the empirical average \(\mathbb{E}_N(\cdot)=\int(\cdot)d\mathbb{P}_N\). Given data $\{\boldsymbol{X}_{i}\}_{i\in[N]}$, because \(\mathbb{P}_N\) is also a probability measure with compact support, the framework outlined in Sections~\ref{section: Manifolds via Minimization} and~\ref{section: Projection-Adaptation Algorithm} still applies to \(\mathbb{P}_N\).

This section presents two main results. First, it is demonstrated that the minimizer $\Bf^*_{N,\lambda}$, defined with respect to the empirical distribution $\mathbb{P}_N$, converges to its population-level counterpart $\Bf^*_{\lambda}$ as the sample size $N\rightarrow\infty$. Second, an explicit formula for the PA algorithm for the empirical distribution $\mathbb{P}_N$ is presented, utilizing the RKHS structure of $H^2(\M)$.

\subsubsection{Consistency of the Empirical Minimizer} Let $\Bf^*_{N,\lambda}:=\argmin_{\Bf\in\mathscr{F}(\mathbb{P}_N)} \mathcal{L}_{N,\lambda}(\Bf)$, where $\mathscr{F}(\mathbb{P}_N)$ is defined by \eqref{eq: the core min problem} and
\begin{align}\label{eq: core iterative algorithm, empirical}
    \mathcal{L}_{N,\lambda}(\Bf):= \frac{1}{N} \sum_{i=1}^N \norm{\boldsymbol{X}_i - \Bf\left(\Bpi_{\Bf} (\boldsymbol{X}_i)\right)}^2 + \lambda\cdot \Vert \nabla{}^2 \boldsymbol{f} \Vert^2_{L^2(\M)}.
\end{align}
Notably, $\Bf^*_{N,\lambda}$ is a consistent estimator of $\Bf^*_{\lambda}$ in the sense that
$\Bf^*_{N,\lambda}$ converges to $\Bf^*_{\lambda}$ \textit{in outer probability} \citep[][Sections 1.2 and 1.9]{van1996weak}, as stated precisely in the following theorem.
\begin{theorem}\label{thm: argmin}
Let $\lambda>0$ and $\{\boldsymbol{X}_i\}_{i\in[N]}\overset{\mathrm{iid}}{\sim}\mathbb{P}$. Suppose the global minimizer $\fstar_{\lambda}$ defined in \eqref{eq: the core min problem} is locally unique in the following sense: $\mcL_\lambda(\Bf^*_\lambda) < \inf_{\Bf \not \in \mathscr{G}} \mcL_{\lambda}(\Bf)$ for every open subset $\mathscr{G}\subseteq C(\M)$ containing $\fstar_{\lambda}$, where openness is understood in the supremum norm topology on \(C(\M)\). Then, $\Vert \Bf^*_{N,\lambda} - \Bf^*_{\lambda} \Vert_{C(\M)}$ converges to zero in outer probability as $N\rightarrow\infty$.
\end{theorem}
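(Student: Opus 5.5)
The plan is to apply the argmin (argmax) consistency theorem of \cite{van1996weak} (Corollary~3.2.3) in the space $C(\M)$ with the supremum norm. That corollary needs three ingredients: a uniform law of large numbers for $\mathcal{L}_{N,\lambda}-\mathcal{L}_\lambda$ over a set that contains all relevant minimizers; the well-separatedness of $\Bf^*_\lambda$, which is exactly the local uniqueness hypothesis of the theorem; and the fact that $\Bf^*_{N,\lambda}$ nearly minimizes $\mathcal{L}_\lambda$ up to $o_P(1)$. Write $R:=\operatorname{rad}_0(\operatorname{supp}\mathbb{P})$ and $R_N:=\max_i\|\boldsymbol X_i\|$. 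Almost surely $R_N\le R$, so $\mathscr F(\mathbb P_N)\subseteq\mathscr F(\mathbb P)$, and $R_N\to R$ almost surely because $R$ is the essential supremum of $\|\boldsymbol X\|$. Two observations simplify everything. First, by \eqref{eq: Borel measurable selection of nearest-point}, the fitting term equals $\int h_{\Bf}\,d\mathbb P$ with $h_{\Bf}(\boldsymbol x):=\min_{\boldsymbol m}\|\boldsymbol x-\Bf(\boldsymbol m)\|^2$, which does not depend on the choice of projection index. Second, the penalty term is identical in $\mathcal L_{N,\lambda}$ and $\mathcal L_\lambda$, so $\mathcal L_{N,\lambda}(\Bf)-\mathcal L_\lambda(\Bf)=(\mathbb P_N-\mathbb P)h_{\Bf}$.

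\emph{Step 1 (compact localization).} The constant map $\Bf\equiv\boldsymbol 0$ lies in every $\mathscr F(\cdot)$ and has $\mathcal L_\lambda(\boldsymbol 0)\le R^2$ and $\mathcal L_{N,\lambda}(\boldsymbol 0)\le R_N^2\le R^2$. Hence both $\Bf^*_\lambda$ and $\Bf^*_{N,\lambda}$ lie in
\[
\mathscr K:=\{\Bf\in\mathscr F(\mathbb P):\ \|\nabla^2\Bf\|^2_{L^2(\M)}\le R^2/\lambda\}.
\]
On $\mathscr K$ the sup-norm bound $\|\Bf\|_{C(\M)}\le 2R$ also bounds $\|\Bf\|_{L^2(\M)}$ by $2R\,\mathrm{vol}_g(\M)^{1/2}$, so $\mathscr K$ is bounded in $H^2(\M)$. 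By Theorem~\ref{thm: Rellich–Kondrachov embedding}, $\mathscr K$ is therefore totally bounded in $C(\M)$.

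\emph{Step 2 (uniform LLN).} For $\boldsymbol x$ in the compact support and $\Bf,\Bg\in\mathscr K$, we have
\[
|h_{\Bf}(\boldsymbol x)-h_{\Bg}(\boldsymbol x)|\le 6R\,\|\Bf-\Bg\|_{C(\M)},
\]
using $|a^2-b^2|=|a-b|(a+b)$ and the $1$-Lipschitz dependence of the distance to the image. Moreover $0\le h_{\Bf}\le 9R^2$. A finite $\varepsilon$-net of $\mathscr K$ in $C(\M)$ then yields finitely many $12R\varepsilon$-brackets covering $\{h_{\Bf}:\Bf\in\mathscr K\}$. The bracketing Glivenko--Cantelli theorem gives $\sup_{\Bf\in\mathscr K}|(\mathbb P_N-\mathbb P)h_{\Bf}|\to0$ outer almost surely. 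The same Lipschitz bound shows that $\mathcal L_\lambda$ restricted to $\mathscr K$ is continuous in the supremum norm, since the penalty is fixed on each function.

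\emph{Step 3 (near-minimization).} I expect this to be the main obstacle, because the constraint sets differ: $\Bf^*_\lambda$ need not belong to $\mathscr F(\mathbb P_N)$. I would use the rescaled function $t_N\Bf^*_\lambda$ with $t_N:=R_N/R\in(0,1]$ (ignoring the trivial case $R=0$). It belongs to $\mathscr F(\mathbb P_N)$, its penalty is $t_N^2\|\nabla^2\Bf^*_\lambda\|^2_{L^2(\M)}$, and it lies in $\mathscr K$ because $t_N\le1$. Since $\|t_N\Bf^*_\lambda-\Bf^*_\lambda\|_{C(\M)}\le 2R(1-t_N)\to0$ almost surely, continuity and Step~2 give
\[
\mathcal L_\lambda(\Bf^*_{N,\lambda})\le \mathcal L_{N,\lambda}(\Bf^*_{N,\lambda})+o(1)\le \mathcal L_{N,\lambda}(t_N\Bf^*_\lambda)+o(1)=\mathcal L_\lambda(\Bf^*_\lambda)+o(1),
\]
outer almost surely.

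\emph{Step 4 (conclusion).} Take any open $\mathscr G\ni\Bf^*_\lambda$ in $C(\M)$. By the local uniqueness hypothesis, $\mathcal L_\lambda(\Bf)\ge\mathcal L_\lambda(\Bf^*_\lambda)+\eta$ for some $\eta>0$ and every $\Bf\notin\mathscr G$. Combined with Step~3, this forces $\Bf^*_{N,\lambda}\in\mathscr G$ eventually. In particular, taking $\mathscr G$ to be sup-norm balls gives $\|\Bf^*_{N,\lambda}-\Bf^*_\lambda\|_{C(\M)}\to0$ outer almost surely, hence in outer probability. Outer probability is needed because $\Bf^*_{N,\lambda}$ need not be a measurable function of the sample. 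If $\Bf^*_{N,\lambda}$ is not unique, the argument applies to any selected minimizer, since Steps~1 and~3 only use its minimizing property.
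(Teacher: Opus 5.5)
Your proposal is correct and follows the same overall structure as the paper. Both use an argmin argument: the paper cites Corollary~3.2.3(i) of \cite{van1996weak}, and your Step~4 reproves it. Both use a uniform law of large numbers over an $H^2$-bounded set that the Rellich--Kondrachov embedding makes totally bounded in $C(\M)$. Both use the Lipschitz bound $|h_{\Bf}-h_{\Bg}|\le 6R\|\Bf-\Bg\|_{C(\M)}$ on the support.

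Where you differ, your version is better. The paper's third item asserts as ``immediate'' that $\Bf^*_{N,\lambda}$ minimizes $\mathcal{L}_{N,\lambda}$ over its localization set, an $H^2$-ball inside $\mathscr F(\mathbb P)$. But $\Bf^*_{N,\lambda}$ only minimizes over $\mathscr F(\mathbb P_N)$, which is strictly smaller whenever $R_N<R$. Your rescaling supplies exactly what is missing. For every $\Bf\in\mathscr F(\mathbb P)$ one has $t_N\Bf\in\mathscr F(\mathbb P_N)$ and, almost surely, $\mathcal{L}_{N,\lambda}(t_N\Bf)\le\mathcal{L}_{N,\lambda}(\Bf)+12R^2(1-t_N)$. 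This gives the $o_P(1)$ near-minimization that the corollary actually requires.

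Your bracketing Glivenko--Cantelli step also gives outer almost sure uniform convergence, not just convergence in outer probability. It avoids the paper's move of letting $\varepsilon\to0$ after the $\varepsilon$-dependent net has already been fixed. Finally, the comparator $\boldsymbol 0$ gives a cleaner localization bound, $R^2/\lambda$.

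One sentence needs correcting. $\mathcal{L}_\lambda$ restricted to $\mathscr K$ is \emph{not} continuous in the supremum norm, because the Hessian penalty is only lower semicontinuous along uniformly convergent $H^2$-bounded sequences. For example, on $\M=\mathbb S^1$ with arc-length parameter $t$, the maps $(c\,n^{-2}\sin(nt),0,\ldots,0)$ converge uniformly to $\boldsymbol 0$ while their penalty stays equal to $\pi c^2$.

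Nothing in your argument depends on that claim. Along $t\mapsto t\Bf^*_\lambda$ the penalty is $t^2\|\nabla^2\Bf^*_\lambda\|^2_{L^2(\M)}\le\|\nabla^2\Bf^*_\lambda\|^2_{L^2(\M)}$. Hence $\mathcal{L}_\lambda(t_N\Bf^*_\lambda)\le\mathcal{L}_\lambda(\Bf^*_\lambda)+12R^2(1-t_N)$ directly, and Step~3 goes through with this explicit bound in place of the continuity statement.
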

\noindent Figure \ref{fig:PME_monte_carlo} provides a visual illustration of Theorem~\ref{thm: argmin}. The proof of Theorem~\ref{thm: argmin} relies on a corollary of the \textit{argmax theorem} \citep[][Section 3.2]{van1996weak}, together with the compactness of the embedding $\iota:\, H^2(\mathfrak{M}) \rightarrow C(\mathfrak{M})$ stated in Theorem~\ref{thm: Rellich–Kondrachov embedding}.

\begin{figure}[ht]
    \centering
    \includegraphics[width=1\linewidth]{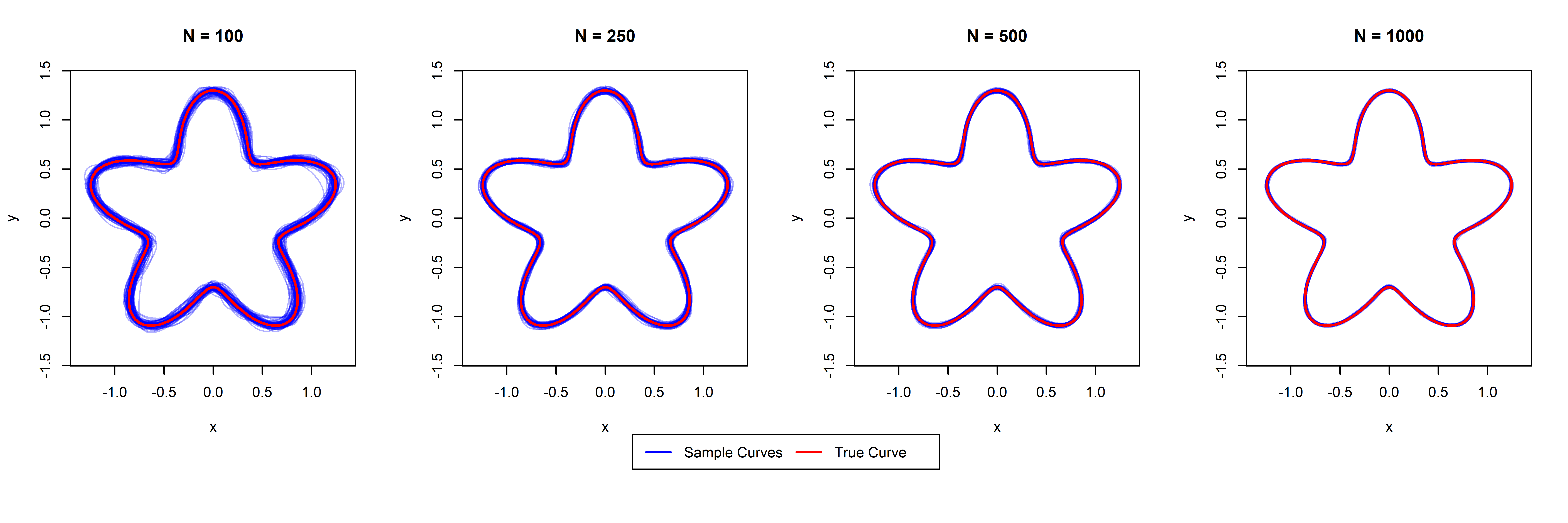}
    \vspace{-35pt}
    \caption{\footnotesize Visual illustration of Theorem \ref{thm: argmin}. The red curve represents the manifold underlying a point cloud generated according to the data-generating mechanism described in Appendix \ref{appendix: Boundary of a Flower/Star}. For each sample size $N$, we repeat the following procedure 100 times: (i) generate a sample of size $N$ using the data-generating mechanism described in Appendix \ref{appendix: Boundary of a Flower/Star}; and (ii) apply the PA algorithm and plot the resulting estimate as a blue curve.}
    \label{fig:PME_monte_carlo}
\end{figure}

\subsubsection{The Empirical PA Algorithm} The consistent estimator $\Bf^*_{N,\lambda}$ can be computed numerically using the PA algorithm outlined in \eqref{eq: the core iterative algorithm}. Specifically, we replace the underlying distribution $\mathbb{P}$ in \eqref{eq: def of the Q functional} with the empirical distribution $\mathbb{P}_N$. This approach imposes two conditions on the resulting function class $\mathscr{F}(\mathbb{P}_N)$, i.e., $\boldsymbol{f}\in H^2(\M)=H^2(\M;\,\mathbb{R}^D)$, and $\Vert \Bf\Vert_{C(\M;\,\mathbb{R}^D)}\le 2\cdot\operatorname{rad}_0 \left(\operatorname{supp}(\mathbb{P}_N)\right)=2\cdot\max_{1\le i\le N}\Vert \boldsymbol{X}_i\Vert$. However, the second condition may impede our direct use of the RKHS structure of the Sobolev space $H^2(\M)$. Therefore, in practice, we ignore the second condition and implement the PA algorithm as follows
\begin{align}\label{eq: argmin via RKHS}
    \Bf_{N,\lambda}^{(n+1)}:= \argmin_{\Bf\in H^2(\M)} \left[ \frac{1}{N} \sum_{i=1}^N \left\{\left\Vert \boldsymbol{X}_i-\Bf\left(\Bpi_{\Bf_{N,\lambda}^{(n)}}(\boldsymbol{X}_i)\right)\right\Vert^2\right\} + \lambda\cdot \Vert \nabla{}^2 \boldsymbol{f} \Vert^2_{L^2(\M)} \right].
\end{align}
Note that the minimization in \eqref{eq: argmin via RKHS} is performed over the entire RKHS $H^2(\M)$ instead of $\mathscr{F}(\mathbb{P}_N)$. With a reasonable initialization, the minimizers $\Bf_{N,\lambda}^{(n+1)}$ generated in our numerical experiments consistently satisfy $\Vert \Bf_{N,\lambda}^{(n+1)} \Vert_{C(\M)}\le 2\cdot\max_{1\le i\le N}\Vert \boldsymbol{X}_i\Vert$ at every iteration, i.e., each $\Bf_{N,\lambda}^{(n+1)}$ does not deviate too far from data $\{\boldsymbol{X}_i\}_{i\in[N]}$. Consequently, each $\Bf_{N,\lambda}^{(n+1)}$ obtained in \eqref{eq: argmin via RKHS} also serves as a minimizer over the function class $\mathscr{F}(\mathbb{P}_N)$. This supports the compatibility of the iterative method in \eqref{eq: argmin via RKHS} with the PA algorithm outlined in \eqref{eq: the core iterative algorithm}. As noted in Section \ref{section: Manifolds via Minimization}, the use of the factor 2 in the bound $2\cdot\max_{1\le i\le N}\Vert \boldsymbol{X}_i\Vert$ is for illustrative purposes and may be replaced with any sufficiently large constant.

Given \(\{\boldsymbol{X}_i\}_{i\in[N]}\) and the function \(\Bf_{N,\lambda}^{(n)}\) from the \(n\)th step, the updated estimator \(\Bf_{N,\lambda}^{(n+1)}\) in \eqref{eq: argmin via RKHS} admits a closed-form expression via the \emph{representer theorem} \citep[][Theorem 1.3.1]{wahba1990spline}. Details of the application of the representer theorem to the PA algorithm are presented in Appendix \ref{appendix: Reproducing Kernel Hilbert Spaces}. Comprehensive implementation details of the PA algorithm are provided in Appendix \ref{appendix: Projection-Adaptation Algorithm}. In particular, Algorithm \ref{alg: empirical pme} in Appendix \ref{appendix: Projection-Adaptation Algorithm} presents a summary of the PA algorithm, and Appendix~\ref{section: An Approximate Spline Solution for Large N} provides a discussion on efficient methods for solving \eqref{eq: argmin via RKHS}.

\section{Geometric Interpretation of the Penalty}\label{section: Geometric Interpretation of the Penalty}

In this section, we provide a detailed interpretation of the penalty \(\Vert \nabla^2 \boldsymbol{f} \Vert^2_{L^2(\M)}\) used in \eqref{eq: the core min problem}. First, we explain why we use the Hessian \(\nabla^2\), rather than the Laplace-Beltrami operator, in defining the penalty term (see Section \ref{section: Hessian Penalty versus Laplace-Beltrami Penalty}). Second, we describe the relationship between our penalty \(\Vert \nabla^2 \boldsymbol{f} \Vert^2_{L^2(\M)}\) and the curve-length penalty proposed by \cite{kegl2000learning} (see Section \ref{section: Curve Length Penalty}). Finally, we show that the penalty \(\Vert \nabla^2 \boldsymbol{f} \Vert^2_{L^2(\M)}\) penalizes both (i) the curvature of the fitted manifold (see Section \ref{section: Curvature Penalty}) and (ii) the discrepancy between the metrics of the template and fitted manifolds (see Section \ref{section: Metric Mismatch Penalty}); this is established through the orthogonal decomposition presented in Section \ref{section: An Orthogonal Decomposition of the Penalty}.

\subsection{Hessian Penalty versus Laplace-Beltrami Penalty}\label{section: Hessian Penalty versus Laplace-Beltrami Penalty}

Theorem~\ref{thm: PME with lambda=infty} provides an initial interpretation of the Hessian-based penalty \(\Vert \nabla^2 \boldsymbol{f} \Vert^2_{L^2(\M)}\), showing that a heavily penalized estimator tends to have low complexity (e.g., a hyperplane determined by finitely many parameters or a single point). Generally, the Hessian penalty \(\Vert \nabla^2 \boldsymbol{f} \Vert^2_{L^2(\M)}\) favors estimators that lie close to the finite-dimensional space $\mathcal{N}(\nabla^2):=\{\Bf\in H^2(\M):\,\Vert \nabla^2 \boldsymbol{f} \Vert^2_{L^2(\M)}=0 \}$ (see Remark~\ref{remark: Affine and harmonic function spaces} in Appendix~\ref{appendix: Reproducing Kernel Hilbert Spaces}). This feature helps reduce overfitting effectively.

By contrast, a penalty defined through the Laplace--Beltrami operator need not favor a finite-dimensional space. For any smooth scalar-valued function \(f\) on \(\M\), the Laplace--Beltrami operator \(\Delta\) is defined by
\begin{align}\label{eq: def of Laplace-Beltrami operator}
    \Delta f(\boldsymbol{m}) := \operatorname{tr}_g(\nabla^2 f)(\boldsymbol{m}) = \sum_{i=1}^d \nabla^2 f(e_i,e_i),
\end{align}
where \(\{e_k\}_{k\in[d]}\) is any \(g\)-orthonormal basis of the tangent space \(T_{\boldsymbol{m}}\M\). For a vector-valued map \(\Bf=(f_1,\ldots,f_D)\), we define $\Delta \Bf := (\Delta f_1,\ldots,\Delta f_D)$. Accordingly, one may consider the penalty $\Vert \Delta \boldsymbol{f} \Vert^2_{L^2(\M)}=\sum_{j=1}^D\Vert \Delta f_j \Vert^2_{L^2(\M)}$ \citep[e.g.,][Section 2.2]{wahba1990spline}. When the boundary \(\partial \M\) is empty, the Hessian penalty \(\Vert \nabla^2 \boldsymbol{f} \Vert^2_{L^2(\M)}\) and the Laplace--Beltrami penalty \(\Vert \Delta \boldsymbol{f} \Vert^2_{L^2(\M)}\) are related through \textit{Bochner's formula} \citep[][Chapter 3]{li2012geometric}:
\begin{align}\label{Bochner's formula}
    \Vert \Delta \boldsymbol{f} \Vert^2_{L^2(\M)}
    =
    \Vert \nabla^2 \boldsymbol{f} \Vert^2_{L^2(\M)}
    +
    \sum_{j=1}^D\int_{\M}\mathrm{Ric}(\nabla f_j,\nabla f_j)\,d\mathrm{vol}_g,
\end{align}
where \(\mathrm{Ric}(\cdot,\cdot)\) denotes the Ricci curvature tensor. An estimator that is heavily penalized by \(\Vert \Delta \boldsymbol{f} \Vert^2_{L^2(\M)}\) tends to lie close to the null space $\mathcal{N}(\Delta):=\{\Bf\in H^2(\M): \Vert \Delta \boldsymbol{f} \Vert^2_{L^2(\M)}=0\}$, i.e., the space of harmonic maps. However, unlike the Hessian penalty, the null space \(\mathcal{N}(\Delta)\) may be infinite-dimensional when the boundary $\partial\M\ne\emptyset$ (see Remark~\ref{remark: Affine and harmonic function spaces}). Consequently, the Laplace-Beltrami penalty may be less effective in controlling overfitting, since the infinitely many dimensions still permit substantial flexibility in manifold estimation. Moreover, unlike the Hessian penalty in Theorem~\ref{thm: PME with lambda=infty}(ii), the Laplace-Beltrami penalty does not recover linear PCA within the PME framework. For these reasons, we adopt and focus on the Hessian penalty \(\Vert \nabla^2 \boldsymbol{f} \Vert^2_{L^2(\M)}\).

\subsection{Curve Length Penalty}\label{section: Curve Length Penalty}

\cite{kegl2000learning} propose a principal curve framework that minimizes the loss function in \eqref{eq: the loss utilized by Kegl et al.}, where its penalty term is equal to the length of the fitted curve \citep[][Chapter 1]{doCarmo1976DifferentialGeometry}. When the template manifold $(\M,g)$ is the unit circle $\mathbb{S}^1$ endowed with the metric $g$ induced from $\mathbb R^2$, the curve length penalty utilized by \cite{kegl2000learning} is dominated by our proposed penalty $\Vert \nabla{}^2 \boldsymbol{f} \Vert^2_{L^2(\mathbb{S}^1)}$. Specifically, we have that
\begin{align}\label{eq: the dominating inequality related to Kegl et al.}
    \text{the length of the curve }\Bf(\mathbb{S}^1) \le (2\pi)^{3/2} \cdot \Vert \nabla^2 \Bf \Vert_{L^2(\mathbb{S}^1)},
\end{align}
and the proof of \eqref{eq: the dominating inequality related to Kegl et al.} is given in Appendix \ref{Appendix: Proofs} using Poincaré–Wirtinger's inequality. Therefore, in this case, our proposed $\Vert \nabla^2 \Bf \Vert^2_{L^2(\mathbb{S}^1)}$ penalizes the curve length, as in \cite{kegl2000learning}. In addition, the inequality in \eqref{eq: the dominating inequality related to Kegl et al.} provides an alternative validation of Theorem~\ref{thm: PME with lambda=infty}(i).

\subsection{An Orthogonal Decomposition of the Penalty}\label{section: An Orthogonal Decomposition of the Penalty}

We decompose the proposed penalty $\Vert \nabla{}^2 \boldsymbol{f} \Vert^2_{L^2(\M)}$ into two additive terms, each with a geometric interpretation. 

Let $(\M,g)$ be a template manifold satisfying Assumption \ref{assumption: assumption on the template manifold}. Assume henceforth that $\boldsymbol{f}:\M\rightarrow\mathbb{R}^D$ is an embedding \citep[][Chapter 2]{lee2018riemannian}. Precisely, $\Bf$ is a homeomorphism onto its image $\Bf(\M)$ with the subspace topology, and it is an immersion, i.e., $d\boldsymbol{f}:T_{\boldsymbol{m}}\M\rightarrow T_{\boldsymbol{f}(\boldsymbol{m})}\mathbb{R}^D\cong\mathbb{R}^D$ is injective for any $\boldsymbol{m}\in\M$. Then, we may define a new metric $g_{\Bf}=\Bf^*\delta$ on $\M$ by
\begin{align}\label{eq: def of metric g_f}
    g_{\boldsymbol{f}}(u,v):=\delta\big(\, d\boldsymbol{f}(u),\, d\boldsymbol{f}(v) \,\big) \text{ for any }u,v\in\Gamma(T\M),
\end{align}
where $\delta$ is the Euclidean metric on $\mathbb{R}^D$, and $(\M, g_{\Bf})$ is a $d$-dimensional embedded submanifold of $\mathbb{R}^D$ \citep[][Chapter 8]{lee2018riemannian}. Note that the newly defined metric $g_{\Bf}$ in \eqref{eq: def of metric g_f} may differ from the original metric $g$. Since $g_{\Bf}$ is induced by the Euclidean metric $\delta$ on $\mathbb{R}^D$, and $\Bf:\M\rightarrow\mathbb{R}^D$ is an embedding, we identify $(\M, g_{\Bf})$ with its image $\Bf(\M) \subseteq \mathbb{R}^D$.

Let $\nabla^{\boldsymbol{f}}$ be the Levi-Civita connection associated with $g_{\Bf}$ on $\M$. Recall that $\nabla$ denotes the Levi-Civita connection associated with $g$ on $\M$. We will demonstrate that the difference between connections $\nabla$ and $\nablaf$ constitutes an orthogonal component of our penalty term. 

For each $\boldsymbol{m}\in\M$, let $T_{\boldsymbol{f}(\boldsymbol{m})}\boldsymbol{f}(\M)$ be the tangent space and $N_{\boldsymbol{f}(\boldsymbol{m})}\boldsymbol{f}(\M)$ its normal space in $\mathbb{R}^D$. Then, we have the following decomposition of the Hessian $\nabla^2\boldsymbol{f}$, 
\begin{align}\label{eq: the key decomposition of the Hessian}
    \nabla^2\boldsymbol{f}(u,v) = \II(u,v) + d\boldsymbol{f}\big(\nablaf_u v - \nabla_u v\big)  \text{ for any }u,v\in\Gamma(T\M),
\end{align}
whose proof can be found in Appendix \ref{Appendix: Proofs}, where $\II(u,v) \in N_{\boldsymbol{f}(\boldsymbol{m})}\boldsymbol{f}(\M)$ is the second fundamental form of the embedded submanifold $\boldsymbol{f}(\M)\subset\mathbb{R}^D$. Section \ref{section: A Detailed Interpretation} will discuss the relationship between the second fundamental form $\II$ and the curvatures of the manifold $\boldsymbol{f}(\M)$.

The three terms in \eqref{eq: the key decomposition of the Hessian} can be viewed as vectors in $\mathbb{R}^D$. For each $\boldsymbol{m}\in\M$ and $u,v\in \Gamma(T_{\boldsymbol{m}}\M)$, we have $(\nablaf_u v - \nabla_u v)\in T_{\boldsymbol{m}}\M$, which implies $d\boldsymbol{f}\big(\nablaf_u v - \nabla_u v\big)\in T_{\boldsymbol{f}(\boldsymbol{m})}\boldsymbol{f}(\M)$. Note $\II(u,v)\in N_{\boldsymbol{f}(\boldsymbol{m})}\boldsymbol{f}(\M)$ and that $T_{\boldsymbol{f}(\boldsymbol{m})}\boldsymbol{f}(\M)$ and $N_{\boldsymbol{f}(\boldsymbol{m})}\boldsymbol{f}(\M)$ are perpendicular to each other. Then, we have 
\begin{align}\label{eq: pointwise orthogonal decomposition of the penalty term}
\| \nabla^2\boldsymbol{f}(u,v) \|^2_{\mathbb{R}^D} = \| \II(u,v) \|^2_{\mathbb{R}^D} + \| d\boldsymbol{f}\big(\nablaf_u v - \nabla_u v\big) \|^2_{\mathbb{R}^D}, 
\end{align}
where $\| \cdot \|_{\mathbb{R}^D}$ denotes the Euclidean norm in $\mathbb{R}^D$. The orthogonal decomposition in \eqref{eq: pointwise orthogonal decomposition of the penalty term} holds at every point $\boldsymbol{f}(\boldsymbol{m})\in\mathbb{R}^D$ for $\boldsymbol{m}\in\M$. Using the pointwise orthogonal decomposition in \eqref{eq: pointwise orthogonal decomposition of the penalty term}, we have the following $L^2$-decomposition
\begin{align}\label{eq: the key L2 norm decomposition of the penalty}
\begin{aligned}
    \Vert \nabla{}^2 \boldsymbol{f} \Vert^2_{L^2(\M)}
=
\underbrace{ \|\II\|_{L^2(\M)}^2 }_{\text{curvature of }\boldsymbol f(\M)} \;+\;
\underbrace{ \sum_{i,k=1}^d\int_{\M}\left\Vert\, d\boldsymbol{f}\big(\nablaf_{e_i} e_k - \nabla_{e_i} e_k\big)\big\vert_{\boldsymbol{f}(\boldsymbol{m})} \,\right\Vert^2_{\mathbb{R}^D} \;d \vol_g(\boldsymbol{m}) }_{\text{metric mismatch between }g\text{ and }g_{\boldsymbol f}},
\end{aligned}
\end{align}
where $\{e_k\}_{k\in[d]}$ is any $g$-orthonormal basis of the tangent space $T_{\boldsymbol{m}}\M$ at $\boldsymbol{m} \in \M$. Appendix \ref{Appendix: Proofs} provides the detailed derivation from equation \eqref{eq: pointwise orthogonal decomposition of the penalty term} to equation \eqref{eq: the key L2 norm decomposition of the penalty}.

\subsection{A Riemannian Interpretation}\label{section: A Detailed Interpretation}

\subsubsection{Curvature Penalty}\label{section: Curvature Penalty} 
The first term $\|\II\|_{L^2(\M)}^2$ in \eqref{eq: the key L2 norm decomposition of the penalty} imposes a penalty on the intrinsic curvature of the fitted $\Bf(\M)$, i.e., $(\M,\,g_{\boldsymbol{f}})$, through the second fundamental form $\II$. Specifically, let $R^{\boldsymbol f}:\Gamma(T\M)\times \Gamma(T\M) \rightarrow \Gamma(T\M)$ denote the Riemann curvature tensor of the induced metric $g_{\boldsymbol f}$ on $\M$, and the \textit{Gauss equation} \citep[][Theorem 8.5]{lee2018riemannian} implies that
\begin{align*}
g_{\boldsymbol f}\, (\, R^{\boldsymbol f}(u,v)w,\, z\, )
= \delta \,(\, \II(u,z),\, \II(v,w) \,) - \delta\, (\, \II(u,w),\, \II(v,z) \,)
\end{align*}
for all $u,v,w,z \in \Gamma(T\M)$. The sectional, Ricci, and scalar curvatures of the fitted manifold $\boldsymbol{f}(\M)$ are penalized by $\|\II\|_{L^2(\M)}^2$ due to their dependence on the Riemann curvature tensor \citep[][Chapter 7]{lee2018riemannian}. In particular, when $d=2$ and $D=3$ (so that $\boldsymbol{f}(\M)$ is a two-dimensional hypersurface in three-dimensional space), \textit{Gauss's Theorema Egregium} \citep[][Theorem 8.27]{lee2018riemannian} implies that the Gaussian curvature of the fitted surface is penalized by the term $\|\II\|_{L^2(\M)}^2$.

\subsubsection{Metric Mismatch Penalty}\label{section: Metric Mismatch Penalty} 
The second term on the right-hand side of \eqref{eq: the key L2 norm decomposition of the penalty} quantifies the discrepancy between the Levi--Civita connections $\nabla$ and $\nablaf$, associated with the metrics $g$ and $g_{\boldsymbol f}$, respectively. By the fundamental theorem of Riemannian geometry
\citep[][Theorem~5.10]{lee2018riemannian}, $\nabla \neq \nablaf$ implies $g \neq g_{\boldsymbol f}$. Consequently, the second term in \eqref{eq: the key L2 norm decomposition of the penalty} can be interpreted
as measuring the dissimilarity between the metrics $g$ and $g_{\boldsymbol f}$ on $\M$.

\section{Tuning Parameter Selection}\label{section: Tuning Parameter Selection}

As discussed in Section \ref{section: Manifolds via Minimization}, the manifold estimators $\Bf_\lambda^*$ corresponding to the two extremes $\lambda=0$ and $\lambda\rightarrow\infty$ typically lead to overfitting and underfitting, respectively. The selection of a proper tuning parameter $\lambda$ between the two extremes remains unaddressed in the preceding sections. In this section, we suggest an approach to selecting the tuning parameter $\lambda$ that applies to a latent-variable model. 

\subsection{Latent-Variable Model}\label{section: Latent-Variable Model} 
Suppose each $\mathbb{R}^D$-valued data point $\boldsymbol{X}$ is generated through the following latent-variable model
\begin{align}\label{eq: the latent-variable model}
    \boldsymbol{X} = \boldsymbol{f}_0(\boldsymbol{T}) + \boldsymbol{\varepsilon},
\end{align}
where $\boldsymbol{f}_0:\M\rightarrow\mathbb{R}^D$ is deterministic, smooth, and an embedding, $\boldsymbol{T}$ is an $\M$-valued latent random variable with a non-degenerate distribution (i.e., having a density bounded away from zero), and $\boldsymbol{\varepsilon}$ is mean-zero noise supported in the normal spaces of the submanifold $\boldsymbol{f}_0(\M)$. Specifically, let $\boldsymbol{\zeta}$ be independent of $\boldsymbol{T}$, with mean $\mathbb{E}\boldsymbol{\zeta}=\boldsymbol{0}$ and covariance matrix $\mathrm{Cov}(\boldsymbol{\zeta})=\sigma^2\boldsymbol{I}_D$; let $\boldsymbol{P}(\boldsymbol{m})$ denote the orthogonal projection matrix in $\mathbb{R}^D$ onto the normal space $N_{\boldsymbol{f}_0(\boldsymbol{m})}\boldsymbol{f}_0(\M)=\left( d\boldsymbol{f}_0(T_{\boldsymbol{m}}\M) \right)^\bot$, for all $\boldsymbol{m}\in\M$; the noise $\boldsymbol{\varepsilon}$ in \eqref{eq: the latent-variable model} is defined by $\boldsymbol{\varepsilon}:=\boldsymbol{P}(\boldsymbol{T})\boldsymbol{\zeta}$. Then, the conditional distribution of $\boldsymbol{\varepsilon}$ given $\boldsymbol{T}$ is supported in the normal space $N_{\boldsymbol{f}_0(\boldsymbol{T})}\boldsymbol{f}_0(\M)$, with $\mathbb{E}(\boldsymbol{\varepsilon}\,\vert\,\boldsymbol{T})=\boldsymbol{0}$ and $\mathrm{Cov}(\boldsymbol{\varepsilon}\,\vert\,\boldsymbol{T})=\sigma^2\boldsymbol{P}(\boldsymbol{T})$. 

The following assumption specifies what constitutes an appropriate choice of $\lambda$.
\begin{assumption}\label{assumption: the assumption for lambda selection}
    Let $\{\Bf_\lambda^*\}_{\lambda>0}$ denote the class of functions defined by \eqref{eq: the core min problem}, and let $\Bf_0$ be the underlying function governing the latent-variable model in~\eqref{eq: the latent-variable model}. Then, there exists a unique ``oracle'' $\lambda_0>0$ and a smooth diffeomorphism $\varphi:\M\rightarrow\M$ such that $\boldsymbol{f}_{\lambda_0}^* = \Bf_0\circ\varphi^{-1}$, i.e., $\boldsymbol{f}_{\lambda_0}^*$ and $\Bf_0$ are identical up to reparameterization.
\end{assumption}

\subsection{Selection} We propose an approach to estimating the $\lambda_0$ in Assumption~\ref{assumption: the assumption for lambda selection}. As pointed out by \cite{duchamp1996extremal}, the core difficulty in selecting model complexity lies in the fact that the mean squared distance functional defined in \eqref{eq: def of the fitting-error functional} does not have a minimizer. Therefore, we construct a new objective function for estimating $\lambda_0$. 

Denote the projection index $\boldsymbol{M}_\lambda:=\boldsymbol{\pi}_{\Bf_\lambda^*}(\boldsymbol{X})$ and the fitted residual $\boldsymbol{R}_\lambda:=\boldsymbol{X} - \Bf_\lambda^*(\boldsymbol{M}_\lambda)$. Note that Lemma~\ref{lemma: Borel measurable selection of nearest-point} and Definition~\ref{def: projection index} ensure that $\boldsymbol{M}_\lambda$ is a random variable. We define a function on the product space $(0,\infty)\times\M$ as follows
\begin{align}\label{eq: def of residual function}
    \Phi(\lambda, \boldsymbol{m}):= \mathbb{E}\left(\Vert \boldsymbol{R}_\lambda \Vert^2 \,\big\vert\, \boldsymbol{M}_\lambda = \boldsymbol{m}\right).
\end{align}
With realizations $\{\boldsymbol{X}_i\}_{i\in[N]}$, for each fixed $\lambda$, the function $\boldsymbol{m} \mapsto \Phi(\lambda, \boldsymbol{m})$ can be learned by regressing $\{ \Vert \boldsymbol{X}_i - \Bf^*_\lambda\left(\boldsymbol{\pi}_{\Bf_\lambda^*}(\boldsymbol{X}_i)\right) \Vert^2 \}_{i\in[N]}$ on $\{\boldsymbol{\pi}_{\Bf_\lambda^*}(\boldsymbol{X}_i)\}_{i\in[N]}$. Note that the predictors are on the manifold $\M$; the framework of regression on manifolds is needed \citep[e.g.,][]{pelletier2006non, di2009local, cheng2013local}. The following theorem relates $\Phi(\lambda, \boldsymbol{m})$ to the oracle $\lambda_0$ in Assumption~\ref{assumption: the assumption for lambda selection}.
\begin{theorem}\label{thm: the theorem for lambda selection}
    Under Assumption \ref{assumption: the assumption for lambda selection}, we have $\mathrm{Var}\big(\Phi(\lambda_0, \boldsymbol{U})\big)=0$, where $\boldsymbol{U}$ is uniformly distributed on $\M$, i.e., $\boldsymbol{U}\sim\mathrm{Unif}(\M)$.
\end{theorem}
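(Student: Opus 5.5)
The plan is to show that at the oracle $\lambda_0$ the conditional mean of the squared residual given the projection index is constant in $\boldsymbol{m}$, and indeed equal to $\sigma^2(D-d)$. Then $\Phi(\lambda_0,\boldsymbol U)$ is almost surely constant for $\boldsymbol U\sim\mathrm{Unif}(\M)$, so its variance vanishes. By Assumption~\ref{assumption: the assumption for lambda selection}, $\Bf^*_{\lambda_0}=\Bf_0\circ\varphi^{-1}$, and therefore the image $\Bf^*_{\lambda_0}(\M)$ coincides with $\Bf_0(\M)$. Since the fitting residual depends only on the image set, $\Vert\boldsymbol R_{\lambda_0}\Vert=\mathrm{dist}(\boldsymbol X,\Bf_0(\M))$.

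\textbf{Step 1: identify the projection.} The model gives $\boldsymbol X=\Bf_0(\boldsymbol T)+\boldsymbol\varepsilon$ with $\boldsymbol\varepsilon\in N_{\Bf_0(\boldsymbol T)}\Bf_0(\M)$. I will argue that the nearest point of $\boldsymbol X$ on $\Bf_0(\M)$ is $\Bf_0(\boldsymbol T)$. This is where a reach/tubular-neighborhood condition enters: the noise must be small relative to $\mathrm{reach}(\Bf_0(\M))$, in the spirit of Assumption~\ref{assumption: singleton assumption for the sequence}. Under such a condition, every point of the tubular neighborhood has a unique nearest point, and that point is the foot of the normal segment. I would state this condition explicitly, reading it as implicit in the latent-variable model, since without it the claim can fail.

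It then follows that $\Bf^*_{\lambda_0}(\boldsymbol M_{\lambda_0})=\Bf_0(\boldsymbol T)$. Injectivity of the embedding $\Bf_0$ gives $\boldsymbol M_{\lambda_0}=\varphi(\boldsymbol T)$, and the residual is $\boldsymbol R_{\lambda_0}=\boldsymbol\varepsilon=\boldsymbol P(\boldsymbol T)\boldsymbol\zeta$.

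\textbf{Step 2: compute the conditional expectation.} Because $\varphi$ is a diffeomorphism, conditioning on $\boldsymbol M_{\lambda_0}=\boldsymbol m$ is the same as conditioning on $\boldsymbol T=\varphi^{-1}(\boldsymbol m)$. Independence of $\boldsymbol\zeta$ and $\boldsymbol T$ then gives
\[
\Phi(\lambda_0,\boldsymbol m)=\mathbb E\big(\boldsymbol\zeta^\top\boldsymbol P(\boldsymbol t)\boldsymbol\zeta\big)\big|_{\boldsymbol t=\varphi^{-1}(\boldsymbol m)}=\sigma^2\operatorname{tr}\boldsymbol P(\boldsymbol t)=\sigma^2(D-d).
\]
Here $\boldsymbol P$ is a symmetric idempotent projection onto a normal space of dimension $D-d$, since $d\Bf_0$ is injective. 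The result is constant in $\boldsymbol m$. This holds for every $\boldsymbol m$ in the support of $\boldsymbol M_{\lambda_0}$, which is all of $\M$ because $\boldsymbol T$ has a density bounded below and $\varphi$ is onto.

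\textbf{Step 3: conclude.} Conditional expectations are defined only almost everywhere with respect to the law of $\boldsymbol M_{\lambda_0}$. That law is equivalent to $\mathrm{vol}_g$, since $\boldsymbol T$ has a positive density and $\varphi$ is a diffeomorphism. So we may choose the version $\Phi(\lambda_0,\cdot)\equiv\sigma^2(D-d)$, and this identity then holds $\mathrm{Unif}(\M)$-almost surely. Hence $\mathrm{Var}(\Phi(\lambda_0,\boldsymbol U))=0$.

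\textbf{Main obstacle.} The hard part is Step 1, proving that the nearest-point projection onto $\Bf_0(\M)$ recovers $\Bf_0(\boldsymbol T)$. A normal displacement guarantees this only inside the reach. I would first try to deduce it from the first-order optimality condition together with uniqueness of the nearest point (the medial-axis discussion). Failing that, I would add a bounded-noise assumption $\Vert\boldsymbol\zeta\Vert<\mathrm{reach}(\Bf_0(\M))$ and invoke Federer's characterization of the reach.
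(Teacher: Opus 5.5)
Your proposal is correct and follows the paper's proof in substance. The paper also reduces everything to $\boldsymbol{M}_{\lambda_0}=\varphi(\boldsymbol{T})$ and $\boldsymbol{R}_{\lambda_0}=\boldsymbol{P}(\boldsymbol{T})\boldsymbol{\zeta}$, obtaining $\Phi(\lambda_0,\boldsymbol{m})=(D-d)\sigma^2$; it just goes through a bias--variance split $\Phi(\lambda,\boldsymbol{m})=V_\lambda(\boldsymbol{m})+\Vert\boldsymbol{\delta}_\lambda(\boldsymbol{m})\Vert^2$, which your direct trace computation makes unnecessary.

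The paper asserts $\boldsymbol{\pi}_{\Bf^*_{\lambda_0}}\big(\Bf^*_{\lambda_0}(\varphi(\boldsymbol{T}))+\boldsymbol{P}(\boldsymbol{T})\boldsymbol{\zeta}\big)=\varphi(\boldsymbol{T})$ without justification. Your explicit within-reach noise condition therefore supplies a tacit hypothesis rather than changing the argument, and the same holds for your care with versions of the conditional expectation under $\mathrm{Unif}(\M)$. Drop your ``first try'', though: uniqueness of the nearest point cannot force it to be the normal foot. On $\mathbb{S}^1$, the normal displacement $\boldsymbol{u}-\tfrac32\boldsymbol{u}=-\tfrac12\boldsymbol{u}$ has the unique nearest point $-\boldsymbol{u}$, so the bounded-noise assumption is the one you need.
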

\noindent Theorem \ref{thm: the theorem for lambda selection} motivates selecting $\lambda$ to minimize the dispersion of $\Phi(\lambda, \boldsymbol{U})$. However, a small value of the variance $\mathrm{Var}\big(\Phi(\lambda, \boldsymbol{U})\big)$ may simply reflect the small scale of $\Phi(\lambda, \boldsymbol{U})$, rather than genuinely low dispersion. To account for scale effects and improve numerical stability, we choose $\lambda$ by minimizing the coefficient of variation rather than the variance itself. That is, we adopt the following choice of the tuning parameter
\begin{align}\label{eq: choice of the optimal lambda}
    \lambda^*:=\argmin_{\lambda\in(0,\infty)} \left\{ \frac{\sqrt{\mathrm{Var}\big(\Phi(\lambda, \boldsymbol{U})\big)}}{\mathbb{E}\big(\Phi(\lambda, \boldsymbol{U})\big)} \right\},
\end{align}
where $\boldsymbol{U}\sim\mathrm{Unif}(\M)$ can be generated artificially in practice; and $\Phi$, defined in \eqref{eq: def of residual function}, can be estimated using a regression model \citep[e.g.,][]{pelletier2006non}.

Because \(\lambda^*\) in~\eqref{eq: choice of the optimal lambda} is selected based on an estimated regression function \(\widehat{\Phi}\approx\Phi\) (see \eqref{eq: def of residual function}), any estimation error in \(\widehat{\Phi}\) induces error in the resulting estimation of \(\lambda^*\). Fortunately, the theorem below establishes a stability property: a small error in the estimation of \(\lambda^*\) does not translate to a large error in the estimation of \(\Bf^*_{\lambda^*}\).
\begin{theorem}\label{thm: continuity of the minimizer wrt the tuning parameter}
Suppose there exists a closed interval $[\underline{\lambda},\overline{\lambda}]\subset(0,\infty)$ such that, for every $\lambda\in[\underline{\lambda},\overline{\lambda}]$, the minimizer $\boldsymbol{f}_\lambda^*$ defined by \eqref{eq: the core min problem} is unique. Then the mapping
\[
[\underline{\lambda},\overline{\lambda}] \longrightarrow C(\mathfrak{M};\,\mathbb{R}^D), 
\qquad \lambda \longmapsto \boldsymbol{f}_\lambda^*
\]
is continuous with respect to the supremum norm topology on $C(\mathfrak{M})$.
\end{theorem}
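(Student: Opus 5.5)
We argue by a standard compactness-plus-uniqueness scheme. Fix $\lambda\in[\underline{\lambda},\overline{\lambda}]$ and a sequence $\lambda_k\to\lambda$ in this interval; it suffices to show that every subsequence of $\{\boldsymbol{f}^*_{\lambda_k}\}$ has a further subsequence converging to $\boldsymbol{f}^*_\lambda$ in $\Vert\cdot\Vert_{C(\M)}$. The first step is a uniform $H^2$ bound. Comparing with the constant map $\boldsymbol{f}\equiv\boldsymbol{0}\in\mathscr{F}(\mathbb{P})$ gives
\[
\lambda_k\Vert\nabla^2\boldsymbol{f}^*_{\lambda_k}\Vert_{L^2(\M)}^2\le\mathcal{L}_{\lambda_k}(\boldsymbol{0})=\mathbb{E}\Vert\boldsymbol{X}\Vert^2 ,
\]
so $\Vert\nabla^2\boldsymbol{f}^*_{\lambda_k}\Vert^2_{L^2(\M)}\le \mathbb{E}\Vert\boldsymbol{X}\Vert^2/\underline{\lambda}$. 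The constraint in $\mathscr{F}(\mathbb{P})$ bounds the sup norm, hence the $L^2$ norm (since $\M$ is compact), so $\sup_k\Vert\boldsymbol{f}^*_{\lambda_k}\Vert_{H^2(\M)}<\infty$. By weak compactness in the Hilbert space $H^2(\M)$ and by Theorem~\ref{thm: Rellich–Kondrachov embedding}, a subsequence converges weakly in $H^2$ and uniformly to some $\boldsymbol{f}_\infty\in H^2(\M)$. The sup-norm constraint is closed under uniform limits, so $\boldsymbol{f}_\infty\in\mathscr{F}(\mathbb{P})$.

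The second step is to show that $\boldsymbol{f}_\infty$ minimizes $\mathcal{L}_\lambda$. I would use the representation of the fitting term as $\mathcal{D}(\boldsymbol{f})=\mathbb{E}\{\min_{\boldsymbol{m}}\Vert\boldsymbol{X}-\boldsymbol{f}(\boldsymbol{m})\Vert^2\}$, which is independent of the measurable selection. Because $\boldsymbol{x}\mapsto$ distance to a compact set is $1$-Lipschitz in the set in the Hausdorff sense, one gets $|\min_{\boldsymbol{m}}\Vert\boldsymbol{x}-\boldsymbol{f}(\boldsymbol{m})\Vert-\min_{\boldsymbol{m}}\Vert\boldsymbol{x}-\boldsymbol{h}(\boldsymbol{m})\Vert|\le\Vert\boldsymbol{f}-\boldsymbol{h}\Vert_{C(\M)}$. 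Combined with boundedness of $\operatorname{supp}(\mathbb{P})$ and of $\mathscr{F}(\mathbb{P})$, this shows that $\mathcal{D}$ is continuous on $\mathscr{F}(\mathbb{P})$ in the sup norm. The penalty $\boldsymbol{f}\mapsto\Vert\nabla^2\boldsymbol{f}\Vert^2_{L^2}$ is a continuous convex seminorm squared on $H^2$, hence weakly lower semicontinuous. Therefore
\[
\mathcal{L}_\lambda(\boldsymbol{f}_\infty)\le\liminf_k\mathcal{L}_{\lambda_k}(\boldsymbol{f}^*_{\lambda_k}),
\]
where I use $\lambda_k\to\lambda$ together with the uniform bound on the penalties to swap $\lambda_k$ for $\lambda$. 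Conversely, for any competitor $\boldsymbol{h}\in\mathscr{F}(\mathbb{P})$,
\[
\mathcal{L}_{\lambda_k}(\boldsymbol{f}^*_{\lambda_k})\le\mathcal{L}_{\lambda_k}(\boldsymbol{h})\to\mathcal{L}_\lambda(\boldsymbol{h}).
\]
Hence $\mathcal{L}_\lambda(\boldsymbol{f}_\infty)\le\mathcal{L}_\lambda(\boldsymbol{h})$ for all $\boldsymbol{h}$, so $\boldsymbol{f}_\infty$ is a minimizer. By the uniqueness hypothesis, $\boldsymbol{f}_\infty=\boldsymbol{f}^*_\lambda$, which closes the subsequence argument and yields continuity.

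The main obstacle is the sup-norm continuity of the fitting term. Since $\mathcal{L}_\lambda$ is written with a projection index $\boldsymbol{\pi}_{\boldsymbol{f}}$ that may vary discontinuously in $\boldsymbol{f}$, this continuity must be derived through the min-distance identity \eqref{eq: Borel measurable selection of nearest-point}. I would state this continuity as a lemma (it is presumably also used for Theorems~\ref{thm: existence of f star} and \ref{thm: argmin}), proving it by the Lipschitz estimate above and dominated convergence. The lower-semicontinuity step, needed so that the penalty need not converge, is routine.
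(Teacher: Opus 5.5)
Your proposal is correct and follows essentially the same route as the paper. The steps match: a uniform $H^2$ bound; weak compactness plus Rellich--Kondrachov to extract a uniformly convergent subsequence whose limit stays in $\mathscr{F}(\mathbb{P})$; sup-norm continuity of the fitting term together with weak lower semicontinuity of the penalty to show the limit minimizes $\mathcal{L}_\lambda$; and uniqueness with the subsequence principle to conclude. Your use of the zero map as the competitor, which gives the explicit bound $\mathbb{E}\Vert\boldsymbol{X}\Vert^2/\underline{\lambda}$ uniformly on the interval, is a slightly cleaner version of the paper's $\limsup$-based Step 1.
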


Our tuning-parameter selection approach in \eqref{eq: choice of the optimal lambda} is relatively robust to violations of the latent-variable model described in Section \ref{section: Latent-Variable Model}. In particular, even when the noise $\boldsymbol{\varepsilon}$ in \eqref{eq: the latent-variable model} is not supported in the normal space of $\boldsymbol{f}_0(\M)$, the selection procedure in \eqref{eq: choice of the optimal lambda} can still yield a reasonable manifold estimate. As a proof of concept, we illustrate the performance and robustness of the proposed selection approach through the following numerical experiments:
\begin{itemize}
    \item Figure~\ref{fig:pme_demo_cv_lambda_selection}. The flower-shaped point cloud in panel (iv) is generated using the data-generating mechanism described in Appendix~\ref{appendix: Boundary of a Flower/Star}. Note that this mechanism is not consistent with the latent-variable model specified in Section~\ref{section: Latent-Variable Model}. The PA algorithm is applied to this point cloud for each $\lambda>0$. In practice, to prevent underfitting, we upper bound the choice of $\lambda$ based on the inflection point of the mean of squared residuals $\mathbb{E}\big(\Phi(\lambda, \boldsymbol{U})\big)$ as a function of $\lambda$, as shown in panel (ii). This allows the method to be immune to underfitting. The value of $\lambda^*$ that is optimal under criterion \eqref{eq: choice of the optimal lambda} is indicated in panel (i).

    \item Figure~\ref{fig:sph_results_panel_8}. The three-dimensional point cloud in each panel of the second row is generated according to the data-generating mechanism described in Appendix~\ref{appendix: Surface of a Flower/Star (2d, 3D)} (all panels in the second row present the same point cloud). We use the regression estimation method developed by \cite{pelletier2006non} to estimate the regression function $\Phi(\lambda,\boldsymbol{m})$ defined in \eqref{eq: def of residual function}. The value of $\lambda^*$ that is optimal under criterion \eqref{eq: choice of the optimal lambda} is indicated in panel (i). The elbow point of the mean squared residual plot in panel (ii) demonstrates that routine underfitting occurs after this point, so we only consider $\lambda$ to the left of this point. An overfitting surface still captures the overall shape, displayed in panel (vi). However, closer inspection of the gray contour lines reveals a jagged surface. Inspecting the optimal $\lambda^*$ surface demonstrates smooth contour lines. The underfitting $\lambda$ demonstrates a completely shrunken surface collapsing to the sample mean, a consequence of Theorem \ref{thm: PME with lambda=infty} for closed manifolds.
\end{itemize}
In both Figures~\ref{fig:pme_demo_cv_lambda_selection} and \ref{fig:sph_results_panel_8}, each fitted manifold corresponding to the optimal $\lambda^*$ under criterion \eqref{eq: choice of the optimal lambda} is visually well aligned with the corresponding true manifold.

\begin{figure}[ht] 
    \centering
    \includegraphics[width=1.0\linewidth]{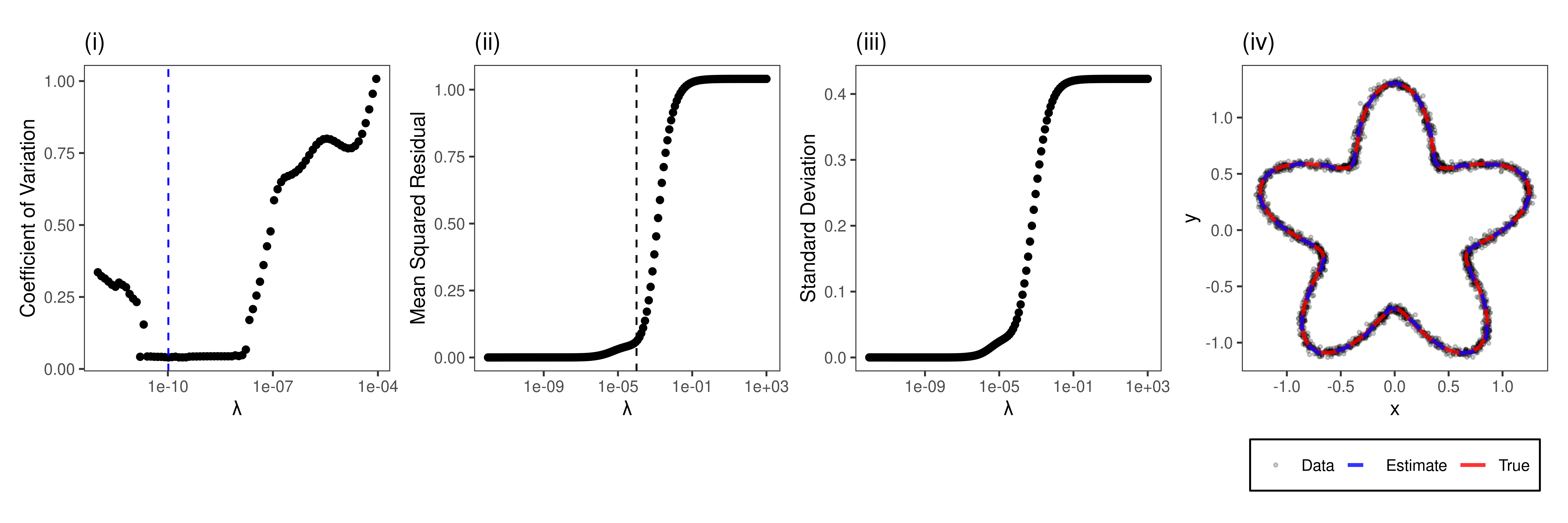}
    \vspace{-25pt}
    \caption{\footnotesize (i) The coefficient of variation $\sqrt{\mathrm{Var}\big(\Phi(\lambda, \boldsymbol{U})\big)}\big/\mathbb{E}\big(\Phi(\lambda, \boldsymbol{U})\big)$ as a function of $\lambda$, where the blue dotted line denotes the optimal $\lambda^*$ defined by \eqref{thm: the theorem for lambda selection}. (ii) The mean squared residual $\mathbb{E}\big(\Phi(\lambda, \boldsymbol{U})\big)$ as a function of $\lambda$. The black dotted vertical line denotes an inflection point, where we should upper bound the eligible $\lambda$ values. (iii) The standard deviation $\sqrt{\mathrm{Var}\big(\Phi(\lambda, \boldsymbol{U})\big)}$. (iv) The data (gray dots), the estimate associated with the optimal $\lambda^*$ (blue dotted curve), and the true latent manifold (red solid curve)}
    \label{fig:pme_demo_cv_lambda_selection}
\end{figure}

\begin{figure}[ht]
    \centering
    \includegraphics[width=1\linewidth]{panel8_fig.png}
    \vspace{-42pt}
    \caption{\footnotesize(i) The coefficient of variation $\sqrt{\mathrm{Var}\big(\Phi(\lambda, \boldsymbol{U})\big)}\big/\mathbb{E}\big(\Phi(\lambda, \boldsymbol{U})\big)$ as a function of $\lambda$, where the red dotted line denotes the optimal $\lambda^*$ defined by \eqref{thm: the theorem for lambda selection}. (ii) The mean squared residual $\mathbb{E}\big(\Phi(\lambda, \boldsymbol{U})\big)$ as a function of $\lambda$. The black dotted vertical line denotes an inflection point, where we should upper bound the eligible $\lambda$ values. (iii) The standard deviation $\sqrt{\mathrm{Var}\big(\Phi(\lambda, \boldsymbol{U})\big)}$. (iv) Persistence diagram \citep{fasy2014confidence} of the point cloud, indicating one significant one-dimensional homological feature and one significant two-dimensional homological feature; the pink band is a 99\% bootstrap confidence band. (v) Point cloud with the true data generating surface. (vi) through (viii) are plots with an overfitting $\lambda$, the optimal $\lambda$ from panel (i), and an underfitting $\lambda$, respectively.}
    \label{fig:sph_results_panel_8}
\end{figure}

\section{Conclusion and Future Research}\label{section: discussions and future research}

In this article, we established rigorous theoretical foundations for PME on arbitrary compact template manifolds by extending the classical formulation from Euclidean domains to compact Riemannian manifolds. This generalization enables the framework to accommodate non-Euclidean topological structure and to integrate naturally with tools from TDA. Using Sobolev space theory, we proved the existence of the penalized minimizer, established convergence of the PA algorithm under suitable regularity conditions, and proved consistency of the empirical minimizer. We also provided a geometric interpretation of the Hessian-based penalty in terms of the second fundamental form and metric distortion. Importantly, we also proposed a theoretically justified method for selecting the regularization parameter that avoids the deficiencies of the classical fitting-error criterion. Beyond placing PME on a firmer mathematical footing, these results enhance its value for shape data analysis, where accurate recovery of manifold structure from point clouds is often essential for subsequent analysis, and broaden its applicability across scientific disciplines in which complex high-dimensional data are naturally modeled by latent manifold structure. Taken together, our work provides a comprehensive foundation for PME that is mathematically rigorous and practically useful in modern statistical and scientific applications.


There remains substantial scope for further improving and extending the present work. First, computational challenges arise when applying the PA algorithm to very large sample sizes. Because the representation in \eqref{eq: notations for the represeter theorem} requires inversion of a potentially large matrix, one natural way to alleviate this difficulty is to use a reduced-basis expansion. By selecting only a subset of the $N$ basis functions, the matrix inversion can be carried out more efficiently \citep[e.g.,][]{ma2015efficient, meng2020more}. \cite{meng_principal_2021} have proposed a similar approach as well. Other approaches employ alternative basis systems that are not tied to the reproducing-kernel basis generated by the observed data; examples are available in the \texttt{R} function \texttt{splines} and in packages such as \texttt{mgcv} \citep{wood2015package}. Further discussion of this topic is provided in Appendix \ref{appendix: Projection-Adaptation Algorithm}. We plan to investigate these directions in future work in order to improve the computational efficiency of the PME framework.

Second, we plan to apply our proposed PME framework, particularly the PA algorithm, to more general template manifolds. As our simulations demonstrate, the PA algorithm works well for template manifolds $[0,1]$, $\mbS^1$, and $\mbS^2$, provided that a suitable initialization is used. The main barrier to generalizing to other compact manifolds $\M$ is obtaining a computationally efficient representation of the reproducing kernel for the RKHS $H^2(\M)$. The study of kernels on compact Riemannian manifolds has advanced substantially within the statistics literature \citep[e.g.,][]{lindgren2011explicit, li2023inference}, which facilitates future research in this direction.

Third, we plan to investigate the asymptotic behavior of a fitted principal manifold near the boundary of the latent manifold. A similar study for locally linear embedding \citep{roweis2000nonlinear} has recently been established in the literature \citep{wu2023locally, kuo2025boundary}. We hope to adapt the techniques developed there to the PME framework.

Finally, an interesting direction for future research is to connect our PME framework with recent developments in multiview manifold learning and sensor fusion. Whereas our work focuses on estimating an embedded manifold from a single point cloud, recent studies in multiview diffusion geometry have developed rigorous frameworks for recovering a common latent manifold from multiple sensors, while accounting for sensor-specific deformations, nuisance structure, and heterogeneous high-dimensional noise \citep{talmon2019latent, ding2024kernel, ding2026generalized}. These developments suggest the possibility of extending PME to multiview settings.

\section*{Code Availability}

Code for the algorithm is publicly available in our \texttt{R} package \texttt{CPME} on \url{https://github.com/cmperez024/Compact-Principal-Manifold-Estimation}. Simulation code for replication purposes is also available in the repository.

\section*{Acknowledgments}

We are grateful to Dr.~Jonathan Stewart (Florida State University) for helpful discussions and valuable suggestions regarding asymptotic analysis.


\clearpage
\begin{appendix}

\startcontents[appendix]

\begin{center}
    \Large\textbf{Appendix}
\end{center}

\printcontents[appendix]{}{1}{\section*{Contents}}


$ $

\section{Notation}\label{appendix: Mathematical Preparations}

\textbf{Constants and Parameters}

$D$ --- The ambient dimension.

$d$ --- The intrinsic dimension of the underlying manifold.

$\lambda$ --- The regularization parameter/tuning parameter in \eqref{eq: the core min problem}.

$\radsuppP$ --- The outer radius of the compact support $\mathrm{supp}(\mbP)$, i.e., $$\operatorname{rad}_0\big(\operatorname{supp}(\mathbb{P})\big) := \sup_{\boldsymbol{x}\in \operatorname{supp}(\mathbb{P})}\, \Vert \boldsymbol{x} \Vert. $$

$N$ --- The sample size.

\noindent\textbf{Optimization}

$\mcL_\lambda$ --- The population-level loss functional; see\eqref{eq: the core min problem}.

$\mcL_{N,\lambda}$ --- The empirical-level loss functional; see \eqref{eq: core iterative algorithm, empirical}.

$\Bf_\lambda^{(n)}$ --- The $n$-th iteration of the population-level PA algorithm based on regularization parameter $\lambda$; see \eqref{eq: the core iterative algorithm}; not to be confused with the $n$-th derivative.

$\Bf_{N,\lambda}^{(n)}$ --- The $n$-th iteration of the empirical-level PA algorithm based on regularization parameter $\lambda$ and sample size $N$; see \eqref{eq: core iterative algorithm, empirical}.

$\Bf^*_{\lambda}$ --- A minimizer of $\mcL_\lambda(\Bf)$; see \eqref{eq: the core min problem}.

$\Bf^*_{N,\lambda}$ --- A minimizer of the empirical $\mcL_{N, \lambda}(\Bf)$; see \eqref{eq: core iterative algorithm, empirical}.

$\mathcal{Q}_\lambda(\Bf\,\vert\,\Bg)$ --- See \eqref{eq: def of the Q functional}.

$\mathcal{T}_\lambda$ --- The updating operator defined by $\mathcal{T}_\lambda(\Bg):=\argmin_{\Bf\in\mathscr{F}(\mathbb{P})} \mathcal{Q}_\lambda(\Bf\,\vert\,\Bg)$.

\noindent\textbf{Probability and Measure Theory}

$\boldsymbol{X}$ --- An $\mbR^D$-valued random variable drawn from $\mbP$.

$\dist(\boldsymbol{X}, \Bf)$ --- See \eqref{eq: def of the fitting error term}.

$\mbE$ --- Expectation.

$\mbP$ --- Probability measure defined on the Borel $\sigma$-algebra $\mcB(\mbR^D)$.

$\operatorname{supp}(\mbP)$ --- The support of a probability measure $\mbP$.

$\mathbb{P}_N:=\frac{1}{N}\sum_{i=1}^N \delta_{\boldsymbol{X}_i}$ --- Empirical distribution.

\noindent\textbf{Spaces and Manifolds}

$\M$ --- The template manifold with $\dim\M=d$.

$\mathcal{M}$ --- The latent manifold we wish to estimate.

$H^2(\M)$ --- Sobolev space; see Section~\ref{section: Sobolev Spaces on Riemannian Manifolds}.

$C(\M)$ --- Collection of continuous functions defined on $\M$.

$\mathscr{F}(\mbP)$ --- See \eqref{eq: the core min problem}.

\noindent\textbf{Operators and Functions}

$\nabla$ --- The Levi-Civita connection associated with $g$ on $\M$.

$\nabla^{\Bf}$ --- The Levi-Civita connection associated with $g_{\Bf}$ on $\M$.

$\Delta$ --- Laplace-Beltrami operator on $\M$; see \eqref{eq: def of Laplace-Beltrami operator}.

$\nabla^2$ --- The Hessian operator; see \eqref{eq: def of Hess}.

$\Bpi_{\Bf}$ --- Projection index; see Definition~\ref{def: projection index}.

$\norm{\cdot}$ and $\norm{\cdot}_{\mathbb{R}^D}$ --- Euclidean norm.

$\delta$ --- The Euclidean metric.

$g_{\Bf}$ --- Pullback Riemannian metric associated with $\Bf$ and $\M$ \eqref{eq: def of metric g_f}.

$\II$ --- The second fundamental form.

$\lvert \cdot \rvert_g$ --- The pointwise norm with respect to $g$; see \eqref{eq: vert g norm using bases}.

$\norm{\cdot}_{L^2(\M)}$ --- The $L^2$ norm of a function or a $(0,2)$ tensor; see \eqref{eq: L2 norm on M} and \eqref{eq: def of L2 norm of a (0,2) tensor}.

\section{Reproducing Kernel Hilbert Spaces}\label{appendix: Reproducing Kernel Hilbert Spaces}

In order to carry out the PA algorithm detailed in \eqref{eq: argmin via RKHS}, it is instructive to consider the case where projection indices  $\{\Bpi_{\Bf_{N,\lambda}^{(n)}}(\boldsymbol{X}_i)\}_{i\in [N]}$  are considered fixed. In this case, the solution to minimizing the functional in \eqref{eq: argmin via RKHS} is identical to the framework proposed in the smoothing spline literature. One particular result is the \textit{representer theorem} \citep[][Theorem 1.3.1]{wahba1990spline}, which uses the RKHS structure of $H^2(\M)=H^2(\M ; \mbR^1)$ to provide a closed form solution for a spline fit $f_j \in H^2(\M)$ for each $j \in [D]$. Stacking the results gives us a function in $H^2(\M ; \mbR^D),$ i.e., $\Bf = (f_1,\ldots,f_D)$. We henceforth focus on scalar-valued functions. To apply the representer theorem to $H^2(\M)$, we first establish some mathematical preparations in Sections \ref{appendix: An Orthogonal Decomposition of H2} and \ref{appendix: Inner Products Associated with Reproducing Kernels}. 

\subsection{An Orthogonal Decomposition of the Space}\label{appendix: An Orthogonal Decomposition of H2}

Define a linear subspace of $H^2(\M)$ by
\begin{align}\label{eq: def of H_0}
\mathcal{H}_0 \;:=\; \left\{\, f\in H^{2}(\M)\;:\;\nabla^{2}f = 0 \text{ almost everywhere on } \M \, \right\}.
\end{align}
The following lemma shows that $\mathcal{H}_0$ is finite-dimensional. Its proof can be found in Appendix~\ref{Appendix: Proofs}.
\begin{lemma}\label{lemma: finite dimensionality of the kernel space}
Let $(\M,g)$ be a Riemannian manifold satisfying Assumption \ref{assumption: assumption on the template manifold}. Then, $\mathcal{H}_0$ is a finite-dimensional linear subspace of $H^{2}(\M)$. More precisely, $\dim \mathcal{H}_0 \;\le\; 1+\dim \M$.
\end{lemma}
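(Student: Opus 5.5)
The plan is to show that any $f\in\mathcal{H}_0$ is determined by its value and differential at a single point. Then $\mathcal{H}_0$ injects linearly into $\mathbb{R}\times T^*_{\boldsymbol{m}_0}\M\cong\mathbb{R}^{1+d}$, which gives $\dim\mathcal{H}_0\le 1+d$.

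\textbf{Step 1: regularity.} First we show that every $f\in\mathcal{H}_0$ is smooth. Since $\nabla^2 f=0$ a.e., the trace gives $\Delta f=\operatorname{tr}_g\nabla^2 f=0$ in the weak sense on the interior of $\M$, so $f$ is weakly harmonic there. Elliptic regularity (Weyl's lemma in local coordinates) then makes $f$ smooth on the interior $\M^\circ:=\M\setminus\partial\M$, and $\nabla^2 f=0$ holds pointwise there. This regularity step is the main technical obstacle, because $f$ is a priori only $H^2$. If one prefers to avoid elliptic theory, an alternative is to work in local coordinates. There $\nabla^2 f=0$ reads $\partial_i\partial_j f=\Gamma^k_{ij}\partial_k f$ a.e., so the first weak derivatives of $\partial_k f$ are linear combinations of the $\partial_k f$ with smooth coefficients. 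Bootstrapping this identity shows $\partial f\in H^{s}_{loc}$ for all $s$, hence $f$ is smooth in the interior. Either route is enough, since only interior points are needed below.

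\textbf{Step 2: $df$ is parallel.} Fix $\boldsymbol{m}_0\in\M^\circ$. The condition $\nabla^2 f=\nabla(df)=0$ says the $1$-form $df$ is parallel on $\M^\circ$. Take any $\boldsymbol{m}\in\M^\circ$ and a smooth curve $\gamma:[0,1]\to\M^\circ$ from $\boldsymbol{m}_0$ to $\boldsymbol{m}$; such a curve exists because $\M$ is connected, so its interior is connected. Along $\gamma$, the covector $df_{\gamma(t)}$ solves the linear parallel-transport ODE. By ODE uniqueness, $df_{\gamma(t)}$ is the parallel transport of $df_{\boldsymbol{m}_0}$. Moreover $\frac{d}{dt}f(\gamma(t))=df_{\gamma(t)}(\dot\gamma(t))$, so
\[
f(\boldsymbol{m})=f(\boldsymbol{m}_0)+\int_0^1 df_{\gamma(t)}(\dot\gamma(t))\,dt .
\]

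\textbf{Step 3: injectivity and conclusion.} Consider the linear map
\[
\Theta:\mathcal{H}_0\to\mathbb{R}\times T^*_{\boldsymbol{m}_0}\M,\qquad f\mapsto \bigl(f(\boldsymbol{m}_0),\,df_{\boldsymbol{m}_0}\bigr).
\]
Suppose $\Theta f=0$. By Step 2, $df$ vanishes along every curve starting at $\boldsymbol{m}_0$, and the integral formula then gives $f\equiv 0$ on $\M^\circ$. The interior has full measure, and $f$ is continuous by Theorem~\ref{thm: Rellich–Kondrachov embedding}, so $f=0$ as an element of $H^2(\M)$. Hence $\Theta$ is injective, and $\dim\mathcal{H}_0\le 1+d=1+\dim\M$.
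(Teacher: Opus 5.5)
Your proposal is correct and follows the paper's proof essentially step by step. Both arguments obtain interior smoothness from elliptic regularity of $\Delta f=\operatorname{tr}_g\nabla^2 f=0$, show that the first derivative of $f$ is parallel on the connected interior, prove that $f\mapsto(f(p),\text{first derivative at }p)$ is injective by integrating along curves, and extend to $\partial\M$ by continuity. The differences are cosmetic: you work with the covector $df$ rather than the gradient $\nabla f$, and you add a more elementary bootstrap route to regularity through the first-order system $\partial_i\partial_j f=\Gamma^k_{ij}\partial_k f$, which avoids elliptic theory.
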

\begin{remark}\label{remark: Affine and harmonic function spaces}
\begin{enumerate}
    \item Lemma~\ref{lemma: finite dimensionality of the kernel space} implies that the null space $\mathcal{N}(\nabla^2):=\{\Bf\in H^2(\M):\,\Vert \nabla^2 \boldsymbol{f} \Vert^2_{L^2(\M)}=0 \}$, introduced in Section~\ref{section: Hessian Penalty versus Laplace-Beltrami Penalty}, is finite-dimensional. 

    \item The space of harmonic maps $\mathcal{N}(\Delta):=\{\Bf\in H^2(\M): \Vert \Delta \boldsymbol{f} \Vert^2_{L^2(\M)}=0\}$, introduced in Section~\ref{section: Hessian Penalty versus Laplace-Beltrami Penalty}, may be infinite-dimensional when $\partial\M\ne\emptyset$. For example, let $\M=\{(x,y)\in\mathbb{R}^2: x^2+y^2\le 1\}$, the closed unit disk in \(\mathbb{R}^2\) equipped with the Euclidean metric. Then the Laplace-Beltrami operator reduces to the usual Laplacian $\Delta=\frac{\partial^2}{\partial x^2}+\frac{\partial^2}{\partial y^2}$. Define a sequence of functions \(\{u_n\}_{n\in\mathbb{N}}\) on \(\M\) by $u_n(x,y):=\mathrm{Re}(x+iy)^n$ for $(x,y)\in\M$. Since \(z=x+iy\mapsto z^n\) is holomorphic on \(\mathbb{C}\), its real part \(u_n\) is harmonic on \(\M\). Hence, \(\{u_n\}_{n\in\mathbb{N}}\subseteq \mathcal{N}(\Delta)\). It is easy to show that the infinitely many functions $\{u_n\}_{n\in\mathbb{N}}$ are linearly independent.
\end{enumerate}
\end{remark}

Since $\mathcal{H}_0$ is finite-dimensional, it is closed in $L^2(\M)$. Hence, its $L^2$-orthogonal complement $\mathcal{H}_0^{\perp_{L^2}}$ is well-defined. We denote 
\begin{align}\label{eq: def of H_1}
    \begin{aligned}
        \mathcal{H}_1 :&=\mathcal{H}_0^{\perp_{L^2}}\cap H^2(\M) \\
    &=\left\{f\in H^2(\M): \int_{\M} f(\boldsymbol{m}) \tilde{f}(\boldsymbol{m}) \, d\vol_g(\boldsymbol{m}) = 0 \text{ for all }\tilde{f}\in\mathcal{H}_0 \right\}.
    \end{aligned}
\end{align}
Then, we have the direct sum decomposition
\begin{align*}
    H^2(\M) = \mathcal{H}_0 \oplus \mathcal{H}_1.
\end{align*}
That is, for every $f\in H^2(\M)$, there exist unique $f_0\in\mathcal{H}_0$ and $f_1\in\mathcal{H}_1$ such that $f=f_0+f_1$.

\subsection{Inner Products Associated with Reproducing Kernels}\label{appendix: Inner Products Associated with Reproducing Kernels}

To apply the representer theorem to $H^2(\M)$, we first define an inner product on $H^2(\M)$. Specifically, for any $f=f_0+f_1$ and $\tilde{f}=\tilde{f}_0+\tilde{f}_1\in H^2(\M)$ with $\{f_0,\tilde{f}_0\}\subseteq\mathcal{H}_0$ and $\{f_1, \tilde{f}_1\}\subseteq\mathcal{H}_1$, we define an inner product by
\begin{align}\label{eq: def of the inner product for the representer theorem}
    \begin{aligned}
        & \langle f,\tilde{f}\rangle_R := \int_{\M} f_0(\boldsymbol{m})\cdot \tilde{f}_0(\boldsymbol{m}) \, d\vol_g(\boldsymbol{m}) + \left\langle \nabla^2 f_1,\, \nabla^2  \tilde{f}_1 \right\rangle_{L^2(\M)}, \\
    & \text{where }\left\langle \nabla^2 f_1,\, \nabla^2  \tilde{f}_1 \right\rangle_{L^2(\M)}:= \int_{\M} \left\langle \nabla^2 f_1(\boldsymbol{m}), \nabla^2  \tilde{f}_1(\boldsymbol{m}) \right\rangle_g \, d\vol_g(\boldsymbol{m}),
    \end{aligned}
\end{align}
and $\langle \cdot,\cdot\rangle_g$ denotes the pointwise inner product induced by the Riemannian metric $g$, i.e., $\langle \nabla^2 f,\nabla^2 \tilde f \rangle_g = \sum_{ijk\ell} g^{ik}g^{j\ell}(\nabla^2 f)_{ij}(\nabla^2 \tilde f)_{k\ell}$ in each coordinate chart, compatible with \eqref{eq: vert cdot vert g}. 

The inner product $\langle\cdot, \cdot\rangle_R$ induces a norm $\Vert f\Vert_R:=\sqrt{\langle f, f \rangle_R}$. Importantly, it is equivalent to the following Sobolev norm that we use
\begin{align*}
    \Vert f\Vert_{H^2(\M)}:=\left(\Vert f\Vert_{L^2(\M)}^2+\Vert\nabla^2 f\Vert_{L^2(\M)}^2\right)^{1/2}.
\end{align*}
Specifically, there exist constants $A$ and $B$ such that 
\begin{align}\label{eq: Sobolev norm equivalence}
    A\cdot\Vert f\Vert_R \le\Vert f\Vert_{H^2(\M)} \le B\cdot \Vert f\Vert_R \quad \text{ for all }f\in H^2(\M).
\end{align}
The proof of \eqref{eq: Sobolev norm equivalence} can be found in Appendix \ref{Appendix: Proofs}. When $\dim\M<4$, Lemma \ref{lemma: general Sobolev inequalities on manifolds}, together with \eqref{eq: Sobolev norm equivalence}, implies
\begin{align}\label{eq: reasoning of H2 equipped with R}
    \Vert f\Vert_{C(\M)} \le C_H\cdot \Vert f\Vert_{H^2(\M)} \le C_H\cdot B \cdot \Vert f\Vert_R \quad \text{ for all }f\in H^2(\M),
\end{align}
which further implies that $H^2(\M)$, equipped with the inner product $\langle\cdot, \cdot\rangle_R$, is an RKHS. 

In addition, we define the inner product $\langle\cdot, \cdot\rangle_{R_1}$ on $\mathcal{H}_1$ by 
\begin{align}\label{eq: def of the inner product associated with R1}
    \begin{aligned}
        \langle f, \tilde{f}\rangle_{R_1}&:= \left\langle \nabla^2 f,\, \nabla^2  \tilde{f} \right\rangle_{L^2(\M)} \\
    &= \int_{\M} \left\langle \nabla^2 f(\boldsymbol{m}), \nabla^2  \tilde{f}(\boldsymbol{m}) \right\rangle_g \, d\vol_g(\boldsymbol{m}) \quad \text{ for all }f, \tilde{f}\in \mathcal{H}_1.
    \end{aligned}
\end{align}
It is easy to verify that $\Vert f\Vert_{R_1}:=\sqrt{\langle f, f\rangle_{R_1}}$ defines a norm on $\mathcal{H}_1$. Then, \eqref{eq: reasoning of H2 equipped with R} implies that
\begin{align*}
    \Vert f\Vert_{C(\M)} \le C_H\cdot B \cdot \Vert f\Vert_{R} = C_H\cdot B \cdot \Vert f\Vert_{R_1} \quad \text{ for all }f\in \mathcal{H}_1,
\end{align*}
where the equality above follows from the definition in \eqref{eq: def of the inner product for the representer theorem}. Hence, $(\mathcal{H}_1, \langle\cdot,\cdot\rangle_{R_1})$ is an RKHS as well, and we denote its reproducing kernel by $R_1(\cdot,\cdot)$. By the definition of a reproducing kernel, together with \eqref{eq: def of the inner product associated with R1}, we have
\begin{align}\label{eq: L2 representation of R1}
    R_1(\boldsymbol{m},\boldsymbol{m}') = \langle R_1(\cdot,\boldsymbol{m}),\, R_1(\cdot,\boldsymbol{m}')\rangle_{R_1}=\left\langle \nabla^2 R_1(\cdot, \boldsymbol{m}),\, \nabla^2  R_1(\cdot, \boldsymbol{m}') \right\rangle_{L^2(\M)}.
\end{align}

\subsection{Representer Theorem and an Equivalence of Norms}

Note that $\mathcal{H}_1$ is also orthogonal to $\mathcal{H}_0$ with respect to the inner product defined in \eqref{eq: def of the inner product for the representer theorem}. Let $P_1: H^2(\M) \rightarrow \mathcal{H}_1$ denote the corresponding $\langle\cdot,\cdot\rangle_{R}$-orthogonal projection. Then, for every $f=f_0+f_1$ with $f_j\in\mathcal{H}_j$ for $j=0,1$, we have
\begin{align*}
    \Vert \nabla^2 f\Vert_{L^2(\M)}^2 = \Vert \nabla^2 f_1\Vert_{L^2(\M)}^2 = \Vert f_1\Vert_{R_1}^2 = \Vert P_1 f\Vert_{R_1}^2 = \Vert P_1 f\Vert_{R}^2.
\end{align*}
In addition, let $\{\phi_j\}_{j\in[d_0]}$ be a linear basis of $\mathcal{H}_0$, where $d_0:=\dim\mathcal{H}_0$.

Given the preceding preparations, we now present the representer theorem for the minimization problem stated in \eqref{eq: argmin via RKHS}.
\begin{theorem}\label{thm: wahba representation theorem}
Suppose we observe a point cloud $\{\boldsymbol{X}_i=(X_{i1},\ldots,X_{iD})^\T\}_{i\in[N]}\subseteq\mathbb{R}^D$, and $\Bf_{N,\lambda}^{(n)}=(f_{N,\lambda,1}^{(n)},\ldots, f_{N,\lambda,D}^{(n)})$ has been fitted. Denote
\begin{align}\label{eq: notations for the represeter theorem}
\begin{aligned}
& \boldsymbol{m}_i := \boldsymbol{\pi}_{\Bf_\lambda^{(n)}}\left(\boldsymbol{X}_i\right) \quad \text{ for all }i\in[N], \\
& \boldsymbol{K}:=(K_{ij})_{1\le i,j \le N} \in\mbR^{N \times N},\\ 
&\text{where } K_{ij} = R_1(\boldsymbol{m}_i,\boldsymbol{m}_j), \text{ and $R_1$ is the reproducing kernel of }\mathcal{H}_1, \\
& \boldsymbol{M} := \boldsymbol{K} + N \lambda \boldsymbol{I}_N  \in \mbR^{N\times N}, \text{ where }\boldsymbol{I}_N  \text{ denotes the $N$-by-$N$ identity matrix},\\
& \boldsymbol{T}:=(T_{ij})_{1\le i \le N,\, 1\le j\le d_n} \in \mbR^{N \times d_0}, \\
&\text{where } T_{ij}=\phi_j(\boldsymbol{m}_i), \text{ and }\{\phi_j\}_{j=1}^{d_0} \text{ is a linear basis of } \mathcal{H}_0, \\
& \boldsymbol{X}:= (\boldsymbol{X}_1,\ldots, \boldsymbol{X}_N)^\T=(X_{ij})_{1\le i\le N,\, 1\le j\le D} \in\mathbb{R}^{N\times D},\\
& (\theta_{ij})_{1\le i\le d_0,\, 1\le j\le D} :=(\boldsymbol T^\T \boldsymbol M^{-1} \boldsymbol T)^{-1}\boldsymbol T^\T \boldsymbol M^{-1} \boldsymbol X  \in \mbR^{d_0 \times D},  \\
& (\alpha_{ij})_{1\le i\le N,\, 1\le j\le D} := \boldsymbol M^{-1} \left( \boldsymbol I_N - \boldsymbol T(\boldsymbol T^\T \boldsymbol M^{-1} \boldsymbol T)^{-1} \boldsymbol T^\T \boldsymbol M^{-1} \right)\boldsymbol X \in \mbR^{N \times D}.
\end{aligned}
\end{align}
Then, the minimizing function $\Bf_{N,\lambda}^{(n+1)}=(f_{N,\lambda,1}^{(n+1)},\ldots,f_{N,\lambda,D}^{(n+1)}) : \M \to \mbR^D$  is given by
\begin{align}\label{eq: minimizer formula in the representer theorem}
    f_{N,\lambda,j}^{(n+1)}(\boldsymbol{m}) = \sum_{i=1}^{d_0} \theta_{ij} \cdot \phi_i(\boldsymbol{m}) + \sum_{\ell=1}^N \alpha_{\ell j}\cdot R_1(\boldsymbol{m},\boldsymbol m_\ell).
\end{align}
\end{theorem}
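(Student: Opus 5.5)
The plan is to reduce the A-step \eqref{eq: argmin via RKHS} to $D$ decoupled scalar smoothing-spline problems and then apply the classical representer-theorem argument of \citet[Theorem 1.3.1]{wahba1990spline} in the RKHS $(H^2(\M),\langle\cdot,\cdot\rangle_R)$.

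\textbf{Step 1 (decoupling).} Once $\Bf^{(n)}_{N,\lambda}$ is fixed, the indices $\boldsymbol{m}_i$ are fixed points of $\M$. Both the squared Euclidean norm and $\Vert\nabla^2\Bf\Vert^2_{L^2(\M)}=\sum_j\Vert\nabla^2 f_j\Vert^2_{L^2(\M)}$ split over coordinates. Hence the objective equals $\sum_{j=1}^D J_j(f_j)$, where
\[
J_j(f)=\frac1N\sum_{i=1}^N\{X_{ij}-f(\boldsymbol{m}_i)\}^2+\lambda\Vert P_1 f\Vert_{R_1}^2 .
\]
It is therefore enough to minimize each $J_j$ over $H^2(\M;\mathbb{R}^1)$ separately.

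\textbf{Step 2 (finite-dimensional reduction).} Write $f=f_0+f_1$ with $f_0\in\mathcal{H}_0$ and $f_1\in\mathcal{H}_1$. The space $\mathcal{H}_0$ is spanned by $\{\phi_k\}_{k\in[d_0]}$ by Lemma~\ref{lemma: finite dimensionality of the kernel space}. Decompose $f_1=\sum_\ell\alpha_\ell R_1(\cdot,\boldsymbol{m}_\ell)+h$, where $h\in\mathcal{H}_1$ is $\langle\cdot,\cdot\rangle_{R_1}$-orthogonal to every $R_1(\cdot,\boldsymbol{m}_\ell)$. By the reproducing property, $h(\boldsymbol{m}_i)=\langle h,R_1(\cdot,\boldsymbol{m}_i)\rangle_{R_1}=0$, so $h$ does not change the data-fit term. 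Meanwhile the penalty grows by $\lambda\Vert h\Vert^2_{R_1}$. A minimizer thus has $h=0$ and takes the form \eqref{eq: minimizer formula in the representer theorem}. Substituting this form gives, in matrix notation for column $j$,
\[
\frac1N\Vert \boldsymbol{X}_{\cdot j}-\boldsymbol{T}\boldsymbol{\theta}_{\cdot j}-\boldsymbol{K}\boldsymbol{\alpha}_{\cdot j}\Vert^2+\lambda\,\boldsymbol{\alpha}_{\cdot j}^\T\boldsymbol{K}\boldsymbol{\alpha}_{\cdot j},
\]
using \eqref{eq: L2 representation of R1} for the penalty.

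\textbf{Step 3 (solving the quadratic).} Setting the gradients to zero gives
\[
\boldsymbol{K}\{(\boldsymbol{K}+N\lambda \boldsymbol{I}_N)\boldsymbol{\alpha}+\boldsymbol{T}\boldsymbol{\theta}-\boldsymbol{X}\}=0,\qquad \boldsymbol{T}^\T(\boldsymbol{K}\boldsymbol{\alpha}+\boldsymbol{T}\boldsymbol{\theta}-\boldsymbol{X})=0 .
\]
Following Wahba, I would take the particular solution of
\[
\boldsymbol{M}\boldsymbol{\alpha}+\boldsymbol{T}\boldsymbol{\theta}=\boldsymbol{X},\qquad \boldsymbol{T}^\T\boldsymbol{\alpha}=0 .
\]
This pair satisfies both equations: the first is immediate, and the second follows because $\boldsymbol{T}^\T(\boldsymbol{K}\boldsymbol{\alpha}+\boldsymbol{T}\boldsymbol{\theta}-\boldsymbol{X})=-N\lambda\boldsymbol{T}^\T\boldsymbol{\alpha}=0$. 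Solving gives $\boldsymbol{\alpha}=\boldsymbol{M}^{-1}(\boldsymbol{X}-\boldsymbol{T}\boldsymbol{\theta})$, and imposing $\boldsymbol{T}^\T\boldsymbol{\alpha}=0$ yields exactly the displayed $\boldsymbol{\theta}$ and $\boldsymbol{\alpha}$ in \eqref{eq: notations for the represeter theorem}. Since the problem is convex, any stationary point is a minimizer. Different stationary $\boldsymbol{\alpha}$ differ only by elements of $\ker \boldsymbol{K}$, and these give the same function because $\boldsymbol{\alpha}^\T\boldsymbol{K}\boldsymbol{\alpha}=\Vert\sum_\ell\alpha_\ell R_1(\cdot,\boldsymbol{m}_\ell)\Vert^2_{R_1}$. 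So the resulting function is the minimizer.

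\textbf{Main obstacle.} The delicate step is invertibility. The matrix $\boldsymbol{M}$ is positive definite because $\boldsymbol{K}\succeq0$ and $\lambda>0$. However, $\boldsymbol{T}^\T\boldsymbol{M}^{-1}\boldsymbol{T}$ is invertible only if $\boldsymbol{T}$ has full column rank, i.e., no nonzero element of $\mathcal{H}_0$ vanishes at all the $\boldsymbol{m}_i$. I would state this as an implicit (generic) condition on the projection indices: for example, enough of them in general position relative to the at most $(1+d)$-dimensional space $\mathcal{H}_0$. It is also needed for uniqueness of the minimizer; without it, the formula should be read with a generalized inverse.
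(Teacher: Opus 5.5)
Your proposal is correct and takes the same route as the paper. The paper also decouples into $D$ scalar spline problems and then invokes Wahba's representer theorem (Theorem 1.3.1 of Wahba, 1990); your Steps 1--3 reconstruct Wahba's argument in the RKHS $(\mathcal{H}_0\oplus\mathcal{H}_1,\langle\cdot,\cdot\rangle_R)$. Your observation that $\boldsymbol{T}^\T\boldsymbol{M}^{-1}\boldsymbol{T}$ is invertible, and the minimizer unique, only when $\boldsymbol{T}$ has full column rank (no nonzero element of $\mathcal{H}_0$ vanishes at every $\boldsymbol{m}_i$) is exactly the hypothesis Wahba's theorem requires, which the paper's statement leaves implicit.
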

\noindent Theorem \ref{thm: wahba representation theorem} follows directly from an application of the representer theorem \citep[][Theorem 1.3.1]{wahba1990spline}. 

To tailor the application of Theorem \ref{thm: wahba representation theorem} to a particular template manifold $\M$, we must identify the reproducing kernel $R_1$ of the RKHS $\mathcal{H}_1$ defined in \eqref{eq: def of H_1}. For three particular template manifolds: 
\begin{itemize}
    \item $\mathbb S^1 =$ unit circle, 
    \item $\mathbb S^2 = $ unit sphere, and 
    \item the flat unit interval $[0,1]$,
\end{itemize}
the corresponding reproducing kernels are known \citep[e.g.,][]{wahbacircle, wahbasphere, wahba1990spline, hsing2015theoretical}. The results are restated in the following corollary.

\begin{corollary}\label{cor: minimizerkernels} 
The reproducing kernel $R_1$ of $\mathcal{H}_1$, as defined in \eqref{eq: def of the inner product associated with R1}, and the matrix $\boldsymbol T$ defined in \eqref{eq: notations for the represeter theorem} will vary based on template manifold $\M$ in accordance to the following:
\begin{enumerate}
    \item When $\M = [0,1] \subset \mbR$, we have that
    \begin{align*}
        R_1(s,t) =  \frac13\min(s,t)^3 - \frac12\min(s,t)^2(s+t) + \min(s,t)(st), \quad \text{ and }\quad \boldsymbol{T} = \begin{pmatrix}
            1 & m_1 \\
            1 & m_2 \\
            \vdots & \vdots \\
            1 & m_N
        \end{pmatrix}.
    \end{align*}

    \item Let $\M = \mathbb S^1$ and parameterize it by the interval $[0,1).$ For $s, t \in [0,1)$ we have
    $$R_1(s,t) = -\frac{1}{24}B_4([s-t]),  \quad \text{ and }\quad \boldsymbol{T} =(1,1,\ldots,1)^\T,$$ where $B_4(t)=t^4 - 2t^3 + t^2 - \frac{1}{30}$ is the fourth Bernoulli polynomial and $[t] = t - \lfloor t \rfloor$ denotes the fractional part operator.

    \item Let $\M = \mathbb S^2$ and parameterize it by unit vectors $\boldsymbol s,\boldsymbol t \in \mbR^3$. There is a topologically equivalent norm on $H^2(\M)$ under which we have the closed form reproducing kernel
    $$R_1(s,t) = \frac{1}{4\pi}  \left(q_2(\boldsymbol s^\T \boldsymbol t) - \frac{1}{3}\right),  \quad \text{ and }\quad \boldsymbol{T} =(1,1,\ldots,1)^\T \in \mbR^N,$$ where 
    $$q_2(t)=\begin{cases}
        0.5 & t=1 \\ 
        0.5 \left( \log\left(1 + \sqrt{\frac{2}{1-t}} \right)(1-4t+3t^2) - 3\sqrt{2(1-t)^3}+4-3t \right) & t <1.
    \end{cases}$$
    
\end{enumerate}
\end{corollary}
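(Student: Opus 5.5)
The plan is to treat the three templates separately, reducing each to a known RKHS computation. The one structural fact used throughout is that the minimizer in Theorem~\ref{thm: wahba representation theorem} depends only on three things: the null space $\mathcal{H}_0$, the penalty $\Vert \nabla^2 f\Vert^2_{L^2(\M)}$, and equivalent renormalizations absorbed into $\lambda$. It does not depend on which complement of $\mathcal{H}_0$ is used, provided the penalty equals the squared norm of the component in that complement. We may therefore replace the $L^2$-orthogonal complement $\mathcal{H}_1$ by any convenient complement on which $\Vert\nabla^2 f\Vert_{L^2}$ is a Hilbert norm dominating point evaluation, and then read off its kernel. In each case we first identify $\mathcal{H}_0$ using Lemma~\ref{lemma: finite dimensionality of the kernel space}. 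This also gives the matrix $\boldsymbol{T}$ as the evaluations of a basis of $\mathcal{H}_0$.

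For $\M=[0,1]$ we have $\nabla^2 f=f''$, so $\mathcal{H}_0=\mathrm{span}\{1,m\}$ and $\boldsymbol{T}$ has rows $(1,m_i)$. We take the complement $\{f\in H^2: f(0)=f'(0)=0\}$. Taylor's formula with integral remainder gives
\[
f(s)=f(0)+f'(0)s+\int_0^1 (s-u)_+ f''(u)\,du .
\]
Hence the representer of evaluation at $s$ is $R_1(\cdot,s)$ with $\partial_u^2R_1(u,s)=(s-u)_+$, so that $R_1(s,t)=\int_0^1(s-u)_+(t-u)_+\,du$. Integrating over $u\le\min(s,t)$ then yields the stated cubic.

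For $\M=\mathbb{S}^1$, parametrized by $t\in[0,1)$, the Hessian is $f''$ up to the constant factor $(2\pi)^{-2}$ coming from arc length; this factor is absorbed into $\lambda$. Periodicity kills the linear term, so $\mathcal{H}_0$ consists of the constants and $\boldsymbol{T}=(1,\ldots,1)^\T$. Here $\mathcal{H}_1$ is the mean-zero subspace. In Fourier coefficients $f=\sum_{k\neq0}c_ke^{2\pi ikt}$ the norm is $\sum_{k\ne0}(2\pi k)^4|c_k|^2$. Consequently
\[
R_1(s,t)=\sum_{k\neq0}\frac{e^{2\pi ik(s-t)}}{(2\pi k)^4}.
\]
This equals $-B_4([s-t])/24$ by the classical Fourier expansion of the Bernoulli polynomials, $B_{2n}([x])=(-1)^{n+1}2(2n)!\sum_{k\ge1}\cos(2\pi kx)/(2\pi k)^{2n}$.

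For $\M=\mathbb{S}^2$, a function with $\nabla^2f=0$ has a parallel gradient field. A closed manifold with positive curvature admits no nonzero parallel vector field, so $\mathcal{H}_0$ consists of the constants and $\boldsymbol{T}=(1,\ldots,1)^\T$. Since $\partial\M=\emptyset$ and $\mathrm{Ric}=g$, Bochner's formula \eqref{Bochner's formula} combined with the Poincaré inequality gives, on mean-zero $f$,
\[
\Vert\nabla^2f\Vert^2\le\Vert\Delta f\Vert^2=\Vert\nabla^2f\Vert^2+\Vert\nabla f\Vert^2\le C\Vert\nabla^2 f\Vert^2 .
\]
(For the last inequality, integrate by parts to get $\Vert\nabla f\Vert^2\le\Vert f\Vert\Vert\Delta f\Vert$, then use Poincaré.) So the norm $\Vert\Delta f\Vert$ is topologically equivalent to the original one. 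Expanding in spherical harmonics, this norm is diagonal, with weight $(l(l+1))^2$ on degree $l\ge1$. Any norm whose weights $\rho_l$ satisfy $\rho_l\asymp l^4$ is again equivalent, and the corresponding kernel is
\[
\frac{1}{4\pi}\sum_{l\ge1}\frac{2l+1}{\rho_l}P_l(\boldsymbol{s}^\T\boldsymbol{t})
\]
by the addition theorem.

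Choosing Wahba's weights, for which $(2l+1)/\rho_l$ are the coefficients in the Legendre expansion of $q_2$, makes the series sum to $\frac{1}{4\pi}\bigl(q_2(\boldsymbol s^\T\boldsymbol t)-\tfrac13\bigr)$. The subtracted constant is exactly the $l=0$ term, $q_2$'s Legendre coefficient on $P_0$ (equivalently $\tfrac12\int_{-1}^1 q_2$), removed because it belongs to $\mathcal{H}_0$. This closed-form summation is the main obstacle. I would rely on Wahba's generating-function computation, which integrates $\sum_l h^lP_l(z)=(1-2hz+h^2)^{-1/2}$ twice in $h$, rather than rederive it. I would then check only two things: that her weights are $\asymp l^4$, and that the constant term equals $1/3$.
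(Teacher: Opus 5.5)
Your proposal is correct and follows essentially the paper's route. For $[0,1]$ the paper likewise uses Wahba's Green's function $(t-u)_+$ and integrates $\int_0^{\min(s,t)}(s-u)(t-u)\,du$. For $\mathbb{S}^1$ and $\mathbb{S}^2$ it cites Wahba's kernels, and on the sphere it proves the Hessian/Laplacian equivalence via Bochner's formula and spherical harmonics. Your Bochner-plus-Poincaré version of that step closes only because $\lambda_1=2>1$, which you should say explicitly. Your Fourier--Bernoulli and addition-theorem computations fill in what the paper delegates to citations.

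Your remark that the cubic kernel reproduces the complement $\{f: f(0)=f'(0)=0\}$, and not the $L^2$-orthogonal complement $\mathcal{H}_1$ of \eqref{eq: def of H_1}, is correct. Indeed $R_1(\cdot,s)\ge 0$ with positive integral, so it is not $L^2$-orthogonal to constants. Your observation that this choice of complement does not change the minimizer in Theorem~\ref{thm: wahba representation theorem} addresses a point the paper's proof glosses over.
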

\noindent  Item (i) of Corollary~\ref{cor: minimizerkernels} is discussed in \cite{wahba1990spline} but its precise form requires a further derivation (see Appendix \ref{Appendix: Proofs}). Items (ii) of Corollary~\ref{cor: minimizerkernels} is directly  adapted from \cite{wahbacircle}. Item (iii) is from \cite{wahbasphere} but necessitates a discussion on what is meant by ``topologically equivalent norm on $H^2(\mbS^2)$.'' Accordingly, let $g$ be the standard Riemannian metric associated with $\mbS^2$. In \cite{wahbasphere}, the penalty of interest is technically the Laplace Beltrami operator $\Delta$, defined in \eqref{eq: def of Laplace-Beltrami operator}, on the surface, i.e.,
\begin{align}\label{eq: Laplace-Beltrami penalty}
    \norm{\Delta \Bf}_{L^2(\mbS^2)}^2 =\sum_{i=1}^D \int_{\mbS^2} \left\vert \Delta f_i \right\vert^2 d\vol_g
\end{align}
where $\Bf = (f_1,f_2,\ldots,f_D)$ and $f_i\in H^2(\mathbb{S}^2)$ for $i\in[D]$. This is not the same as the penalty operator given by the $L^2$ norm of the Hessian, $\norm{\nabla^2 \Bf}_{L^2(\mbS^2)}$, as defined in \eqref{eq: L2 norm of Hf}. We demonstrate a result that bridges these two penalties. 
\begin{lemma}\label{lemma: hessian laplace equivalence}
    Suppose $\Bf \in H^2(\mbS^2)$. Then,
    $$\norm{\nabla^2 \Bf}_{L^2(\mbS^2)} \leq \norm{\Delta \Bf}_{L^2(\mbS^2)} \leq \sqrt 2 \norm{\nabla^2 \Bf}_{L^2(\mbS^2)}.$$
\end{lemma}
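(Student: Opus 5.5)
Both inequalities reduce to the scalar case. Since $\|\nabla^2\Bf\|^2_{L^2(\mbS^2)}=\sum_{j}\|\nabla^2 f_j\|^2_{L^2(\mbS^2)}$ and $\|\Delta\Bf\|^2_{L^2(\mbS^2)}=\sum_j\|\Delta f_j\|^2_{L^2(\mbS^2)}$, it suffices to prove, for scalar $f\in H^2(\mbS^2)$,
\begin{align*}
\|\nabla^2 f\|^2_{L^2(\mbS^2)}\le \|\Delta f\|^2_{L^2(\mbS^2)}\le 2\,\|\nabla^2 f\|^2_{L^2(\mbS^2)} .
\end{align*}
Summing over $j$ and taking square roots then gives the lemma. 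I will prove these for smooth $f$ and extend by density. Smooth maps are dense in $H^2(\mbS^2)$ by definition, and both sides are continuous in the $H^2$ norm: the Hessian side trivially, and the Laplacian side because $|\Delta f|^2\le 2|\nabla^2 f|_g^2$ pointwise (see below).

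\emph{Upper bound.} I will argue pointwise. At each $\boldsymbol m$, take a $g$-orthonormal basis $e_1,e_2$ of $T_{\boldsymbol m}\mbS^2$. Then $\Delta f=\nabla^2f(e_1,e_1)+\nabla^2 f(e_2,e_2)$ by \eqref{eq: def of Laplace-Beltrami operator}. Cauchy--Schwarz in $\mathbb R^2$ gives
\begin{align*}
|\Delta f|^2\le 2\bigl(\nabla^2f(e_1,e_1)^2+\nabla^2f(e_2,e_2)^2\bigr)\le 2|\nabla^2 f|_g^2 ,
\end{align*}
where the last step uses \eqref{eq: vert g norm using bases}. Integrating against $d\vol_g$ gives the right-hand inequality. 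This step needs neither curvature nor closedness.

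\emph{Lower bound.} I will invoke Bochner's formula \eqref{Bochner's formula}, which applies because $\partial\mbS^2=\emptyset$. It gives
\begin{align*}
\|\Delta f\|^2_{L^2}=\|\nabla^2 f\|^2_{L^2}+\int_{\mbS^2}\mathrm{Ric}(\nabla f,\nabla f)\,d\vol_g .
\end{align*}
On the unit sphere with the standard metric, the sectional curvature is $1$, so $\mathrm{Ric}=(d-1)g=g$. The curvature integral therefore equals $\int|\nabla f|^2\,d\vol_g\ge 0$, and the left-hand inequality follows. (In fact this yields the stronger identity $\|\Delta f\|^2=\|\nabla^2 f\|^2+\|\nabla f\|^2$.)

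The main point needing care is justifying Bochner's integrated identity for $H^2$ rather than smooth functions. I expect no real obstacle there. The identity comes from integrating the pointwise Bochner formula $\tfrac12\Delta|\nabla f|^2=|\nabla^2 f|^2+\langle\nabla f,\nabla\Delta f\rangle+\mathrm{Ric}(\nabla f,\nabla f)$ over the closed manifold and integrating by parts. It holds for smooth $f$, and every term is continuous in the $H^2$ topology. Here $\|\nabla f\|_{L^2}$ is controlled by $\|f\|_{H^2}$ via interpolation, or simply via $\int|\nabla f|^2=-\int f\Delta f$ together with the pointwise bound above. Passing to the limit along a smooth approximating sequence then completes the proof.
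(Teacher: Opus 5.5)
Your proof is correct. The scalar reduction and the lower bound are the same as in the paper; your reduction works with squared norms, which is the right way to pass to the vector-valued norm. The lower bound in both cases comes from Bochner's formula with $\mathrm{Ric}=g$. The genuine difference is the upper bound $\|\Delta f\|_{L^2}\le\sqrt2\,\|\nabla^2 f\|_{L^2}$.

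The paper obtains it spectrally. Expanding $f$ in spherical harmonics and using $\lambda_\ell=\ell(\ell+1)\ge 2$, it shows $\|\nabla f\|_{L^2}^2=\sum\lambda_\ell a_{\ell m}^2\le\frac12\sum\lambda_\ell^2 a_{\ell m}^2=\frac12\|\Delta f\|_{L^2}^2$. It then substitutes this into the Bochner identity $\|\nabla^2 f\|^2=\|\Delta f\|^2-\|\nabla f\|^2$. You instead use the pointwise trace inequality $|\operatorname{tr}_g\nabla^2 f|^2\le 2|\nabla^2 f|_g^2$.

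Your route is shorter and more general:
\begin{itemize}
\item it needs no curvature, no closedness and no knowledge of the spectrum;
\item it holds almost everywhere directly for $H^2$ functions;
\item it works on any surface, and gives $\sqrt d$ in dimension $d$;
\item it supplies the $H^2$-continuity of $f\mapsto\|\Delta f\|_{L^2}$, which the density step requires and the paper leaves implicit.
\end{itemize}
Nothing is lost relative to the paper. Combining your pointwise bound with Bochner's identity gives $\|\nabla f\|_{L^2}^2\le\frac12\|\Delta f\|_{L^2}^2$. This is the spectral gap $\lambda_1\ge 2$ (Lichnerowicz's estimate) that the paper takes as input. In both approaches the constant $\sqrt2$ is attained by restrictions of linear functions, for which $\nabla^2 f=-f\,g$.

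Your last paragraph, on extending Bochner's identity itself to $H^2$, is unnecessary. The inequalities for smooth $f$, together with the continuity of both sides that you already established, suffice.
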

\noindent The proof is provided in Appendix \ref{Appendix: Proofs}. The result shows that the Laplace-Beltrami-based penalty in \cite{wahbasphere}, i.e., the one defined in \eqref{eq: Laplace-Beltrami penalty}, is equivalent to the Hessian-based penalty on $\mbS^2$. This allows us to use the reproducing kernel associated with the Laplace-Beltrami operator-based penalty $\norm{\Delta \Bf}_{L^2(\mbS^2)}^2$. The notion of ``topologically equivalent norm'' refers to both this equivalence and another equivalence imposed in \citep[][Section 3]{wahbasphere}, allowing for a convenient closed form for the reproducing kernel. This form is the one given in Corollary \ref{cor: minimizerkernels}(iii).

We remark that the above discussion was necessary due to the presence of curvature on the manifold $\mbS^2$ and the desire to leverage reproducing kernels associated with $\Delta$ on the sphere, which is common in the literature. However, a penalty on $\mbS^1$ can also be defined in terms of the Laplace-Beltrami operator.  As $\mbS^1$ is a one-dimensional manifold, we can choose local coordinates $t \in [0,1)$ in which the Riemannian metric can be expressed  as $g = dt\otimes dt$. This implies the Christoffel symbols vanish, so we can express the covariant Hessian and Laplace-Beltrami operator as follows: $$(\nabla ^2 f)_{ij} = \partial_{ij} f - \Gamma_{ij}^k \partial_k f \implies \nabla^2f = f''(t) dt \otimes dt$$
$$\Delta_g  f  = \mathrm{tr}_g (\nabla^2 f) \implies \Delta_g f =f''(t),$$
which follows from \cite[][Example 4.22]{lee2018riemannian} and \eqref{eq: def of Laplace-Beltrami operator}. Computing the integrals, we see that
\begin{align*}
    \norm{\nabla^2 f}^2_{L^2(\mbS^1)} &= \int_{\mbS^1} | \nabla^2 f|_g^2 d\vol_g \\ 
    &= \int_0^1 |f''(t)|^2 | dt \otimes dt|_g^2 dt \\
    &=  \int_0^1 |f''(t)|^2dt,
\end{align*}
and
\begin{align*}
    \norm{\Delta_g f}^2_{L^2(\mbS^1)} &= \int_{\mbS^1} | \Delta_g f |^2 d\vol_g \\ 
    &=  \int_0^1 |f''(t)|^2dt.
\end{align*}
Hence, there is no need to distinguish between the Hessian and Laplace-Beltrami penalties in this case. 

\subsection{Closed Form of the Penalty Term}

Computing $\norm{\nabla^2 \Bf_{N,\lambda}^{(n+1)}}_{L^2(\M)}$ is helpful for visualizing the value of $\mcL_{N,\lambda}(\Bf_{N,\lambda}^{(n+1)})$ as iterations progress (e.g., Figure \ref{fig:interval_S1_full}). The following lemma provides a closed form for the penalty term.
\begin{lemma}\label{lemma:  closed form penalty circle}
    Suppose we have data $\{\boldsymbol{X}_i\}_{i\in[N]} \subseteq \mbR^D$ and $\{\boldsymbol m_i :=\boldsymbol{\pi}_{\Bf_{N,\lambda}^{(n)}}(\boldsymbol{X}_i) \}_{i\in[N]}\subseteq \M$. Using the notations defined in Theorem \ref{thm: wahba representation theorem}, the penalty of the optimal solution is given by
    \begin{align}\label{eq: closed form penalty circle}
    \norm{\nabla^2 \Bf_{N,\lambda}^{(n+1)}}^2_{L^2(\M)} =\sum_{j=1}^D \boldsymbol{\alpha}_{j}^\T\boldsymbol{K}\boldsymbol{\alpha}_j,
    \end{align}
    where $\boldsymbol{\alpha}_j = (\alpha_{1,j},\ldots \alpha_{N,j})^\T$.
\end{lemma}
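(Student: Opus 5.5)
The plan is to work coordinatewise, using the orthogonal decomposition $H^2(\M)=\mathcal{H}_0\oplus\mathcal{H}_1$ and the reproducing property of $R_1$ on $(\mathcal{H}_1,\langle\cdot,\cdot\rangle_{R_1})$. Because $\Vert \nabla^2 \Bf\Vert^2_{L^2(\M)}=\sum_{j=1}^D \Vert \nabla^2 f_j\Vert^2_{L^2(\M)}$ by \eqref{eq: L2 norm of Hf}, it suffices to show, for each fixed $j\in[D]$, that $\Vert \nabla^2 f^{(n+1)}_{N,\lambda,j}\Vert^2_{L^2(\M)}=\boldsymbol{\alpha}_j^\T\boldsymbol{K}\boldsymbol{\alpha}_j$. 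Summing over $j$ then gives \eqref{eq: closed form penalty circle}.

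Fix $j$ and write $f_j=f^{(n+1)}_{N,\lambda,j}$. By the formula \eqref{eq: minimizer formula in the representer theorem} of Theorem \ref{thm: wahba representation theorem}, we have $f_j=h_0+h_1$, where $h_0:=\sum_{i=1}^{d_0}\theta_{ij}\phi_i\in\mathcal{H}_0$ and $h_1:=\sum_{\ell=1}^N\alpha_{\ell j}R_1(\cdot,\boldsymbol{m}_\ell)$. Since each $R_1(\cdot,\boldsymbol{m}_\ell)\in\mathcal{H}_1$ (it is the reproducing kernel of $\mathcal{H}_1$), we have $h_1\in\mathcal{H}_1$. By uniqueness of the direct-sum decomposition, $h_0$ and $h_1$ are therefore exactly the $\mathcal{H}_0$ and $\mathcal{H}_1$ components of $f_j$. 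Because $\nabla^2 h_0=0$ almost everywhere by \eqref{eq: def of H_0}, linearity of $\nabla^2$ gives $\nabla^2 f_j=\nabla^2 h_1$. Hence
\begin{align*}
\Vert \nabla^2 f_j\Vert^2_{L^2(\M)}=\Vert \nabla^2 h_1\Vert^2_{L^2(\M)}=\Vert h_1\Vert^2_{R_1},
\end{align*}
using \eqref{eq: def of the inner product associated with R1}; this is the identity $\Vert\nabla^2 f\Vert^2_{L^2(\M)}=\Vert P_1 f\Vert^2_{R_1}$ recorded before Theorem \ref{thm: wahba representation theorem}.

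It remains to expand $\Vert h_1\Vert^2_{R_1}$ by bilinearity and apply the reproducing property $\langle R_1(\cdot,\boldsymbol{m}_\ell),R_1(\cdot,\boldsymbol{m}_{\ell'})\rangle_{R_1}=R_1(\boldsymbol{m}_\ell,\boldsymbol{m}_{\ell'})=K_{\ell\ell'}$ from \eqref{eq: L2 representation of R1}:
\begin{align*}
\Vert h_1\Vert^2_{R_1}=\sum_{\ell,\ell'=1}^N\alpha_{\ell j}\alpha_{\ell' j}K_{\ell\ell'}=\boldsymbol{\alpha}_j^\T\boldsymbol{K}\boldsymbol{\alpha}_j.
\end{align*}
The only step needing care is confirming that $R_1(\cdot,\boldsymbol{m})$ lies in $\mathcal{H}_1$, so that the $\theta$-part carries no Hessian contribution and no cross terms appear. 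This is exactly where the definition of $R_1$ as the kernel of $\mathcal{H}_1$, rather than of all of $H^2(\M)$, is used.

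For $\mathbb{S}^2$, Corollary \ref{cor: minimizerkernels}(iii) uses the Wahba kernel attached to an equivalent norm. In that case the identity should be read with the penalty defined by that norm, i.e.\ $\Vert\Delta\cdot\Vert$ up to the norm equivalence of Lemma \ref{lemma: hessian laplace equivalence}. I would note this caveat rather than belabor it.
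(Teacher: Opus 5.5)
Your proof is correct and follows the paper's argument: the $\mathcal{H}_0$ part is dropped because its Hessian vanishes, and the remaining norm is expanded by bilinearity, with \eqref{eq: L2 representation of R1} turning each cross term into $K_{\ell\ell'}$. Your caveat about $\mathbb{S}^2$ is a fair point that the paper's proof does not address: the kernel in Corollary~\ref{cor: minimizerkernels}(iii) reproduces an equivalent inner product rather than the Hessian one, so there the identity is exact only for the penalty attached to that norm.
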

\noindent The proof of Lemma \ref{lemma:  closed form penalty circle} is given in Appendix \ref{Appendix: Proofs}.

\section{Projection–Adaptation Algorithm}\label{appendix: Projection-Adaptation Algorithm}

Following Theorem \ref{thm: wahba representation theorem} and its ensuing Corollary~\ref{cor: minimizerkernels}, we now provide the Projection–Adaptation (PA) algorithm in detail in Algorithm \ref{alg: empirical pme}.

\begin{algorithm}[ht]
    \caption{Projection–Adaptation (PA) Algorithm}\label{alg: empirical pme}
    \begin{algorithmic}[1]
        \Require (i) a point cloud $\{\boldsymbol{X}_i\}_{i\in[N]}$; (ii) a choice of $\lambda >0$; (iii) a stopping criterion $\epsilon_{\text{stop}} > 0$; (iv) maximum number of iterations, $\operatorname{itr}$; (v) a choice of template manifold $\M$ satisfying Assumptions \ref{assumption: assumption on the template manifold} and \ref{assumption: topological assumption}.

        \State Initialize projection indices $\boldsymbol m_1,\ldots \boldsymbol m_N\in\M$ (see Section \ref{section: Initialization of the PA algorithm} for details).

        \State Fit the initial spline $\Bf_{N,\lambda}^{(0)} = (f_{N,\lambda,1}^{(0)}, \ldots, f_{N,\lambda,D}^{(0)})$ by using $\boldsymbol X:=(\boldsymbol{X}_1, \ldots, \boldsymbol{X}_N)^\T$, $\lambda$, projection indices $\boldsymbol m_1,\ldots, \boldsymbol m_N$, and the formula in \eqref{eq: minimizer formula in the representer theorem}.


        \State Initialize $\epsilon \gets \epsilon_{\text{stop}} + 1$, $n  \gets 1$, and $D(\Bf_{N,\lambda}^{(0)}) \gets \sum_{i=1}^N\norm{\boldsymbol{X}_i - \Bf_{N,\lambda}^{(0)}(\Bpi_{\Bf_{N,\lambda}^{(0)}}(\boldsymbol{X}_i))}^2$.

        \While{$\epsilon > \epsilon_{\text{stop}}$ and $n < \operatorname{itr}$}
            \State Compute $\Bpi_{\Bf_{N,\lambda}^{(n-1)}}(\boldsymbol{X}_i) =\argmin_{\boldsymbol{m}\in\mathfrak{M}} \norm{\boldsymbol{X}_i - \Bf_{N,\lambda}^{(n-1)}(\boldsymbol m)}$ for all $i \in [N]$ (see Section \ref{section: Computing Projecting Indices} for details).
            \State  Compute $\Bf_{N,\lambda}^{(n)}$ based on $\boldsymbol X=(\boldsymbol{X}_1, \ldots, \boldsymbol{X}_N)^\T$, $\lambda>0$, and projection indices $\{\Bpi_{\Bf_{N,\lambda}^{(n-1)}}(\boldsymbol{X}_i)\}_{i\in[N]}$ (see Theorem \ref{thm: wahba representation theorem} and Section \ref{section: An Approximate Spline Solution for Large N}).
            
            \State Set $D(\Bf_{N,\lambda}^{(n)}) \gets \sum_{i=1}^N\norm{\boldsymbol{X}_i - \Bf_{N,\lambda}^{(n)}(\Bpi_{\Bf_{N,\lambda}^{(n-1)}}(\boldsymbol{X}_i))}^2$
            \State Update $\epsilon \gets | D(\Bf_{N,\lambda}^{(n)})  - D(\Bf_{N,\lambda}^{(n-1)}) | / D(\Bf_{N,\lambda}^{(n-1)}) $
            \State Update $n \gets n+1$
        \EndWhile

        \Return $\fstar_{N,\lambda} \gets \Bf_{N,\lambda}^{(n)}$
    \end{algorithmic}

\end{algorithm}

\subsection{Initialization}\label{section: Initialization of the PA algorithm}
One aspect of Algorithm \ref{alg: empirical pme} requiring further discussion is the initialization of the projection indices---computing the initial $\boldsymbol m_1,\ldots \boldsymbol m_N \in \M$ so that can we can compute the initial function $\Bf_{N,\lambda}^{(0)}$ using the formula in \eqref{eq: minimizer formula in the representer theorem}. One should strategize based on the choice of template manifold $\M$ as well as the structure of the observed point cloud $\{\boldsymbol{X}_i\}_{i \in [N]}$.

\subsubsection{Template $\M=[0,1]$} For the case of $\M = [0,1]$, we find satisfactory results from initialization via ISOMAP \citep{tenenbaum2000global} and then rescaling the ISOMAP coordinates to the interval $[0,1]$, preserving the original order. Given one-dimensional ISOMAP coordinates $y_1, \ldots, y_N \in \mbR$ corresponding to a dataset of $N$ points, the projection indices are given by
$$m_i := \frac{y_i - \min_{i \in [N]} y_i}{\max_{i \in [N]} y_i - \min_{i \in [N]} y_i}.$$

\subsubsection{Template $\M=\mathbb{S}^1$ and Ambient Space $\mathbb{R}^2$} For initialization for the template manifold $ \M = \mbS^1$, by Assumption \ref{assumption: topological assumption} in ambient $\mbR^2$, assuming there is some circular structure, we can project the data onto a circle and find its ordering. This calculation involves centering the data and computing the angle from the origin to the data point relative to the positive $x$-axis. This value can then be rescaled to $[0,1)$. This works well when the underlying manifold satisfies a star-shape-like property. In the literature, subsets $S$ of a vector space $V$ are said to be star-shaped at a point $x \in S$ if for every $y \in S$, the line segment from $x$ to $y$ is inside $S$.

This notion of star-shape will have to be modified for our purposes, since for example, manifolds diffeomorphic to $\mbS^1$ typically have no interior in $\mbR^2$. We hence denote star-shape-like to be the following property. Suppose $\mcM$ is a latent manifold in $\mbR^D$. Let $\boldsymbol x \in \mbR^D$ denote the mean value of $\mcM \subset \mbR^D$. For example, for the case of $\mbS^1 \subset \mbR^2$ centered at the origin, the mean is $\boldsymbol{0}$.  Let $\boldsymbol x \in \mbR^D$ and choose a point $\boldsymbol y \in \mcM$. Draw an infinitely long ray from $\boldsymbol x$ to $\boldsymbol y$. The line should only intersect the manifold once. When the latent manifold satisfies this property, circular projection suffices. But if it does not, the circular projection does not behave well: to remedy this, we recommend applying ISOMAP to get a two-dimensional representation before applying the circular projection.

An illustration of this property is given by Figure  \ref{fig:example_star_convex}. Suppose the latent manifold $\mathcal{M}$ is diffeomorphic to $\M=\mathbb S^1$. The approximate center of $\mcM$ is indicated by the black point. Observe that drawing a vector from the black point to point $p_2$ crosses the manifold $\M$ twice. When this happens, this will lead to an issue exemplified by the following. Suppose $\theta$ denotes the angular distance from the positive $x$-axis (grey dotted line). Such values of $\theta$ determine the ordering induced by the circular projection. Let $\theta_{13}$ denote the angular distance between $p_1$ and $p_3$, and $\theta_{23}$ for $p_2$ and $p_3$. If one were to use the naive circular projection method, it would say that the distance between points $p_1$ and $p_3$ is smaller than that of $p_1$ and $p_2$. Translating the angular values to distances on the manifold, this is incorrect. A consequence of this is that the spline fit may take shortcuts: instead of approximately following the geodesic path between $p_2$ and $p_3$ (necessarily passing through $p_1$), it will cross through the ``interior,'' bypassing $p_1$.

\begin{figure}[ht]
    \centering
    \includegraphics[width=0.45\linewidth]{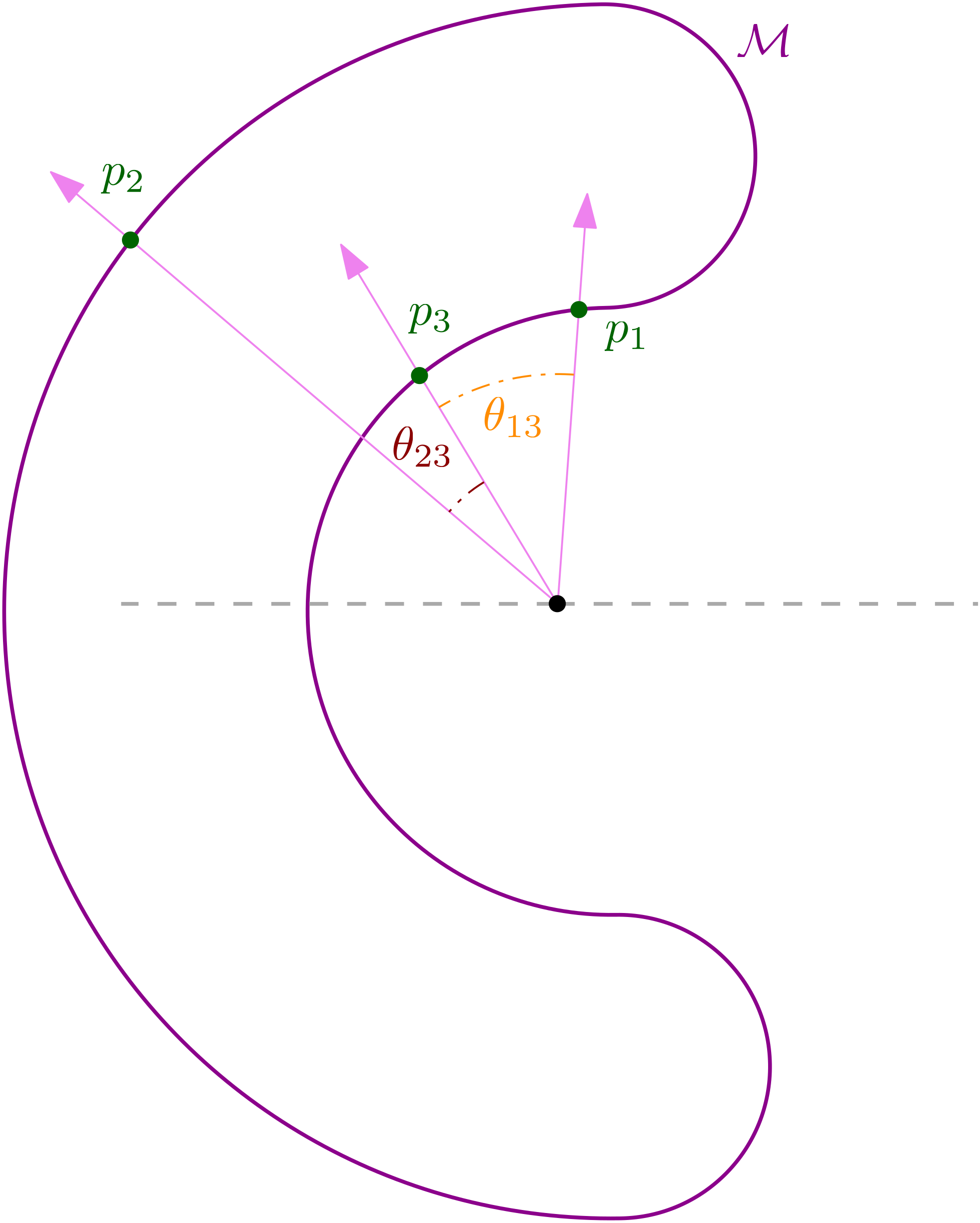}
    \caption{\footnotesize A latent manifold $\mathcal{M}$ with approximate center at the black point. Taking the ordering counterclockwise along the manifold, we have the ordering $(p_1, p_2, p_3)$. Now let $\theta$ denote the angle along the horizontal axis to a point on the manifold, and $\theta_{ij}$ denote the distance between $p_i$ and $p_j$. This way, we see that $\theta_{13} > \theta_{23}$, implying that the circular projection method believes $p_2$ and $p_3$ are closer together than $p_1$ and $p_3$ are, which is incorrect.}
    \label{fig:example_star_convex}
\end{figure}

The reason why ISOMAP can be used to solve this issue is its ability to ``unwind'' point clouds into one that satisfies a star-shape-like property. In Figure \ref{fig:isomap_unwind}, we display a point cloud along a cashew-shaped closed curve and color the points based on the true ordering. If one were to use the circular projection on the left panel directly, we would encounter the issues as described in the previous paragraph. Under ISOMAP, the true ordering is preserved under the circular projection method (as shown in the right panel), resulting in better initializations.
\begin{figure}[ht]
    \centering
    \includegraphics[width=0.75\linewidth]{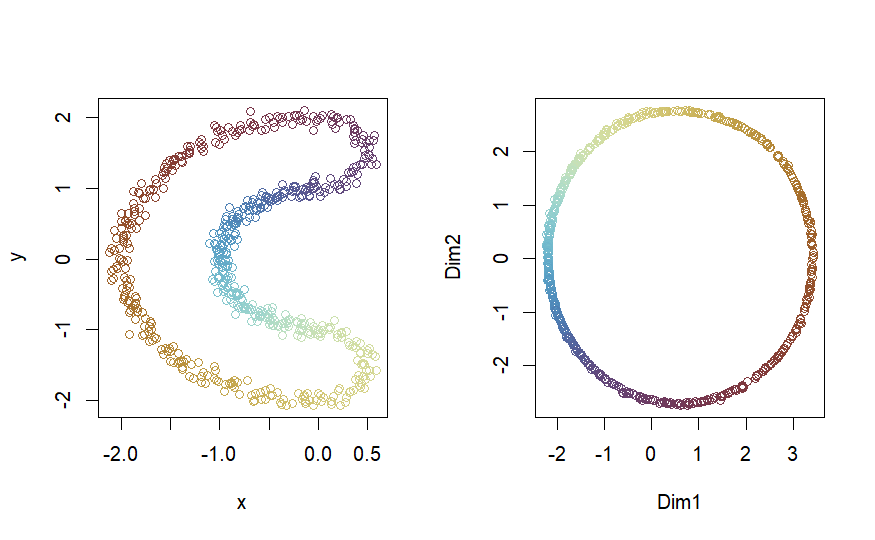}
    \caption{\footnotesize The left figure depicts the original dataset. The right panel shows the point cloud of the corresponding ISOMAP into $2$ dimensions. The color of the plot denotes the data's original ordering.  Observe how the ordering along the curve is preserved.}
    \label{fig:isomap_unwind}
\end{figure}

When it comes to using the PA algorithm, Figure \ref{fig:isomap_unwind_first_vs_converged} demonstrates the poor initialization when ISOMAP is not used on point clouds failing to satisfy the star property. We use the same dataset as in Figure \ref{fig:isomap_unwind}. The left plot shows the initial manifold estimate, and the right side displays the converged estimate. Observe how the ISOMAP method results in a much better initialization than the naive method. Interestingly, once we let the algorithm run to convergence in both cases, we see similar curves that describe the data well. However, in the converged non-ISOMAP case, small high curvature points are present on the curve. This suggests two things: (1) in some cases, the circular projection method can be robust to data not satisfying the star property, but not without consequences (e.g., high curvature points), and (2) using ISOMAP to initialize can lead to a better fit, despite this robustness.

\begin{figure}[ht]
    \centering
    \includegraphics[width=1\linewidth]{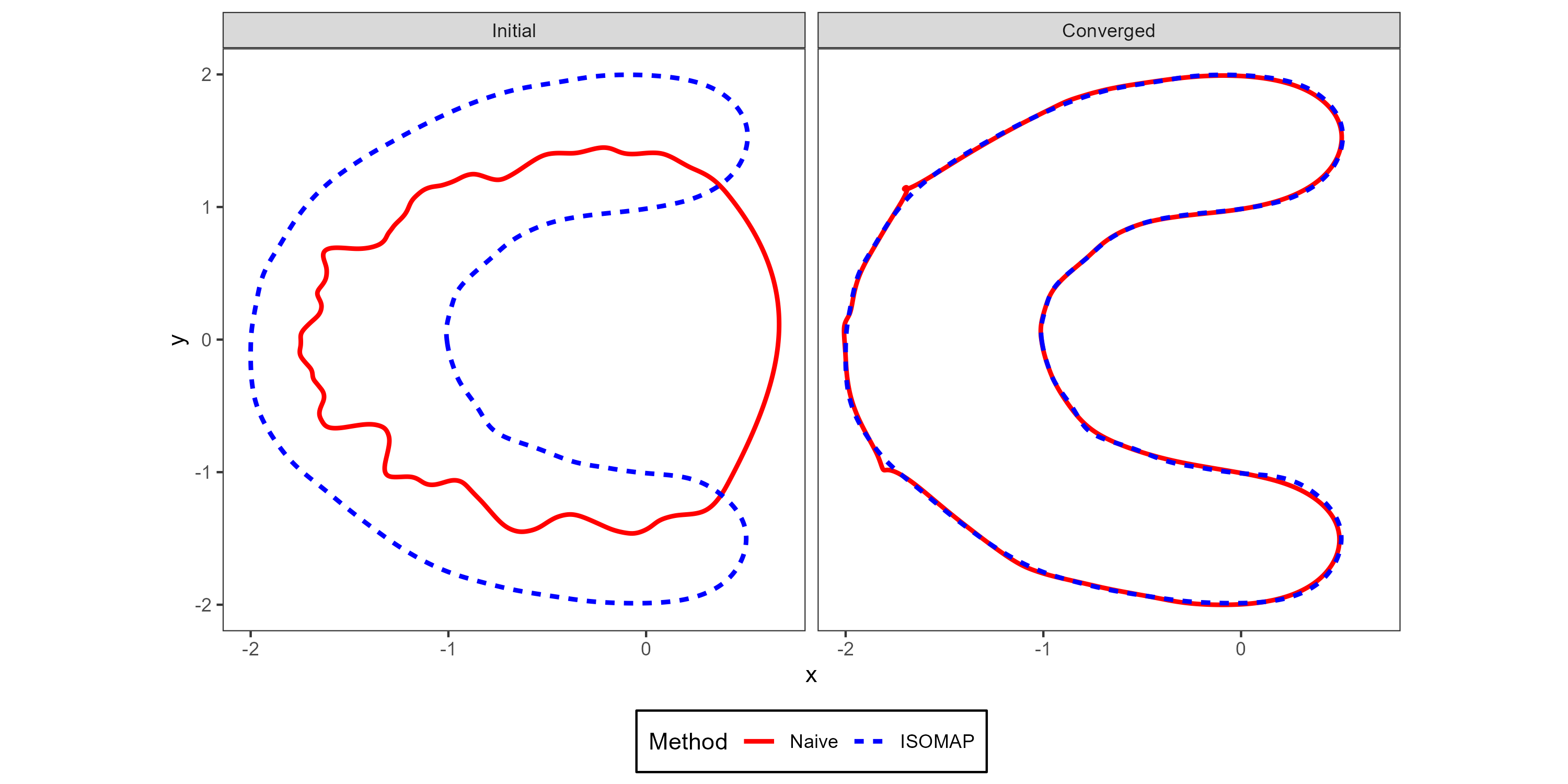}
    \caption{\footnotesize First and Converged estimates of data where the initialization is either through ISOMAP into a circular projection, or just a circular projection on raw data.}
    \label{fig:isomap_unwind_first_vs_converged}
\end{figure}

One may ask, since ISOMAP can be used to find a $d$--dimensional coordinate system to represent the data, why use a two-dimensional representation of the data and then project onto a circle, as opposed to finding a one--dimensional representation through ISOMAP directly? Figure \ref{fig:isomap_1d_vs_2d} demonstrates why. In the case of using a one-dimensional representation directly for periodic data, we see extremely poor results on just the initialization step. Using ISOMAP to acquire a 2-dimensional system and then projecting onto a circle leads to a significantly better initialization. This is always the approach we use when dealing with misbehaved point clouds. Similarly for $\mbS^2$ in $\mbR^3$, despite it being a two-dimensional manifold, we do ISOMAP to get three-dimensional coordinates and then project onto $\mbS^2$ by centering and dividing by the norm.
\begin{figure}[ht]
    \centering
    \includegraphics[width=1\linewidth]{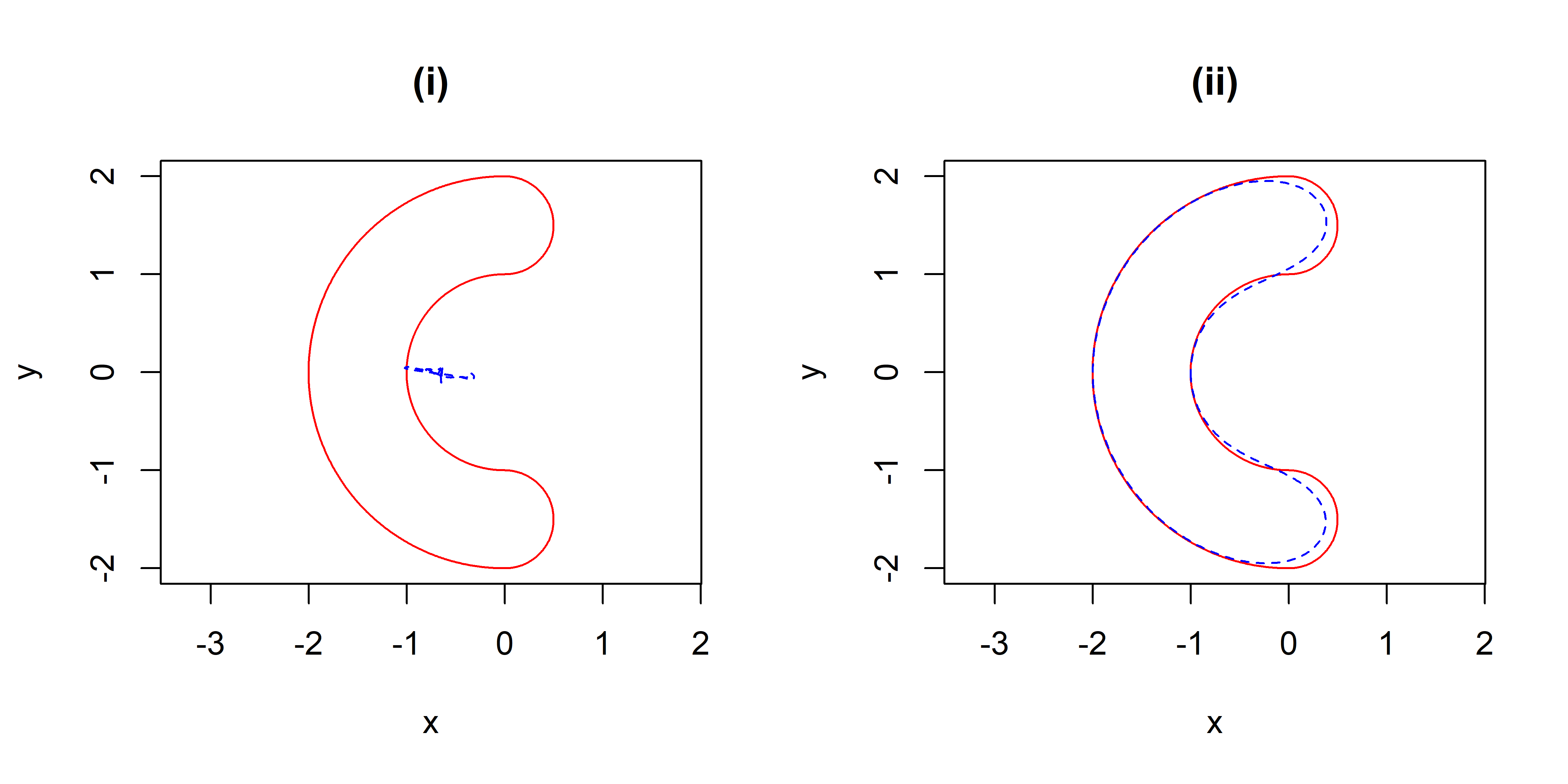}
    \caption{\footnotesize (i) Underlying manifold (red solid curve) displayed alongside the first spline fit with projection indices from direct $1$d ISOMAP initialization (blue dotted curve). (ii) Here the spline fit is instead procured via projection indices from $2$d ISOMAP followed by circular projection.}
    \label{fig:isomap_1d_vs_2d}
\end{figure}

\subsubsection{Template $\M=\mathbb{S}^1$ and Ambient Space $\mathbb{R}^3$} In this situation, we always recommend using a two--dimensional ISOMAP step followed by a circular projection. In ambient $\mbR^3$ space, a non--self--intersecting closed curve can behave erratically, and ISOMAP tends to do a satisfactory job in finding the circular structure and retaining it in a two-dimensional coordinate system.

\subsubsection{Template $\M=\mathbb{S}^2$ and Ambient Space $\mathbb{R}^3$} The approach is similar to that of $\mbS^1$ in $\mbR^2$. Since, in our algorithm, we parameterize by directions in $\mbR^3$ instead of spherical coordinates, our initialization approach is to simply center the data and then normalize the vectors by their Euclidean norm to acquire unit vectors that describe their location on the sphere. A three--dimensional ISOMAP step should be used when the point cloud does not satisfy a star-shape-like property. One then normalizes the ISOMAP coordinate vectors in the aforementioned fashion.

\subsubsection{ISOMAP Parameters}
In all applications of ISOMAP, we must specify the nearest-neighbor parameter $k$. There are theoretically justified methods for choosing $k$, such as studied by \cite{samko2006selection}. But for our purposes, we find that taking the smallest possible $k$ such that the ISOMAP graph is connected usually suffices. It is also possible to perform ISOMAP based on a distance parameter $\epsilon$, though this was not explored in our work.

\subsection{Computing Projecting Indices}\label{section: Computing Projecting Indices}

We discuss the computation of projection indices $\Bpi_{\Bf_{N,\lambda}^{(n)}}(\boldsymbol{X}_i) =\argmin_{\boldsymbol{m} \in \M}\norm{\boldsymbol{X}_i - \Bf_{N,\lambda}^{(n)}(\boldsymbol m)}$ for any $n = 0, 1, 2, \ldots$ and all $\lambda>0$. For the template manifolds $[0,1]$ and $\mathbb S^1$, since they are intrinsically of dimension $1$ and are bounded, the procedure is simple and tractable. We find the best accuracy and speed from doing a partition of the interval.

Accordingly, let $M \in \mbN$ be a large positive integer. We denote $\mcP = \mcP([0,1]) = \{x_j\}_{j=0}^M$ to be a partition of $[0,1]$, i.e., $0=x_0 < x_1 < \ldots < x_M=1.$ The projection index problem can then be reformulated as
$$\hat{\Bpi}_{\Bf_{N,\lambda}^{(n)}}(\boldsymbol{X}_i) =\argmin_{t \in \mcP}\norm{\boldsymbol{X}_i - \Bf_{N,\lambda}^{(n)}(t)}.$$
Due to the finiteness of $\mcP$, it suffices to check the value of $\norm{\boldsymbol{X}_i - \Bf_{N,\lambda}^{(n)}(t)}$ for each $t \in \mcP$ and pick $t$ such that the quantity is minimized. For a sufficiently fine partition, it will typically find a good approximation of the global minimizer. This also applies to $\mathbb S^1$ since we can parameterize it with the interval $[0,1)$ and construct a finite partition as above, with the modification that $t_M=1-\epsilon$ for $\epsilon >0$ small. This is to account for the fact that the endpoints $\{0,1\}$ are identified with each other in $\mbS^1$.

In the case of the template manifold $\mathbb S^2$, the addition of an extra dimension complicates the above approach. It is possible to generate a grid of points on a Fibonacci sphere as a partition of $\mbS^2$ and perform the search as described above. While this can find a global solution, it requires a very fine partition, at which point the method has comparable speed and accuracy to other methods, which we describe next. For $\mathbb S^2$, we recommend solving the projection index problem by use of techniques such as non-linear minimization/maximization (NLM). In our case, we use the \texttt{nlm} function in \texttt{R} \citep{nlmschnabel1985modular, nlmdennis1996numerical}. While a good balance between accuracy and speed, the NLM method may only find a local minimizer. Due to the curvature of the manifold, one may obtain slightly more accurate results by using manifold optimization routines such as in the \texttt{ManifoldOptim} package \cite{martin2020manifoldoptim}. However, in our testing, the \texttt{nlm} approach yielded faster computation than that of manifold optimization with a similar degree of accuracy.

\subsection{An Approximate Spline Solution for Large Sample Sizes}\label{section: An Approximate Spline Solution for Large N}

Suppose we have $N$ many iid observations $\{\boldsymbol{X_i}\}_{i\in[N]}$ of $\mbR^D$-valued random variables associated with some latent manifold $\mathcal{M}.$ Given a choice of template manifold $\M$, we obtain a reproducing kernel $R = R_0 + R_1$ defined by \eqref{eq: def of H_0} and \eqref{eq: def of H_1}. By Theorem \ref{thm: wahba representation theorem}, the optimal $\Bf_{N,\lambda}^{(n)}$, which solves the minimization problem in \eqref{eq: argmin via RKHS}, will be in the span of  $\{R(\cdot, \boldsymbol{m}_i)\}_{i\in[N]}$, where $\{\boldsymbol{m}_i\}_{i\in[N]}$ are the projection indices associated with the data. A necessary step in computing the solution is the evaluation of the matrix of evaluations of the rk $R_1$, $\boldsymbol{K} = \big(R_1(\boldsymbol{m}_i,\boldsymbol{m}_j) \big)_{ij}$ and the inversion of a positive-definite $\boldsymbol M = \boldsymbol K + N\lambda \boldsymbol I_N  \in \mbR^{N\times N}$. The cost of inverting such a matrix is known to be $O(N^3)$, which is extremely costly. As a result, the exact solution making use of the entire span of the reproducing kernel is undesirable for large $N$.

Recall that a single step in the PA algorithm (Algorithm \ref{alg: empirical pme}) is a standard smoothing spline problem where we penalize the second derivative. We discuss two types of approaches that help reduce computation costs when solving for $\fstar_{N,\lambda}$: (1) methods that use the same reproducing kernel basis or (2) methods that use alternative bases.

First, we wish to reduce computation cost by taking the solution to be in the span of a subset of $\{R(\cdot, \boldsymbol{m}_i)\}_{i \in [N]}.$ For an adequately chosen subset size $N' \subset N$ corresponding to points $\{\boldsymbol{m}_{i_1}, \ldots \boldsymbol{m}_{i_{N'}}\} \subset \{\boldsymbol{m}_i\}_{i\in[N]}$, one may achieve a good approximation with a significant reduction in computation costs. This type of strategy is studied by \cite{ma2015efficient} and \cite{meng2020more}. A strategy of similar spirit is used by \cite{meng_principal_2021}, which introduces a density estimation algorithm to obtain a reduced dataset $\{\boldsymbol{X}'_{i}\}_{i \in N'} \subset \mbR^D$---not necessarily a subset of the original data $\{\boldsymbol{X}_{i}\}_{i \in [N]}$---and proceeds with the PA algorithm as usual.

Other methods exist that forgo the reproducing kernel basis. Consider the case of the template manifold $[0,1]$. For a fixed iteration, the minimizer coincides exactly to a natural polynomial cubic spline with knots given by the projection indices $\{\boldsymbol{m}_i\}_{i\in[N]}$ \citep{berlinet2011reproducing}. In the literature, it is standard to further impose a set of basis functions, a common choice being the cubic B-spline basis functions \citep{qiu1995nonparametric}. This provides a great numerical advantage, as evidenced in functions such as \texttt{smooth.spline} in \texttt{R}, which is used in our code to approximate spline fits on the template manifold $[0,1]$. For the template $\mathbb S^1$, we consider splines on the interval $[0,1]$ with a periodic boundary condition. These splines are called cyclic cubic splines, and these splines can be expanded in terms of a Cardinal spline basis \citep{wood2017generalized}. This is the approach implemented in \texttt{mgcv} \citep{wood2015package}. 

These are not the only methods available, as there exists a substantial literature on the efficient computation and approximation of smoothing splines. There remains considerable scope for improving the efficiency of solving the minimization problem in \eqref{eq: argmin via RKHS}.

\section{Additional Simulations}\label{appendix: additional simulations}

This section presents additional numerical experiments that validate the two conclusions of Theorem~\ref{thm: PME with lambda=infty}.

\subsection{One-Dimensional Templates in Ambient Three-Dimensional Space}

\subsubsection{Interval Template}

We plot the algorithm result on a sinusoidal curve as well as a helix in Figure \ref{fig:1d3D_sine_spiral} using the exact method (inversion of a full $N \times N$ matrix by Theorem \ref{thm: wahba representation theorem}). The data are generated by uniform sampling of $t_i \in [0,1]$ of the parameterized curves
\begin{align*}
    & t_i \mapsto \mcO \Bf(t_i) + \boldsymbol \epsilon_i, \\
    & t_i \mapsto \mcO \Bg(t_i) + \boldsymbol \epsilon_i.
\end{align*}
where $\Bf(t) = (t, \sin(2 \pi t))^{\T}$, $\Bg(t) = (\cos(2\pi t),  \sin(2\pi t), 3t)^{\T}$,  $\mcO$ is a $3\times3$ rotation matrix, and $\boldsymbol \epsilon_i$ is a multivariate normal sample with mean $\boldsymbol \mu = (0,0,0)^\T$ and covariance with diagonal entries $0.04$.

Observe how very small $\lambda$ results in underfitting, a moderate choice provides smoothing, while a large $\lambda$ results in a PCA line, a consequence of Theorem \ref{thm: PME with lambda=infty}. We also see that $\lambda = 10^{-6}$ visually provides a good fit to the data.

\begin{figure}[ht]
    \centering
    \includegraphics[width=1\linewidth]{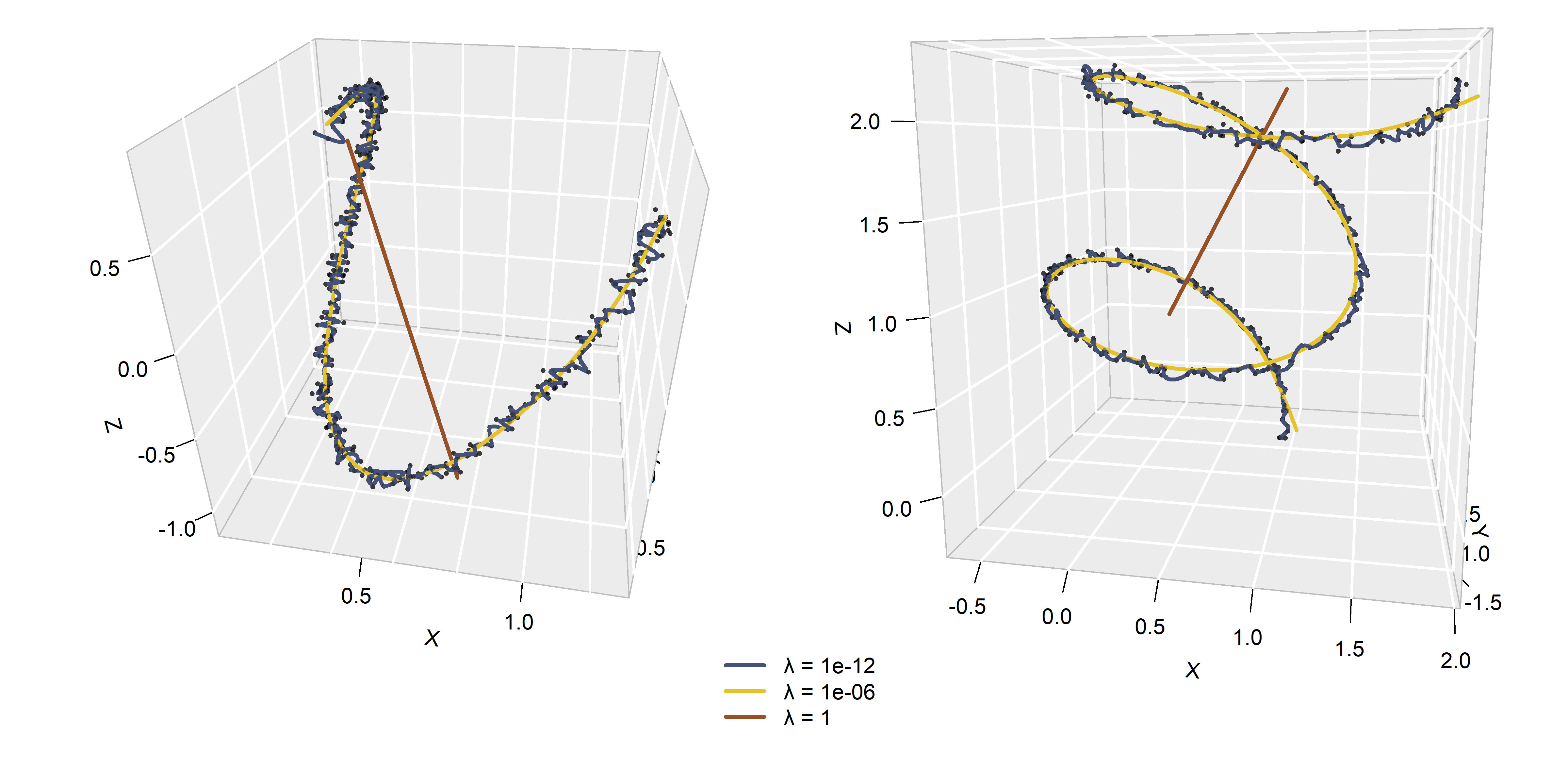}
    \caption{\footnotesize Fits on rotated sinusoidal and helix datasets in ambient $\mbR^3$ space for various values of $\lambda$.}
    \label{fig:1d3D_sine_spiral}
\end{figure}

\subsubsection{Circle Template}
For the case of $\M=\mathbb{S}^1$, we have a similar setup and data-generating method, except from $\Bf$ that are closed parameterized curves. In the case of $\lambda$ being large, $\fstar_{N,\lambda}$ collapses to a sample mean. The data is generated are generated by uniform sampling of $t_i \in [0,1]$ of the parameterized curves
\begin{align*}
    & t_i \mapsto  \Bf(t_i) + \boldsymbol\epsilon_i, \\
    & t_i \mapsto \mcO\Bg(t_i) + \boldsymbol\epsilon_i
\end{align*}
where $\Bf(t) = (r(t) \cos(2 \pi t), r(t) \sin(2\pi t), \sin(3 \pi t))^{\T}$ with $r(t) = 1 + 0.3 \sin(10 \pi t)$,  $\mcO$ is a $3\times3$ rotation matrix, and $\boldsymbol \epsilon_i$ is a multivariate normal sample with mean $\boldsymbol\mu = (0,0,0)^\T$ and covariance with diagonal entries $0.04$. The function $\Bg$ corresponding to the moon shape does not have a closed form when generating and details are deferred to the function $\texttt{moon1d3D}$ in our code, but essentially corresponds to creating two half circles alongside each other, and joining their endpoints with smaller half circles.

\begin{figure}[ht]
    \centering
    \includegraphics[width=1.0\linewidth]{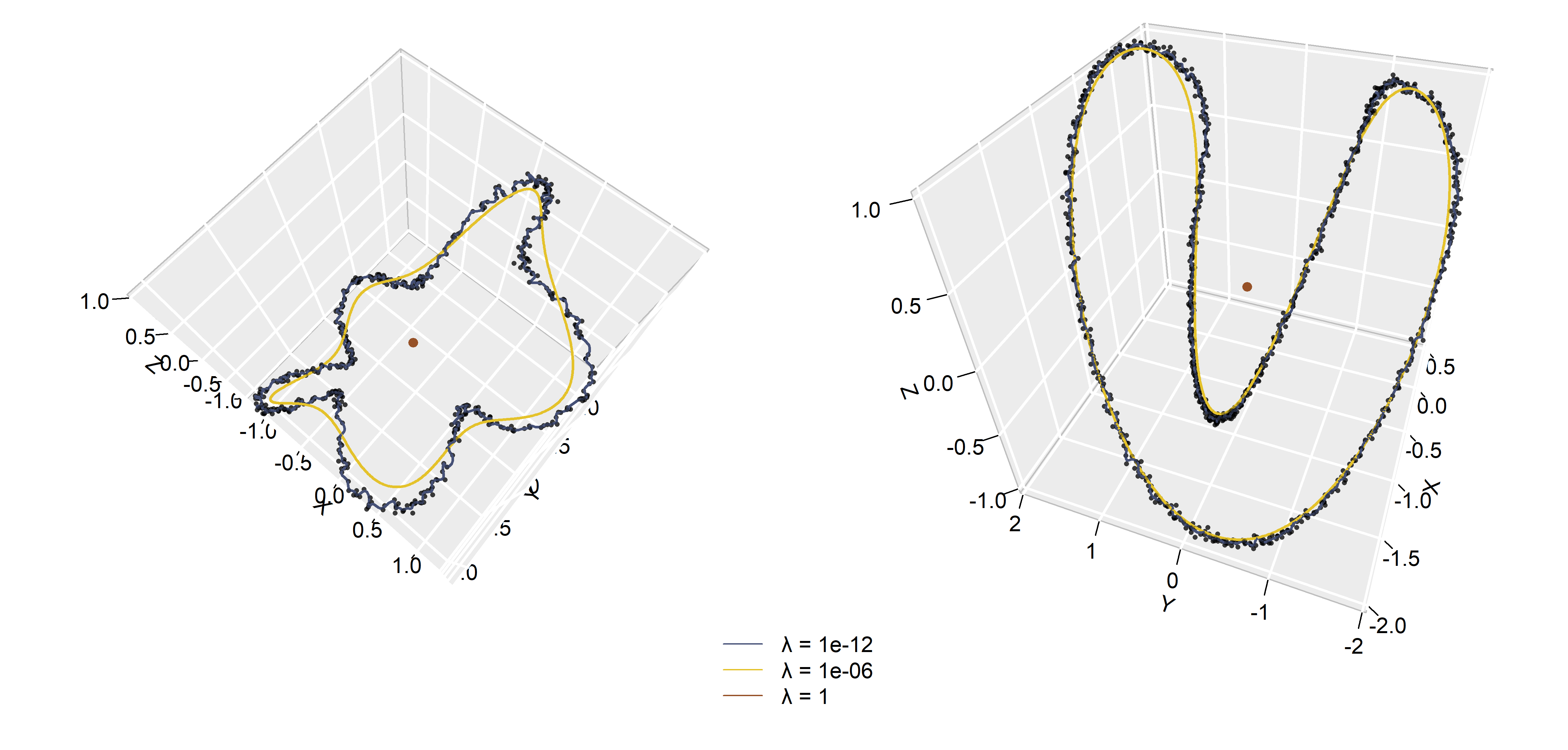}
    \caption{\footnotesize Fits on a star-shaped dataset and a moon-shaped dataset, respectively, for varying levels of $\lambda$}
    \label{fig:1d3D_star_moon}
\end{figure}

The results in Figure \ref{fig:1d3D_star_moon} demonstrate interpolation at $\lambda = 10^{-12}$, smoothing with slight underfitting at $\lambda =  10^{-6}$ next $\lambda$, and collapse to the sample mean at $\lambda =1$. In particular, the case $\lambda=1$ validates Theorem~\ref{thm: PME with lambda=infty}.

\subsection{Asymptotics of the Tuning Parameter}

By Theorem \ref{thm: PME with lambda=infty}, we know what the true $\fstar_{N,\lambda}$ will be. For the case of $[0,1]$, it will take the form of a PCA line, and for closed manifolds such as $\mbS^1$ and $\mbS^2$, it will collapse to the sample mean. We present numerical evidence demonstrating this property as $\lambda$ increases.

For the case of the interval template, there may be infinitely many reparameterizations of a line. Therefore, we use the geodesic distance between two planar curves to assess convergence \citep{srivastava2016functional}. For $\mbS^1$ and $\mbS^2$, since $\fstar_{N, \lambda}$collapses to a point, we use a typical $L^2$ distance metric for functions. The results are demonstrated in figure \ref{fig:dist_lambda_infty}. Observe how in all cases, as $\lambda$ increases, the distance between $\fstar_{N,\lambda}$ and the PCA line (for the interval) or the sample mean (for $\mbS^1$ and $\mbS^2$) approaches $0$.

\begin{figure}[ht]
    \centering
    \includegraphics[width=1\linewidth]{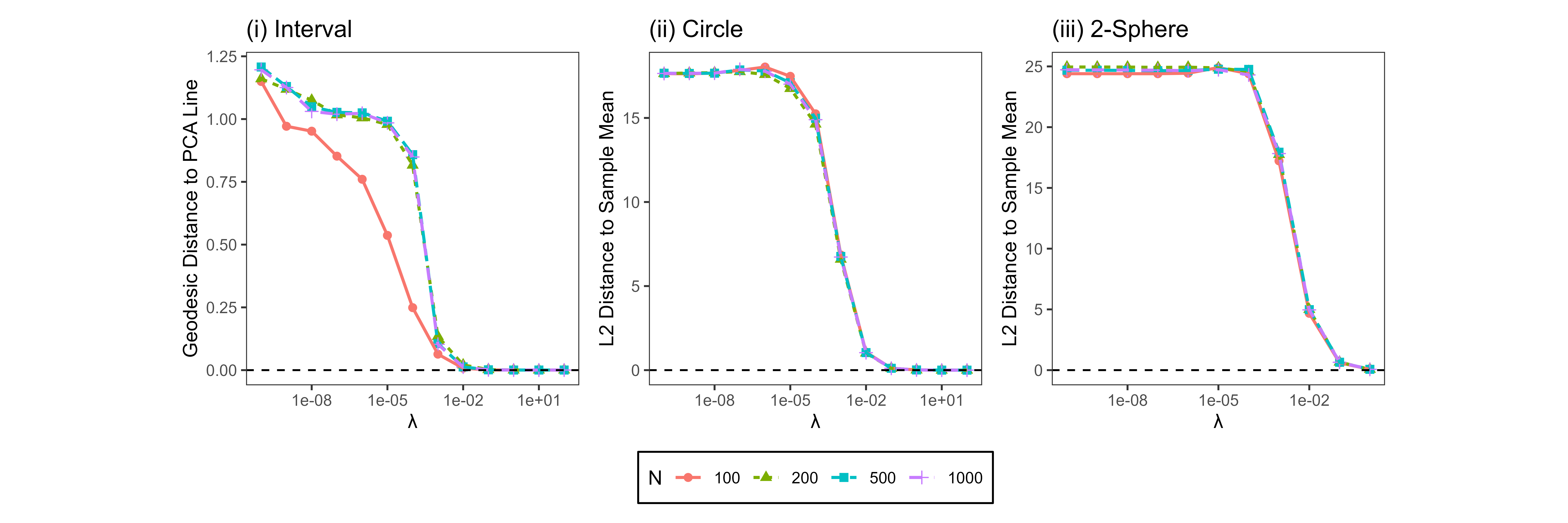}
    \caption{\footnotesize (i): Plot of geodesic distance in the $\M=[0,1]$ case between $\fstar_{N,\lambda}$ and $\fstar_{N,\infty}$ (PCA-line) as a function of $\lambda$, where each line corresponds to a different $N$. (ii) and (iii): $L^2$ distance between  $\fstar_{N,\lambda}$ and $\fstar_{N,\infty}$ (sample mean) as a function of $\lambda$ for the $\M=\mbS^1$ and $\M=\mbS^2$ cases, respectively. }
    \label{fig:dist_lambda_infty}
\end{figure}

\section{Mathematical Proofs}\label{Appendix: Proofs}

In this section, we present the proofs of our proposed theorems and lemmas.

\subsection{Proofs of the Results in Section \ref{section: Mathematical Preliminaries}}

We first review the notion of Hölder spaces on Riemannian manifolds \citep[][Section 2.6]{hebey2000nonlinear}. Given a smooth, compact Riemannian manifold $(\M, g)$ and $0<\alpha\le 1$, let $C^{0,\alpha}(\M)$ be the set of continuous functions $u:\M\rightarrow\mathbb{R}$ for which the norm
\begin{align*}
    \Vert u\Vert_{C^{0,\alpha}(\M)}=\max_{\boldsymbol{m}\in\M} \vert u(\boldsymbol{m})\vert + \max_{\boldsymbol{m}\ne \boldsymbol{m}'} \frac{\vert u(\boldsymbol{m})-u(\boldsymbol{m}')\vert}{d_g(\boldsymbol{m},\,\boldsymbol{m}')^\alpha}
\end{align*}
is finite, where $\alpha$ is called the Hölder exponent, and $d_g(\cdot,\cdot)$ denotes the distance associated with the Riemannian metric $g$. Denote
\begin{align*}
    [u]_{C^{0,\alpha}(\M)}:=\max_{\boldsymbol{m}\ne \boldsymbol{m}'} \frac{\vert u(\boldsymbol{m})-u(\boldsymbol{m}')\vert}{d_g(\boldsymbol{m},\,\boldsymbol{m}')^\alpha}.
\end{align*}
For any two Hölder exponents $0<\alpha<\beta\le 1$, it is easy to verify that $C^{0,\beta}(\M) \subseteq C^{0,\alpha}(\M)$ and $[u]_{C^{0,\alpha}(\M)} \le C_{\alpha,\beta}\cdot [u]_{C^{0,\beta}(\M)}$, where $C_{\alpha,\beta}$ is a constant depending on $\alpha$ and $\beta$. Hereafter, $u$ denotes a real-valued function, in contrast to the vector-valued function $\Bf=(f_1,\ldots,f_D)$.

We begin with a lemma, which serves as preparation for the proof of Theorem \ref{thm: Rellich–Kondrachov embedding}.
\begin{lemma}\label{lemma: general Sobolev inequalities on manifolds}
Let \((\M,g)\) be a smooth, compact \(d\)-dimensional Riemannian manifold. If \(d<4\), we have $H^2(\M)\subseteq C^{0,\frac{1}{3}}(\M)$, and there is \(C_H\) such that $\|u\|_{C^{0,\frac{1}{3}}(\M)}\le C_H\|u\|_{H^2(\M)}$ for all $u\in H^2(\M)$.
\end{lemma}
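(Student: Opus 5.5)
The plan is to reduce to the Euclidean case by a finite atlas and a partition of unity, and then to invoke the classical Sobolev embeddings on bounded Lipschitz domains of $\mathbb{R}^d$ (or half-balls when $\partial\M\neq\emptyset$). By density it suffices to prove the inequality for $u\in C^\infty(\M)$. Once the bound $\|u\|_{C^{0,1/3}(\M)}\le C_H\|u\|_{H^2(\M)}$ holds for smooth $u$, a Cauchy sequence of smooth functions in $H^2(\M)$ is also Cauchy in $C^{0,1/3}(\M)$. This space is a Banach space, so the limit is a H\"older function, and this gives both the inclusion and the inequality.

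\textbf{Step 1 (norm comparison).} Because $\M$ is compact, we first show that the norm $\|u\|_{L^2}+\|\nabla^2 u\|_{L^2}$ controls the full norm $\|u\|_{L^2}+\|\nabla u\|_{L^2}+\|\nabla^2u\|_{L^2}$. Integrating by parts gives
\[
\|\nabla u\|_{L^2}^2=-\int_\M u\,\Delta u\,d\vol_g+\int_{\partial\M}u\,\partial_\nu u .
\]
If $\partial\M=\emptyset$, then $\|\nabla u\|_{L^2}^2\le \sqrt d\,\|u\|_{L^2}\|\nabla^2u\|_{L^2}$, which suffices. If $\partial\M\neq\emptyset$, I would instead use the interpolation inequality $\|\nabla u\|_{L^2}\le \epsilon\|\nabla^2 u\|_{L^2}+C_\epsilon\|u\|_{L^2}$. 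This can be proved locally in charts from the Euclidean interpolation inequality on Lipschitz domains, or by a compactness/contradiction argument. I expect this to be the main obstacle in the bounded case. I would first try the chart-wise Euclidean Gagliardo--Nirenberg inequality, and absorb the lower-order Christoffel terms (the difference between $\partial_i\partial_j u$ and $\nabla^2_{ij}u$ is $\Gamma^k_{ij}\partial_k u$) using the same $\epsilon$-interpolation.

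\textbf{Step 2 (localization).} Next, cover $\M$ by finitely many charts $\phi_\ell:U_\ell\to B$ (balls or half-balls) on which $g_{ij}$ and $g^{ij}$ are uniformly comparable to $\delta_{ij}$ and the Christoffel symbols are bounded, and take a subordinate partition of unity $\{\eta_\ell\}$. In coordinates, $u\circ\phi_\ell^{-1}$ lies in the Euclidean $W^{2,2}(B)$ with norm bounded by $C\|u\|_{H^2(\M)}$, using Step 1 and the formula relating $\nabla^2$ to coordinate second derivatives.

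\textbf{Step 3 (Euclidean embedding and gluing).} For $d\le 3$, Morrey and Sobolev give $W^{2,2}(B)\hookrightarrow C^{0,\gamma}(\overline B)$. Explicitly: for $d=3$, $W^{1,2}\hookrightarrow L^6$ applied to $\nabla u$ gives $W^{2,2}\hookrightarrow W^{1,6}\hookrightarrow C^{0,1/2}$; for $d=2$ and $d=1$ one gets every exponent $\gamma<1$, and $\gamma=1/2$ respectively. In all cases $\gamma\ge 1/2>1/3$, so by the monotonicity of H\"older seminorms recorded above we may take exponent $1/3$ uniformly. On half-balls we use a bounded extension operator. To glue, write $u=\sum_\ell\eta_\ell u$. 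The sup norm is bounded chartwise. For the seminorm, if $d_g(\boldsymbol m,\boldsymbol m')$ is smaller than a Lebesgue number of the cover, both points lie in a common chart, where $d_g$ is comparable to Euclidean distance. Otherwise the difference quotient is at most $2\|u\|_{C(\M)}$ divided by a fixed positive constant. Summing over the finitely many charts then yields the constant $C_H$.
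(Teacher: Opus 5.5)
Your proposal is correct and follows essentially the same route as the paper's proof: a finite atlas with a subordinate partition of unity, the Euclidean general Sobolev inequalities on each chart image, and patching of the local H\"older bounds into a global one. Your Step~1 controls $\|\nabla u\|_{L^2}$ by $\|u\|_{L^2}+\|\nabla^2 u\|_{L^2}$, which is needed because the paper's $H^2$ norm has no first-order term, and your Lebesgue-number gluing of the seminorm makes explicit two points the paper passes over when it cites the Leibniz rule and the patching inequality.
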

\noindent The Hölder exponent \(1/3\) in Lemma~\ref{lemma: general Sobolev inequalities on manifolds} is not optimal, but it is sufficient for the proof of Theorem~\ref{thm: Rellich–Kondrachov embedding}. Lemma~\ref{lemma: general Sobolev inequalities on manifolds} is an analogue of Theorem~2.8 in the monograph by \cite{hebey2000nonlinear}. For the reader's convenience, we include a proof of Lemma~\ref{lemma: general Sobolev inequalities on manifolds}, following the same strategy as employed in the proof of Theorem~2.8 by \cite{hebey2000nonlinear}.

\begin{proof}[\textbf{Proof of Lemma \ref{lemma: general Sobolev inequalities on manifolds}}]
Because the differential manifold \(\M\) is compact and smooth, we choose a finite atlas \(\{(\Omega_s,\varphi_s)\}_{s\in[N]}\) such that:
\begin{itemize}
\item Each \(U_s:=\varphi_s(\Omega_s)\subset\mathbb R^d\) is bounded with \(C^1\) boundary (after shrinking if needed). The definition of the Sobolev space $H^2(U_s)$ on $U_s$ can be found in Section 5.2 of the book by \cite{evans1998pde}.
\item Metric coefficients \(\{g_{ij}^{(s)}\}_{1\le i,j \le d}\) are uniformly comparable to Euclidean metric on each chart, i.e., there exist constants $c_0$ and $C_0$ such that $c_0|\xi|^2\le \sum_{ij} g_{ij}^{(s)}(x)\xi_i\xi_j\le C_0|\xi|^2$.
\item Derivatives of \(g_{ij}^{(s)}\), transition maps, and Jacobians are uniformly bounded (finiteness gives uniform constants).
\end{itemize}
Take \(\{\eta_s\}_{s\in[N]}\subset C^\infty(\M)\), a partition of unity subordinate to \(\{\Omega_s\}_{s\in[N]}\). For every $u\in H^2(\M)$, set $u_s := (\eta_su)\circ \varphi_s^{-1}$ on $U_s$. For each $s\in[N]$, by change-of-variables, there exist constants \(C_1\) and \(C_1'\) such that
\begin{align}\label{eq: local H2 bounded by global H2}
    \|u_s\|_{H^2(U_s)}
\le C_1\|\eta_su\|_{H^2(\M)}
\le C_1'\|u\|_{H^2(\M)}
\qquad \text{ for all } u\in H^2(\M),
\end{align}
where the second inequality uses the Leibniz rule and smooth bounded \(\eta_s\). 

Now, we apply the ``general Sobolev inequalities'' \citep[][Section 5.6.3]{evans1998pde} to each chart. Specifically, for each $s\in[N]$, we have $H^2(U_s)\subseteq C^{0,\frac{1}{3}}(\overline{U_s})$, and there exists a constant $C_2$ such that 
\begin{align}\label{eq: local H^2 embedded into Holder}
    \|u_s\|_{C^{0,\frac{1}{3}}(\overline U_s)}\le C_2\|u_s\|_{H^2(U_s)}.
\end{align}

We patch the local bounds to derive a global bound. Specifically, since \(u=\sum_{s=1}^N\eta_su\), and \(N<\infty\), we have
\begin{align}\label{eq: patching local to get the global}
    \|u\|_{C^{0,\frac{1}{3}}(\M)}
\le \sum_{s=1}^N\|\eta_su\|_{C^{0,\frac{1}{3}}(\M)}
\le C_3\sum_{s=1}^N\|u_s\|_{C^{0,\frac{1}{3}}(\overline U_s)}.
\end{align}
Hence, $u\in C^{0,\frac{1}{3}}(\M)$. Combined with \eqref{eq: local H2 bounded by global H2} and \eqref{eq: local H^2 embedded into Holder}, \eqref{eq: patching local to get the global} implies
\begin{align*}
    \|u\|_{C^{0,\frac{1}{3}}(\M)} \le C_3\cdot C_2\cdot \sum_{s=1}^N \|u_s\|_{H^2(U_s)} \le C_3\cdot C_2\cdot N\cdot C_1' \cdot \|u\|_{H^2(\M)}.
\end{align*}
With $C_H = C_3\cdot C_2\cdot N\cdot C_1'$, we complete the proof of Lemma \ref{lemma: general Sobolev inequalities on manifolds}.
\end{proof}

\begin{proof}[\textbf{Proof of Theorem \ref{thm: Rellich–Kondrachov embedding}}]
Without loss of generality, it suffices to consider the case $H^2(\M)=H^2(\M;\mathbb{R}^1)$. By the definition of $C^{0,\frac13}(\mathfrak M)$, we have 
\begin{align*}
    H^2(\mathfrak M)\subseteq C^{0,\frac13}(\mathfrak M)\subseteq C(\mathfrak M).
\end{align*}
Hence, the inclusion map $\iota:H^2(\mathfrak M)\to C(\mathfrak M)$ is well defined. To prove the compactness of \(\iota\), it suffices to show that every bounded sequence in \(H^2(\mathfrak M)\) admits a subsequence converging in \(C(\mathfrak M)\) \citep[][Section 2.7]{hebey2000nonlinear}.

Let \(\{u_n\}_{n\in\mathbb{N}}\subseteq H^2(\mathfrak M)\) be bounded, i.e., $\sup_{n\in\mathbb{N}} \|u_n\|_{H^2(\mathfrak M)} =:M<\infty$. Applying Lemma~\ref{lemma: general Sobolev inequalities on manifolds}, we have the following bound
\[
\sup_{n\in\mathbb{N}}\|u_n\|_{C(\mathfrak M)} \;\le\;\sup_{n\in\mathbb{N}}\|u_n\|_{C^{0,\frac13}(\mathfrak M)}\;\le\;C_H M.
\]
Hence, the sequence $\{u_n\}_{n\in\mathbb{N}}$ is bounded in \(C(\mathfrak M)\). By the previous inequality and the definitions of $[u_n]_{C^{0, \frac13}(\M)}$ and $\norm{u_n}_{C^{0,\frac13}(\M)}$, we have that
\begin{align*}
    |u_n(\boldsymbol{m})-u_n(\boldsymbol{m}')|
\le [u_n]_{C^{0,\frac13}(\mathfrak M)}\;\cdot\; d_g(\boldsymbol{m},\,\boldsymbol{m}')^{1/3}
\le C_H \cdot M\,\cdot\,d_g(\boldsymbol{m},\,\boldsymbol{m}')^{1/3}
\end{align*}
for all $\boldsymbol{m}, \boldsymbol{m}'\in\M$, which implies that the sequence $\{u_n\}_{n\in\mathbb{N}}$ is uniformly equicontinuous. Then, by the Arzelà–Ascoli theorem \citep[][Theorem 4.25]{brezis2011functional}, the family \(\{u_n\}_{n\in\mathbb{N}}\subseteq C(\mathfrak M)\) is relatively compact in \(C(\mathfrak M)\). Consequently, there exist a subsequence \(\{u_{n,k}\}_{k\in\mathbb{N}}\) and some \(u^*\in C(\mathfrak M)\) such that $\lim_{k\rightarrow\infty}\|u_{n_k}-u^*\|_{C(\mathfrak M)}=0$. Therefore, the inclusion map $\iota:H^2(\mathfrak M)\to C(\mathfrak M)$ is compact.
\end{proof}

\begin{proof}[\textbf{Proof of Lemma \ref{lemma: Borel measurable selection of nearest-point}}]
We verify the hypotheses of the Kuratowski--Ryll-Nardzewski measurable
selection theorem \citep[][Theorem 5.2.1]{kuratowski1965general, srivastava1998course} for
$\boldsymbol{\Psi}:\mathbb{R}^D \rightarrow 2^\mathfrak{M}$.

\noindent \underline{Part 1 (notation and basic facts)}.
Let $\Bf(\mathfrak{M}):=\{\boldsymbol{f}(\boldsymbol{m}):\boldsymbol{m}\in\M\}\subseteq\mathbb{R}^D$. Since
$\Bf$ is continuous and $\mathfrak{M}$ is compact, $\Bf(\mathfrak{M})$ is compact.
For any nonempty compact $A\subseteq\mathbb{R}^D$ define the distance function
\begin{align*}
    d(\boldsymbol{x}, A) := \min_{\boldsymbol{y}\in A}\|\boldsymbol{x}-\boldsymbol{y}\|\ \ \text{ for all }\boldsymbol{x}\in\mathbb{R}^D.
\end{align*}
It is standard that
$\boldsymbol{x}\mapsto d(\boldsymbol{x},A)$ is continuous and that if $A\subseteq B$ then
$d(\boldsymbol{x},B)\le d(\boldsymbol{x},A)$ for all $\boldsymbol{x}\in\mathbb{R}^D$.

\noindent \underline{Part 2 (closed, nonempty values)}.
For fixed $\boldsymbol{x}$, the map
\begin{align*}
\varphi_{\boldsymbol{x}}:\ & \mathfrak{M}\to\mathbb{R}, \\
& \boldsymbol{m} \mapsto \varphi_{\boldsymbol{x}}(\boldsymbol{m})=\|\boldsymbol{x}-\Bf(\boldsymbol{m})\|
\end{align*}
is continuous on compact $\mathfrak{M}$; hence, it attains its minimum and
$\boldsymbol{\Psi}(\boldsymbol{x})\neq\emptyset$, where $\boldsymbol{\Phi}$ is defined in \eqref{eq: the set-valued projection}. Being an argmin set of a
continuous function, $\boldsymbol{\Psi}(\boldsymbol{x})=\varphi_{\boldsymbol{x}}^{-1}(\min_{\boldsymbol{m}\in\mathfrak{M}}\varphi_{\boldsymbol{x}}(\boldsymbol{m}))$ is closed in
$\mathfrak{M}$; thus, it is compact. Therefore, $\boldsymbol{\Psi}$ is
nonempty, closed-valued.

\noindent \underline{Part 3 (a countable base with closed containment)}. As a manifold, $\mathfrak{M}$ is satisfies the second axiom of countability \citep[][Appendix A]{lee2018riemannian}, i.e., $\mathfrak{M}$ has a countable topological base $\{B_j\}_{j\in\mathbb{N}}$. In particular,
for every open $U\subseteq\mathfrak{M}$ and $\boldsymbol{m}\in U$, there exists $j$ with
$\boldsymbol{m}\in B_j\subseteq \overline{B_j}\subseteq U$.

\noindent \underline{Part 4 (characterization of the hit set)}. Fix an arbitrary open $U\subseteq\mathfrak{M}$, define the ``hit set''
\[
H_U := \{\boldsymbol{x}\in\mathbb{R}^D:\ \boldsymbol{\Psi}(\boldsymbol{x})\cap U\neq\emptyset\}.
\]
For each $j$ with $B_j\subseteq \overline{B_j}\subseteq U$, set $K_j:=\Bf(\overline{B_j})$, which is compact. Consider
\[
E_j := \left\{\boldsymbol{x}\in\mathbb{R}^D:\ d(\boldsymbol{x}, K_j)=d\left(\boldsymbol{x},\Bf(\mathfrak{M})\right)\right\}.
\]
Since $\boldsymbol{x}\mapsto d(\boldsymbol{x}, K_j)$ and $\boldsymbol{x}\mapsto d\left(\boldsymbol{x},\Bf(\mathfrak{M})\right)$ are continuous, each $E_j$ is closed. We claim
\begin{equation}\label{eq:hit-decomp}
H_U = \bigcup_{j:\,B_j \,\subseteq \, \overline{B_j}\,\subseteq U} E_j .
\end{equation}
\noindent \underline{Part 4.1 (proof of ``$\subseteq$'' in \eqref{eq:hit-decomp})}. Let an arbitrary $\boldsymbol{x}\in H_U$. Since $\boldsymbol{\Psi}(\boldsymbol{x})\cap U\neq\emptyset$, there exists $\boldsymbol{m}_\ast\in\boldsymbol{\Psi}(\boldsymbol{x})\cap U$.
By Part~3, choose $j$ with $\boldsymbol{m}_\ast\in B_j\subseteq\overline{B_j}\subseteq U$, which implies that $\Bf(\boldsymbol{m}_\ast) \in K_j$.
Then
\[
d(\boldsymbol{x}, K_j)=\min_{\boldsymbol{y}\in K_j} \Vert \boldsymbol{x}-\boldsymbol{y} \Vert
\le \|\boldsymbol{x}-\Bf(\boldsymbol{m}_\ast)\|
= \min_{\boldsymbol{m}'\in\mathfrak{M}}\|\boldsymbol{x}-\Bf(\boldsymbol{m}')\|
= d(\boldsymbol{x}, \Bf(\mathfrak{M})),
\]
while $K_j\subseteq \Bf(\mathfrak{M})$ implies $d(\boldsymbol{x}, \Bf(\mathfrak{M}))\le d(\boldsymbol{x}, K_j)$. Hence, $d(\boldsymbol{x}, \Bf(\mathfrak{M})) = d(\boldsymbol{x}, K_j)$, which implies that
$\boldsymbol{x}\in E_j$. That is, $H_U \subseteq \bigcup_{j:\,B_j \,\subseteq \, \overline{B_j}\,\subseteq U} E_j$.

\noindent \underline{Part 4.2 (proof of ``$\supseteq$'' in \eqref{eq:hit-decomp})}. Let an arbitrary $\boldsymbol{x} \in \bigcup_{j:\,B_j \,\subseteq \, \overline{B_j}\,\subseteq U} E_j$. There exists a $j$ such that $B_j \,\subseteq \, \overline{B_j}\,\subseteq U$ and $\boldsymbol{x}\in E_j$. By compactness of
$\overline{B_j}$, choose $\boldsymbol{m}_j\in\overline{B_j}\subseteq U$ such that $\|\boldsymbol{x}-\Bf(\boldsymbol{m}_j)\| =\min_{\boldsymbol{m}\in \overline{B_j}} \Vert \boldsymbol{x}-\Bf(\boldsymbol{m})\Vert=d(\boldsymbol{x}, K_j)$. Since $\boldsymbol{x}\in E_j$, we have
\begin{align*}
    \|\boldsymbol{x}-\Bf(\boldsymbol{m}_j)\|=d(\boldsymbol{x}, K_j)=d(\boldsymbol{x}, \Bf(\mathfrak{M}))=\min_{\boldsymbol{m}'\in\mathfrak{M}}\|\boldsymbol{x}-\Bf(\boldsymbol{m}')\|,
\end{align*}
hence, $\boldsymbol{m}_j\in\boldsymbol{\Psi}(\boldsymbol{x})$. Because
$\boldsymbol{m}_j\in\overline{B_j}\subseteq U$, we have $\boldsymbol{m}_j \in \boldsymbol{\Psi}(\boldsymbol{x}) \cap U \ne \emptyset$, i.e., $\boldsymbol{x}\in H_U$, which implies that $H_U \supseteq \bigcup_{j:\,B_j \,\subseteq \, \overline{B_j}\,\subseteq U} E_j$. This proves \eqref{eq:hit-decomp}.

\noindent \underline{Part 5 (Application of the Kuratowski--Ryll-Nardzewski measurable selection theorem)}. Note that the compact Riemannian manifold $\mathfrak{M}$ is a Polish space. By \eqref{eq:hit-decomp}, $H_U$ is a countable union of the closed sets $E_j$, hence, $H_U$ is Borel measurable in $\mathbb{R}^D$. Since the open set $U$ was arbitrary, the Kuratowski--Ryll-Nardzewski measurable selection theorem implies that there exists a Borel measurable selection $\Bpi_{\Bf}:\mathbb{R}^D\to\mathfrak{M}$ with
$\Bpi_{\Bf}(\boldsymbol{x})\in\boldsymbol{\Psi}(\boldsymbol{x})$ for all
$\boldsymbol{x}\in\mathbb{R}^D$. 
\end{proof}

\subsection{Proofs of the Results in Section \ref{section: Manifolds via Minimization}}

The following lemma serves as preparation for the proof of Theorem \ref{thm: existence of f star}.
\begin{lemma}\label{lemma: continuity of Delta wrt f}
We denote 
\begin{align}\label{eq: def of the fitting error term}
    \dist(\boldsymbol{x}, \Bf) := \min_{\boldsymbol{m} \in \M} \norm{\boldsymbol{x} - \Bf(\boldsymbol{m})}^2 = \norm{\boldsymbol{x} - \Bf(\Bpi_{\Bf}(\boldsymbol{x}))}^2
\end{align}
for all $\boldsymbol{x}\in\mathbb{R}^D$ and $\Bf\in\mathscr{F}(\mathbb{P})$. Then, we have
\begin{align}\label{eq: continuity of Delta wrt f}
    \left| \dist(\boldsymbol{x}, \Bf) - \dist(\boldsymbol{x}, \Bg) \right| \le 6\cdot\radsuppP\cdot \max_{\boldsymbol{m}\in\mathfrak{M}} \norm{\Bf(\boldsymbol{m}) - \Bg(\boldsymbol{m})}.
\end{align}
for all $\boldsymbol{x}\in\operatorname{supp}(\mathbb{P})$ and $\Bf,\Bg\in\mathscr{F}(\mathbb{P})$.
\end{lemma}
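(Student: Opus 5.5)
The plan is to compare the two minima pointwise through a common competitor, and then control the difference of squares using the uniform bounds that membership in $\mathscr{F}(\mathbb{P})$ provides. Write $R:=\radsuppP$ and $\delta:=\max_{\boldsymbol{m}\in\M}\norm{\Bf(\boldsymbol{m})-\Bg(\boldsymbol{m})}$. Fix $\boldsymbol{x}\in\operatorname{supp}(\mathbb{P})$, so that $\norm{\boldsymbol{x}}\le R$. For every $\boldsymbol{m}\in\M$ we have $\norm{\Bf(\boldsymbol{m})}\le 2R$ and $\norm{\Bg(\boldsymbol{m})}\le 2R$, since $\Bf,\Bg\in\mathscr{F}(\mathbb{P})$.

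First, we use the identity $a^2-b^2=(a-b)(a+b)$ for nonnegative reals. With $a=\norm{\boldsymbol{x}-\Bf(\boldsymbol{m})}$ and $b=\norm{\boldsymbol{x}-\Bg(\boldsymbol{m})}$, the reverse triangle inequality gives $|a-b|\le\norm{\Bf(\boldsymbol{m})-\Bg(\boldsymbol{m})}\le\delta$. The triangle inequality together with the bounds above gives $a+b\le 2\norm{\boldsymbol{x}}+\norm{\Bf(\boldsymbol{m})}+\norm{\Bg(\boldsymbol{m})}\le 2R+2R+2R=6R$. Hence
\begin{align*}
\left|\norm{\boldsymbol{x}-\Bf(\boldsymbol{m})}^2-\norm{\boldsymbol{x}-\Bg(\boldsymbol{m})}^2\right|\le 6R\,\delta\qquad\text{for all }\boldsymbol{m}\in\M .
\end{align*}

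Second, we pass from this pointwise bound to the minima. Let $\boldsymbol{m}_g:=\Bpi_{\Bg}(\boldsymbol{x})$, which is a minimizer for $\Bg$ by Lemma~\ref{lemma: Borel measurable selection of nearest-point}. Then
\begin{align*}
\dist(\boldsymbol{x},\Bf)\le\norm{\boldsymbol{x}-\Bf(\boldsymbol{m}_g)}^2\le\norm{\boldsymbol{x}-\Bg(\boldsymbol{m}_g)}^2+6R\delta=\dist(\boldsymbol{x},\Bg)+6R\delta .
\end{align*}
Swapping the roles of $\Bf$ and $\Bg$, and using $\Bpi_{\Bf}(\boldsymbol{x})$ as the competitor, gives the reverse inequality. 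Combining the two yields \eqref{eq: continuity of Delta wrt f}.

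No step is a real obstacle. The only point needing care is the constant. It relies on $\norm{\boldsymbol{x}}\le R$, which is why the statement is restricted to $\boldsymbol{x}\in\operatorname{supp}(\mathbb{P})$, and on the factor $2$ in the definition of $\mathscr{F}(\mathbb{P})$, which together produce $6R$. Attainment of the minima over $\M$ is guaranteed because $\Bf$ and $\Bg$ are continuous (Theorem~\ref{thm: Rellich–Kondrachov embedding}) and $\M$ is compact.
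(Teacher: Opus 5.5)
Your proof is correct and takes essentially the same route as the paper. Both arguments use the projection index of one map as a competitor in the minimum for the other, factor the difference of squares, and bound the sum factor by $6\cdot\radsuppP$ using $\Vert\boldsymbol{x}\Vert\le\radsuppP$ together with the $\mathscr{F}(\mathbb{P})$ constraint. The paper arranges this as a case split on which of $\dist(\boldsymbol{x},\Bf)$ and $\dist(\boldsymbol{x},\Bg)$ is larger, whereas you prove both one-sided bounds; the two presentations are equivalent.
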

\begin{proof}[\textbf{Proof of Lemma \ref{lemma: continuity of Delta wrt f}}]
By Lemma \ref{lemma: Borel measurable selection of nearest-point}, we have the following Borel measurable projection indices of $\Bf$ and $\Bg$, respectively.
\begin{align*}
    \Bpi_{\Bf}(\boldsymbol{x}) \in \argmin_{\boldsymbol{m}\in\mathfrak{M}}\Vert \boldsymbol{x}-\Bf(\boldsymbol{m})\Vert^2\ \ \ \text{ and }\ \ \ \Bpi_{\Bg}(\boldsymbol{x}) \in \argmin_{\boldsymbol{m}\in\mathfrak{M}}\Vert \boldsymbol{x}-\Bg(\boldsymbol{m})\Vert^2.
\end{align*}
When $\dist(\boldsymbol{x}, \Bf) \le \dist(\boldsymbol{x},\Bg)$, we have
\begin{align*}
    \begin{aligned}
        &\ \ \ \left| \dist(\boldsymbol{x}, \Bf) - \dist(\boldsymbol{x}, \Bg) \right| \\
    &= \dist(\boldsymbol{x}, \Bg) - \dist(\boldsymbol{x}, \Bf) \\
    &= \dist(\boldsymbol{x}, \Bg) - \norm{\boldsymbol{x}- \Bf(\Bpi_{\Bf}(\boldsymbol{x}))}^2 \\
    &\leq \norm{\boldsymbol{x} - \Bg(\Bpi_{\Bf}(\boldsymbol{x}))}^2 - \norm{\boldsymbol{x} - \Bf(\Bpi_{\Bf}(\boldsymbol{x}))}^2 \\
    &= \Big(\norm{\boldsymbol{x} - \Bg(\Bpi_{\Bf}(\boldsymbol{x}))} + \norm{\boldsymbol{x} - \Bf(\Bpi_{\Bf}(\boldsymbol{x}))}\Big)\Big(\norm{\boldsymbol{x} - \Bg(\Bpi_{\Bf}(\boldsymbol{x}))} - \norm{\boldsymbol{x} - \Bf(\Bpi_{\Bf}(\boldsymbol{x}))}\Big) \\
    &\leq \Big(\norm{\boldsymbol{x} - \Bg(\Bpi_{\Bf}(\boldsymbol{x}))} + \norm{\boldsymbol{x} - \Bf(\Bpi_{\Bf}(\boldsymbol{x}))}\Big) \cdot\norm{\Bg(\Bpi_{\Bf}(\boldsymbol{x})) - \Bf(\Bpi_{\Bf}(\boldsymbol{x}))} \\
    &\leq \Big(2\norm{\boldsymbol{x}} + \norm{\Bf(\Bpi_{\Bg}(\boldsymbol{x}))} + \norm{ \Bg(\Bpi_{\Bg}(\boldsymbol{x}))}\Big) \cdot\norm{\Bg(\Bpi_{\Bf}(\boldsymbol{x})) - \Bf(\Bpi_{\Bf}(\boldsymbol{x}))} \\
    \end{aligned}
\end{align*}
where the second to last inequality follows from the reverse triangle inequality. Recall from \eqref{eq: the core min problem} that, when defining $\mathscr{F}(\mathbb{P})$ and $\operatorname{rad}_0(\operatorname{supp}\left(\mathbb{P})\right) := \sup_{\boldsymbol{x}\in \operatorname{supp}(\mathbb{P})}\, \Vert \boldsymbol{x} \Vert$,
\begin{align}\label{eq: C bound from the def of F}
    \max_{\boldsymbol{m}\in\mathfrak{M}}\Vert \Bf(\boldsymbol{m}) \Vert \le 2\cdot\radsuppP \ \ \text{ and }\ \ \max_{\boldsymbol{m}\in\mathfrak{M}}\Vert \Bg(\boldsymbol{m}) \Vert \le 2\cdot\radsuppP.
\end{align}
Then, when $\boldsymbol{x}\in\operatorname{supp}(\mathbb{P})$,  applying \eqref{eq: C bound from the def of F} yields
\begin{align}\label{eq: hahahahah}
    \left| \dist(\boldsymbol{x}, \Bf) - \dist(\boldsymbol{x}, \Bg) \right| \le 6\cdot\radsuppP \cdot\norm{\Bg(\Bpi_{\Bf}(\boldsymbol{x})) - \Bf(\Bpi_{\Bf}(\boldsymbol{x}))}.
\end{align} 
When $\dist(\boldsymbol{x}, \Bf) > \dist(\boldsymbol{x},\Bg)$, applying \eqref{eq: C bound from the def of F} once again, we have
\begin{align}\label{eq: critical bound discussion for the convergence of the ftting error term}
     \begin{aligned}
         &\ \ \ \left| \dist(\boldsymbol{x}, \Bf) - \dist(\boldsymbol{x}, \Bg) \right| \\
    &= \dist(\boldsymbol{x}, \Bf) - \dist(\boldsymbol{x}, \Bg) \\
    &= \dist(\boldsymbol{x}, \Bf) - \norm{\boldsymbol{x}- \Bg(\Bpi_{\Bg}(\boldsymbol{x}))}^2 \\
    &\leq \norm{\boldsymbol{x} - \Bf(\Bpi_{\Bg}(\boldsymbol{x}))}^2 - \norm{\boldsymbol{x}- \Bg(\Bpi_{\Bg}(\boldsymbol{x}))}^2 \\
    &= \Big(\norm{\boldsymbol{x} - \Bf(\Bpi_{\Bg}(\boldsymbol{x}))} + \norm{\boldsymbol{x}- \Bg(\Bpi_{\Bg}(\boldsymbol{x}))}\Big)\Big(\norm{\boldsymbol{x} - \Bf(\Bpi_{\Bg}(\boldsymbol{x}))} - \norm{\boldsymbol{x}- \Bg(\Bpi_{\Bg}(\boldsymbol{x}))}\Big) \\
    &\leq \Big(\norm{\boldsymbol{x} - \Bf(\Bpi_{\Bg}(\boldsymbol{x}))} + \norm{\boldsymbol{x}- \Bg(\Bpi_{\Bg}(\boldsymbol{x}))}\Big) \cdot\norm{\Bf(\Bpi_{\Bg}(\boldsymbol{x})) - \Bg(\Bpi_{\Bg}(\boldsymbol{x}))} \\
    &\le 6\cdot\radsuppP\cdot\norm{\Bf(\Bpi_{\Bg}(\boldsymbol{x})) - \Bg(\Bpi_{\Bg}(\boldsymbol{x}))},
     \end{aligned}
\end{align}
Combining \eqref{eq: hahahahah} and \eqref{eq: critical bound discussion for the convergence of the ftting error term}, we have \eqref{eq: continuity of Delta wrt f}.
\end{proof}

\begin{proof}[\textbf{Proof of Theorem \ref{thm: existence of f star}}] We divide the proof into three parts. In Part 1, we show that there exists a (sub)sequence, $\{\fnk\}_{k\in\mathbb{N}}$, that converges strongly in $C(\M)$. In Part 2, we use the result from Part 1 to establish the convergence of the fitting-error term $\mathbb{E}\dist(\boldsymbol{X}, \fnk)$. In Part 3, we leverage the previous results to demonstrate that the infimum $ \inf_{\Bf \in \mathscr{F}(\mathbb{P})}\mathcal{L}_\lambda(\Bf)$ is attained at the limit of the aforementioned subsequence, i.e., $ \inf_{\Bf \in \mathscr{F}(\mathbb{P})}\mathcal{L}_\lambda(\Bf) = \lim_{k\rightarrow\infty} \mathcal{L}_\lambda(\fnk)$.\\
\underline{Part 1.} By definition of infimum, there exists a sequence $\{\Bf^{(n)}=(f_1^{(n)}, f_2^{(n)},\ldots,f_D^{(n)})\}_{n\in\mathbb{N}}\subseteq\mathscr{F}(\mathbb{P})$ such that
\begin{align*}
    \lim_{n\rightarrow\infty} \mathcal{L}_\lambda(\Bf^{(n)}) = \inf_{\Bf\in\mathscr{F}(\mathbb{P})} \mathcal{L}_\lambda(\Bf) = \inf_{\Bf\in\mathscr{F}(\mathbb{P})}\left[ \mathbb{E}\dist(\boldsymbol{X}, \Bf) + \lambda\cdot \Vert \nabla{}^2 \boldsymbol{f} \Vert^2_{L^2(\M)}\right]<\infty.
\end{align*}
Since the sequence $\{\mathcal{L}_\lambda(\Bf^{(n)})\}_{n\in\mathbb{N}}$ converges to a (finite) infimum, and  $\lambda>0$, we have
\begin{align}\label{eq: unform boundedness in C(M)}
    \sup_{n\in\mathbb{N}} \left\{\Vert \nabla{}^2 \boldsymbol{f}^{(n)} \Vert^2_{L^2(\M)}\right\}=:C_1<\infty.
\end{align}
By the definition of the collection $\mathscr{F}(\mathbb{P})$, we have
\begin{align}\label{eq: upper bound of the C norm of fn}
\sup_{n\in\mathbb{N}}\left(\max_{\boldsymbol{m}\in\mathfrak{M}}\Vert \Bf^{(n)}(\boldsymbol{m})\Vert\right) \le 2\cdot\operatorname{rad}_0(\operatorname{supp}\left(\mathbb{P})\right),
\end{align}
which implies that
\begin{align*}
    \sup_{n\in\mathbb{N}}\left(\int_{\mathfrak{M}}\Vert \Bf^{(n)}(\boldsymbol{m})\Vert^2\,d \vol_g(\boldsymbol{m})\right) \le \vol_g(\mathfrak{M})\cdot\left[2\cdot\operatorname{rad}_0(\operatorname{supp}\left(\mathbb{P})\right)\right]^2=:C_2
\end{align*}
Then, we have the following boundedness in $H^2(\M)$
\begin{align}\label{eq: uniform bound of the H2 norm of fn}
        \sup_{n\in\mathbb{N}}\Vert \boldsymbol{f}^{(n)}\Vert^2_{H^2(\mathfrak{M})} = \sup_{n\in\mathbb{N}}\left\{\Vert \Bf^{(n)}\Vert_{L^2(\mathfrak{M})}^2 + \Vert \nabla^2 \Bf^{(n)}\Vert_{L^2(\mathfrak{M})}^2\right\} \le C_1 + C_2 < +\infty.
\end{align}
Since $H^2(\mathfrak{M})$ is reflexive \citep[][Proposition 2.3]{hebey2000nonlinear}, Theorem 3.18 of \cite{brezis2011functional} implies that there exists a subsequence $\{\Bf^{(n,k)}\}_{k\in\mathbb{N}}$ of $\{\Bf^{(n)}\}_{n\in\mathbb{N}}$ that converges weakly, i.e., for every $j\in[D]$,
\begin{align}\label{eq: the weak convergence of f_{n,k}}
    f_j^{(n,k)} \overset{w}{\rightharpoonup} f^*_j\in H^2(\mathfrak{M})\ \ \text{ as }k\rightarrow\infty,
\end{align}
where $\overset{w}{\rightharpoonup}$ denotes the weak convergence in the Hilbert space $H^2(\mathfrak{M})$. Define 
\begin{align*}
    \Bf^*:=(f^*_1, f^*_2,\ldots,f^*_D)\in H^2(\mathfrak{M};\,\mathbb{R}^D).
\end{align*}
Theorem \ref{thm: Rellich–Kondrachov embedding}, together with Remark 2 of Chapter 6 of \cite{brezis2011functional}, implies that $\Bf^*\in C(\mathfrak{M};\,\mathbb{R}^D)$ and
\begin{align}\label{eq: convergence to f^*}
    \lim_{k\rightarrow\infty}\Vert \Bf^{(n,k)}-\Bf^*\Vert_{C(\mathfrak{M};\,\mathbb{R}^D)}=\lim_{k\rightarrow\infty}\max_{\boldsymbol{m}\in\mathfrak{M}}\Vert \Bf^{(n,k)}(\boldsymbol{m})-\Bf^*(\boldsymbol{m})\Vert=0.
\end{align}
Furthermore, combining \eqref{eq: upper bound of the C norm of fn} and \eqref{eq: convergence to f^*}, we have
\begin{align}\label{eq: upper bound of the C norm of f*}
 \max_{\boldsymbol{m}\in\mathfrak{M}}\Vert \Bf^*(\boldsymbol{m})\Vert = \lim_{k\rightarrow\infty}\max_{\boldsymbol{m}\in\mathfrak{M}}\Vert \Bf^{(n,k)}(\boldsymbol{m})\Vert \le 2\cdot\operatorname{rad}_0(\operatorname{supp}\left(\mathbb{P})\right).
\end{align}
Therefore, $\Bf^*\in\mathscr{F}(\mathbb{P})$.

\noindent\underline{Part 2.} We now establish the convergence of the fitting-error term $\mathbb{E}\dist(\boldsymbol{X}, \fnk)$. Using Lemma \ref{lemma: continuity of Delta wrt f} and \eqref{eq: convergence to f^*}, we have 
\begin{align*}
    \lim_{k\rightarrow\infty} \mathbb{E} \left| \dist(\boldsymbol{X}, \fstar) - \dist(\boldsymbol{X}, \fnk) \right| & = \lim_{k\rightarrow\infty}\int_{\mathbb{R}^D} \left| \dist(\boldsymbol{x}, \fstar) - \dist(\boldsymbol{x}, \fnk) \right| \,\mathbb{P}(\mathrm{d}\boldsymbol{x}) \\
    & \le 6\cdot\radsuppP\cdot \lim_{k\rightarrow\infty} \max_{\boldsymbol{m}\in\mathfrak{M}} \norm{\Bf^*(\boldsymbol{m}) - \Bf^{(n,k)}(\boldsymbol{m})} =0,
\end{align*}
which implies that
\begin{align}\label{eq: data convergence}
    \lim_{k \to \infty}\mathbb{E} \dist(\boldsymbol{X}, \fnk) = \mathbb{E}\dist(\boldsymbol{X}, \fstar).
\end{align}

\noindent\underline{Part 3.} The strong convergence in \eqref{eq: convergence to f^*} implies that 
\begin{align*}
    \Vert \Bf^{(n,k)}-\Bf^*\Vert^2_{L^2(\mathfrak{M})} &= \int_{\mathfrak{M}} \Vert \Bf^{(n,k)}(\boldsymbol{m})-\Bf^*(\boldsymbol{m})\Vert^2 \,d \vol_g(\boldsymbol{m}) \\
    & \le \vol_g(\mathfrak{M})\cdot\max_{\boldsymbol{m}\in\mathfrak{M}}\Vert \Bf^{(n,k)}(\boldsymbol{m})-\Bf^*(\boldsymbol{m})\Vert ^2\rightarrow0,\ \ \text{ as }k\rightarrow\infty,
\end{align*}
which further implies that
\begin{align}\label{eq: convergence of L2 norm}
    \lim_{k\rightarrow\infty} \Vert \Bf^{(n,k)} \Vert_{L^2(\mathfrak{M})} = \Vert \Bf^* \Vert_{L^2(\mathfrak{M})}.
\end{align}
The weak convergence in \eqref{eq: the weak convergence of f_{n,k}}, together with the weak lower semicontinuity of the norm $\Vert \cdot \Vert_{H^2(\mathfrak{M})}$ \citep[][Proposition 3.5(iii)]{brezis2011functional}, implies that
\begin{align*}
    \Vert f_j^* \Vert_{L^2(\mathfrak{M})}^2 + \Vert \nabla^2 f_j^* \Vert_{L^2(\mathfrak{M})}^2  &=\Vert f_j^* \Vert_{H^2(\mathfrak{M})}^2 \\
    & \le \liminf_{k\rightarrow\infty} \Vert f_j^{(n,k)} \Vert_{H^2(\mathfrak{M})}^2 \\
    &= \liminf_{k\rightarrow\infty}\left(\Vert f_j^{(n,k)} \Vert_{L^2(\mathfrak{M})}^2 + \Vert \nabla^2 f_j^{(n,k)} \Vert_{L^2(\mathfrak{M})}^2\right) \\
    & = \Vert f_j^* \Vert_{L^2(\mathfrak{M})}^2 + \liminf_{k\rightarrow\infty} \Vert \nabla^2 f_j^{(n,k)} \Vert_{L^2(\mathfrak{M})}^2,
\end{align*}
for every $j\in[D]$, where the last equality follows from \eqref{eq: convergence of L2 norm}. Therefore, 
\begin{align}\label{eq: the lower semicontinuity tool}
    \Vert \nabla^2 f_j^* \Vert_{L^2(\mathfrak{M})}^2 \le \liminf_{k\rightarrow\infty} \Vert \nabla^2 f_j^{(n,k)} \Vert_{L^2(\mathfrak{M})}^2,\ \ \text{ for every }j\in[D].
\end{align} 
Return to the functional at the outset of the proof. \eqref{eq: data convergence} and \eqref{eq: the lower semicontinuity tool} imply that
\begin{align}\label{eq: lower semicontinuity argument}
    \begin{aligned}
        \mathcal{L}_\lambda (\fstar) &= \mathbb{E}\dist(\boldsymbol{X}, \fstar) + \lambda\cdot\sum_{j=1}^D \Lnorm{ \nabla ^2 f_j^*}^2\\
    &\leq \lim_{k \to \infty}\mathbb{E} \dist(\boldsymbol{X}, \fnk) + \lambda\cdot{ \sum_{j=1}^D \liminf_{k \to \infty}\Lnorm{ \nabla ^2 \fnkj}^2}\\
    &\le \liminf_{k\to \infty} \Big(  \mathbb{E} \dist(\boldsymbol{X}, \fnk) +{\lambda\cdot \sum_{j=1}^D \Lnorm{ \nabla ^2 \fnkj}^2}\Big) \\
    &= \liminf_{k \to \infty} \mathcal{L}_\lambda(\fnk). 
    \end{aligned}
\end{align} 
Since the sequence $\fn$ is defined such that $\lim_{n\to \infty}\mathcal{L}_\lambda(\fn) =\inf_{\Bf \in \mathscr{F}(\mathbb{P})} \mathcal{L}_\lambda(\Bf)$, any subsequence will converge to the same limit, and thus,
 \begin{align*}
     \mathcal{L}_\lambda (\fstar) \le \liminf_{k \to \infty} \mathcal{L}_\lambda(\fnk) = \lim_{k \to \infty} \mathcal{L}_\lambda(\fnk) = \inf_{\Bf \in \mathscr{F}(\mathbb{P})} \mathcal{L}_\lambda(\Bf) \le \mathcal{L}_\lambda (\fstar).
 \end{align*}
Therefore, the infimum $\inf_{\Bf \in \mathscr{F}(\mathbb{P})} \mathcal{L}_\lambda(\Bf) = \mathcal{L}_\lambda (\fstar)$ is achieved by $\Bf^*_\lambda:=\Bf^*\in\mathscr{F}(\mathbb{P})$.
\end{proof}

The following lemma is a standard result in many textbooks \citep[e.g.,][Theorem 2.3.3]{durrett2019probability}.
\begin{lemma}\label{lemma: elementary mathematical analysis lemma}
Let \(\{\boldsymbol{y}_n\}_{n\in\mathbb{N}}\) be a sequence of elements of a topological space. If every subsequence \(\{\boldsymbol{y}_{n,m}\}_{m\in\mathbb{N}}\) has a further subsequence \(\{\boldsymbol{y}_{n,m,k}\}_{k\in\mathbb{N}}\) 
that converges to \(\boldsymbol{y}\), then \(\boldsymbol{y}_n \to \boldsymbol{y}\).
\end{lemma}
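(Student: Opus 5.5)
We argue by contrapositive: suppose $\boldsymbol{y}_n\not\to\boldsymbol{y}$. By the definition of convergence in a topological space, there is an open neighborhood $V$ of $\boldsymbol{y}$ such that $\boldsymbol{y}_n\notin V$ for infinitely many indices $n$. No countability or metrizability assumption on the space is needed, since we only use the definition of convergence through neighborhoods.

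We then list these indices increasingly as $n_1<n_2<\cdots$. This defines a subsequence $\{\boldsymbol{y}_{n,m}\}_{m\in\mathbb{N}}:=\{\boldsymbol{y}_{n_m}\}_{m\in\mathbb{N}}$ lying entirely in the closed set $V^c$.

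By hypothesis this subsequence has a further subsequence $\{\boldsymbol{y}_{n,m,k}\}_{k\in\mathbb{N}}$ converging to $\boldsymbol{y}$. Convergence means that, for all sufficiently large $k$, $\boldsymbol{y}_{n,m,k}\in V$. But every term of $\{\boldsymbol{y}_{n,m,k}\}_{k\in\mathbb{N}}$ lies in $V^c$, which is a contradiction. Hence $\boldsymbol{y}_n\to\boldsymbol{y}$.

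The only point requiring care is the negation of convergence, which yields a fixed neighborhood $V$ that is missed infinitely often. This step is routine, so no genuine obstacle is expected.
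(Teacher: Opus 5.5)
Your proof is correct. It is essentially the paper's argument: the paper cites Durrett's textbook for this lemma, and its proof of Lemma~\ref{lemma: a generalization of the elementary mathematical analysis lemma} (take $\mathscr{A}=\{\boldsymbol{y}\}$) uses the same construction — a subsequence that avoids a fixed open set $\mathscr{V}\ni\boldsymbol{y}$, followed by the contradiction with the convergent further subsequence.
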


The following lemma is a generalization of Lemma \ref{lemma: elementary mathematical analysis lemma}.
\begin{lemma}\label{lemma: a generalization of the elementary mathematical analysis lemma}
Let $\mathscr{X}$ be a topological space, and let $\mathscr{A}\subseteq \mathscr{X}$. Let $\{\boldsymbol{y}_n\}_{n\in\mathbb{N}}$ be a sequence in $\mathscr{X}$. Assume that every subsequence of $\{\boldsymbol{y}_n\}_{n\in\mathbb{N}}$ has a further subsequence converging to a point in $\mathscr{A}$. Then, $\{\boldsymbol{y}_n\}_{n\in\mathbb{N}}$ converges to $\mathscr{A}$ in the following sense: for every open set $\mathscr{V}\subseteq \mathscr{X}$ with $\mathscr{A}\subseteq \mathscr{V}$, there exists $N\in\mathbb{N}$ such that $\boldsymbol{y}_n\in \mathscr{V}$ for all $n\ge N$.
\end{lemma}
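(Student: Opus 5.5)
We argue by contradiction, following the same pattern as Lemma~\ref{lemma: elementary mathematical analysis lemma}. Suppose the conclusion fails for some open $\mathscr{V}\supseteq\mathscr{A}$. Then for every $N\in\mathbb{N}$ there is an index $n\ge N$ with $\boldsymbol{y}_n\notin\mathscr{V}$. Choosing these indices inductively, with each $n_m$ strictly larger than the previous one, gives a subsequence $\{\boldsymbol{y}_{n_m}\}_{m\in\mathbb{N}}$ with $\boldsymbol{y}_{n_m}\in\mathscr{X}\setminus\mathscr{V}$ for every $m$.

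By hypothesis, this subsequence has a further subsequence $\{\boldsymbol{y}_{n_{m_k}}\}_{k\in\mathbb{N}}$ converging to some point $\boldsymbol{y}\in\mathscr{A}$. Since $\mathscr{A}\subseteq\mathscr{V}$, we have $\boldsymbol{y}\in\mathscr{V}$, so $\mathscr{V}$ is an open neighborhood of $\boldsymbol{y}$. By the definition of convergence in a topological space (no metrizability is needed), $\boldsymbol{y}_{n_{m_k}}\in\mathscr{V}$ for all sufficiently large $k$. This contradicts $\boldsymbol{y}_{n_{m_k}}\notin\mathscr{V}$ for every $k$, and so the conclusion holds.

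There is no real obstacle. The only care needed is in extracting the subsequence, so that it is a genuine subsequence with strictly increasing indices, and in using only the neighborhood definition of convergence. The statement is stated for an arbitrary topological space, and this argument applies in that generality. When $\mathscr{A}=\{\boldsymbol{y}\}$, it recovers Lemma~\ref{lemma: elementary mathematical analysis lemma}: applying it to the open neighborhoods of $\boldsymbol{y}$ gives $\boldsymbol{y}_n\to\boldsymbol{y}$.
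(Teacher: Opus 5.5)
Your proof is correct and is the same argument the paper gives. Both extract a strictly increasing subsequence lying outside $\mathscr{V}$, pass to a further subsequence converging to a point of $\mathscr{A}\subseteq\mathscr{V}$, and contradict the neighborhood definition of convergence.
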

\begin{proof}[\textbf{Proof of Lemma \ref{lemma: a generalization of the elementary mathematical analysis lemma}.}]
We argue by contradiction. Suppose $\{\boldsymbol{y}_n\}_{n\in\mathbb{N}}$ does not converge to $\mathscr{A}$. Then, by definition, there exists an open set $\mathscr{V}\subseteq \mathscr{X}$ such that $\mathscr{A}\subseteq \mathscr{V}$, but $\{\boldsymbol{y}_n\}_{n\in\mathbb{N}}$ is not eventually contained in $\mathscr{V}$. That is, for every $N\in\mathbb{N}$, there exists $n\ge N$ such that $\boldsymbol{y}_n\notin \mathscr{V}$.

We now construct a subsequence $\{\boldsymbol{y}_{n_m}\}_{m\in\mathbb{N}}$ with $\boldsymbol{y}_{n_m}\notin \mathscr{V}$ for all $m\in\mathbb{N}$. Specifically, 
\begin{itemize}
    \item there exists $n_1\ge 1$ such that $\boldsymbol{y}_{n_1}\notin \mathscr{V}$;
    \item having had $\boldsymbol{y}_{n_m}$, there exists $n_{m+1}\ge \max\{m, n_{m}+1\}$ such that $\boldsymbol{y}_{n_{m+1}}\notin \mathscr{V}$.
\end{itemize}

By assumption, the subsequence $\{\boldsymbol{y}_{n_m}\}_{m\in\mathbb{N}}$ has a further subsequence $\{\boldsymbol{y}_{n_{m_k}}\}_{k\in\mathbb{N}}$ that converges to some point $\boldsymbol{y}\in \mathscr{A}$. Since $\mathscr{A}\subseteq \mathscr{V}$, we have $\boldsymbol{y}\in \mathscr{V}$. Because $\mathscr{V}$ is an open neighborhood of $\boldsymbol{y}$ and $\boldsymbol{y}_{n_{m_k}}\to \boldsymbol{y}$, there exists $K\in\mathbb{N}$ such that
\[
\boldsymbol{y}_{n_{m_k}}\in \mathscr{V} \qquad \text{for all } k\ge K.
\]
This contradicts the fact that $\boldsymbol{y}_{n_{m_k}}\notin \mathscr{V}$ for all $k\in\mathbb{N}$. The contradiction shows that $\{\boldsymbol{y}_n\}_{n\in\mathbb{N}}$ must converge to $\mathscr{A}$.
\end{proof}

\begin{lemma}\label{lemma: solutions to the Hessian equation}
Let $\mathfrak M$ be a compact, connected, $d$-dimensional Riemannian manifold, and let $\boldsymbol h\in H^2(\mathfrak M;\,\mathbb{R}^D)$. Assume $\|\nabla^2 \boldsymbol h\|_{L^2(\mathfrak M)}^2=0$. Then, we have:
\begin{enumerate}
    \item If $\partial\mathfrak M=\emptyset$, i.e., $\mathfrak M$ is closed, then $\boldsymbol h$ is constant on $\mathfrak M$.
    \item If $\mathfrak M$ is simply connected and flat (i.e., its Riemann curvature tensor vanishes everywhere) with smooth boundary, then $\boldsymbol h$ is affine in global flat coordinates; that is, after identifying $\mathfrak M$ with a domain in $\mathbb R^d$ via a global isometry, there exist $\boldsymbol a\in\mathbb R^D$ and $\boldsymbol B\in\mathbb R^{D\times d}$ such that 
    \begin{align}\label{eq: affine functions}
        \boldsymbol h(\boldsymbol{x})=\boldsymbol a+\boldsymbol B \boldsymbol{x}.
    \end{align}
\end{enumerate}
\end{lemma}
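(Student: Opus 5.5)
Since $\nabla^2\boldsymbol h$ is taken componentwise, I will work with one scalar component $h=h_j\in H^2(\M)$ with $\nabla^2 h=0$ almost everywhere. The first step is to upgrade regularity. The equation $\nabla^2 h=0$ implies $\Delta h=\operatorname{tr}_g\nabla^2 h=0$ weakly. By elliptic regularity (interior regularity on charts, together with regularity up to the smooth boundary when $\partial\M\neq\emptyset$ — more simply, just interior smoothness suffices), $h$ is smooth in the interior of $\M$. Hence $\nabla^2 h\equiv 0$ pointwise there. Consequently the gradient field $X:=\nabla h$ is parallel: $\nabla_u X=(\nabla^2 h)(u,\cdot)^\sharp=0$ for all $u$. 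It follows that $|X|_g^2$ is locally constant, and by connectedness of the interior, which follows from connectedness of $\M$ and the smooth boundary, $|\nabla h|_g\equiv c$ is a constant.

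For item (i), with $\partial\M=\emptyset$, $h$ is smooth on a compact manifold without boundary, so it attains its maximum at some $\boldsymbol m_0$. There $\nabla h(\boldsymbol m_0)=0$, which forces $c=0$. Then $\nabla h\equiv0$, and connectedness gives that $h$ is constant. An alternative route is to integrate $\Delta h=0$ against $h$ to get $\int|\nabla h|^2=0$, since there is no boundary term. Applying this to every component shows that $\boldsymbol h$ is constant.

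For item (ii), I will use that a simply connected flat manifold (with boundary) admits a global isometric immersion into $\mathbb R^d$ via the developing map. Following the statement, I will take it as given that this identifies $\M$ isometrically with a domain in $\mathbb R^d$. In these global flat coordinates the Christoffel symbols vanish, so $\nabla^2 h=(\partial_i\partial_k h)$. Then $\partial_i\partial_k h=0$ on a connected open domain, which makes every $\partial_k h$ locally constant and hence constant. So $h(\boldsymbol x)=a+\boldsymbol b^\top\boldsymbol x$ on the interior, and by continuity (Theorem~\ref{thm: Rellich–Kondrachov embedding}) also up to the boundary. Stacking the $D$ components gives \eqref{eq: affine functions}.

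The main obstacle is the regularity step, namely passing from the $L^2$ statement $\nabla^2 h=0$ a.e.\ to a classical statement. I would first try local elliptic regularity of $\Delta$ in each chart. This is Weyl's lemma, since an $H^2$ weak solution of $\Delta h=0$ is smooth in the interior, and only interior smoothness is needed because continuity handles the boundary. A secondary subtlety in (ii) is justifying the global flat chart, namely that the developing map is injective. Since the lemma phrases the conclusion ``after identifying $\M$ with a domain via a global isometry,'' I would simply adopt that identification. If it were unavailable, the same argument works on the developing map: $dh$ is a parallel $1$-form, and $h$ equals a fixed affine function composed with the developing map.
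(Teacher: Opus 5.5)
Your proposal is correct and follows essentially the paper's proof. In both, elliptic regularity for $\Delta h=\operatorname{tr}_g\nabla^2 h=0$ gives smoothness. On the closed manifold the paper then uses Green's identity, and your maximum-point argument via the parallel gradient is an equally valid variant. In the flat case both use vanishing Christoffel symbols in a global Euclidean chart, with continuity from Theorem~\ref{thm: Rellich–Kondrachov embedding} to reach the boundary. Your caveat that the developing map need not be injective is well founded, since the paper simply asserts the global isometry, and your fallback (an affine function composed with the developing map) covers the general case.
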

\begin{proof}[\textbf{Proof of Lemma \ref{lemma: solutions to the Hessian equation}}]
Write $\boldsymbol h=(h_1,\dots,h_D)$, where each component $h_\ell\in H^2(\mathfrak M;\mathbb{R}^1)$. Because $\|\nabla^2 \boldsymbol h\|_{L^2(\mathfrak M)}^2= \sum_{\ell=1}^D \int_{\M} |\nabla^2 h_\ell (\boldsymbol{m}) |^2 \, d\,\mathrm{vol}_g(\boldsymbol{m})=0$, we have $\int_{\M} |\nabla^2 h_\ell (\boldsymbol{m}) |^2 \, d\,\mathrm{vol}_g(\boldsymbol{m})=0$ for all $\ell\in[D]$. Thus, it suffices to prove the scalar statement for one component $h\in H^2(\mathfrak M)=H^2(\mathfrak M;\mathbb{R}^1)$ with $\int_{\M} |\nabla^2 h (\boldsymbol{m}) |^2 \, d\,\mathrm{vol}_g(\boldsymbol{m})=0$. 

Obviously, $\nabla^2 h=0$ almost everywhere (a.e.) on $\M$.

\medskip
\noindent
\underline{Part 1: Closed manifold case (\(\partial\mathfrak M=\emptyset\)).} Assume $\mathfrak M$ is closed.

The Laplace--Beltrami operator $\Delta$ is the trace of the Hessian, i.e., $\Delta h = \operatorname{tr}_g(\nabla^2 h)$, defined in \eqref{eq: def of Laplace-Beltrami operator}. Since $\nabla^2 h=0$ a.e., it follows that $\Delta h=0$ a.e. on $\M$. By standard interior elliptic regularity for second-order elliptic equations \citep[][Chapter 6, Theorem 3, ``Infinite differentiability in the interior'']{evans1998pde}, weakly harmonic functions are locally smooth (e.g., in each sufficiently small coordinate chart), which implies that $h\in C^\infty(\M)$. Then, Green's identity holds on the closed manifold $\M$, i.e., 
\begin{align*}
    \int_{\mathfrak M} |\nabla h|^2\,d \vol_g = -\int_{\mathfrak M} h\,\Delta h\,d \vol_g.
\end{align*}
Since $\Delta h=0$, we have that $\int_{\mathfrak M} |\nabla h|^2\,d \vol_g=0$, which implies that $\nabla h=0$ on $\M$. Hence, $h$ is constant on $\mathfrak M$. Applying this to each component $h_\ell$, we conclude that $\boldsymbol h$ is a constant vector-valued function on $\mathfrak M$.

\noindent\underline{Part 2: Simply connected flat manifold with smooth boundary.} Assume now that $\mathfrak M$ is simply connected, flat, and has smooth boundary. Then, there exist global coordinates in which the metric is Euclidean (equivalently, $\mathfrak M$ is globally isometric to a domain $\Omega\subseteq\mathbb R^d$). In these coordinates, the Levi-Civita connection is the standard derivative, so the covariant Hessian agrees with the usual Hessian. Thus, for each scalar component $h_\ell$, the condition that $\nabla^2 h_\ell=0$ a.e. becomes that $\partial_{ij} h_\ell =0$ a.e. on $\Omega$ for $1\le i,j\le d$. Therefore, for each $1\le \ell\le D$, there exist $a^\ell, b^\ell\in\mathbb R$ such that $h_\ell(x)=a^\ell+\sum_{i=1}^d b_i^\ell x_i$ a.e. on $\Omega$. Denote $\boldsymbol a=(a^1,\dots,a^D)\in\mathbb R^D$ and $\boldsymbol B=(b_i^\ell)_{\ell,i}\in\mathbb R^{D\times d}$. Then, we have that $\boldsymbol h(x)=\boldsymbol a+\boldsymbol B\boldsymbol x$ a.e. on $\Omega$. Due to the embedding result in Theorem \ref{thm: Rellich–Kondrachov embedding}, i.e., $\Bh\in C(\M;\mathbb{R}^D)$, we have that $\boldsymbol h(x)=\boldsymbol a+\boldsymbol B\boldsymbol x$ exactly everywhere on $\Omega$
\end{proof}

\begin{proof}[\textbf{Proof of Theorem \ref{thm: PME with lambda=infty}}]  Suppose $\{\lambda_n\}_{n\in\mathbb{N}}$ is an arbitrary sequence of positive real numbers such that $\lim_{n\rightarrow\infty}\lambda_n=\infty$.\\
\underline{Part A: Preparations.} We proceed by applying Lemmas \ref{lemma: elementary mathematical analysis lemma} and \ref{lemma: a generalization of the elementary mathematical analysis lemma}. To facilitate this, we provide several necessary preparations first. Let $\{\boldsymbol{f}_{\lambda_{n,k}}^*\}_{k\in\mathbb{N}}$ be an arbitrary subsequence of $\{\boldsymbol{f}_{\lambda_{n}}^*\}_{n\in\mathbb{N}}$.

\noindent\underline{Step 1: Boundedness of the subsequence $\{\boldsymbol{f}_{\lambda_{n,k}}^*\}_{k\in\mathbb{N}}$ in $H^2(\mathfrak{M})$.}\\
Fix any $\boldsymbol{g}\in \mathscr{F}(\mathbb{P})$. Since $\boldsymbol{f}_{\lambda_{n,k}}^*$ minimizes $\mathcal{L}_{\lambda_{n,k}}(\Bf)$ over $\mathscr{F}(\mathbb{P})$, we have
\begin{align*}
    \lambda_{n,k} \cdot \Vert \nabla^2 \boldsymbol{f}_{\lambda_{n,k}}^* \Vert^2_{L^2(\mathfrak{M})} \le \mathcal{L}_{\lambda_{n,k}}(\boldsymbol{f}_{\lambda_{n,k}}^*) \le \mathcal{L}_{\lambda_{n,k}}(\boldsymbol{g}) =\mathbb{E}\,\dist(\boldsymbol{X},\boldsymbol{g})+\lambda_{n,k} \cdot \Vert \nabla^2 \boldsymbol{g} \Vert^2_{L^2(\mathfrak{M})}.
\end{align*}
Dividing each side by $\lambda_{n,k}$ and taking $k \to \infty$, we obtain
\begin{align*}
    \limsup_{k\rightarrow\infty} \Vert \nabla^2 \boldsymbol{f}_{\lambda_{n,k}}^* \Vert^2_{L^2(\mathfrak{M})} &\le \lim_{k\rightarrow\infty} \frac{1}{\lambda_{n,k}} \left\{\mathbb{E}\,\dist(\boldsymbol{X},\boldsymbol{g})+\lambda_{n,k} \cdot \Vert \nabla^2 \boldsymbol{g} \Vert^2_{L^2(\mathfrak{M})}\right\} \\
    & = \lim_{k \to \infty} \frac{1}{\lambda_{n,k}} \mathbb{E}\,\dist(\boldsymbol{X},\boldsymbol{g}) +  \Vert \nabla^2 \boldsymbol{g} \Vert^2_{L^2(\mathfrak{M})} \\
    &= \Vert \nabla^2 \boldsymbol{g} \Vert^2_{L^2(\mathfrak{M})} < +\infty.
\end{align*}
Therefore, $\sup_{k\in\mathbb{N}} \Vert \nabla^2 \boldsymbol{f}_{\lambda_{n,k}}^* \Vert^2_{L^2(\mathfrak{M})} <+\infty$. In addition, the definition of $\mathscr{F}(\mathbb{P})$ implies that $\sup_{k\in\mathbb{N}}\Vert \boldsymbol{f}_{\lambda_{n,k}}^* \Vert_{L^2(\mathfrak{M})}<+\infty$. Hence, the sequence $\{\boldsymbol{f}_{\lambda_{n,k}}^*\}_{k\in\mathbb{N}}$ is bounded in $H^2(\mathfrak{M})$.

\noindent\underline{Step 2: Weak convergence in $H^2(\mathfrak{M})$ and strong convergence in $C(\mathfrak{M})$.}\\
Theorem 3.18 of \cite{brezis2011functional}, together with the result from Step 1, implies that there exists a further subsequence $\{\boldsymbol{f}_{\lambda_{n,k,l}}^*\}_{l\in\mathbb{N}}$ and $\tilde{\boldsymbol{f}}\in H^2(\mathfrak{M})$ such that
\begin{align}\label{eq: pca weak convergence, proof of the continuity in lambda}
    \boldsymbol{f}_{\lambda_{n,k,l}}^* \overset{w}{\rightharpoonup} \tilde{\boldsymbol{f}} \text{ in } H^2(\mathfrak{M}) \quad \text{ as }l\rightarrow\infty.
\end{align}
Theorem \ref{thm: Rellich–Kondrachov embedding}, together with Remark 2 of Chapter 6 of \cite{brezis2011functional}, implies that $\tilde{\boldsymbol{f}} \in C(\mathfrak{M})$ and
\begin{align}\label{eq: pca uniform limit for the continuity wrt lambda}
\lim_{l\rightarrow\infty}\max_{\boldsymbol{m}\in\mathfrak{M}} \Vert \boldsymbol{f}_{\lambda_{n,k,l}}^*(\boldsymbol{m}) - \tilde{\boldsymbol{f}}(\boldsymbol{m}) \Vert = 0.
\end{align}
Moreover, since each $\boldsymbol{f}_{\lambda_{n,k,l}}^*\in \mathscr{F}(\mathbb{P})$ satisfies the uniform bound $\max_{\boldsymbol{m}\in\mathfrak{M}}\Vert \boldsymbol{f}_{\lambda_{n,k,l}}^*(\boldsymbol{m}) \Vert \le 2\cdot\operatorname{rad}_0(\operatorname{supp}\left(\mathbb{P})\right)$ defining $\mathscr{F}(\mathbb{P})$, the uniform limit $\tilde{\boldsymbol{f}}$ in \eqref{eq: pca uniform limit for the continuity wrt lambda} satisfies the same bound. Thus, $\tilde{\boldsymbol{f}}\in \mathscr{F}(\mathbb{P})$.

\noindent\underline{Step 3: Convergence of the fitting-error term.}\\
Lemma \ref{lemma: continuity of Delta wrt f}, together with \eqref{eq: pca uniform limit for the continuity wrt lambda}, implies that
\begin{equation}\label{eq: pca datafit_conv}
\lim_{l\rightarrow\infty}\mathbb{E}\,\dist(\boldsymbol{X}, \boldsymbol{f}_{\lambda_{n,k,l}}^*) = \mathbb{E}\,\dist(\boldsymbol{X},\tilde{\boldsymbol{f}}).
\end{equation}

\noindent\underline{Step 4: Lower semicontinuity of the limit function's penalty}\\
\eqref{eq: pca uniform limit for the continuity wrt lambda} implies $\lim_{l\rightarrow\infty}\Vert \boldsymbol{f}_{\lambda_{n,k,l}}^* - \tilde{\boldsymbol{f}} \Vert_{L^2(\mathfrak{M})}=0$. In addition, the weak convergence in \eqref{eq: pca weak convergence, proof of the continuity in lambda}, together with the weak lower semicontinuity of the norm $\Vert \cdot \Vert_{H^2(\mathfrak{M})}$ \citep[][Proposition 3.5(iii)]{brezis2011functional}, implies that
\begin{align*}
    \Vert \tilde{\boldsymbol{f}} \Vert_{L^2(\mathfrak{M})}^2 + \|\nabla^2 \tilde{\boldsymbol{f}}\|_{L^2(\M)}^2 =\Vert \tilde{\boldsymbol{f}} \Vert_{H^2(\mathfrak{M})}^2 &\le \liminf_{l\rightarrow\infty} \Vert \boldsymbol{f}_{\lambda_{n,k,l}}^* \Vert_{H^2(\mathfrak{M})}^2 \\
    &= \Vert \tilde{\boldsymbol{f}} \Vert_{L^2(\mathfrak{M})}^2 + \liminf_{l\rightarrow\infty} \|\nabla^2 \boldsymbol{f}_{\lambda_{n,k,l}}^*\|_{L^2(\M)}^2.
\end{align*}
Therefore,
\begin{equation}\label{eq: pca lsc_penalty}
\|\nabla^2 \tilde{\boldsymbol{f}}\|_{L^2(\M)}^2 \le \liminf_{l\rightarrow\infty} \|\nabla^2 \boldsymbol{f}_{\lambda_{n,k,l}}^*\|_{L^2(\M)}^2.
\end{equation}

\noindent\underline{Step 5: Controlling the Upper Bound of the Penalty of the Sequence}\\Fix an arbitrary $\boldsymbol{h}\in \mathscr{F}(\mathbb{P})$. Optimality of $\boldsymbol{f}_{\lambda_{n,k,l}}^*$ for $\mathcal{L}_{\lambda_{n,k,l}}(\Bf)$ gives
\[
\mathbb{E}\,\dist(\boldsymbol{X}, \boldsymbol{f}_{\lambda_{n,k,l}}^*)+\lambda_{n,k,l} \cdot \|\nabla^2 \boldsymbol{f}_{\lambda_{n,k,l}}^*\|^2_{L^2(\M)} \le \mathbb{E}\,\dist(\boldsymbol{X}, \boldsymbol{h})+\lambda_{n,k,l} \cdot \|\nabla^2 \boldsymbol{h}\|^2_{L^2(\M)}.
\]
Divide by $\lambda_{n,k,l}$ and take $\limsup_{l\to\infty}$ on the left-hand and right-hand sides. Then,
\begin{align*}
\quad &\limsup _{l \to \infty}\left\{ \frac{1}{\lambda_{n,k,l} } \mathbb{E}\,\dist(\boldsymbol{X}, \boldsymbol{f}_{\lambda_{n,k,l}}^*) + \|\nabla^2 \boldsymbol{f}_{\lambda_{n,k,l}}^*\|^2_{L^2(\M)} \right\}\\ \le &\limsup_{l \to \infty}\left\{ \frac{1}{\lambda_{n,k,l}}\mathbb{E}\,\dist(\boldsymbol{X}, \boldsymbol{h})+ \|\nabla^2 \boldsymbol{h}\|^2_{L^2(\M)} \right\}.
\end{align*}
Since $\lambda_{n,k,l} \rightarrow\infty$ as $l\rightarrow\infty$, we have that
\[
\limsup _{l \to \infty} \|\nabla^2 \boldsymbol{f}_{\lambda_{n,k,l}}^*\|^2_{L^2(\M)} \le \|\nabla^2 \boldsymbol{h}\|^2_{L^2(\M)}.
\]
Since this must hold for all $\boldsymbol{h} \in \mF(\mbP),$ this includes $\boldsymbol{h} = 0$. Thus, we have that
\begin{equation}\label{eq: pca usc_penalty}
\limsup _{l \to \infty} \|\nabla^2 \boldsymbol{f}_{\lambda_{n,k,l}}^*\|^2_{L^2(\M)} =  0.
\end{equation}

\noindent \underline{Step 6: Deducing the limit point.}\\
By \eqref{eq: pca lsc_penalty}, \eqref{eq: pca usc_penalty},  $\limsup$ bounding $\liminf$ from above, and non-negativity of the norm, it follows that $\|\nabla^2 \tilde{\boldsymbol{f}}\|^2_{L^2(\M)} = 0$. 

\noindent\underline{Step 7: Subsequential limit as a minimizer of the limiting problem.}\\ Let $\Bh \in \mF(\mbP)$ such that $\Vert \nabla^2 \Bh \Vert^2_{L^2(\M)} = 0$. Then, by the optimality of $\fstar_{\lambda_{n,k,l}}$ for the objective $\mcL_{\lambda_{n,k,l}}(\Bf)$, we have that
$$\mbE \dist(\boldsymbol{X},\fstar_{n,k,l}) \leq\mbE \dist(\boldsymbol{X},\fstar_{n,k,l}) + \lambda_{n,k,l} \cdot \|\nabla^2 \fstar_{n,k,l}\|^2_{L^2(\M)} \leq \mbE \dist(\boldsymbol{X},h).$$
Taking limits on the left- and right-hand sides and using \eqref{eq: pca datafit_conv}, we have
$$\mbE \dist(\boldsymbol{X},\tilde{\Bf}) = \lim_{l \to \infty}\mbE \dist(\boldsymbol{X},\fstar_{n,k,l}) \leq\mbE \dist (\boldsymbol{X},\Bh).$$ 
The expression above holds for all $\Bh$ such that $\Vert \nabla^2 \Bh \Vert^2_{L^2(\M)} = 0$. Since$\|\nabla^2 \tilde{\boldsymbol{f}}\|^2_{L^2(\M)} = 0$, $\tilde{\Bf}$ must be a minimizer. That is,
\begin{equation}\label{eq: limiting problem lambda infinity infty}
    \tilde{\Bf} \in \argmin_{\Bh \in \mF(\mbP) :\; \Vert \nabla^2 \Bh \Vert^2_{L^2(\M)} = 0} \mbE \dist(\boldsymbol{X},\Bh).
\end{equation}

\noindent\underline{Part B: special cases.} We now aim to prove the two cases in Theorem \ref{thm: PME with lambda=infty}, pertaining to two special cases of manifolds. From Part A, we know the minimizaiton is reduced to the form given by \eqref{eq: limiting problem lambda infinity infty}. 

\noindent\underline{Case 1: Manifold without boundary.} Assume $\mathfrak M$ is closed, i.e., $\partial\M=\emptyset$. Due to Lemma \ref{lemma: solutions to the Hessian equation}, the minimization in \eqref{eq: limiting problem lambda infinity infty} is equivalent to 
\begin{align*}
    \argmin_{\Bh\in\mathscr{F}(\mathbb{P}):\,\Bh = \boldsymbol{c} \text{ is a constant function}} \; \mathbb{E}\Vert \boldsymbol{X} - \boldsymbol{c} \Vert^2,
\end{align*}
which has a unique minimizer, $\boldsymbol{h}=\mathbb{E}\boldsymbol{X}$. Then, \eqref{eq: limiting problem lambda infinity infty} implies that $\tilde{\Bf}(\boldsymbol{m})=\mathbb{E}\boldsymbol{X}$ for all $\boldsymbol{m}\in\M$. Therefore, the argument in Part A shows that every subsequence of $\{\Bf_{\lambda_n}^*\}_{n\in\mathbb{N}}$ admits a further subsequence converging to the constant function $\boldsymbol{m}\mapsto \mathbb{E}\boldsymbol{X}$ in the supremum-norm topology. The first assertion of Theorem \ref{thm: PME with lambda=infty} then follows from Lemma \ref{lemma: elementary mathematical analysis lemma}.

\noindent\underline{Case 2: Simply Connected Flat Manifold with Smooth Boundary.} Assume now that $\mathfrak M$ is simply connected, flat, and has smooth boundary. Due to Lemma \ref{lemma: solutions to the Hessian equation}, the minimization in \eqref{eq: limiting problem lambda infinity infty} is equivalent to
\begin{align}\label{eq: PCA from the viewpoint of minimizing the distance functional}
    \mathscr{A}:=\;\argmin_{\Bh \in \mF(\mbP) :\; \Bh \text{ is an affine function of the form in \eqref{eq: affine functions}}} \; \mathbb{E}\left\{\Vert \boldsymbol{X} - \Bh\left(\Bpi_{\Bh}(\boldsymbol{X})\right) \Vert^2 \right\}.
\end{align}
It is straightforward that the set $\mathscr{A}$ of minimizers is not a singleton. Once we restrict to affine functions $\Bh$ of the form in \eqref{eq: affine functions}, the image of $\Bh$ lies in a $d$-dimensional affine subspace of $\mathbb{R}^D$. Then, the minimization in \eqref{eq: PCA from the viewpoint of minimizing the distance functional} is equivalent to finding the $d$-dimensional affine subspace that minimizes the expected squared distance from $\boldsymbol{X}$, which is exactly the population PCA optimization problem \citep[][Chapter 2]{jolliffe2002principal}.

Assume that the $d$-th largest eigenvalue of the covariance matrix $\mathrm{Cov}(\boldsymbol{X})$ is strictly greater than the $(d+1)$-th largest eigenvalue. By the population PCA optimization, for every $\Bf^*_\infty\in\mathscr{A}$, we have that
\begin{align*}
    \{\boldsymbol{f}_\infty^*(\boldsymbol m):\boldsymbol m\in\mathfrak M\} \subseteq \left\{ \mathbb E\boldsymbol X+\sum_{j=1}^d \alpha_j \boldsymbol v_j:\ \alpha_1,\ldots,\alpha_d\in\mathbb R \right\},
\end{align*}
where $\boldsymbol v_1,\ldots,\boldsymbol v_d$ are eigenvectors of $\mathrm{Cov}(\boldsymbol X)$ corresponding to its largest $d$ eigenvalues.

Lastly, the discussion in Part A, together with Lemma \ref{lemma: a generalization of the elementary mathematical analysis lemma}, establishes the second assertion of Theorem \ref{thm: PME with lambda=infty}.
\end{proof}

\subsection{Proofs of the Results in Section \ref{section: Projection-Adaptation Algorithm}}

The following lemmas serve as preparation for the proof of Theorem \ref{thm: the convergence theorem of the core iterative algorithm}.
\begin{lemma}\label{lemma: the connection between L and Q}
\begin{enumerate}
    \item For any $\Bf$ and $\Bg\in\mathscr{F}(\mathbb{P})$, we have $\mathcal{L}_\lambda(\Bf) \le \mathcal{Q}_\lambda(\Bf\,\vert\,\Bg)$.

    \item Let $\{\Bf^{(n)}\}_{n\in\mathbb{N}}$ be a sequence generated by the iterative algorithm in \eqref{eq: the core iterative algorithm}. The scalar-valued sequence $\{\mathcal{L}_\lambda(\Bf_\lambda^{(n)})\}_{n\in\mathbb{N}}$ is nonincreasing.
\end{enumerate}
\end{lemma}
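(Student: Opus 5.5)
Both parts follow from the pointwise minimality of the projection index, combined with the definition of the PA update in \eqref{eq: the core iterative algorithm}.

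For item (i), fix $\Bf,\Bg\in\mathscr{F}(\mathbb{P})$ and $\boldsymbol{x}\in\mathbb{R}^D$. By Definition~\ref{def: projection index}, $\Bpi_{\Bf}(\boldsymbol{x})$ minimizes $\boldsymbol{m}\mapsto\Vert\boldsymbol{x}-\Bf(\boldsymbol{m})\Vert$ over $\M$. Since $\Bpi_{\Bg}(\boldsymbol{x})$ is one admissible point of $\M$, we get
\begin{align*}
\Vert\boldsymbol{x}-\Bf(\Bpi_{\Bf}(\boldsymbol{x}))\Vert^2\le\Vert\boldsymbol{x}-\Bf(\Bpi_{\Bg}(\boldsymbol{x}))\Vert^2 .
\end{align*}
Both sides are Borel measurable in $\boldsymbol{x}$, because $\Bf$ is continuous (Theorem~\ref{thm: Rellich–Kondrachov embedding}) and the projection indices are Borel (Lemma~\ref{lemma: Borel measurable selection of nearest-point}). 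For $\boldsymbol{x}\in\operatorname{supp}(\mathbb{P})$ both sides are bounded by $(3\operatorname{rad}_0)^2$, so the expectations are finite. Integrating against $\mathbb{P}$ and adding the common penalty term $\lambda\Vert\nabla^2\Bf\Vert^2_{L^2(\M)}$ to both sides gives $\mathcal{L}_\lambda(\Bf)\le\mathcal{Q}_\lambda(\Bf\,\vert\,\Bg)$.

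For item (ii), use the identity $\mathcal{L}_\lambda(\Bg)=\mathcal{Q}_\lambda(\Bg\,\vert\,\Bg)$ together with the defining property of the update. Since $\Bf^{(n+1)}_\lambda$ minimizes $\mathcal{Q}_\lambda(\cdot\,\vert\,\Bf^{(n)}_\lambda)$ over $\mathscr{F}(\mathbb{P})$, and $\Bf^{(n)}_\lambda$ itself lies in $\mathscr{F}(\mathbb{P})$, item (i) gives the chain
\begin{align*}
\mathcal{L}_\lambda(\Bf^{(n+1)}_\lambda)\le\mathcal{Q}_\lambda(\Bf^{(n+1)}_\lambda\,\vert\,\Bf^{(n)}_\lambda)\le\mathcal{Q}_\lambda(\Bf^{(n)}_\lambda\,\vert\,\Bf^{(n)}_\lambda)=\mathcal{L}_\lambda(\Bf^{(n)}_\lambda).
\end{align*}
This is exactly the claimed monotonicity.

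The only delicate point is that the A-step minimizer must exist. The paper defers this to Lemma~\ref{lemma: properties of Q and T functionals}. If it is not available, I would argue directly, mirroring the proof of Theorem~\ref{thm: existence of f star}. Take a minimizing sequence for $\mathcal{Q}_\lambda(\cdot\,\vert\,\Bg)$; it is bounded in $H^2(\M)$, so extract a weakly convergent subsequence. By the compact embedding it converges uniformly, which makes the fitting term continuous, since $\Bpi_{\Bg}$ is fixed and dominated convergence applies. The penalty term is weakly lower semicontinuous, and $\mathscr{F}(\mathbb{P})$ is closed under uniform limits, so the limit is a minimizer in $\mathscr{F}(\mathbb{P})$. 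The monotonicity argument itself does not need uniqueness of this minimizer.
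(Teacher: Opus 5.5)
Your proposal is correct and follows essentially the same route as the paper. Part (i) comes from the pointwise minimality of $\Bpi_{\Bf}(\boldsymbol{x})$ integrated against $\mathbb{P}$, and part (ii) from the chain $\mathcal{L}_\lambda(\Bf^{(n+1)}_\lambda)\le\mathcal{Q}_\lambda(\Bf^{(n+1)}_\lambda\,\vert\,\Bf^{(n)}_\lambda)\le\mathcal{Q}_\lambda(\Bf^{(n)}_\lambda\,\vert\,\Bf^{(n)}_\lambda)=\mathcal{L}_\lambda(\Bf^{(n)}_\lambda)$; your remarks on measurability, integrability, and existence of the A-step minimizer (which the paper handles in Lemma~\ref{lemma: properties of Q and T functionals}) are sound additions rather than a different argument.
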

\begin{proof}[\textbf{Proof of Lemma \ref{lemma: the connection between L and Q}}]
    i) The definition of projection indices implies that 
    \begin{align*}
        \mathcal{L}_\lambda(\Bf) &= \int_{\mathbb{R}^D} \Vert \boldsymbol{x}-\Bf\left(\Bpi_{\Bf}(\boldsymbol{x})\right)\Vert^2 \, \mathbb{P}(\mathrm{d}\boldsymbol{x}) + \lambda\cdot \Vert \nabla{}^2 \boldsymbol{f} \Vert^2_{L^2(\M)} \\ 
        &\le \int_{\mathbb{R}^D} \Vert \boldsymbol{x}-\Bf\left(\Bpi_{\Bg}(\boldsymbol{x})\right)\Vert^2 \, \mathbb{P}(\mathrm{d}\boldsymbol{x}) + \lambda\cdot \Vert \nabla{}^2 \boldsymbol{f} \Vert^2_{L^2(\M)} = \mathcal{Q}(\Bf \,\vert\, \Bg),
    \end{align*}
    for all $\Bf$ and $\Bg\in\mathscr{F}(\mathbb{P})$.

    ii) The result from part i) implies that 
    \begin{align}\label{eq: Monotone descent and existence of a limit}
    \begin{aligned}
        \min_{\Bf\in\mathscr{F}(\mathbb{P})} \mathcal{L}_\lambda(\Bf) &\le \mathcal{L}_\lambda(\Bf_\lambda^{(n+1)})\\
        &\le \mathcal{Q}_\lambda(\Bf_\lambda^{(n+1)}\,\vert\,\Bf^{(n)}_\lambda) \\
        &= \mathcal{Q}_\lambda(\mathcal{T}_\lambda(\Bf_\lambda^{(n)})\,\vert\,\Bf^{(n)}_\lambda) \\
        &\le \mathcal{Q}_\lambda(\Bf_\lambda^{(n)}\,\vert\,\Bf^{(n)}_\lambda) \\
        &= \mathcal{L}_\lambda(\Bf_\lambda^{(n)}).
    \end{aligned}
\end{align}
\end{proof}

\begin{lemma}\label{lemma: convergence of projection indices}
    Let $\{\boldsymbol{h}^{(k)}\}_{k\in\mathbb{N}}$ be a sequence in $\mathscr{F}(\mathbb{P})$, and $\boldsymbol{h}^{(\infty)}\in \mathscr{F}(\mathbb{P})$. Suppose
\begin{enumerate}
    \item $\lim_{k\rightarrow\infty}\max_{\boldsymbol{m}\in\mathfrak{M}} \Vert \boldsymbol{h}^{(k)}(\boldsymbol{m}) - \boldsymbol{h}^{(\infty)}(\boldsymbol{m}) \Vert = 0$, and 
    \item for every $\boldsymbol{x}\in\mathrm{supp}(\mathbb{P})$, the following set 
    \begin{align*}
        \argmin_{\boldsymbol{m}'\in\mathfrak{M}}\Vert \boldsymbol{x}-\boldsymbol{h}^{(\infty)}(\boldsymbol{m}') \Vert:=\left\{\boldsymbol{m}\in\mathfrak{M}:\,\Vert \boldsymbol{x}-\boldsymbol{h}^{(\infty)}(\boldsymbol{m}) \Vert = \min_{\boldsymbol{m}'\in\mathfrak{M}}\Vert \boldsymbol{x}-\boldsymbol{h}^{(\infty)}(\boldsymbol{m}') \Vert\right\}
    \end{align*}
    is a singleton, denoted as $\{\boldsymbol{m}^{(\infty)}(\boldsymbol{x})\}$.
\end{enumerate}
Then, for every $\boldsymbol{x}\in\mathrm{supp}(\mathbb{P})$, 
\begin{align*}
    \lim_{k\rightarrow\infty} \boldsymbol{m}^{(k)}(\boldsymbol{x}) = \boldsymbol{m}^{(\infty)}(\boldsymbol{x}),
\end{align*}
where $\boldsymbol{m}^{(k)}(\boldsymbol{x}) \in \argmin_{\boldsymbol{m}\in\mathfrak{M}} \Vert \boldsymbol{x} - \boldsymbol{h}^{(k)}(\boldsymbol{m}) \Vert$ is allowed to be any Borel measurable selection. 
\end{lemma}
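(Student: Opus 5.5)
Fix $\boldsymbol{x}\in\mathrm{supp}(\mathbb{P})$ and argue by compactness of $\mathfrak{M}$ together with the subsequence principle of Lemma~\ref{lemma: elementary mathematical analysis lemma}. Since $\mathfrak{M}$ is a compact metric space (with distance $d_g$), it is sequentially compact. So every subsequence $\{\boldsymbol{m}^{(k_j)}(\boldsymbol{x})\}_{j}$ has a further subsequence $\{\boldsymbol{m}^{(k_{j_i})}(\boldsymbol{x})\}_{i}$ converging to some $\bar{\boldsymbol{m}}\in\mathfrak{M}$. By Lemma~\ref{lemma: elementary mathematical analysis lemma}, it then suffices to show that every such subsequential limit satisfies $\bar{\boldsymbol{m}}=\boldsymbol{m}^{(\infty)}(\boldsymbol{x})$. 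Measurability of the selection plays no role here, since the argument is pointwise in $\boldsymbol{x}$.

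The key step is to show that $\bar{\boldsymbol{m}}$ minimizes $\boldsymbol{m}\mapsto\Vert\boldsymbol{x}-\boldsymbol{h}^{(\infty)}(\boldsymbol{m})\Vert$. Abbreviate $\boldsymbol{m}_i:=\boldsymbol{m}^{(k_{j_i})}(\boldsymbol{x})$ and $\boldsymbol{h}_i:=\boldsymbol{h}^{(k_{j_i})}$. First, I would establish convergence of the evaluated points:
\begin{align*}
\Vert \boldsymbol{h}_i(\boldsymbol{m}_i)-\boldsymbol{h}^{(\infty)}(\bar{\boldsymbol{m}})\Vert \le \Vert \boldsymbol{h}_i-\boldsymbol{h}^{(\infty)}\Vert_{C(\mathfrak{M})} + \Vert \boldsymbol{h}^{(\infty)}(\boldsymbol{m}_i)-\boldsymbol{h}^{(\infty)}(\bar{\boldsymbol{m}})\Vert .
\end{align*}
The first term vanishes by hypothesis (i). The second vanishes by continuity of $\boldsymbol{h}^{(\infty)}$, which holds because $\boldsymbol{h}^{(\infty)}\in H^2(\mathfrak{M})\subseteq C(\mathfrak{M})$ by Theorem~\ref{thm: Rellich–Kondrachov embedding}. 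Hence $\Vert\boldsymbol{x}-\boldsymbol{h}_i(\boldsymbol{m}_i)\Vert\to\Vert\boldsymbol{x}-\boldsymbol{h}^{(\infty)}(\bar{\boldsymbol{m}})\Vert$.

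Next, I would show that the minimal values converge: $\min_{\boldsymbol{m}}\Vert\boldsymbol{x}-\boldsymbol{h}_i(\boldsymbol{m})\Vert\to\min_{\boldsymbol{m}}\Vert\boldsymbol{x}-\boldsymbol{h}^{(\infty)}(\boldsymbol{m})\Vert$. This follows because the two minima differ by at most $\Vert\boldsymbol{h}_i-\boldsymbol{h}^{(\infty)}\Vert_{C(\mathfrak{M})}$, by the reverse triangle inequality applied at the respective minimizers (the same argument as in the proof of Lemma~\ref{lemma: continuity of Delta wrt f}, without squares). Since $\boldsymbol{m}_i$ attains the minimum for $\boldsymbol{h}_i$, the two limits coincide:
\begin{align*}
\Vert\boldsymbol{x}-\boldsymbol{h}^{(\infty)}(\bar{\boldsymbol{m}})\Vert=\min_{\boldsymbol{m}\in\mathfrak{M}}\Vert\boldsymbol{x}-\boldsymbol{h}^{(\infty)}(\boldsymbol{m})\Vert .
\end{align*}
So $\bar{\boldsymbol{m}}$ lies in the argmin set, which is the singleton $\{\boldsymbol{m}^{(\infty)}(\boldsymbol{x})\}$ by hypothesis (ii). Lemma~\ref{lemma: elementary mathematical analysis lemma} then yields $\boldsymbol{m}^{(k)}(\boldsymbol{x})\to\boldsymbol{m}^{(\infty)}(\boldsymbol{x})$.

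The main obstacle is the interchange of limit and argmin. It is handled by combining uniform convergence, which controls both the values and the minima, with continuity of the limit map and the uniqueness hypothesis (ii). Without uniqueness, different subsequences could converge to different minimizers, which is exactly why Assumption~\ref{assumption: singleton assumption for the sequence} is needed.
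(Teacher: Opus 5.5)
Your proposal is correct and follows essentially the same route as the paper. You fix $\boldsymbol{x}$, extract a convergent further subsequence using compactness of $\mathfrak{M}$, show that the limit minimizes $\boldsymbol{m}\mapsto\Vert\boldsymbol{x}-\boldsymbol{h}^{(\infty)}(\boldsymbol{m})\Vert$ via uniform convergence and continuity of $\boldsymbol{h}^{(\infty)}$, invoke the singleton hypothesis, and conclude with Lemma~\ref{lemma: elementary mathematical analysis lemma}. The only difference is cosmetic: the paper derives $\Vert\boldsymbol{x}-\boldsymbol{h}^{(\infty)}(\boldsymbol{m}^{(k)})\Vert\le\min_{\boldsymbol{m}}\Vert\boldsymbol{x}-\boldsymbol{h}^{(\infty)}(\boldsymbol{m})\Vert+2\epsilon$ in one step, whereas you pass to the limit separately in the attained values and in the minimal values.
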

\begin{proof}[\textbf{Proof of Lemma \ref{lemma: convergence of projection indices}}]
    Fix $\boldsymbol{x}\in\mathrm{supp}(\mathbb{P})$. 
    \begin{align*}
        \phi_k(\boldsymbol{m}):=\Vert \boldsymbol{x} - \boldsymbol{h}^{(k)}(\boldsymbol{m}) \Vert \ \ \text{ and }\ \ \phi_\infty(\boldsymbol{m}):=\Vert \boldsymbol{x} - \boldsymbol{h}^{(\infty)}(\boldsymbol{m}) \Vert.
    \end{align*}
    Obviously, $\phi_k$ and $\phi_\infty$ are continuous functions. Then, $\boldsymbol{m}^{(k)}\in \argmin_{\boldsymbol{m}\in\mathfrak{M}} \phi_k(\boldsymbol{m})$ for all $k$, and $\boldsymbol{m}^{(\infty)}$ is the element in the singleton $\argmin_{\boldsymbol{m}\in\mathfrak{M}} \phi_\infty(\boldsymbol{m})$. (Throughout this proof, we suppress ``$(\boldsymbol{x})$'', i.e., $\boldsymbol{m}^{(k)}(\boldsymbol{x})$ and $\boldsymbol{m}^{(\infty)}(\boldsymbol{x})$ are denoted as $\boldsymbol{m}^{(k)}$ and $\boldsymbol{m}^{(\infty)}$, respectively.) By assumption,
\begin{align}\label{eq: uniform convergence of phi_k}
\lim_{k\rightarrow\infty}\max_{\boldsymbol{m}\in\mathfrak{M}} \vert \phi_k(\boldsymbol{m}) - \phi_\infty(\boldsymbol{m}) \vert \le \lim_{k\rightarrow\infty}\max_{\boldsymbol{m}\in\mathfrak{M}} \Vert \boldsymbol{h}^{(k)}(\boldsymbol{m}) - \boldsymbol{h}^{(\infty)}(\boldsymbol{m}) \Vert = 0
\end{align}
    
For any subsequence $\{\boldsymbol{m}^{(k, j)}\}_{j\in\mathbb{N}}$ of $\{\boldsymbol{m}^{(k)}\}_{k\in\mathbb{N}}$, the compactness of $\mathfrak{M}$ implies that there exists a further subsequence $\{\boldsymbol{m}^{(k, j, l)}\}_{l\in\mathbb{N}}$ that is convergent, i.e., there exists an $\widehat{\boldsymbol{m}} \in\mathfrak{M}$ such that $\boldsymbol{m}^{(k,j,l)} \rightarrow \widehat{\boldsymbol{m}}$ as $l\rightarrow\infty$.

    By \eqref{eq: uniform convergence of phi_k}, for any $\epsilon>0$, there exists a positive integer $K$, independent of $\boldsymbol{m}$, such that for all $k\ge K$,
    \begin{align}\label{eq: lemma proof eq 1, convergence of projection indices}
        -\epsilon \le \phi_k(\boldsymbol{m}) - \phi_\infty(\boldsymbol{m}) \le \epsilon \ \ \text{ for all }\boldsymbol{m}\in\mathfrak{M}.
    \end{align}
    Because $\boldsymbol{m}^{(k)}$ minimizes $\phi_k$, we have
    \begin{align}\label{eq: lemma proof eq 2, convergence of projection indices}
        \phi_k(\boldsymbol{m}^{(k)}) \le \phi_k(\boldsymbol{m})\ \ \text{ for all }\boldsymbol{m}\in\mathfrak{M}.
    \end{align}
    Use \eqref{eq: lemma proof eq 1, convergence of projection indices} twice in \eqref{eq: lemma proof eq 2, convergence of projection indices},
    \begin{align*}
        \phi_\infty(\boldsymbol{m}^{(k)}) - \epsilon \le \phi_k(\boldsymbol{m}^{(k)}) \le \phi_k(\boldsymbol{m}) \le \phi_\infty(\boldsymbol{m}) +\epsilon \ \ \text{ for all }k \ge K \text{ and all }\boldsymbol{m}\in\mathfrak{M}.
    \end{align*}
    Taking minimum over $\boldsymbol{m}$ on the right gives
    \begin{align*}
        \phi_\infty(\boldsymbol{m}^{(k)}) \le \min_{\boldsymbol{m}\in\mathfrak{M}} \phi_\infty(\boldsymbol{m}) + 2\epsilon \ \ \text{ for all }k \ge K,
    \end{align*}
    which implies that
    \begin{align*}
\phi_\infty(\widehat{\boldsymbol{m}}) = \lim_{l\rightarrow\infty}\phi_\infty(\boldsymbol{m}^{(k,j,l)}) \le \min_{\boldsymbol{m}\in\mathfrak{M}} \phi_\infty(\boldsymbol{m}) + 2\epsilon,
    \end{align*}
where the first equality follows from the continuity of $\phi_\infty$. Since $\epsilon>0$ is arbitrary, we have
    \begin{align*}
        \phi_\infty(\widehat{\boldsymbol{m}}) \le \min_{\boldsymbol{m}\in\mathfrak{M}} \phi_\infty(\boldsymbol{m}),
    \end{align*}
    which implies that $\widehat{\boldsymbol{m}}\in\argmin\phi_\infty=\{\boldsymbol{m}^{(\infty)}\}$. Hence, $\widehat{\boldsymbol{m}}=\boldsymbol{m}^{(\infty)}$. That is, every subsequence of $\{\boldsymbol{m}^{(k)}\}_{k\in\mathbb{N}}$ has a further subsequence converging to $\boldsymbol{m}^{(\infty)}$. By Lemma \ref{lemma: elementary mathematical analysis lemma}, we have $\lim_{k\rightarrow\infty} \boldsymbol{m}^{(k)} = \boldsymbol{m}^{(\infty)}$.
\end{proof}

\begin{lemma}\label{lemma: convergence of projection the fitting term of Q}
    Under the assumptions of Lemma \ref{lemma: convergence of projection indices}, for any $\Bg\in\mathscr{F}(\mathbb{P})$, we have
\begin{align}\label{eq: convergence of the fitting error as the projection index converges}
    \lim_{k\rightarrow\infty} \mathbb{E}\left\{\left\Vert \boldsymbol{X} - \Bg\left(\boldsymbol{m}^{(k)}(\boldsymbol{X})\right) \right\Vert^2\right\} = \mathbb{E}\left\{\left\Vert \boldsymbol{X} - \Bg\left(\boldsymbol{m}^{(\infty)}(\boldsymbol{X})\right) \right\Vert^2\right\},
\end{align}
which implies the following limit for all $\Bg\in\mathscr{F}(\mathbb{P})$
\begin{align*}
    \lim_{k\rightarrow\infty} \mathcal{Q}_\lambda(\Bg \,\vert\, \boldsymbol{h}^{(k)}) = \mathcal{Q}_\lambda(\Bg \,\vert\, \boldsymbol{h}^{(\infty)}).
\end{align*}
\end{lemma}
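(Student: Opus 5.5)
The natural route is dominated convergence, with Lemma~\ref{lemma: convergence of projection indices} supplying the pointwise limit of the integrand. Fix $\Bg\in\mathscr{F}(\mathbb{P})$. For each $\boldsymbol{x}\in\mathrm{supp}(\mathbb{P})$ set
\[
F_k(\boldsymbol{x}):=\Vert \boldsymbol{x}-\Bg(\boldsymbol{m}^{(k)}(\boldsymbol{x}))\Vert^2, \qquad F_\infty(\boldsymbol{x}):=\Vert \boldsymbol{x}-\Bg(\boldsymbol{m}^{(\infty)}(\boldsymbol{x}))\Vert^2 .
\]
Each $F_k$ is Borel measurable, because it is a composition of the Borel selection $\boldsymbol{m}^{(k)}$ (Lemma~\ref{lemma: Borel measurable selection of nearest-point}) with continuous maps; recall $\Bg$ is continuous by Theorem~\ref{thm: Rellich–Kondrachov embedding}. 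For $F_\infty$, on $\mathrm{supp}(\mathbb{P})$ the map $\boldsymbol{m}^{(\infty)}$ coincides with any measurable selection of the argmin of $\boldsymbol{h}^{(\infty)}$, since that argmin is a singleton there. Hence $F_\infty$ is measurable on the support as well, and this is the only place where the singleton hypothesis is needed for measurability.

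The pointwise convergence comes from Lemma~\ref{lemma: convergence of projection indices}: for every $\boldsymbol{x}\in\mathrm{supp}(\mathbb{P})$ we have $\boldsymbol{m}^{(k)}(\boldsymbol{x})\to\boldsymbol{m}^{(\infty)}(\boldsymbol{x})$ in $\M$. Since $\Bg$ is continuous, $\Bg(\boldsymbol{m}^{(k)}(\boldsymbol{x}))\to\Bg(\boldsymbol{m}^{(\infty)}(\boldsymbol{x}))$, and therefore $F_k(\boldsymbol{x})\to F_\infty(\boldsymbol{x})$ for $\mathbb{P}$-a.e.\ $\boldsymbol{x}$. Note that $\mathbb{P}(\mathrm{supp}(\mathbb{P}))=1$.

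For the dominating function, use the constraint defining $\mathscr{F}(\mathbb{P})$. It gives $\Vert\Bg(\boldsymbol{m})\Vert\le 2\,\radsuppP$, while $\Vert\boldsymbol{x}\Vert\le\radsuppP$ on the support. Hence
\[
F_k(\boldsymbol{x})\le \bigl(3\,\radsuppP\bigr)^2 \quad \text{for all } k \text{ and all } \boldsymbol{x}\in\mathrm{supp}(\mathbb{P}),
\]
a constant that is integrable under the probability measure $\mathbb{P}$. The dominated convergence theorem then yields \eqref{eq: convergence of the fitting error as the projection index converges}.

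For the second claim, observe that $\mathcal{Q}_\lambda(\Bg\,\vert\,\boldsymbol{h}^{(k)})$ is the expectation in \eqref{eq: convergence of the fitting error as the projection index converges} plus $\lambda\Vert\nabla^2\Bg\Vert^2_{L^2(\M)}$. Here $\Bpi_{\boldsymbol{h}^{(k)}}$ plays the role of the arbitrary measurable selection $\boldsymbol{m}^{(k)}$, and $\Bpi_{\boldsymbol{h}^{(\infty)}}=\boldsymbol{m}^{(\infty)}$ $\mathbb{P}$-a.s. The penalty term does not depend on $k$, so the limit passes through directly. The only delicate point in the whole argument is the measurability and a.s.\ identification of $F_\infty$, which is handled by the singleton assumption as described above.
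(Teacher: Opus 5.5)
Your proposal is correct and follows the paper's proof. Both get pointwise convergence on $\mathrm{supp}(\mathbb{P})$ from Lemma~\ref{lemma: convergence of projection indices} together with the continuity of $\boldsymbol{g}$, use the uniform bound $\bigl(3\cdot\mathrm{rad}_0(\mathrm{supp}(\mathbb{P}))\bigr)^2$ coming from the constraint in $\mathscr{F}(\mathbb{P})$, and conclude by dominated convergence, with the $k$-independent penalty term carried along. Your remarks on the measurability of $F_k$ and $F_\infty$ make explicit a detail the paper leaves implicit.
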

\begin{proof}[\textbf{Proof of Lemma \ref{lemma: convergence of projection the fitting term of Q}}]
It suffices to show \eqref{eq: convergence of the fitting error as the projection index converges}. 

The continuity of $\Bg$, together with Lemma~\ref{lemma: convergence of projection indices}, implies that
    \begin{align*}
        \lim_{k\rightarrow\infty} \left\Vert \boldsymbol{x} - \Bg\left(\boldsymbol{m}^{(k)}(\boldsymbol{x})\right) \right\Vert^2 = \left\Vert \boldsymbol{x} - \Bg\left(\boldsymbol{m}^{(\infty)}(\boldsymbol{x})\right) \right\Vert^2,
    \end{align*}
    for all $\boldsymbol{x}\in\mathrm{supp}(\mathbb{P})$. By the definition of $\mathscr{F}(\mathbb{P})$, we have 
    \begin{align*}
        \left\Vert \boldsymbol{x} - \Bg\left(\boldsymbol{m}^{(k)}(\boldsymbol{x})\right) \right\Vert^2 \le \left( \norm{\boldsymbol{x}} + \left\Vert \Bg\left(\boldsymbol{m}^{(k)}(\boldsymbol{x})\right)\right\Vert \right)^2 \leq \Big( 3\cdot\radsuppP \Big)^2 <\infty.
    \end{align*}
    Then, the dominated convergence theorem, together with the compactness of $\mathrm{supp}(\mathbb{P})$, implies that
    \begin{align*}
        \mathbb{E}\left\{\left\Vert \boldsymbol{X} - \Bg\left(\boldsymbol{m}^{(\infty)}(\boldsymbol{X})\right) \right\Vert^2\right\} &= \int_{\mathbb{R}^D} \left\Vert \boldsymbol{x} - \Bg\left(\boldsymbol{m}^{(\infty)}(\boldsymbol{x})\right) \right\Vert^2 \, \mathbb{P}(\mathrm{d}\boldsymbol{x}) \\
        &= \int_{\mathrm{supp}(\mathbb{P})} \lim_{k\rightarrow\infty} \left\Vert \boldsymbol{x} - \Bg\left(\boldsymbol{m}^{(k)}(\boldsymbol{x})\right) \right\Vert^2 \, \mathbb{P}(\mathrm{d}\boldsymbol{x}) \\
        &= \lim_{k\rightarrow\infty} \int_{\mathrm{supp}(\mathbb{P})} \left\Vert \boldsymbol{x} - \Bg\left(\boldsymbol{m}^{(k)}(\boldsymbol{x})\right) \right\Vert^2 \, \mathbb{P}(\mathrm{d}\boldsymbol{x}) \\
        & = \lim_{k\rightarrow\infty}\mathbb{E}\left\{ \left\Vert \boldsymbol{X} - \Bg\left(\boldsymbol{m}^{(k)}(\boldsymbol{X})\right) \right\Vert^2\right\}.
    \end{align*}
\end{proof}

\begin{lemma}\label{lemma: properties of Q and T functionals}
For every fixed $\Bg\in\mathscr{F}(\mathbb{P})$, we have that $\argmin_{\Bf\in\mathscr{F}(\mathbb{P})}\mathcal{Q}_\lambda(\Bf\,\vert\,\Bg)$  exists and is unique (i.e., a singleton).
\end{lemma}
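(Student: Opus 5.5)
The plan is to treat $\Bg$ as frozen and to study the map $\Phi_{\Bg}(\Bf):=\mathcal{Q}_\lambda(\Bf\,\vert\,\Bg)$ on $\mathscr{F}(\mathbb{P})$. Once $\Bg$ is fixed, $\boldsymbol{M}:=\Bpi_{\Bg}(\boldsymbol{X})$ is a fixed $\M$-valued random variable by Lemma~\ref{lemma: Borel measurable selection of nearest-point}. Hence
\[
\Phi_{\Bg}(\Bf)=\mathbb{E}\Vert \boldsymbol{X}-\Bf(\boldsymbol{M})\Vert^2+\lambda\Vert\nabla^2\Bf\Vert^2_{L^2(\M)}
\]
is a convex quadratic functional of $\Bf$; there is no projection inside it any more. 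The feasible set $\mathscr{F}(\mathbb{P})$ is convex, being the intersection of $H^2(\M)$ with a sup-norm ball. Existence will come from the direct method, as in the proof of Theorem~\ref{thm: existence of f star}. Uniqueness will come from strict convexity via a parallelogram identity.

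\emph{Existence.} I take a minimizing sequence $\{\Bf^{(n)}\}\subseteq\mathscr{F}(\mathbb{P})$.
\begin{itemize}
\item Since $\lambda>0$, the penalties $\Vert\nabla^2\Bf^{(n)}\Vert^2_{L^2(\M)}$ are bounded. The sup-norm constraint bounds the $L^2$ norms, so the sequence is bounded in $H^2(\M)$.
\item Reflexivity gives a weakly convergent subsequence. Theorem~\ref{thm: Rellich–Kondrachov embedding} upgrades it to uniform convergence to some $\Bf^*$, and $\Bf^*\in\mathscr{F}(\mathbb{P})$ because the sup-norm ball is closed under uniform limits.
\item The fitting term converges by dominated convergence. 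Pointwise, $\Vert\boldsymbol{x}-\Bf^{(n,k)}(\boldsymbol{M})\Vert^2\to\Vert\boldsymbol{x}-\Bf^*(\boldsymbol{M})\Vert^2$, and on $\operatorname{supp}(\mathbb{P})$ everything is bounded by $(3\,\radsuppP)^2$, exactly as in Lemma~\ref{lemma: convergence of projection the fitting term of Q}.
\item The penalty is weakly lower semicontinuous. I use the same argument as in \eqref{eq: the lower semicontinuity tool}: weak lower semicontinuity of the $H^2$ norm, combined with strong $L^2$ convergence.
\end{itemize}
Together these give $\Phi_{\Bg}(\Bf^*)\le\liminf_k\Phi_{\Bg}(\Bf^{(n,k)})=\inf\Phi_{\Bg}$, so $\Bf^*$ is a minimizer.

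\emph{Uniqueness.} Suppose $\Bf_1,\Bf_2\in\mathscr{F}(\mathbb{P})$ are both minimizers, and put $\boldsymbol{h}:=\Bf_1-\Bf_2$. Expanding the squares gives the exact identity
\[
\Phi_{\Bg}\big(\tfrac{\Bf_1+\Bf_2}{2}\big)=\tfrac12\Phi_{\Bg}(\Bf_1)+\tfrac12\Phi_{\Bg}(\Bf_2)-\tfrac14\Big(\mathbb{E}\Vert\boldsymbol{h}(\boldsymbol{M})\Vert^2+\lambda\Vert\nabla^2\boldsymbol{h}\Vert^2_{L^2(\M)}\Big).
\]
The midpoint lies in $\mathscr{F}(\mathbb{P})$ by convexity, so minimality forces $\Vert\nabla^2\boldsymbol{h}\Vert_{L^2(\M)}=0$ and $\boldsymbol{h}(\boldsymbol{M})=\boldsymbol{0}$ almost surely. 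The first condition puts each component of $\boldsymbol{h}$ in $\mathcal{H}_0$, which is finite-dimensional by Lemma~\ref{lemma: finite dimensionality of the kernel space} and is described by Lemma~\ref{lemma: solutions to the Hessian equation}.

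\emph{Main obstacle.} The last step is to conclude $\boldsymbol{h}=0$ from $\boldsymbol{h}\in\mathcal{H}_0$ and $\boldsymbol{h}(\boldsymbol{M})=\boldsymbol{0}$ a.s.
\begin{itemize}
\item When $\partial\M=\emptyset$, the argument closes. Lemma~\ref{lemma: solutions to the Hessian equation}(i) shows $\boldsymbol{h}\equiv\boldsymbol{c}$ is constant, and then $\mathbb{E}\Vert\boldsymbol{h}(\boldsymbol{M})\Vert^2=\Vert\boldsymbol{c}\Vert^2=0$.
\item With boundary, $\mathcal{H}_0$ can contain nonconstant, for example affine, elements. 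Then $\boldsymbol{h}=0$ requires that the law of $\Bpi_{\Bg}(\boldsymbol{X})$ is not supported in the common zero set of some nonzero element of $\mathcal{H}_0$. In the affine case this means it is not contained in a hyperplane of the flat coordinates.
\end{itemize}
The statement as written does not obviously exclude degenerate $\Bg$, such as a $\Bg$ whose projections all land at one point of $[0,1]$; in that case this step would fail. So I would first try to derive this unisolvency from the structure of $\mathscr{F}(\mathbb{P})$. If that fails, I would state it explicitly as the standard nondegeneracy condition for the design points $\boldsymbol{m}_i$ in the smoothing-spline setting: $\boldsymbol{T}$ has full column rank in Theorem~\ref{thm: wahba representation theorem}. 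I would then use it to close the uniqueness argument.
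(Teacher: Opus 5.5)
\paragraph{Existence.} Your existence argument is the paper's own: a minimizing sequence, $H^2$-boundedness from $\lambda>0$ and the sup-norm constraint, and weak compactness upgraded to uniform convergence by Rellich--Kondrachov. The data term then converges by dominated convergence, and the penalty is weakly lower semicontinuous.

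\paragraph{Uniqueness.} Here your route differs from the paper's, and yours is the correct one. The paper says only that uniqueness follows from convexity of $\Bf\mapsto\mathbb{E}\Vert\boldsymbol{X}-\Bf(\Bpi_{\Bg}(\boldsymbol{X}))\Vert^2$, citing Lemma~5.1 of \cite{steinwart2008support}. That lemma works because its regularizer is a full RKHS norm, hence strictly convex. Here $\Vert\nabla^2\Bf\Vert_{L^2(\M)}^2$ is only a seminorm vanishing on $\mathcal{H}_0$, and convexity alone yields a convex argmin set, not a singleton. Your parallelogram identity is the right replacement. Your reduction to $\nabla^2\boldsymbol{h}=0$ and $\boldsymbol{h}(\boldsymbol{M})=\boldsymbol{0}$ a.s.\ is correct, and for $\partial\M=\emptyset$ it closes exactly as you say via Lemma~\ref{lemma: solutions to the Hessian equation}(i).

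\paragraph{Templates with boundary: the lemma is false as stated.} Do not try to extract unisolvency from the structure of $\mathscr{F}(\mathbb{P})$; it cannot be done. Here is a counterexample.
\begin{itemize}
\item Take $\M=[0,1]$, $r:=\operatorname{rad}_0(\operatorname{supp}(\mathbb{P}))>0$, and the embedding $\Bg(t)=r(1+t)\boldsymbol{e}_1$. Then $\Vert\Bg\Vert_{C(\M)}=2r$, so $\Bg\in\mathscr{F}(\mathbb{P})$.
\item Every $\boldsymbol{x}$ with $\Vert\boldsymbol{x}\Vert\le r$ has $x_1\le r$, so its unique nearest point on the segment is $\Bg(0)$. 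Hence $\Bpi_{\Bg}(\boldsymbol{X})=0$ almost surely, for any choice of selection.
\item Then $\mathcal{Q}_\lambda(\Bf\,\vert\,\Bg)=\mathbb{E}\Vert\boldsymbol{X}-\Bf(0)\Vert^2+\lambda\int_0^1\Vert\Bf''(t)\Vert^2\,dt\ge\mathbb{E}\Vert\boldsymbol{X}-\mathbb{E}\boldsymbol{X}\Vert^2$.
\item Equality holds for every $\Bf(t)=\mathbb{E}\boldsymbol{X}+t\boldsymbol{b}$ with $\Vert\boldsymbol{b}\Vert\le r$. All of these lie in $\mathscr{F}(\mathbb{P})$ because $\Vert\mathbb{E}\boldsymbol{X}\Vert\le r$.
\end{itemize}
So the sup-norm constraint gives no help, even for a nondegenerate embedded $\Bg$.

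\paragraph{The fix.} Your fallback is the correct repair. Either restrict to closed templates, where your proof is complete, or add an explicit hypothesis. With $\boldsymbol{M}=\Bpi_{\Bg}(\boldsymbol{X})$, the hypothesis is that no nonzero $\phi\in\mathcal{H}_0$ vanishes $\mathbb{P}$-a.s.\ at $\boldsymbol{M}$. Equivalently, the Gram matrix $\bigl(\mathbb{E}\,\phi_i(\boldsymbol{M})\phi_j(\boldsymbol{M})\bigr)_{i,j}$ of a basis of $\mathcal{H}_0$ is nonsingular. Under $\mathbb{P}_N$ this becomes exactly full column rank of $\boldsymbol{T}$ in Theorem~\ref{thm: wahba representation theorem}. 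Lemma~\ref{lemma: continuity of T} and Theorem~\ref{thm: the convergence theorem of the core iterative algorithm} use that $\mathcal{T}_\lambda$ is single-valued, so the hypothesis must be carried there too, for instance for all $\Bg\in\mathscr{U}$.
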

\begin{proof}[\textbf{Proof of Lemma \ref{lemma: properties of Q and T functionals}}] There exists a sequence $\{\Bf^{(n)}\}_{n\in\mathbb{N}}\subseteq\mathscr{F}(\mathbb{P})$ such that
\begin{align*}
    \lim_{n\rightarrow\infty} \mathcal{Q}_\lambda(\Bf^{(n)} \,\vert\,\Bg) &= \inf_{\Bf\in\mathscr{F}(\mathbb{P})} \mathcal{Q}_\lambda(\Bf \,\vert\,\Bg) \\
    &= \inf_{\Bf\in\mathscr{F}(\mathbb{P})}\left[ \mathbb{E} \left\{\Vert \boldsymbol{X}-\Bf\left(\Bpi_{\Bg}(\boldsymbol{X})\right)\Vert^2\right\} + \lambda\cdot \Vert \nabla{}^2 \boldsymbol{f} \Vert^2_{L^2(\M)}\right]<\infty.
\end{align*}
Using the same argument in Part 1 of the proof of Theorem \ref{thm: existence of f star}, we have the following: there exists $\Bf^*\in\mathscr{F}(\mathbb{P})$ and a subsequence $\{\Bf^{(n,k)}\}_{k\in\mathbb{N}}$ of $\{\Bf^{(n)}\}_{n\in\mathbb{N}}$ such that
\begin{align*}
    & \Bf^{(n,k)} \overset{w}{\rightharpoonup} \Bf^*\in H^2(\mathfrak{M})\quad \text{ as }k\rightarrow\infty,\ \text{ and} \\
    & \lim_{k\rightarrow\infty}\Vert \Bf^{(n,k)}-\Bf^*\Vert_{C(\mathfrak{M})}=\lim_{k\rightarrow\infty}\max_{\boldsymbol{m}\in\mathfrak{M}}\Vert \Bf^{(n,k)}(\boldsymbol{m})-\Bf^*(\boldsymbol{m})\Vert=0.
\end{align*}
For every $\boldsymbol{x}\in\mathrm{supp}(\mathbb{P})$, we have the following upper bound for all $k\in\mathbb{N}$
\begin{align*}
    \Vert \boldsymbol{x} - \fnk\left(\Bpi_{\Bf}(\boldsymbol{x})\right) \Vert \le \Vert \boldsymbol{x} \Vert + \max_{\boldsymbol{m}\in\mathfrak{M}}\Vert \fnk(\boldsymbol{m})\Vert \le 3\cdot\operatorname{rad}_0(\operatorname{supp}\left(\mathbb{P})\right).
\end{align*}
Then, the dominated convergence theorem implies that
\begin{align*}
    \lim_{k\rightarrow\infty} \mathbb{E} \left\{\Vert \boldsymbol{X}-\fnk\left(\Bpi_{\Bg}(\boldsymbol{X})\right)\Vert^2\right\} &= \lim_{k\rightarrow\infty} \int_{\mathrm{supp}(\mathbb{P})} \Vert \boldsymbol{x}-\fnk\left(\Bpi_{\Bg}(\boldsymbol{x})\right)\Vert^2 \, \mathbb{P}(d\boldsymbol{x}) \\
    &= \int_{\mathrm{supp}(\mathbb{P})} \lim_{k\rightarrow\infty}\Vert \boldsymbol{x}-\fnk\left(\Bpi_{\Bg}(\boldsymbol{x})\right)\Vert^2 \, \mathbb{P}(d\boldsymbol{x}) \\
    &= \int_{\mathrm{supp}(\mathbb{P})} \Vert \boldsymbol{x}-\fstar\left(\Bpi_{\Bg}(\boldsymbol{x})\right)\Vert^2 \, \mathbb{P}(d\boldsymbol{x}) \\
    &= \mathbb{E} \left\{\Vert \boldsymbol{X}-\fstar\left(\Bpi_{\Bg}(\boldsymbol{X})\right)\Vert^2\right\}.
\end{align*}
Using the same argument in Part 3 of the proof of Theorem \ref{thm: existence of f star}, we have $\Vert \nabla{}^2 \boldsymbol{f}^* \Vert^2_{L^2(\M)} \le \liminf_{k\rightarrow\infty} \Vert \nabla{}^2 \boldsymbol{f}^{(n,k)} \Vert^2_{L^2(\M)}$. Then,
\begin{align*}
    \mathcal{Q}_\lambda (\fstar\,\vert\,\Bg) &= \mathbb{E} \left\{\Vert \boldsymbol{X}-\fstar\left(\Bpi_{\Bg}(\boldsymbol{X})\right)\Vert^2\right\} + \lambda\cdot \Vert \nabla{}^2 \boldsymbol{f}^* \Vert^2_{L^2(\M)}\\
    &\leq \lim_{k\rightarrow\infty} \mathbb{E} \left\{\Vert \boldsymbol{X}-\fnk\left(\Bpi_{\Bg}(\boldsymbol{X})\right)\Vert^2\right\} + \lambda\cdot  \liminf_{k \to \infty} \Vert \nabla{}^2 \boldsymbol{f}^{(n,k)} \Vert^2_{L^2(\M)}\\
    &\le \liminf_{k\to \infty} \Big[  \mathbb{E} \left\{\Vert \boldsymbol{X}-\fnk\left(\Bpi_{\Bg}(\boldsymbol{X})\right)\Vert^2\right\} +\lambda\cdot \Vert \nabla{}^2 \boldsymbol{f}^{(n,k)} \Vert^2_{L^2(\M)} \Big] \\
    &= \liminf_{k \to \infty} \mathcal{Q}_\lambda(\fnk \,\vert\,\Bg) \\
    &= \lim_{n\rightarrow\infty} \mathcal{Q}_\lambda(\Bf^{(n)} \,\vert\,\Bg) \\ 
    &= \inf_{\Bf\in\mathscr{F}(\mathbb{P})} \mathcal{Q}_\lambda(\Bf \,\vert\,\Bg).
\end{align*}
The other inequality direction holds from the definition of infimum. This then implies that $\fstar$ is a minimizer. The uniqueness of $\fstar$ follows from the convexity of the functional $\Bf\mapsto \mathbb{E} \left\{\Vert \boldsymbol{X}-\Bf\left(\Bpi_{\Bg}(\boldsymbol{X})\right)\Vert^2\right\}$, e.g., see the argument utilized in the proof of Lemma 5.1 in the book by \cite{steinwart2008support}.
\end{proof}

\begin{lemma}\label{lemma: continuity of T}
Fix $\lambda>0$ and let the operator $\mathcal{T}_\lambda$ be defined as in \eqref{eq: the core iterative algorithm}. Under the setup and conditions of Lemma \ref{lemma: convergence of projection indices}, we have
\begin{align*}
    \lim_{k\rightarrow\infty}\|\mathcal{T}_\lambda(\boldsymbol{h}^{(k)})-\mathcal{T}_\lambda(\boldsymbol{h}^{(\infty)})\|_{C(\mathfrak{M};\,\mathbb{R}^D)} = 0,
\end{align*}
which implies that $\mathcal{T}_\lambda$ is continuous at $\boldsymbol{h}^{(\infty)}$ under the supremum norm topology of $C(\mathfrak{M};\,\mathbb{R}^D)$.
\end{lemma}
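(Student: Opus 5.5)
Write $\Bf^{(k)}:=\mathcal{T}_\lambda(\boldsymbol{h}^{(k)})$ and $\Bf^{(\infty)}:=\mathcal{T}_\lambda(\boldsymbol{h}^{(\infty)})$. Both are well defined and unique by Lemma~\ref{lemma: properties of Q and T functionals}. The argument follows the subsequence principle of Lemma~\ref{lemma: elementary mathematical analysis lemma}. We show that every subsequence of $\{\Bf^{(k)}\}$ has a further subsequence converging in $C(\M;\mathbb{R}^D)$ to $\Bf^{(\infty)}$. The plan has three steps: uniform $H^2$ bounds, compactness, and identification of the limit via the minimizing property together with uniqueness.

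\textbf{Step 1 (uniform bound).} The zero function lies in $\mathscr{F}(\mathbb{P})$. By optimality of $\Bf^{(k)}$,
\[
\lambda\Vert\nabla^2\Bf^{(k)}\Vert^2_{L^2(\M)}\le \mathcal{Q}_\lambda(\Bf^{(k)}\,\vert\,\boldsymbol{h}^{(k)})\le \mathcal{Q}_\lambda(\boldsymbol{0}\,\vert\,\boldsymbol{h}^{(k)})=\mathbb{E}\Vert\boldsymbol{X}\Vert^2\le \radsuppP^2 .
\]
The sup-norm constraint defining $\mathscr{F}(\mathbb{P})$ also bounds the $L^2$ norms. Hence $\{\Bf^{(k)}\}$ is bounded in $H^2(\M)$.

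\textbf{Step 2 (compactness).} Given any subsequence, reflexivity of $H^2(\M)$ yields a further subsequence $\Bf^{(k_l)}\rightharpoonup\tilde{\Bf}$ weakly in $H^2$. Theorem~\ref{thm: Rellich–Kondrachov embedding} then upgrades this to uniform convergence. As in Part~1 of the proof of Theorem~\ref{thm: existence of f star}, the limit satisfies $\tilde{\Bf}\in\mathscr{F}(\mathbb{P})$. Weak lower semicontinuity also gives $\Vert\nabla^2\tilde{\Bf}\Vert^2\le\liminf_l\Vert\nabla^2\Bf^{(k_l)}\Vert^2$, exactly as in \eqref{eq: the lower semicontinuity tool}.

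\textbf{Step 3 (identify the limit; main obstacle).} We must show $\mathcal{Q}_\lambda(\tilde{\Bf}\,\vert\,\boldsymbol{h}^{(\infty)})\le\mathcal{Q}_\lambda(\Bg\,\vert\,\boldsymbol{h}^{(\infty)})$ for every $\Bg\in\mathscr{F}(\mathbb{P})$. Uniqueness then forces $\tilde{\Bf}=\Bf^{(\infty)}$.

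The difficulty is that both the function and the projection index move along the sequence. We handle the fitting term by a pointwise argument. Lemma~\ref{lemma: convergence of projection indices} gives $\boldsymbol{m}^{(k_l)}(\boldsymbol{x})\to\boldsymbol{m}^{(\infty)}(\boldsymbol{x})$ for every $\boldsymbol{x}\in\mathrm{supp}(\mathbb{P})$. Combining uniform convergence of $\Bf^{(k_l)}$ with continuity of $\tilde{\Bf}$,
\[
\Vert \Bf^{(k_l)}(\boldsymbol{m}^{(k_l)})-\tilde{\Bf}(\boldsymbol{m}^{(\infty)})\Vert\le \Vert\Bf^{(k_l)}-\tilde{\Bf}\Vert_{C(\M)}+\Vert\tilde{\Bf}(\boldsymbol{m}^{(k_l)})-\tilde{\Bf}(\boldsymbol{m}^{(\infty)})\Vert\to0 .
\]
The integrands are bounded by $(3\radsuppP)^2$, so dominated convergence gives
\[
\mathbb{E}\Vert\boldsymbol{X}-\Bf^{(k_l)}(\boldsymbol{m}^{(k_l)}(\boldsymbol{X}))\Vert^2\to\mathbb{E}\Vert\boldsymbol{X}-\tilde{\Bf}(\boldsymbol{m}^{(\infty)}(\boldsymbol{X}))\Vert^2 .
\]
Adding the liminf inequality for the penalty yields
\[
\mathcal{Q}_\lambda(\tilde{\Bf}\,\vert\,\boldsymbol{h}^{(\infty)})\le\liminf_l\mathcal{Q}_\lambda(\Bf^{(k_l)}\,\vert\,\boldsymbol{h}^{(k_l)}) .
\]
For fixed $\Bg$, optimality gives $\mathcal{Q}_\lambda(\Bf^{(k_l)}\,\vert\,\boldsymbol{h}^{(k_l)})\le\mathcal{Q}_\lambda(\Bg\,\vert\,\boldsymbol{h}^{(k_l)})$. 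By Lemma~\ref{lemma: convergence of projection the fitting term of Q}, the right-hand side tends to $\mathcal{Q}_\lambda(\Bg\,\vert\,\boldsymbol{h}^{(\infty)})$. Chaining the inequalities proves the claim.

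One subtlety needs checking: $\mathcal{Q}_\lambda(\cdot\,\vert\,\boldsymbol{h}^{(\infty)})$ must not depend on the choice of measurable selection. This holds because the argmin is a singleton on $\mathrm{supp}(\mathbb{P})$, which carries full $\mathbb{P}$-measure.

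Uniqueness from Lemma~\ref{lemma: properties of Q and T functionals} then gives $\tilde{\Bf}=\mathcal{T}_\lambda(\boldsymbol{h}^{(\infty)})$. Lemma~\ref{lemma: elementary mathematical analysis lemma} concludes the proof.
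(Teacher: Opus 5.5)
Your proposal is correct and follows essentially the same route as the paper's proof. Both arguments use the subsequence principle, then a uniform $H^2$ bound from optimality, then weak compactness plus Rellich--Kondrachov, then a liminf inequality for $\mathcal{Q}_\lambda$ along the moving pair (Lemma~\ref{lemma: convergence of projection indices}, dominated convergence, weak lower semicontinuity), combined with Lemma~\ref{lemma: convergence of projection the fitting term of Q} and uniqueness. The differences are cosmetic: you bound the penalty with the zero function, using $\mathcal{Q}_\lambda(\boldsymbol{0}\,\vert\,\boldsymbol{h}^{(k)})=\mathbb{E}\Vert\boldsymbol{X}\Vert^2$, where the paper uses a fixed $\boldsymbol{v}_\star$ together with the convergence lemma, and you compare against an arbitrary $\Bg$ where the paper compares only against $\mathcal{T}_\lambda(\boldsymbol{h}^{(\infty)})$.
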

\begin{proof}[\textbf{Proof of Lemma \ref{lemma: continuity of T}}]
Denote
\begin{align*}
    \boldsymbol{u}^{(k)}:=\mathcal{T}_\lambda(\boldsymbol{h}^{(k)})\ \ \text{ and }\ \ \boldsymbol{u}^{(\infty)}:=\mathcal{T}_\lambda(\boldsymbol{h}^{(\infty)}).
\end{align*}
We prove that $\|\boldsymbol{u}^{(k)}-\boldsymbol{u}^{(\infty)}\|_{C(\mathfrak{M};\,\mathbb{R}^D)}\to 0$ as $k\rightarrow\infty$. 

By Lemma \ref{lemma: elementary mathematical analysis lemma}, it suffices to show that every subsequence of $\{\boldsymbol{u}^{(k)}\}_{k\in\mathbb{N}}$ has a further subsequence that converges to $\boldsymbol{u}^{(\infty)}$ under the $C(\mathfrak{M};\,\mathbb{R}^D)$-norm.

Let $\{\boldsymbol{u}^{(k,l)}\}_{l\in\mathbb{N}}$ be an arbitrary subsequence of $\{\boldsymbol{u}^{(k)}\}_{k\in\mathbb{N}}$. Pick a fixed $\boldsymbol{v}_\star\in\mathscr{F}(\mathbb{P})$. By the optimality of $\boldsymbol{u}^{(k,l)}=\mathcal{T}_\lambda(\boldsymbol{h}^{(k,l)})$, we have
\begin{equation*}
\lambda\|\nabla^2 \boldsymbol{u}^{(k,l)}\|^2_{L^2(\mathfrak{M})}
\;\le\;\mathcal{Q}_\lambda(\boldsymbol{u}^{(k,l)}\mid \boldsymbol{h}^{(k,l)})\;\le\;\mathcal{Q}_\lambda(\boldsymbol{v}_\star\mid \boldsymbol{h}^{(k,l)})\qquad\text{for all }l\le\in\mathbb{N}.
\end{equation*}
Lemma \ref{lemma: convergence of projection the fitting term of Q} shows that the limit $\lim_{l\rightarrow\infty}\mathcal{Q}_\lambda(\boldsymbol{v}_\star\mid \boldsymbol{h}^{(k,l)}) = \mathcal{Q}_\lambda(\boldsymbol{v}_\star\mid \boldsymbol{h}^{(\infty)})$ exists, which implies that
\begin{align*}
    \lambda \|\nabla^2 \boldsymbol{u}^{(k,l)}\|^2_{L^2(\mathfrak{M})}
\;\le\; \sup_{l'\in\mathbb{N}} \mathcal{Q}_\lambda(\boldsymbol{v}_\star\mid \boldsymbol{h}^{(k,l')}) < \infty \ \ \text{ for all }l\in\mathbb{N}.
\end{align*}
In addition, by the definition of $\mathscr{F}(\mathbb{P})$, we have
\begin{align*}
    \Vert \boldsymbol{u}^{(k,l)} \Vert^2_{L^(\mathfrak{M})} = \int_{\mathfrak{M}} \Vert \boldsymbol{u}^{(k,l)}(\boldsymbol{m}) \Vert^2 \, d \vol_g(\boldsymbol{m}) \le \left[2\cdot\mathrm{rad}_0\left(\mathrm{supp}(\mathbb{P})\right)\right]^2\cdot\vol_g(\mathfrak{M}) \ \ \text{ for all }l\in\mathbb{N}.
\end{align*}
Therefore, $\{\boldsymbol{u}^{(k,l)}\}_{k\in\mathbb{N}}$ is bounded in
$H^2(\mathfrak{M};\,\mathbb{R}^D)$. Theorem 3.18 of \cite{brezis2011functional} implies that there exists a further subsequence $\{\boldsymbol{u}^{(k,l,m)}\}_{m\in\mathbb{N}}$ and $\bar{\Bf}\in H^2(\mathfrak{M};\,\mathbb{R}^D)$ such that
\[
\boldsymbol{u}^{(k,l,m)} \overset{w}{\rightharpoonup}  \bar{\Bf} \quad\text{in }H^2(\mathfrak{M};\,\mathbb{R}^D) \text{ as }m\rightarrow\infty.
\]
Theorem \ref{thm: Rellich–Kondrachov embedding}, together with Remark 2 of Chapter 6 of \cite{brezis2011functional}, implies that
\begin{equation}\label{eq:fk_uniform_conv_to_fbar}
\lim_{m\rightarrow\infty}\|\boldsymbol{u}^{(k,l,m)}- \bar{\Bf}\|_{C(\mathfrak{M};\,\mathbb{R}^D)} = \lim_{m\rightarrow\infty}\max_{\boldsymbol{m}\in\mathfrak{M}}\|\boldsymbol{u}^{(k,l,m)}(\boldsymbol{m})- \bar{\Bf}(\boldsymbol{m})\| = 0.
\end{equation}

We claim that $ \bar{\Bf}$ minimizes $\mathcal{Q}_\lambda(\,\cdot\,\mid \boldsymbol{h}^{(\infty)})$ over $\mathscr{F}(\mathbb{P})$.
Fix any $\Bf\in\mathscr{F}(\mathbb{P})$, by the optimality of $\boldsymbol{u}^{(k,l,m)}=\mathcal{T}_\lambda(\boldsymbol{h}^{(k,l,m)})$, we have
\begin{equation*}
\mathcal{Q}_\lambda(\boldsymbol{u}^{(k,l,m)}\mid \boldsymbol{h}^{(k,l,m)})\;\le\;\mathcal{Q}_\lambda(\Bf\mid \boldsymbol{h}^{(k,l,m)})\qquad\text{for all }m\in\mathbb{N}.
\end{equation*}
We first show a lower bound for the left-hand side limit, i.e., the following several steps are for showing \eqref{eq:liminf_Q}.
For $\boldsymbol{x}\in\mathrm{supp}(\mathbb{P})$, by Lemma \ref{lemma: convergence of projection indices} and \eqref{eq:fk_uniform_conv_to_fbar}, we have
\begin{align*}
&\Vert \boldsymbol{u}^{(k,l,m)}(\Bpi_{\boldsymbol{h}^{(k,l,m)}}(\boldsymbol{x})) - \bar{\Bf}(\Bpi_{\boldsymbol{h}^{(\infty)}}(\boldsymbol{x})) \Vert \\
    &\le \Vert \boldsymbol{u}^{(k,l,m)}(\Bpi_{\boldsymbol{h}^{(k,l,m)}}(\boldsymbol{x})) - \bar{\Bf}(\Bpi_{\boldsymbol{h}^{(k,l,m)}}(\boldsymbol{x})) \Vert + \Vert \bar{\Bf}(\Bpi_{\boldsymbol{h}^{(k,l,m)}}(\boldsymbol{x})) - \bar{\Bf}(\Bpi_{\boldsymbol{h}^{(\infty)}}(\boldsymbol{x})) \Vert \\
    & \le \max_{\boldsymbol{m}\in\mathfrak{M}} \Vert \boldsymbol{u}^{(k,l,m)}(\boldsymbol{m}) - \bar{\Bf}(\boldsymbol{m}) \Vert + \Vert \bar{\Bf}(\Bpi_{\boldsymbol{h}^{(k,l,m)}}(\boldsymbol{x})) - \bar{\Bf}(\Bpi_{\boldsymbol{h}^{(\infty)}}(\boldsymbol{x})) \Vert \\
    & \rightarrow 0, \text{ as }m\rightarrow\infty.
\end{align*}
Thus, we have the following pointwise convergence
\[
\lim_{m\rightarrow\infty}\|\boldsymbol{x}-\boldsymbol{u}^{(k,l,m)}(\Bpi_{\boldsymbol{h}^{(k,l,m)}}(\boldsymbol{x}))\|^2\;=\;\|\boldsymbol{x}- \bar{\Bf}(\Bpi_{\boldsymbol{h}^{(\infty)}}(\boldsymbol{x}))\|^2.
\]
Additionally, the definition of $\mathscr{F}(\mathbb{P})$ implies
\begin{align*}
\|\boldsymbol{x}-\boldsymbol{u}^{(k,l,m)}(\Bpi_{\boldsymbol{h}^{(k,l,m)}}(\boldsymbol{x}))\|^2 \le \left[3\cdot\mathrm{rad}_0\left(\mathrm{supp}(\mathbb{P})\right)\right]^2 \ \ \text{ for all }m\in\mathbb{N}. 
\end{align*}
Then, the dominated convergence theorem implies that
\begin{equation}\label{eq:data_term_conv_with_moving_fk}
\lim_{m\rightarrow\infty}\mathbb{E}\left\{\|\boldsymbol{X}-\boldsymbol{u}^{(k,l,m)}(\Bpi_{\boldsymbol{h}^{(k,l,m)}}(\boldsymbol{X}))\|^2 \right\}\;=\;
\mathbb{E}\left\{\|\boldsymbol{X}- \bar{\Bf}(\Bpi_{\boldsymbol{h}^{(\infty)}}(\boldsymbol{X}))\|^2\right\}.
\end{equation}
For the penalty term, weak lower semicontinuity in $H^2(\mathfrak{M};\,\mathbb{R}^D)$ yields
\begin{equation}\label{eq:lsc_penalty 1}
\|\nabla^2  \bar{\Bf}\|^2_{L^2(M)}
\;\le\;\liminf_{m\to\infty} \|\nabla^2 \boldsymbol{u}^{(k,l,m)}\|^2_{L^2(\mathfrak{M})}.
\end{equation}
Combining \eqref{eq:data_term_conv_with_moving_fk} and \eqref{eq:lsc_penalty 1}, we obtain
\begin{equation}\label{eq:liminf_Q}
\mathcal{Q}_\lambda( \bar{\Bf}\mid \boldsymbol{h}^{(\infty)})\;\le\;\liminf_{m\to\infty} \mathcal{Q}_\lambda(\boldsymbol{u}^{(k,l,m)}\mid \boldsymbol{h}^{(k,l,m)}).
\end{equation}
Next, we apply Lemma \ref{lemma: convergence of projection the fitting term of Q} to get
\begin{equation}\label{eq:Q_fstar_conv}
\lim_{m\rightarrow\infty}\mathcal{Q}_\lambda(\boldsymbol{u}^{(\infty)}\mid \boldsymbol{h}^{(k,l,m)})\;=\;\mathcal{Q}_\lambda(\boldsymbol{u}^{(\infty)}\mid \boldsymbol{h}^{(\infty)}).
\end{equation}
Since $\boldsymbol{u}^{(k,l,m)}=\mathcal{T}_\lambda(\boldsymbol{h}^{(k,l,m)})$ minimizes $\mathcal{Q}_\lambda(\cdot\mid \boldsymbol{h}^{(k,l,m)})$,
\begin{equation}\label{eq:Q_fk_le_Q_fstar}
\mathcal{Q}_\lambda(\boldsymbol{u}^{(k,l,m)}\mid \boldsymbol{h}^{(k,l,m)})\;\le\;\mathcal{Q}_\lambda(\boldsymbol{u}^{(\infty)}\mid \boldsymbol{h}^{(k,l,m)}).
\end{equation}
Taking $\limsup$ in \eqref{eq:Q_fk_le_Q_fstar} and using \eqref{eq:Q_fstar_conv} yields
\begin{equation}\label{eq:limsup_Q}
\limsup_{m\to\infty}\mathcal{Q}_\lambda(\boldsymbol{u}^{(k,l,m)}\mid \boldsymbol{h}^{(k,l,m)})\;\le\;\mathcal{Q}_\lambda(\boldsymbol{u}^{(\infty)}\mid \boldsymbol{h}^{(\infty)}).
\end{equation}
Combining \eqref{eq:liminf_Q} and \eqref{eq:limsup_Q} gives
\begin{equation*}
\mathcal{Q}_\lambda( \bar{\Bf}\mid \boldsymbol{h}^{(\infty)})\;\le\;\mathcal{Q}_\lambda(\boldsymbol{u}^{(\infty)}\mid \boldsymbol{h}^{(\infty)}).
\end{equation*}
Therefore, $ \bar{\Bf}$ is a minimizer of $\mathcal{Q}_\lambda(\cdot\mid \boldsymbol{h}^{(\infty)})$ over $\mathscr{F}(\mathbb{P})$.
Note $\boldsymbol{u}^{(\infty)} = \mathcal{T}_\lambda(\boldsymbol{h}^{(\infty)})$ is also a minimizer. By Lemma~\ref{lemma: properties of Q and T functionals}, this minimizer is unique, hence, $\bar{\Bf}=\boldsymbol{u}^{(\infty)}$. Then, \eqref{eq:fk_uniform_conv_to_fbar} implies
\begin{equation*}
\lim_{m\rightarrow\infty}\|\boldsymbol{u}^{(k,l,m)}- \boldsymbol{u}^{(\infty)}\|_{C(\mathfrak{M};\,\mathbb{R}^D)} = \lim_{m\rightarrow\infty}\max_{\boldsymbol{m}\in\mathfrak{M}}\|\boldsymbol{u}^{(k,l,m)}(\boldsymbol{m})- \boldsymbol{u}^{(\infty)}(\boldsymbol{m})\| = 0.
\end{equation*}

We have shown that every subsequence of $\{\boldsymbol{u}^{(k)}\}_{k\in\mathbb{N}}$ has a further subsequence that converges to $\boldsymbol{u}^{(\infty)}$ under the $C(\mathfrak{M};\,\mathbb{R}^D)$-norm. It then follows from Lemma \ref{lemma: elementary mathematical analysis lemma} that the desired result holds.
\end{proof}

\begin{lemma}\label{thm: properties of the iterative algorithm}
For every $\lambda>0$, let $\{\Bf^{(n)}_\lambda\}_{n\in\mathbb{N}}$ be a sequence of functions generated by the iterative algorithm in \eqref{eq: the core iterative algorithm}. If the conditions of Theorem \ref{thm: the convergence theorem of the core iterative algorithm} hold, we have
\begin{enumerate}
    \item There exists a subsequence $\{\Bf^{(n_k)}_\lambda\}_{n\in\mathbb{N}} \subseteq \{\Bf^{(n)}_\lambda\}_{n\in\mathbb{N}}$ and $\Bf_\lambda^{(\infty)}\in\mU$ such that
    \begin{align}\label{eq: result 1 of the convergence lemma}
        \lim_{k\rightarrow\infty} \max_{\boldsymbol{m}\in\mathfrak{M}} \Vert \Bf^{(n_k)}_\lambda(\boldsymbol{m}) - \Bf_\lambda^{(\infty)}(\boldsymbol{m}) \Vert = 0 \ \ \text{ and }\ \ \mathcal{L}_\lambda(\Bf_\lambda^{(\infty)}) \le \lim_{n\rightarrow\infty} \mathcal{L}_\lambda(\Bf^{(n)}).
    \end{align}

    \item If $\argmin_{\boldsymbol{m}\in\mathfrak{M}} \Vert \boldsymbol{x} - \Bf_\lambda^{(\infty)}(\boldsymbol{m})\Vert$ is a singleton for every $\boldsymbol{x}\in\mathrm{supp}(\mathbb{P})$, we have
    \begin{align*}
\mathcal{L}_\lambda(\Bf_\lambda^{(\infty)}) = \lim_{n\rightarrow\infty} \mathcal{L}_\lambda(\Bf^{(n)}).
    \end{align*}
\end{enumerate}
\end{lemma}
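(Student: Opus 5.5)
Part (i) is a compactness argument modelled on Part 1 of the proof of Theorem~\ref{thm: existence of f star}. By Lemma~\ref{lemma: the connection between L and Q}(ii), the sequence $\{\mathcal{L}_\lambda(\Bf_\lambda^{(n)})\}_{n\in\mathbb{N}}$ is nonincreasing and bounded below by $0$, so it has a finite limit $L_\infty$ and $\sup_n \mathcal{L}_\lambda(\Bf_\lambda^{(n)})\le \mathcal{L}_\lambda(\Bf_\lambda^{(0)})<\infty$. Since $\lambda>0$, this bounds $\sup_n\Vert\nabla^2\Bf_\lambda^{(n)}\Vert_{L^2(\M)}^2$. The constraint defining $\mathscr{F}(\mathbb{P})$ bounds the $L^2$ norms, so the sequence is bounded in $H^2(\M;\mathbb{R}^D)$. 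Reflexivity gives a weakly convergent subsequence, and Theorem~\ref{thm: Rellich–Kondrachov embedding} upgrades it to uniform convergence to some $\Bf_\lambda^{(\infty)}$. The limit stays in $\mathscr{F}(\mathbb{P})$ because the sup-norm bound passes to uniform limits. Invariance in Assumption~\ref{assumption: Assumption for the convergence of the iterative algorithm} together with $\Bf_\lambda^{(0)}\in\mU$ places every iterate in $\mU$, and closedness of $\mU$ in $C(\M)$ then gives $\Bf_\lambda^{(\infty)}\in\mU$.

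For the inequality in \eqref{eq: result 1 of the convergence lemma}, I would argue exactly as in Parts 2--3 of the proof of Theorem~\ref{thm: existence of f star}. Lemma~\ref{lemma: continuity of Delta wrt f} and the uniform convergence give $\mathbb{E}\,\dist(\boldsymbol{X},\Bf^{(n_k)}_\lambda)\to\mathbb{E}\,\dist(\boldsymbol{X},\Bf^{(\infty)}_\lambda)$. Weak lower semicontinuity of the $H^2$ norm, combined with the strong $L^2$ convergence, gives $\Vert\nabla^2\Bf_\lambda^{(\infty)}\Vert^2\le\liminf_k\Vert\nabla^2\Bf_\lambda^{(n_k)}\Vert^2$. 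Adding the two yields $\mathcal{L}_\lambda(\Bf_\lambda^{(\infty)})\le\liminf_k\mathcal{L}_\lambda(\Bf_\lambda^{(n_k)})=L_\infty$.

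For (ii), the remaining task is the reverse inequality $L_\infty\le\mathcal{L}_\lambda(\Bf_\lambda^{(\infty)})$, and this is where the singleton hypothesis enters. Apply Lemma~\ref{lemma: convergence of projection the fitting term of Q} with $\boldsymbol{h}^{(k)}=\Bf^{(n_k)}_\lambda$, $\boldsymbol{h}^{(\infty)}=\Bf^{(\infty)}_\lambda$ and $\Bg=\Bf^{(\infty)}_\lambda$; this gives $\mathcal{Q}_\lambda(\Bf^{(\infty)}_\lambda\mid\Bf^{(n_k)}_\lambda)\to\mathcal{Q}_\lambda(\Bf^{(\infty)}_\lambda\mid\Bf^{(\infty)}_\lambda)=\mathcal{L}_\lambda(\Bf^{(\infty)}_\lambda)$. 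The chain \eqref{eq: Monotone descent and existence of a limit} shows
\[
\mathcal{L}_\lambda(\Bf^{(n_k+1)}_\lambda)\le\mathcal{Q}_\lambda\bigl(\mathcal{T}_\lambda(\Bf^{(n_k)}_\lambda)\mid\Bf^{(n_k)}_\lambda\bigr)\le\mathcal{Q}_\lambda(\Bf^{(\infty)}_\lambda\mid\Bf^{(n_k)}_\lambda),
\]
where the second inequality uses the minimality of $\mathcal{T}_\lambda(\Bf^{(n_k)}_\lambda)$ over $\mathscr{F}(\mathbb{P})$, which contains $\Bf^{(\infty)}_\lambda$. Letting $k\to\infty$, the left side tends to $L_\infty$ because the full sequence converges, and so $L_\infty\le\mathcal{L}_\lambda(\Bf^{(\infty)}_\lambda)$. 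Combined with (i), this gives equality.

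The main obstacle is the step in (ii) that passes to the limit in $\mathcal{Q}_\lambda(\cdot\mid\Bf^{(n_k)}_\lambda)$. Without the singleton assumption, the projection indices $\Bpi_{\Bf^{(n_k)}_\lambda}(\boldsymbol{x})$ need not converge, so Lemma~\ref{lemma: convergence of projection indices} would fail and the argument would break. Everything else is a routine reuse of the compactness and lower-semicontinuity machinery already established.
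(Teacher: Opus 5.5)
Your proposal is correct and follows the paper's proof essentially step for step. For (i) you use the uniform $H^2$ bound, reflexivity, Rellich--Kondrachov and lower semicontinuity, and for (ii) you use the sandwich $\mathcal{L}_\lambda(\Bf^{(n_k+1)}_\lambda)\le\mathcal{Q}_\lambda(\Bf^{(\infty)}_\lambda\mid\Bf^{(n_k)}_\lambda)\to\mathcal{L}_\lambda(\Bf^{(\infty)}_\lambda)$. Your explicit appeal to the invariance $\mathcal{T}_\lambda(\mU)\subseteq\mU$, which places every iterate in $\mU$ before closedness is applied, fills a step the paper leaves implicit.
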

\begin{proof}[\textbf{Proof of Lemma \ref{thm: properties of the iterative algorithm}}]
\underline{Part {\textit{i)}}.} Lemma~\ref{lemma: the connection between L and Q} implies
\begin{align*}
    0 \le \Vert \nabla^2\Bf_\lambda^{(n)} \Vert^2_{L^2(\M)}\le \mathcal{L}_\lambda(\Bf_\lambda^{(n)}) \le \mathcal{L}_\lambda(\Bf_\lambda^{(1)}) \quad \text{for all } n\ge 1.
\end{align*}
Then, the same argument in Part 1 of the proof of Theorem \ref{thm: existence of f star} implies that there exists a subsequence $\{\Bf^{(n_k)}_\lambda\}_{n\in\mathbb{N}} \subseteq \{\Bf^{(n)}_\lambda\}_{n\in\mathbb{N}}$ and a function $\Bf_\lambda^{(\infty)} \in\mathscr{F}(\mathbb{P})$ such that
    \begin{align*}
    & \Bf^{(n_k)} \overset{w}{\rightharpoonup} \Bf^{(\infty)}\in H^2(\mathfrak{M})\ \ \text{ as }k\rightarrow\infty, \text{ and } \\
    & \lim_{k\rightarrow\infty} \max_{\boldsymbol{m}\in\mathfrak{M}} \Vert \Bf^{(n_k)}_\lambda(\boldsymbol{m}) - \Bf_\lambda^{(\infty)}(\boldsymbol{m}) \Vert = 0.
    \end{align*}
Due to the closedness of $\mU$ under the supremum norm, $\Bf_\lambda^{(\infty)}\in\mU$. Applying the lower semicontinuity argument utilized in \eqref{eq: the lower semicontinuity tool} and \eqref{eq: lower semicontinuity argument}, we have
\begin{align*}
\mathcal{L}_\lambda(\Bf_\lambda^{(\infty)}) \le \liminf_{k\rightarrow\infty} \mathcal{L}_\lambda(\Bf^{(n_k)}_\lambda)=  \lim_{n\rightarrow\infty} \mathcal{L}_\lambda(\Bf^{(n)}).
\end{align*}
\underline{Part {\textit{ii)}}.} Lemma \ref{lemma: convergence of projection the fitting term of Q}, together with Assumption~\ref{assumption: singleton assumption for the sequence}, implies that
\begin{align*}
    \lim_{k\rightarrow\infty} \mathcal{Q}_\lambda ( \Bf_\lambda^{(\infty)} \,\vert\, \Bf^{(n_k)}_\lambda) = \mathcal{Q}_\lambda ( \Bf_\lambda^{(\infty)} \,\vert\, \Bf_\lambda^{(\infty)}  )=\mathcal{L}_\lambda(\Bf^{(\infty)}).
\end{align*}
Using Lemma \ref{lemma: the connection between L and Q} and the definition of $\mathcal{Q}_\lambda(\cdot\,\vert\,\cdot)$, we have\footnote{Note that the superscript $n_k+1$ means $(n_k)+1$, rather than $n_{(k+1)}$. This follows from iteration $\boldsymbol{f}^{(n_k+1)}=\mathcal{T}_\lambda(\boldsymbol{f}^{(n_k)})$.} 
\begin{align*}
    \mathcal{L}_\lambda(\Bf^{(n_k +1 )}) \le \mathcal{Q}_\lambda ( \Bf^{(n_k +1 )} \,\vert\, \Bf^{(n_k)}_\lambda) = \min_{\Bf\in\mathscr{F}(\mathbb{P})} \mathcal{Q}_\lambda ( \Bf \,\vert\, \Bf^{(n_k)}_\lambda) \le \mathcal{Q}_\lambda ( \Bf_\lambda^{(\infty)} \,\vert\, \Bf^{(n_k)}_\lambda).
\end{align*}
Then, we have
\begin{align*}
    \mathcal{L}_\lambda(\Bf_\lambda^{(\infty)}) \le \lim_{n\rightarrow\infty} \mathcal{L}_\lambda(\Bf^{(n)}) =  \lim_{k\rightarrow\infty} \mathcal{L}_\lambda(\Bf^{(n_k +1 )}) \le \lim_{k\rightarrow\infty} \mathcal{Q}_\lambda ( \Bf_\lambda^{(\infty)} \,\vert\, \Bf^{(n_k)}_\lambda) = \mathcal{L}_\lambda(\Bf_\lambda^{(\infty)}).
\end{align*}
Therefore, $\lim_{n\rightarrow\infty} \mathcal{L}_\lambda(\Bf^{(n)}) = \mathcal{L}_\lambda(\Bf_\lambda^{(\infty)})$.
\end{proof}

\begin{proof}[\textbf{Proof of Theorem \ref{thm: the convergence theorem of the core iterative algorithm}}]
By Lemma~\ref{lemma: the connection between L and Q}, we have that $\mathcal{L}_\lambda(\Bf_\lambda^{(n+1)}) \le \mathcal{L}_\lambda(\Bf_\lambda^{(n)})$ for all $n\in\mathbb{N}$, so the sequence $\{\mathcal{L}_\lambda(\Bf_\lambda^{(n)})\}_{n\in\mathbb{N}}$ is nonincreasing and bounded below by $\inf_{\Bf \in \mathscr{F}(\mathbb{P})} \mathcal{L}_\lambda(\Bf)$. Hence, there exists a finite limit
\[
  \ell := \lim_{n \to \infty} \mathcal{L}_\lambda(\Bf_\lambda^{(n)}) \in \mathbb{R}.
\]
By Theorem~\ref{thm: existence of f star}, there exists a minimizer $\Bf_\lambda^* \in \mathscr{F}(\mathbb{P})$ of $\mathcal{L}_\lambda$; by Assumption~\ref{assumption: Assumption for the convergence of the iterative algorithm}, this minimizer is unique in the closed region $\mU$. Denote
\[
  L^* := \mathcal{L}_\lambda(\Bf_\lambda^*) = \inf_{f \in \mathscr{F}(\mathbb{P})} \mathcal{L}_\lambda(f).
\]
Clearly, $L^* \le \ell$. By Lemma \ref{thm: properties of the iterative algorithm}, there exists a subsequence $\{\Bf_\lambda^{(n_k)}\}_{k\in\mathbb{N}} \subseteq \{\Bf^{(n)}_\lambda\}_{n\in\mathbb{N}}$ and a function $\Bf_\lambda^{(\infty)} \in \mU$ satisfying \eqref{eq: result 1 of the convergence lemma}. Due to Assumption \ref{assumption: singleton assumption for the sequence}, $\argmin_{\boldsymbol{m}\in\mathfrak{M}} \Vert \boldsymbol{x} - \Bf_\lambda^{(\infty)}(\boldsymbol{m})\Vert$ is a singleton for every $\boldsymbol{x}\in\mathrm{supp}(\mathbb{P})$. Lemma \ref{thm: properties of the iterative algorithm} implies
\begin{equation}\label{eq:limit-l}
  \mathcal{L}_\lambda(\Bf_\lambda^{(\infty)}) = \lim_{n \to \infty} \mathcal{L}_\lambda(\Bf_\lambda^{(n)}) = \ell.
\end{equation}

Recall the iteration $\Bf_\lambda^{(n+1)} = \mathcal{T}_\lambda(\Bf_\lambda^{(n)})$ for all $n\in\mathbb{N}$. Let $\{\Bf_\lambda^{(n_k)}\}_{k\in\mathbb{N}}$ be the convergent subsequence with limit $\Bf_\lambda^{(\infty)}$ as above. Consider the shifted subsequence $\{\Bf_\lambda^{(n_k+1)}\} = \{\mathcal{T}_\lambda(\Bf_\lambda^{(n_k)})\}$. The sequence $\mathcal{L}_\lambda(\Bf_\lambda^{(n_k+1)})$ also converges to $\ell$, since it is a subsequence of the same nonincreasing sequence $\{\mathcal{L}_\lambda(\Bf_\lambda^{(n)})\}$. Using the same argument as in the proof of Lemma \ref{thm: properties of the iterative algorithm}, there exists a further subsequence $\{\Bf_\lambda^{(n_{k,l}+1)}\}_{l\in\mathbb{N}} \subseteq \{\Bf_\lambda^{(n_k+1)}\}_{k\in\mathbb{N}}$ and a map $\tilde{\Bf}_\lambda^{(\infty)}\in\mU$ such that $\Bf_\lambda^{(n_{k,l}+1)} \overset{w}{\rightharpoonup} \tilde{\Bf}_\lambda^{(\infty)}$ in $H^2(\mathfrak{M})$ as $l\rightarrow\infty$,
    \begin{align*}
    & \lim_{l\rightarrow\infty} \max_{\boldsymbol{m}\in\mathfrak{M}} \Vert \Bf^{(n_{k,l}+1)}_\lambda(\boldsymbol{m}) - \tilde{\Bf}_\lambda^{(\infty)}(\boldsymbol{m}) \Vert = 0.
    \end{align*}

Therefore, $\tilde{\Bf}_\lambda^{(\infty)}$ is also an accumulation point under the supremum norm. Assumption \ref{assumption: singleton assumption for the sequence}, together with the same argument as that used in the proof of Lemma \ref{thm: properties of the iterative algorithm}, we have
\begin{equation}\label{eq:limit-l2}
  \mathcal{L}_\lambda\left( \tilde{\Bf}_\lambda^{(\infty)} \right) = \lim_{l\rightarrow\infty} \mathcal{L}_\lambda\left( \Bf_\lambda^{(n_{k,l}+1)} \right) = \ell.
\end{equation}
On the other hand, by definition of the iteration,
\begin{align*}
    \Bf_\lambda^{(n_{k,l}+1)} = \mathcal{T}_\lambda(\Bf_\lambda^{(n_{k,l})}).
\end{align*}
Lemma \ref{lemma: continuity of T} implies the following limits under the $C(\mathfrak{M})$-topology
\begin{align}\label{eq:Tinfty}
    \tilde{\Bf}_\lambda^{(\infty)} = \lim_{k\rightarrow\infty}\Bf_\lambda^{(n_{k,l}+1)} = \lim_{k\rightarrow\infty}\mathcal{T}_\lambda(\Bf_\lambda^{(n_{k,l})}) = \mathcal{T}_\lambda(\Bf_\lambda^{(\infty)}).
\end{align}
Combining \eqref{eq:limit-l}, \eqref{eq:limit-l2}, and \eqref{eq:Tinfty}, we obtain
\begin{align}\label{eq: equality "obtained above"}
    \mathcal{L}_\lambda(\Bf_\lambda^{(\infty)}) = \ell = \mathcal{L}_\lambda(\tilde{\Bf}^{(\infty)}) = \mathcal{L}_\lambda\big(\mathcal{T}_\lambda(\Bf_\lambda^{(\infty)})\big).
\end{align}
Lemma \ref{lemma: the connection between L and Q} implies that
\begin{equation}\label{eq:majorization}
  \mathcal{L}_\lambda\big(\mathcal{T}_\lambda(\Bf_\lambda^{(\infty)})\big) \le \mathcal{Q}_\lambda\big(\mathcal{T}_\lambda(\Bf_\lambda^{(\infty)}) \mid \Bf_\lambda^{(\infty)}\big) \le \mathcal{Q}_\lambda(\Bf_\lambda^{(\infty)} \mid \Bf_\lambda^{(\infty)}) = \mathcal{L}_\lambda(\Bf_\lambda^{(\infty)}).
\end{equation}
Together with \eqref{eq: equality "obtained above"}, we conclude that both inequalities in \eqref{eq:majorization} are equalities, i.e.,
\[
  \mathcal{Q}_\lambda\big(\mathcal{T}_\lambda(\Bf_\lambda^{(\infty)}) \mid \Bf_\lambda^{(\infty)}\big) = \mathcal{Q}_\lambda(\Bf_\lambda^{(\infty)} \mid \Bf_\lambda^{(\infty)}).
\]
The optimality of $\mathcal{T}_\lambda(\Bf_\lambda^{(\infty)})$ indicates that $\Bf_\lambda^{(\infty)}$ is a minimizer of $\Bf\mapsto \mathcal{Q}_\lambda(\,\cdot\, \mid \Bf_\lambda^{(\infty)})$ as well. Lemma \ref{lemma: properties of Q and T functionals} indicates that the minimizer of the functional $\Bf\mapsto \mathcal{Q}_\lambda(\,\cdot\, \mid \Bf_\lambda^{(\infty)})$ is unique, which implies that 
\[
  \mathcal{T}_\lambda(\Bf_\lambda^{(\infty)}) = \Bf_\lambda^{(\infty)} \in\mU.
\]
That is, $\Bf_\lambda^{(\infty)}$ is a fixed point of $\mathcal{T}_\lambda$. Assumption \ref{assumption: Assumption for the convergence of the iterative algorithm} implies
\begin{align}\label{eq: f^infty = f^star}
    \lim_{k\rightarrow\infty} \Bf_\lambda^{(n_k)} = \Bf_\lambda^{(\infty)} = \Bf_\lambda^*,
\end{align}
where the limit is understood under the supremum norm of $C(\mathfrak{M})$. Lemma \ref{thm: properties of the iterative algorithm}, together with \eqref{eq: f^infty = f^star}, implies that
\begin{align*}
    \mathcal{L}_\lambda(\Bf_\lambda^*) = \lim_{n\rightarrow\infty} \mathcal{L}_\lambda(\Bf^{(n)}).
\end{align*}


However, to apply Lemma~\ref{lemma: elementary mathematical analysis lemma} to the sequence \(\{\Bf_\lambda^{(n)}\}_{n\in\mathbb{N}}\), we must verify that every subsequence of \(\{\Bf_\lambda^{(n)}\}_{n\in\mathbb{N}}\) admits a further subsequence converging to \(\Bf^*\). To this end, take an arbitrary subsequence of \(\{\Bf_\lambda^{(n)}\}_{n\in\mathbb{N}}\) and repeat the preceding argument with this subsequence in place of \(\{\Bf_\lambda^{(n)}\}_{n\in\mathbb{N}}\) throughout, including in all auxiliary lemmas. The argument continues to hold because any such subsequence remains bounded in \(H^2(\mathfrak{M})\). Consequently, we conclude that the arbitrary subsequence of \(\{\Bf_\lambda^{(n)}\}_{n\in\mathbb{N}}\) possesses a further subsequence that converges to \(\Bf^*\) in the supremum norm of \(C(\mathfrak{M})\). By Lemma \ref{lemma: elementary mathematical analysis lemma}, we have
\begin{align*}
    \lim_{n\rightarrow\infty}\max_{\boldsymbol{m}\in\mathfrak{M}}\Vert \Bf_\lambda^{(n)}(\boldsymbol{m}) - \Bf^*_\lambda(\boldsymbol{m}) \Vert = 0.
\end{align*}
This completes the proof.
\end{proof}

\subsection{Proof of the Results in Section \ref{section: Empirical Version}}

\begin{lemma}\label{lemma:bounded penalty}
For fixed $\lambda > 0$, let $\fstar_{\lambda}$ be an optimizer as given in \eqref{eq: the core min problem} and $\fstar_{N,\lambda}$ an empirical version of a sample of size $N$ as the minimizer of \eqref{eq: core iterative algorithm, empirical}. Let $\Bg \in \mF(\mbP)$ be arbitrary. Then, 
\begin{align}\label{eq: def of R in mathscrF R}
    \begin{aligned}
        & \norm{\fstar_{\lambda}}_{H^2(\M)} \leq R \quad \text{ and } \quad \norm{\fstar_{N,\lambda}}_{H^2(\M)} \leq R, \quad \text{ where}\\
    & R = \sqrt{ \Big(2\cdot \radsuppP \Big)^2\cdot\vol_g(\M) + \frac1\lambda \Big( 3\cdot\radsuppP \Big)^2+  \norm{\nabla^2 \Bg}_{L^2(\M)}^2 }.
    \end{aligned}
\end{align}

\end{lemma}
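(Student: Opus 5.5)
The plan is to bound the two pieces of the squared Sobolev norm $\Vert \Bf\Vert_{H^2(\M)}^2=\Vert \Bf\Vert_{L^2(\M)}^2+\Vert \nabla^2\Bf\Vert_{L^2(\M)}^2$ separately. The $L^2$ piece comes from the sup-norm constraint defining $\mathscr{F}(\cdot)$. The Hessian piece comes from comparing the minimizer with a competitor built from $\Bg$, using minimality of the loss.

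\emph{$L^2$ part.} For $\fstar_\lambda\in\mF(\mbP)$ we have $\max_{\boldsymbol m}\Vert\fstar_\lambda(\boldsymbol m)\Vert\le 2\radsuppP$. Integrating over $\M$ gives
\[
\Vert\fstar_\lambda\Vert^2_{L^2(\M)}\le(2\radsuppP)^2\vol_g(\M).
\]
For the empirical minimizer, $\fstar_{N,\lambda}\in\mathscr{F}(\mbP_N)$. Since every $\boldsymbol X_i$ lies in $\operatorname{supp}(\mbP)$ (almost surely), $\operatorname{rad}_0(\operatorname{supp}\mbP_N)=\max_i\Vert\boldsymbol X_i\Vert\le\radsuppP$. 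Hence $\mathscr{F}(\mbP_N)\subseteq\mF(\mbP)$, and the same $L^2$ bound holds.

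\emph{Fitting-error bound.} For any $\boldsymbol h$ in either class and any $\boldsymbol x\in\operatorname{supp}(\mbP)$,
\[
\dist(\boldsymbol x,\boldsymbol h)\le(\Vert\boldsymbol x\Vert+\max_{\boldsymbol m}\Vert\boldsymbol h(\boldsymbol m)\Vert)^2\le(3\radsuppP)^2,
\]
as in the proof of Lemma~\ref{lemma: continuity of Delta wrt f}. Both $\mbE\,\dist(\boldsymbol X,\boldsymbol h)$ and $\mbE_N\,\dist(\boldsymbol X,\boldsymbol h)$ are therefore at most $(3\radsuppP)^2$.

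\emph{Penalty part, population case.} Since $\Bg\in\mF(\mbP)$, minimality gives
\[
\lambda\Vert\nabla^2\fstar_\lambda\Vert^2_{L^2(\M)}\le\mcL_\lambda(\fstar_\lambda)\le\mcL_\lambda(\Bg)\le(3\radsuppP)^2+\lambda\Vert\nabla^2\Bg\Vert^2_{L^2(\M)}.
\]
Dividing by $\lambda$ and adding the $L^2$ bound yields $\Vert\fstar_\lambda\Vert_{H^2(\M)}^2\le R^2$.

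\emph{Penalty part, empirical case (main obstacle).} Here $\Bg$ need not belong to $\mathscr F(\mbP_N)$, because $\max_i\Vert\boldsymbol X_i\Vert$ may be smaller than $\radsuppP$, so $\Bg$ is not directly an admissible competitor. The fix is to rescale $\Bg$ into the class. Set
\[
c:=\min\{1,\,2\max_i\Vert\boldsymbol X_i\Vert/\Vert\Bg\Vert_{C(\M)}\}\in[0,1]
\]
(with $c=1$ if $\Bg\equiv 0$), and use $\boldsymbol h:=c\Bg$. Then:
\begin{itemize}
\item[(a)] $\boldsymbol h\in H^2(\M)$ and $\Vert\boldsymbol h\Vert_{C(\M)}\le 2\max_i\Vert\boldsymbol X_i\Vert$, so $\boldsymbol h\in\mathscr{F}(\mbP_N)$;
\item[(b)] $\Vert\nabla^2\boldsymbol h\Vert_{L^2(\M)}=c\,\Vert\nabla^2\Bg\Vert_{L^2(\M)}\le\Vert\nabla^2\Bg\Vert_{L^2(\M)}$, by linearity of $\nabla^2$;
\item[(c)] $\mbE_N\,\dist(\boldsymbol X,\boldsymbol h)\le(3\radsuppP)^2$, by the fitting-error bound.
\end{itemize}
Minimality of $\fstar_{N,\lambda}$ then gives
\[
\lambda\Vert\nabla^2\fstar_{N,\lambda}\Vert^2_{L^2(\M)}\le\mcL_{N,\lambda}(\boldsymbol h)\le(3\radsuppP)^2+\lambda\Vert\nabla^2\Bg\Vert^2_{L^2(\M)}.
\]
Dividing by $\lambda$ and combining with the $L^2$ bound gives $\Vert\fstar_{N,\lambda}\Vert_{H^2(\M)}\le R$, with the same $R$ as in \eqref{eq: def of R in mathscrF R}.
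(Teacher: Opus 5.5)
Your proof is correct and follows the paper's outline. The $L^2$ part comes from the sup-norm constraint defining $\mathscr{F}(\cdot)$, and the Hessian part comes from comparing the minimizer with a competitor built from $\Bg$, with the fitting error bounded by $(3\radsuppP)^2$.

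You differ from the paper only in the empirical case, and there your version is more careful. The paper writes $\mcL_{N,\lambda}(\fstar_{N,\lambda})\le\mcL_{N,\lambda}(\Bg)$ directly. But $\fstar_{N,\lambda}$ minimizes over $\mathscr{F}(\mbP_N)$, and $\Bg\in\mF(\mbP)$ need not lie in that class, because $\max_i\Vert\boldsymbol{X}_i\Vert$ can be strictly smaller than $\radsuppP$. Your rescaled competitor $c\Bg$ closes exactly this gap. A shorter fix would be to compare with the zero function, which lies in every $\mathscr{F}(\cdot)$. It gives $\lambda\Vert\nabla^2\fstar_{N,\lambda}\Vert^2_{L^2(\M)}\le\mcL_{N,\lambda}(\boldsymbol{0})\le(\radsuppP)^2$, which is already below the required bound for every $\Bg$.

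Your remark that $\boldsymbol{X}_i\in\operatorname{supp}(\mbP)$ almost surely also makes explicit what the paper uses tacitly. It gives $\mathscr{F}(\mbP_N)\subseteq\mF(\mbP)$, which the paper's ``by the same steps'' for the empirical $L^2$ bound relies on. It also ensures the $3\radsuppP$ bound applies at the data points; the paper states that bound for all $\boldsymbol{x}\in\mathbb{R}^D$, but it holds only on the support.
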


\begin{proof}[\textbf{Proof of Lemma \ref{lemma:bounded penalty}}]

Define $C : = \radsuppP.$ Observe that for any $\boldsymbol{x} \in \mbR^D$ and $\Bg \in \mF(\mbP)$,
$$\norm{\boldsymbol{x} - \boldsymbol{g}(\boldsymbol{\pi_g}(\boldsymbol{x}))} \leq \norm{x} + \norm{\boldsymbol{g}(\boldsymbol{\pi_g}(\boldsymbol{x}))} \leq C + 2C = 3C.$$

We bound the penalty terms first. Under the assumptions of lemma, observe from the definition of $\fstar_\lambda$ that
\begin{align*}
    \lambda \norm{\nabla^2 \Bf^*_{\lambda}}^2_{L^2(\M)} \leq \Lfunc(\fstar_\lambda) &\leq \Lfunc(\Bg), \\
    &= \mathbb{E}\left\{ \norm{\boldsymbol{X}-\boldsymbol{g}(\boldsymbol{\pi_g}(\boldsymbol{X}))}^2 \right\} + \lambda \norm{\nabla^2 \Bg}_{L^2(\M)}^2 \\
    &\le (3C)^2 + \lambda \norm{\nabla^2 \Bg}_{L^2(\M)}^2,
\end{align*}
thus we have
$$ \norm{\nabla^2 \Bf^*_{\lambda}}^2_{L^2(\M)} \leq \frac1\lambda (3C)^2+  \norm{\nabla^2 \Bg}_{L^2(\M)}^2.$$
For the empirical minimizer, observe that 
\begin{align*}
\lambda \norm{\nabla^2 \Bf^*_{N,\lambda}}^2_{L^2(\M)} &\leq \Lnfunc(\fstar_{N,\lambda})\\
&\leq \Lnfunc(\Bg)\\
&=\frac1N \sum_{i=1}^N \norm{\boldsymbol{X}_i-\boldsymbol{g}(\boldsymbol{\pi_g}(\boldsymbol{X}_i))}^2+\lambda \norm{\nabla^2 \Bg}_{L^2(\M)}^2\\
&\leq  (3C)^2 + \lambda \norm{\nabla^2 \Bg}_{L^2(\M)}^2,
\end{align*}
which implies
$$\norm{\nabla^2 \Bf^*_{N,\lambda}}^2_{L^2(\M)}\leq \frac1\lambda (3C)^2+  \norm{\nabla^2 \Bg}_{L^2(\M)}^2. $$
Now, bounding the $L^2$ norms of the original functions. We have
$$\norm{\fstar_\lambda}_{L^2}^2 = \int_M \norm{\fstar_\lambda(\boldsymbol{m})}^2 \,d \vol_g(\boldsymbol{m}) \leq \norm{\fstar_\lambda}_{C(\M)}^2\cdot\vol_g(\M) \leq (2C)^2\cdot \vol_g(\M);$$
by the same steps, we have $\norm{\fstar_{N,\lambda}}_{L^2}^2 \leq (2C)^2\cdot \vol_g(\M)$. Define 
$$R := \sqrt{ (2C)^2\cdot\vol_g(\M) + \frac1\lambda (3C)^2+  \norm{\nabla^2 \Bg}_{L^2(\M)}^2 }.$$
It follows that
\begin{align*}
    & \norm{\fstar_{\lambda}}_{H^2(\M)}^2 = \norm{\fstar_{\lambda}}_{L^2(\M)}^2 + \norm{\nabla^2 \Bf^*_{\lambda}}_{L^2(\M)} \leq R^2, \\
    & \norm{\fstar_{N,\lambda}}_{H^2}^2 = \norm{\fstar_{N,\lambda}}_{L^2}^2 + \norm{\nabla^2 \Bf^*_{ N,\lambda}}_{L^2(\M)}^2 \leq R^2.
\end{align*}
Taking the square root concludes the proof.
\end{proof}
\newcommand{\Bx}{\boldsymbol{x}}
\newcommand{\Bm}{\boldsymbol{m}}
\newcommand{\infm}{\inf\limits_{m \in \M}}
\begin{lemma}\label{lemma:infimum inequality} Let $U = \{u_m\}_{m \in \M}$ and $V = \{v_m\}_{m \in \M}$ be subsets of $\mbR$ indexed by $m \in \M$. Suppose $\delta > 0$ is some constant. If $| u_m - v_m| \leq \delta$ for all $m \in \M$, it follows that
$$\left|\inf_{m \in \M} u_m - \inf_{m \in \M}  v_m \right| \leq \delta.$$
\end{lemma}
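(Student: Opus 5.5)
The plan is a direct two-sided comparison of the infima, using only that an infimum is a greatest lower bound; no structure on the index set $\M$ is needed. Write $u_* := \inf_{m \in \M} u_m$ and $v_* := \inf_{m \in \M} v_m$. I first treat the case where both are finite, and then dispose of the degenerate cases.

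For the upper bound on $u_* - v_*$, the hypothesis $|u_m - v_m| \le \delta$ gives $u_m \le v_m + \delta$ for every $m \in \M$. Since $u_* \le u_m$, we get $u_* \le v_m + \delta$ for every $m$, i.e., $u_* - \delta$ is a lower bound for $\{v_m\}_{m\in\M}$. Hence
\begin{align*}
    u_* - \delta \le v_*, \quad \text{that is,} \quad u_* - v_* \le \delta.
\end{align*}
By symmetry, the inequality $v_m \le u_m + \delta$ yields $v_* - u_* \le \delta$ in the same way. Together these give $|u_* - v_*| \le \delta$.

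The only point needing care is finiteness of the infima. If $u_* = -\infty$, then $u_* \le v_m + \delta$ forces nothing. However, the symmetric inequality $v_* \le u_m + \delta$ for all $m$, together with the fact that $u_m$ takes arbitrarily negative values, forces $v_* = -\infty$ as well. In that case the difference $u_* - v_*$ is undefined, and the statement must be read under finiteness.

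In the intended application, $u_m = \norm{\Bx - \Bf(\Bm)}^2 \ge 0$, so both infima are finite, and I would note this convention explicitly. I do not expect any real obstacle: the main step is simply recognizing that "$u_* - \delta$ is a lower bound of the $v_m$" gives $u_* - \delta \le v_*$ by the definition of the infimum.
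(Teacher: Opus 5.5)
Your proposal is correct and is essentially the paper's proof: both observe that $u_* \le u_m \le v_m + \delta$ for every $m$, so $u_* - \delta$ is a lower bound for $\{v_m\}_{m\in\M}$, which gives $u_* - v_* \le \delta$ by the greatest-lower-bound property, and then both swap the roles of $u$ and $v$. Your remark that the infima must be finite (and that one is $-\infty$ if and only if the other is) is a small point the paper leaves implicit; it is harmless here, since in the application $u_m = \norm{\boldsymbol{x} - \Bf(\boldsymbol{m})}^2 \ge 0$.
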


\begin{proof}[\textbf{Proof of Lemma \ref{lemma:infimum inequality}}]
By hypothesis, for all $m \in \M$,
$$-\delta \leq u_m - v_m \leq\delta.$$
Since $u_m - v_m \leq \delta$, it follows that $\inf_{m'\in\M} u_{m'}  \leq  u_m \leq  \delta + v_m$ for all $m\in\M$. This implies that for all $m \in \M$, $\inf_{m'\in\M} u_{m'}$ is a lower bound for $\delta + v_m$. But since the infimum has the property of being the greatest lower bound, it must be that
$$\infm u_m  \leq \infm(\delta + v_m) = \delta + \infm v_m,$$
implying
$$\infm u_m-\infm v_m  \leq \delta.$$

Since we also have that $v_m \leq u_m + \delta$, an analogous argument swapping the roles of $u_m$ and $v_m$ yields
$$\infm v_m-\infm u_m  \leq \delta.$$
Combining results, we obtain
$$\left|\inf_{m \in \M} u_m - \inf_{m \in \M}  v_m \right| \leq \delta.$$

\end{proof}

\begin{proof}[\textbf{Proof of Theorem \ref{thm: argmin}}] 
We make use of Corollary 3.2.3(i) in \cite{van1996weak} along with the previous lemmas. To this end, let $\lambda >0$ and $\Bg \in \mF(\mathbb{P})$. Define 
\begin{align*}
    \mF_R := \left\{ \Bf \in \mF(\mbP) :  \norm{\Bf}_{H^2(\M)} \leq R \right\},
\end{align*}
where $R$ is given by \eqref{eq: def of R in mathscrF R}. Lemma \ref{lemma:bounded penalty} implies that  $\Bf^*_\lambda, \Bf^*_{N,\lambda} \in \mF_R$ for all $N\in\mathbb{N}$. Hereafter, we consider the empirical processes $\{\mcL_{N,\lambda}(\Bf)\}_{\Bf\in\mF_R}$ indexed by $\mF_R$ and the deterministic functional $\mcL_{\lambda}: \Bf\mapsto \mcL_{\lambda}(\Bf)$ defined on $\mF_R$. To apply the corollary from \cite{van1996weak}, we must prove three items:
\begin{enumerate}
    \item $T_N:=\sup_{\Bf \in \mF_R} |\mcL_{N,\lambda}(\Bf) - \mcL_{\lambda}(\Bf)|$ converges in outer probability to 0 as $N\rightarrow\infty$. (Note that, for each $N$, the supremum $T_N$ is not necessarily measurable.)

    \item For any open subset $\mG\subseteq C(\M)$ containing $\fstar_\lambda$,
    $$\mcL_\lambda(\fstar_\lambda) < \inf_{\Bf \not \in \mG} \mcL_\lambda(\Bf).$$

    \item For any $N \in \mbN$, the minimizer $\fstar_{N,\lambda}$ of \eqref{eq: core iterative algorithm, empirical} satisfies
    $$\mcL_{N,\lambda}(\Bf_N^*) \leq \inf_{\Bf \in \mF_R} \mcL_{N,\lambda}(\Bf).$$
\end{enumerate}
Item (ii) is assumed in the statement of the lemma, and item (iii) is immediate. Therefore, it remains only to prove item (i).

As in Lemma \ref{lemma: continuity of Delta wrt f}, we define For $\boldsymbol{x} \in \mbR^D, f \in \mF(\mathbb{P})$,   $$\Deltaxf{x}{f} := \min_{\boldsymbol{m} \in \M} \norm{\boldsymbol{x} - \Bf(\boldsymbol{m})}^2 = \norm{\boldsymbol{x} - \Bf(\Bpi_{\Bf}(\boldsymbol{x}))}^2.$$
Also, let $C : = \radsuppP$. Furthermore, we adopt the notation standard in the empirical process literature \citep[e.g.,][]{van1996weak} and denote
\begin{align*}
    & P_N\Deltadotf{f} := \frac1N \sum_{i=1}^N \varrho(\boldsymbol{X}_i, \, \boldsymbol{f}), \\
    & P\Deltadotf{f} := \mbE \varrho(\boldsymbol{X}_1, \, \boldsymbol{f}), \\
    & (P_N-P)\Deltadotf{f} := P_N\Deltadotf{f} - P\Deltadotf{f}.
\end{align*}
Thus, we have
\begin{align*}
    T_N = \sup_{\Bf \in \mF_R} |\mcL_{N,\lambda}(\Bf) - \mcL_{\lambda}(\Bf)| = \sup_{\Bf \in \mF_R} |(P_N-P)\Deltadotf{f}|.
\end{align*}
The main strategy of the proof will be to, given $\varepsilon > 0$, acquire an $\varepsilon$-net that will assist in providing an upper bound. 

Consider our restricted space $\mF_R$. By Theorem \ref{thm: Rellich–Kondrachov embedding}, the inclusion $\iota: H^2(\M) \hookrightarrow C(\M)$ is a compact operator. Since $\mF_R$ is bounded in $H^2(\M)$, this means $\mF_R$ is totally bounded in $C(\M)$. By the definition of total-boundedness, for every $\varepsilon > 0$, there exist finitely many $\Bf^{(1)},\ldots, \Bf^{(K)} \in \mF_R$ such that for all $\Bf \in \mF_R$, there exists a $j \in \{1, \ldots, K\}$ such that we have the bound
 \begin{align}\label{net on f}
     \norm{\Bf - \Bf^{(j)}}_{C(\M)} < \varepsilon  .
 \end{align}

The compactness of $\mathrm{supp}(\mbP)$ guarantees $P{\varrho}(\cdot, \Bf)$ is finite for all $
\Bf \in \mF_R$. So by the weak law of large numbers \citep[][Section 5.3]{klenke2008probability}, we have
\begin{align*}
    \left|(P_N - P)\dist(\cdot, \Bf^{(j)})\right| \overset{P}{\rightarrow}0 \quad \text{ as }N\rightarrow\infty,
\end{align*}
where ``$\overset{P}{\rightarrow}$'' denotes that the sequence converges in probability. Taking the maximum with respect to $j$, we obtain
\begin{align}\label{net lln}
\max _{1 \leq j \leq K}\left|(P_N - P)\dist(\cdot, \Bf^{(j)})\right| \overset{P}{\rightarrow} 0 \quad \text{ as }N\rightarrow\infty.
\end{align}

Before continuing, we demonstrate another upper bound. Given $\Bf, \Bg \in \mF_R, \boldsymbol{x} \in \mbR^D$, and $\Bm \in \M$, observe that

\begin{align*}
 \Big| \norm{\Bx - \Bf(\Bm)}^2 - \norm{\Bx - \Bg(\Bm)}^2  \Big| &=
 \Big( \norm{\Bx - \Bf(\Bm)} +  \norm{\Bx - \Bg(\Bm)} \Big) \Big| \norm{\Bx - \Bf(\Bm)} - \norm{\Bx - \Bg(\Bm)} \Big|  \\
 &\leq   \Big(\norm{\Bx - \Bf(\Bm)} +  \norm{\Bx - \Bg(\Bm)} \Big) \norm{\Bf(\Bm) -\Bg(\Bm)} \\
 &\leq  \Big(2 \norm{\Bx} + \norm{\Bf(\Bm)} + \norm{\Bg(\Bm)}\Big)\norm{\Bf(\Bm) -\Bg(\Bm)}\\
 &\leq 6C \max_{\Bm \in \M} \norm{\Bf(\boldsymbol{m}) -\Bg(\boldsymbol{m})}.
\end{align*}
Since $\Bm \in \M$ was arbitrary, it follows from Lemma~\ref{lemma:infimum inequality} that
\begin{align}\label{delta f delta g}
    \begin{aligned}
        | \Deltaxf{x}{f} - \Deltaxf{x}{g}| &= 
        \Big| \inf_{\Bm \in \M} \norm{\Bx - \Bf(\Bm)}^2 - \inf_{\Bm \in \M}   \norm{\Bx - \Bg(\Bm)}^2  \Big| \\
   &\leq 6C  \max_{\Bm \in \M} \norm{\Bf(\boldsymbol{m}) -\Bg(\boldsymbol{m})}.
    \end{aligned}
\end{align}
We return now to \eqref{net lln}. Choose $j \in \{1 ,\ldots K\}$ such that the $\varepsilon$ bound in \eqref{net on f} for our $\Bf$ of interest is satisfied. Then,
\begin{align*}
    \qquad &|P_N \Deltadotf{f} - P \Deltadotf{f}| \\
    = & |(P_N- P )\Deltadotf{f}|  \\
    = &\Big|(P_N- P )\Big(\Deltadotf{f} - \Deltadotfj + \Deltadotfj\Big)\Big| \\
    \leq & \Big|(P_N- P )\Deltadotfj\Big| + P_N\Big| \Deltadotf{f} - \Deltadotfj \Big| + P\Big| \Deltadotf{f} - \Deltadotfj\Big|. 
\end{align*}Observe that the second and third term in the last inequality are bounded by \eqref{delta f delta g} (substituting $\Bf^{(j)}$ for $\Bg$). Using this,
\begin{align*}
    |P_N \Deltadotf{f} - P \Deltadotf{f}| &\leq \Big|(P_N- P )\Deltadotfj\Big| +12C \norm{\Bf - \Bf^{(j)}}_\infty \\
    &\leq \Big|(P_N- P )\Deltadotfj\Big| +12C \varepsilon.
\end{align*} 
Note that $\varepsilon$ is arbitrary and let $\varepsilon\rightarrow0$. Taking supremums and maximums, we have
\begin{align*}
    0 \leq T_N=\sup_{\Bf \in \mF_R} |P_N \Deltadotf{f} - P \Deltadotf{f}| &\leq \max_{1 \leq j \leq K}\Big|(P_N- P )\Deltadotfj\Big|.
\end{align*}
Let $T^*_N$ be the \textit{measurable cover function} of $T_N$ \citep[][Lemma~1.2.1]{van1996weak}. Then, Lemma 1.2.1(ii) implies that
\begin{align*}
    & T^*_N \leq \max_{1 \leq j \leq K}\Big|(P_N- P )\Deltadotfj\Big| \quad\text{almost surely}.
\end{align*}
The convergence in probability in \eqref{net lln} implies that $T^*_N \overset{P}{\rightarrow}0$. By Definition 1.9.1(i) of \cite{van1996weak}, $T_N$ convergence in outer probability to 0 as $N\rightarrow\infty$, i.e., item (i) is proved.

All three items together, using Corollary 3.2.3(i) in \cite{van1996weak}, imply Theorem~\ref{thm: argmin}.
\end{proof}

\subsection{Proof of the Results in Section \ref{section: Geometric Interpretation of the Penalty}}

The following lemma is about the Sobolev space $W^{1,1}(I)=\{u\in L^{1}(I): u'\in L^{1}(I)\}$, where $I\subset\mathbb R$ is a bounded interval, and $u'$ is the \textit{weak derivative} of $u$ \citep[][Section 5.2.1]{evans1998pde}. This lemma is commonly known as \textit{Poincaré–Wirtinger’s inequality} in the literature on partial differential equations \citep[e.g.,][]{brezis2011functional}. For the reader’s convenience, we include a brief proof.
\begin{lemma}[Poincaré–Wirtinger's inequality]\label{lemma: Poincaré--Wirtinger's inequality}
Let $I\subset\mathbb R$ be a bounded interval and let $u\in W^{1,1}(I)$. Define the mean value of $u$ on $I$ by $\bar u := \frac{1}{|I|}\int_I u(x)\,dx$, where $|I|$ denotes the length of $I$. Then, $\|u-\bar u\|_{L^\infty(I)} \le \|u'\|_{L^1(I)}$.
\end{lemma}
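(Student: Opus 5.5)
The plan is to prove the inequality first for absolutely continuous representatives and then observe that nothing more is needed. Every $u\in W^{1,1}(I)$ on a bounded interval has a representative $\tilde u$ that is absolutely continuous on $\overline I$ and satisfies
\[
\tilde u(y)-\tilde u(x)=\int_x^y u'(t)\,dt \quad \text{for all } x,y\in\overline I,
\]
by the standard characterization of $W^{1,1}$ in one dimension \citep[][Theorem 8.2]{brezis2011functional}. Since the $L^\infty$ norm and the mean $\bar u$ do not see modifications on null sets, I will work with $\tilde u$ throughout and write $u$ for it.

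The key step is an averaging argument. Fix $x\in\overline I$ and write
\[
u(x)-\bar u=\frac{1}{|I|}\int_I \bigl(u(x)-u(y)\bigr)\,dy .
\]
For each $y$, the fundamental theorem of calculus for absolutely continuous functions gives
\[
|u(x)-u(y)|=\Bigl|\int_y^x u'(t)\,dt\Bigr|\le \int_I |u'(t)|\,dt=\|u'\|_{L^1(I)},
\]
uniformly in $y$. Averaging over $y\in I$ then yields $|u(x)-\bar u|\le \|u'\|_{L^1(I)}$ for every $x$. Taking the supremum over $x$ gives the claim, in fact with the pointwise supremum, which is stronger than the essential supremum.

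The only genuine obstacle is justifying the absolutely continuous representative. If one prefers not to cite it, I would construct it directly. Set $v(x):=\int_{a}^{x}u'(t)\,dt$, where $a$ is the left endpoint of $I$. One checks that $(u-v)'=0$ weakly, using Fubini against test functions, and that a distribution with zero derivative on an interval is a.e.\ constant \citep[][Lemma 8.1]{brezis2011functional}. Hence $u=v+c$ almost everywhere, and $v+c$ is the required continuous representative. Everything else is a routine estimate.
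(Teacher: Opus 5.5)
Your proposal is correct and follows essentially the same argument as the paper: pass to the absolutely continuous representative (Brezis, Theorem 8.2), write $u(x)-\bar u$ as the average of $u(x)-u(y)$ over $y\in I$, and bound each difference uniformly by $\|u'\|_{L^1(I)}$. Your optional direct construction of the representative via $v(x)=\int_a^x u'(t)\,dt$ is a harmless addition; the paper simply cites the result instead.
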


\begin{proof}[\textbf{Proof of Lemma }\ref{lemma: Poincaré--Wirtinger's inequality}]
Write $I=(a,b)$, and $|I|=b-a$. Since $u\in W^{1,1}(a,b)$, $u$ admits an absolutely continuous representative, still denoted by $u$, such that for all $x,y\in(a,b)$,
\[
u(x)-u(y)=\int_y^x u'(t)\,dt,
\]
where $u'$ is the weak derivative of $u$ \citep[][Theorem 8.2]{brezis2011functional}. Fix $x\in(a,b)$. By the definition of $\bar u$,
\[
u(x)-\bar u
= u(x)-\frac{1}{|I|}\int_a^b u(y)\,dy
= \frac{1}{|I|}\int_a^b \big(u(x)-u(y)\big)\,dy.
\]
Taking absolute values and using the triangle inequality yields
\[
|u(x)-\bar u|
\le \frac{1}{|I|}\int_a^b |u(x)-u(y)|\,dy.
\]
For each $y\in(a,b)$, the fundamental theorem of calculus for the absolutely continuous representative gives
\[
|u(x)-u(y)|
= \left|\int_y^x u'(t)\,dt\right|
\le \int_{\min\{x,y\}}^{\max\{x,y\}} |u'(t)|\,dt
\le \int_a^b |u'(t)|\,dt
= \|u'\|_{L^1(I)}.
\]
Substituting this bound into the previous inequality, we obtain
\[
|u(x)-\bar u|
\le \frac{1}{|I|}\int_a^b \|u'\|_{L^1(I)}\,dy
= \|u'\|_{L^1(I)}.
\]
Since $x\in(a,b)$ was arbitrary, taking the supremum over $x$ gives Poincaré--Wirtinger's inequality, $\|u-\bar u\|_{L^\infty(I)} \le \|u'\|_{L^1(I)}$.
\end{proof}

\begin{proof}[\textbf{Proof of Equation~\eqref{eq: the dominating inequality related to Kegl et al.}}]
Let $\gamma(t):=(\cos t,\sin t)$ for $t\in[0,2\pi)$ and 
\begin{align*}
    \boldsymbol g(t)=\left(g_1(t),\ldots,g_D(t) \right):=\Bf(\gamma(t))\in\mathbb R^D.
\end{align*}
Then, $t\mapsto \boldsymbol g(t)$ is a parametrized differentiable curve \citep[e.g.,][Chapter 1]{doCarmo1976DifferentialGeometry}, and
\begin{align*}
\text{the length of the curve }\Bf(\mathbb{S}^1)&=\int_0^{2\pi}\|\boldsymbol g'(t)\|\,dt\\
&\le 2\pi\cdot \sup_{t\in[0,2\pi)}\|\boldsymbol g'(t)\|\\
&= 2\pi\cdot \sup_{t\in[0,2\pi)}\left(\sum_{j=1}^D \left|g_j'(t)\right|^2\right)^{1/2}\\
&\le 2\pi\cdot \left(\sum_{j=1}^D \sup_{t\in[0,2\pi)}\left|g_j'(t)\right|^2\right)^{1/2}.
\end{align*}
Now, fix $j\in\{1,\dots,D\}$. Since $g_j$ is $2\pi$--periodic, we have
\[
\overline{g_j'}:=\frac1{2\pi}\int_0^{2\pi}g_j'(t)\,dt
=\frac{g_j(2\pi)-g_j(0)}{2\pi}=0.
\]
Applying Poincaré--Wirtinger's inequality (Lemma \ref{lemma: Poincaré--Wirtinger's inequality}) to $g_j'$ yields
\[
\sup_{t\in[0,2\pi)}|g_j'(t)|
=\|g_j'-\overline{g_j'}\|_{L^\infty(0,2\pi)}
\le \int_0^{2\pi}|g_j''(t)|\,dt,
\]
which implies that
\[
\sup_{t\in[0,2\pi)}|g_j'(t)|^2
\le \left(\int_0^{2\pi}|g_j''(t)|\,dt\right)^2.
\]
Therefore, applying the Cauchy--Schwarz inequality, we have
\begin{align*}
\text{the length of the curve }\Bf(\mathbb{S}^1)
&\le 2\pi\cdot \left\{\sum_{j=1}^D \left(\int_0^{2\pi}\left|g_j''(t)\right|\,dt\right)^2\right\}^{1/2}\\
&\le 2\pi\cdot \left(\sum_{j=1}^D 2\pi\cdot\int_0^{2\pi}\left|g_j''(t)\right|^2\,dt\right)^{1/2} \\
&= (2\pi)^{3/2}\left(\int_0^{2\pi}\|\boldsymbol g''(t)\|^2\,dt\right)^{1/2}.
\end{align*}
Since $\mathbb S^1$ is endowed with the metric $g$ induced from $\mathbb R^2$, its Riemannian volume form $d\vol_g$ is the arc-length measure. Under the unit-speed parametrization $\gamma(t)=(\cos t,\sin t)$, we have
$d\vol_g=dt$. Moreover, viewing $\Bf=(f_1,\dots,f_D):\mathbb S^1\to\mathbb R^D$ componentwise, the (scalar) Hessian
$\nabla^2 f_j=\nabla(df_j)$ is a symmetric $(0,2)$-tensor, and the vector-valued Hessian is $\nabla^2\Bf=(\nabla^2 f_1,\dots,\nabla^2 f_D)=:\nabla(d\Bf)$. In the arc-length coordinate $t$, the Levi--Civita connection on $\mathbb S^1$ satisfies $\nabla_{\partial_t}\partial_t=0$. Hence, by \eqref{eq: def of Hess}, we have that
\begin{align*}
    (\nabla^2 f_j)(\partial_t,\partial_t)
=\partial_t(\partial_t f_j)-(\nabla_{\partial_t}\partial_t)f_j
=\frac{d^2}{dt^2}\big(f_j\circ\gamma\big)(t) = g_j''(t), \ \ \text{ for each }j.
\end{align*}
Therefore, $\nabla^2\Bf(\gamma(t))(\partial_t,\partial_t)=\boldsymbol g''(t)$, where $\boldsymbol g(t)=\Bf(\gamma(t))$. Consequently, using \eqref{eq: vert g norm using bases} and \eqref{eq: L2 norm of Hf}, we have that
\begin{align*}
    \int_0^{2\pi}\|\boldsymbol g''(t)\|^2\,dt
&=\sum_{j=1}^D\int_0^{2\pi}\left|(\nabla^2 f_j)(\partial_t,\partial_t)\right|^2\,dt \\
&=\sum_{j=1}^D\int_{\mathbb S^1}\big|\nabla^2 f_j\big|_g^2\,d\vol_g \\
&=\int_{\mathbb S^1}\big|\nabla^2\Bf\big|_g^2\,d\vol_g \\
&=\|\nabla^2\Bf\|_{L^2(\mathbb S^1)}^2.
\end{align*}
Consequently, $\text{the length of the curve }\Bf(\mathbb{S}^1) \le (2\pi)^{3/2}\,\|\nabla^2\Bf\|_{L^2(\mathbb S^1)}$.
\end{proof}

\begin{lemma}\label{lemma: a lemma for the pointwise orthogonal decomposition of the Hessian}
We adopt the notations defined and utilized in Section \ref{section: Geometric Interpretation of the Penalty}. In addition, $\overline{\nabla}$ denotes the Levi--Civita connection induced by the Euclidean metric $\delta$ on $\mathbb{R}^D$. Then, we have
\begin{align}\label{eq: a key identity for the Hessian decomposition}
    (\nabla d\boldsymbol{f})(u,v)
= \nablab_{d\boldsymbol{f}(u)}(d\boldsymbol{f}(v)) - d\boldsymbol{f}(\nabla_u v).
\end{align}
\end{lemma}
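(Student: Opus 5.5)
The plan is a direct computation from the definitions, done componentwise. The idea is to view $d\boldsymbol{f}$ as the $\mathbb{R}^D$-valued one-form $(df_1,\ldots,df_D)$, so that $(\nabla d\boldsymbol{f})(u,v)$ is the stacked Hessian $\nabla^2\boldsymbol{f}(u,v)$ of \eqref{eq: def of Hess} and \eqref{eq: def of nabla f}. Then, for each component $j\in[D]$ and all $u,v\in\Gamma(T\M)$, I write
\begin{align*}
(\nabla^2 f_j)(u,v)=u(vf_j)-(\nabla_u v)f_j .
\end{align*}
The last term is the $j$-th component of $d\boldsymbol{f}(\nabla_u v)$, since $w f_j = df_j(w)$ for any vector field $w$. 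So the identity \eqref{eq: a key identity for the Hessian decomposition} reduces to showing that the $j$-th component of $\overline{\nabla}_{d\boldsymbol{f}(u)}(d\boldsymbol{f}(v))$ equals $u(vf_j)$.

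For this step I will use that $\overline{\nabla}$ is the flat connection on $\mathbb{R}^D$, so in the standard coordinates it acts componentwise by ordinary directional differentiation. The vector field $V:=d\boldsymbol{f}(v)$ is only defined along the map $\boldsymbol{f}$, with components $V_j=vf_j$ viewed as functions on $\M$. I therefore interpret $\overline{\nabla}_{d\boldsymbol{f}(u)}V$ as the pullback connection $\boldsymbol{f}^*\overline{\nabla}$ applied along $u$. For the Euclidean connection, the pullback connection differentiates the components along $u$: $(\boldsymbol{f}^*\overline{\nabla})_u V=(u V_1,\ldots,u V_D)$, because the Christoffel symbols of $\delta$ vanish.

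When $\boldsymbol{f}$ is an embedding, as assumed in Section~\ref{section: An Orthogonal Decomposition of the Penalty}, this can equivalently be checked by extending $V$ to a smooth field $\tilde V$ on a neighborhood of $\boldsymbol{f}(\M)$. Then $\overline{\nabla}_{d\boldsymbol{f}(u)}\tilde V=d\tilde V(d\boldsymbol{f}(u))=u(\tilde V\circ\boldsymbol{f})=u(V)$ by the chain rule, and the result is independent of the extension. This gives the $j$-th component $u(vf_j)$. Subtracting $d\boldsymbol{f}(\nabla_u v)$ then yields the identity.

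The main obstacle is making the left-hand side $\overline{\nabla}_{d\boldsymbol{f}(u)}(d\boldsymbol{f}(v))$ rigorous, since $d\boldsymbol{f}(v)$ is not a vector field on $\mathbb{R}^D$. I will handle it by the extension argument together with the chain rule, and note that tensoriality in $u$ makes the value at $\boldsymbol{m}$ depend only on $u|_{\boldsymbol{m}}$ and on $V$ along a curve tangent to $u$. Smoothness of $\boldsymbol{f}$ is assumed in this section, so no Sobolev issues arise.
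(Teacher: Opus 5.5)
Your proposal is correct and follows essentially the same route as the paper. In both, the key step is the chain-rule observation that, because $\overline{\nabla}$ is flat, the Euclidean derivative of $d\boldsymbol{f}(v)$ along $d\boldsymbol{f}(u)$ has components $u(vf_j)$. The paper does this in local coordinates and recovers the Hessian by an explicit cancellation leaving $\partial_j\partial_i f^\alpha-\Gamma^k_{ji}\partial_k f^\alpha$. You instead work invariantly from $\nabla^2 f_j(u,v)=u(vf_j)-(\nabla_u v)f_j$. Your pullback-connection (or extension) argument also makes precise what $\overline{\nabla}_{d\boldsymbol{f}(u)}(d\boldsymbol{f}(v))$ means, which the paper leaves implicit when it applies $\partial_{y^\beta}$ to a function defined only on $\M$.
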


\begin{proof}[\textbf{Proof of Lemma \ref{lemma: a lemma for the pointwise orthogonal decomposition of the Hessian}}]
Let $(x^1,\dots,x^d)$ be local coordinates on $\M$, and let
$(y^1,\dots,y^D)$ be the standard coordinates on $\mathbb{R}^D$.
Write vector fields on $\M$ as
\[
u = u^j \,\partial_j, \qquad v = v^i \,\partial_i,
\]
where $\partial_j := \frac{\partial}{\partial x^j}$ and we use the
Einstein summation convention.

Let $\boldsymbol{f} = (f^1,\dots,f^D):\M\to\mathbb{R}^D$. In these
coordinates, the differential of $\boldsymbol{f}$ is
\[
d\boldsymbol{f}(\partial_i)
  = \partial_i f^\alpha \,\partial_{y^\alpha},
\]
so for a general vector field $v = v^i\partial_i$, we have $d\boldsymbol{f}(v)
  = v^i \,\partial_i f^\alpha \,\partial_{y^\alpha}$. Let $\Gamma^k_{ij}$ be the Christoffel symbols of the Levi--Civita
connection $\nabla$ on $(\M,g)$. The Euclidean connection
$\overline{\nabla}$ on $\mathbb{R}^D$ is flat, so in the coordinates
$(y^\alpha)$ it is simply given by
\begin{align}\label{eq: Euclidean covariant derivative}
    \overline{\nabla}_W Z
 = W^\beta \,\partial_{y^\beta} Z^\alpha \,\partial_{y^\alpha}
\end{align}
for vector fields $W = W^\beta \partial_{y^\beta}$ and
$Z = Z^\alpha \partial_{y^\alpha}$ on $\mathbb{R}^D$.

\noindent\underline{Part 1: Computing $\overline{\nabla}_{d\boldsymbol{f}(u)}(d\boldsymbol{f}(v))$.}
Along the embedded submanifold $\boldsymbol{f}(\M)$, we have
\begin{align*}
    & d\boldsymbol{f}(u)
 = u^j \,\partial_j f^\beta \,\partial_{y^\beta}, \\
 & d\boldsymbol{f}(v)
 = v^i \,\partial_i f^\alpha \,\partial_{y^\alpha}.
\end{align*}
Hence, using \eqref{eq: Euclidean covariant derivative}, we have that
\begin{align*}
\overline{\nabla}_{d\boldsymbol{f}(u)}\big(d\boldsymbol{f}(v)\big)
 &= \Big(u^j \,\partial_j f^\beta\Big)
    \partial_{y^\beta}\!\Big( v^i \,\partial_i f^\alpha \Big)
    \,\partial_{y^\alpha}.
\end{align*}
Now note that the scalar function $\Phi^\alpha(x) := v^i(x)\,\partial_i f^\alpha(x)$ is defined on $\M$, and the vector field $d\boldsymbol{f}(u)$ is the
pushforward of $u$. By the chain rule, differentiating $\Phi^\alpha$
along $d\boldsymbol{f}(u) = u^j \,\partial_j f^\beta \,\partial_{y^\beta}$ on $\boldsymbol{f}(\M)$ is the same as
differentiating $\Phi^\alpha$ along $u = u^j \,\partial_j$ on $\M$, i.e.
\[
\Big(u^j \partial_j f^\beta\Big)
\partial_{y^\beta}\!\Big( v^i \partial_i f^\alpha \Big)
\Big|_{y = \boldsymbol{f}(x)}
 = u^j \,\partial_j \big( v^i \partial_i f^\alpha \big)(x).
\]
Therefore, we have that
\begin{align*}
\overline{\nabla}_{d\boldsymbol{f}(u)}\big(d\boldsymbol{f}(v)\big)
 = u^j\,\partial_j\big( v^i \,\partial_i f^\alpha \big)\,
    \partial_{y^\alpha} = u^j\Big( (\partial_j v^i)\,\partial_i f^\alpha
           + v^i\,\partial_j\partial_i f^\alpha \Big)\partial_{y^\alpha}.
\end{align*}

\medskip
\noindent\underline{Part 2: Computing $d\boldsymbol{f}(\nabla_u v)$.}
The covariant derivative of $v$ has components $(\nabla_u v)^k
 = u^j \,\partial_j v^k + \Gamma^k_{j\ell} u^j v^\ell$. Thus, we have that
\begin{align*}
d\boldsymbol{f}(\nabla_u v)
 = (\nabla_u v)^k \,\partial_k f^\alpha \,\partial_{y^\alpha} = \Big(u^j \,\partial_j v^k
         + \Gamma^k_{j\ell} u^j v^\ell \Big)
     \partial_k f^\alpha \,\partial_{y^\alpha}.
\end{align*}

\medskip
\noindent\underline{Part 3: Subtracting.}
We now compute $\overline{\nabla}_{d\boldsymbol{f}(u)}\big(d\boldsymbol{f}(v)\big)
 - d\boldsymbol{f}(\nabla_u v)$. Using the two expressions above, we obtain
\begin{align*}
&\quad \overline{\nabla}_{d\boldsymbol{f}(u)}\big(d\boldsymbol{f}(v)\big)
 - d\boldsymbol{f}(\nabla_u v) \\
 & = u^j\Big( (\partial_j v^i)\,\partial_i f^\alpha
           + v^i\,\partial_j\partial_i f^\alpha \Big)\partial_{y^\alpha}
 - \Big(u^j \,\partial_j v^k
         + \Gamma^k_{j\ell} u^j v^\ell \Big)
   \partial_k f^\alpha \,\partial_{y^\alpha}.
\end{align*}
The terms containing $\partial_j v^i\,\partial_i f^\alpha$ cancel, and
we are left with
\[
\overline{\nabla}_{d\boldsymbol{f}(u)}\big(d\boldsymbol{f}(v)\big)
 - d\boldsymbol{f}(\nabla_u v)
 = u^j v^i \Big( \partial_j\partial_i f^\alpha
                 - \Gamma^k_{ji}\,\partial_k f^\alpha \Big)
   \partial_{y^\alpha}.
\]
By definition, the covariant derivative $\nabla d\boldsymbol{f}$ is a
$(0,2)$-tensor with values in $\mathbb{R}^D$. In these coordinates, its
components are $(\nabla d\boldsymbol{f})^\alpha_{ji}
 = \partial_j\partial_i f^\alpha
    - \Gamma^k_{ji}\,\partial_k f^\alpha$. For vector fields $u = u^j\partial_j$ and $v = v^i\partial_i$, we
have
\[
(\nabla d\boldsymbol{f})(u,v)
 = u^j v^i\,(\nabla d\boldsymbol{f})^\alpha_{ji}\,\partial_{y^\alpha}
 = \overline{\nabla}_{d\boldsymbol{f}(u)}\big(d\boldsymbol{f}(v)\big)
   - d\boldsymbol{f}(\nabla_u v).
\]
This proves the identity in \eqref{eq: a key identity for the Hessian decomposition}.
\end{proof}

\begin{proof}[\textbf{Proof of Equation \eqref{eq: the key decomposition of the Hessian}}]
Recall that $\overline{\nabla}$ denotes the Levi--Civita connection induced by the Euclidean metric $\delta$ on $\mathbb{R}^D$. For vector fields $u,v\in\Gamma(T\M)$, the Euclidean covariant derivative of $d\boldsymbol{f}(v)$ along $d\boldsymbol{f}(u)$ decomposes orthogonally into tangent and normal components:
\begin{equation}\label{eq:Gauss}
\overline{\nabla}_{d\boldsymbol{f}(u)}\big(d\boldsymbol{f}(v)\big)
\;=\;
d\boldsymbol{f}\big(\nabla^{\boldsymbol{f}}_u v\big)
\;+\;
\II(u,v),
\end{equation}
where $\II(u,v) \in N_{\boldsymbol{f}(\boldsymbol{m})}\boldsymbol{f}(\M)$ is the second fundamental form of the embedded submanifold $\boldsymbol{f}(\M)\subset\mathbb{R}^D$. The decomposition in \eqref{eq:Gauss} is the Gauss formula in Riemannian geometry \citep[][Theorem 8.2]{lee2018riemannian}.

Combining \eqref{eq:Gauss} with \eqref{eq: a key identity for the Hessian decomposition}, we have
\begin{align*}
\nabla^2\boldsymbol{f}(u,v)&=(\nabla d\boldsymbol{f})(u,v)\\
&= \nablab_{d\boldsymbol{f}(u)}(d\boldsymbol{f}(v)) - d\boldsymbol{f}(\nabla_u v) \\
&= d\boldsymbol{f}(\nablaf_u v) + \II(u,v) - d\boldsymbol{f}(\nabla_u v) \\
&= \II(u,v) + d\boldsymbol{f}\big(\nablaf_u v - \nabla_u v\big).
\end{align*}
This proves the identity in \eqref{eq: the key decomposition of the Hessian}.
\end{proof}

\begin{proof}[\textbf{Proof of Equation \eqref{eq: pointwise orthogonal decomposition of the penalty term}}]
Fix $\boldsymbol{m}\in\M$ and choose a $g$-orthonormal basis $\{e_i\}_{i=1}^d$ of $T_{\boldsymbol{m}}\M$.
Then, using \eqref{eq: vert g norm using bases}, we have
\begin{align*}
    |\nabla^2 \boldsymbol{f}(\boldsymbol{m})|_g^2
=
\sum_{i,k=1}^d \|(\nabla^2 \boldsymbol{f})(e_i,e_k)\|_{\R^D}^2
&=
\sum_{i,k=1}^d \left\{\| \II(e_i,e_j) \|^2_{\mathbb{R}^D} + \| d\boldsymbol{f}\big(\nablaf_{e_i} e_j - \nabla_{e_i} e_j\big) \|^2_{\mathbb{R}^D}\right\}  \\
&=| \II(\boldsymbol{m}) |^2_g + \sum_{i,k=1}^d\left\| d\boldsymbol{f}\big(\nablaf_{e_i} e_j - \nabla_{e_i} e_j\big) \big\vert_{\boldsymbol{f}(\boldsymbol{m})} \right\|^2_{\mathbb{R}^D}
\end{align*}
Then, Equation \eqref{eq: pointwise orthogonal decomposition of the penalty term} is obtained by applying the integral $\int_{\M} \;(\cdot)\; d\vol_g(\boldsymbol{m})$ to the preceding identity.
\end{proof}

\subsection{Proofs of the Results in Section~\ref{section: Tuning Parameter Selection}}

\begin{proof}[\textbf{Proof of Theorem~\ref{thm: the theorem for lambda selection}}]
It suffices to show that the map $\boldsymbol{m}\mapsto \Phi(\lambda_0,\boldsymbol{m})$ is constant.

For any fixed $\lambda>0$, define the conditional mean and the associated ``bias'' by
\begin{align*}
    & \boldsymbol{\mu}_\lambda(\boldsymbol{m}) := \mathbb{E}(\boldsymbol{X} \mid \boldsymbol{M}_\lambda = \boldsymbol{m}),\\
& \boldsymbol{\delta}_\lambda(\boldsymbol{m}) := \boldsymbol{\mu}_\lambda(\boldsymbol{m}) - \Bf^*_\lambda(\boldsymbol{m}),
\end{align*}
and introduce the mean-zero component
\begin{align*}
    \boldsymbol{\eta}_\lambda := \boldsymbol X - \boldsymbol\mu_\lambda(\boldsymbol{M}_\lambda) = \boldsymbol{X}-\mathbb{E}(\boldsymbol{X} \mid \boldsymbol{M}_\lambda).
\end{align*}
Then, we have that
\begin{align*}
    \boldsymbol{R}_\lambda
= \boldsymbol{X} - \Bf^*_\lambda(\boldsymbol{M}_\lambda)
= \bigl[\boldsymbol X - \boldsymbol \mu_\lambda(\boldsymbol{M}_\lambda)\bigr] + \bigl[\boldsymbol\mu_\lambda(\boldsymbol{M}_\lambda) - \Bf^*_\lambda(\boldsymbol{M}_\lambda)\bigr]
= \boldsymbol\eta_\lambda + \boldsymbol\delta_\lambda(\boldsymbol{M}_\lambda).
\end{align*}
By construction, we have $\mathbb{E}(\boldsymbol\eta_\lambda \mid \boldsymbol{M}_\lambda)=\boldsymbol{0}$. Then,
\begin{align*}
\Phi(\lambda,\boldsymbol{m})&=\mathbb{E}\bigl[\|\boldsymbol{R}_\lambda\|^2 \mid \boldsymbol{M}_\lambda = \boldsymbol{m}\bigr]\\
  &= \mathbb{E}\bigl[\|\boldsymbol\eta_\lambda + \boldsymbol\delta_\lambda(\boldsymbol{M}_\lambda)\|^2 \mid \boldsymbol{M}_\lambda = \boldsymbol{m}\bigr] \\
  &= \mathbb{E}\bigl[\|\boldsymbol\eta_\lambda\|^2 \mid \boldsymbol{M}_\lambda = \boldsymbol{m}\bigr]
     + \|\boldsymbol\delta_\lambda(\boldsymbol m)\|^2
     + 2\,\mathbb{E}\bigl[\boldsymbol\eta_\lambda^\top \boldsymbol\delta_\lambda(\boldsymbol{M}_\lambda) \mid \boldsymbol{M}_\lambda = \boldsymbol{m}\bigr].
\end{align*}
Given $\boldsymbol{M}_\lambda=\boldsymbol{m}$, the vector $\boldsymbol\delta_\lambda(\boldsymbol{M}_\lambda)$ is deterministic and
$\mathbb{E}[\boldsymbol\eta_\lambda\mid \boldsymbol{M}_\lambda=\boldsymbol{m}]=\boldsymbol{0}$, so the cross term vanishes. Hence,
\begin{align}\label{eq:resid-second-moment}
\begin{aligned}
    & \Phi(\lambda,\boldsymbol{m})
  = V_\lambda(\boldsymbol{m}) + \|\boldsymbol\delta_\lambda(\boldsymbol{m})\|^2, \quad \text{ where} \\
  & V_\lambda(\boldsymbol{m})
:= \mathbb{E}\bigl[\|\boldsymbol\eta_\lambda\|^2 \mid \boldsymbol{M}_\lambda = \boldsymbol{m}\bigr]
= \operatorname{tr}\,\mathrm{Cov}(\boldsymbol X \mid \boldsymbol{M}_\lambda = \boldsymbol{m}).
\end{aligned}
\end{align}
Hereafter, let $\lambda$ be fixed at the oracle value $\lambda_0$ specified in Assumption~\ref{assumption: the assumption for lambda selection}. Then, we have that 
\begin{align}\label{eq: M lambda0 = varphi T}
\begin{aligned}
\boldsymbol{M}_{\lambda_0}=\boldsymbol{\pi}_{\Bf_{\lambda_0}^*}(\boldsymbol{X}) &= \boldsymbol{\pi}_{\Bf_{\lambda_0}^*}\Big(\Bf_0(\boldsymbol{T})+\boldsymbol{P}(\boldsymbol{T})\boldsymbol{\zeta}\Big) \\
& = \boldsymbol{\pi}_{\Bf_{\lambda_0}^*}\Big(\Bf_{\lambda_0}^*\left(\varphi(\boldsymbol{T})\right)+\boldsymbol{P}(\boldsymbol{T})\boldsymbol{\zeta}\Big) = \varphi(\boldsymbol{T}),
\end{aligned}
\end{align}
which follows from that $\Bf_{\lambda_0}^*=\Bf_0\circ\varphi^{-1}$. Therefore, \eqref{eq: M lambda0 = varphi T} implies that
\begin{align*}
\boldsymbol\mu_{\lambda_0}(\boldsymbol{m})
  &= \mathbb{E}[\boldsymbol{X} \mid \boldsymbol{M}_{\lambda_0} = \boldsymbol{m}] \\
  &= \mathbb{E}\bigl[\Bf_0(\boldsymbol{T}) + \boldsymbol{P}(\boldsymbol{T})\boldsymbol\zeta \mid \boldsymbol T = \varphi^{-1}(\boldsymbol{m})\bigr] \\
  &= \Bf_0\bigl(\varphi^{-1}(\boldsymbol{m})\bigr) + \boldsymbol{P}(\varphi^{-1}(\boldsymbol{m}))\mathbb{E}\boldsymbol{\zeta} \\
  &= \Bf_0\bigl(\varphi^{-1}(\boldsymbol{m})\bigr) \\
  &= \Bf_{\lambda_0}^*(\boldsymbol{m}),
\end{align*}
which implies that $\boldsymbol{\delta}_{\lambda_0}(\boldsymbol{m})=\boldsymbol{0}$ for all $\boldsymbol{m}\in\M$. Moreover, applying \eqref{eq: M lambda0 = varphi T} again, we have that
\begin{align*}
    \boldsymbol R_{\lambda_0}
= \boldsymbol X - \Bf_{\lambda_0}^*(\boldsymbol{M}_{\lambda_0})
&= \Bf_0(\boldsymbol T) + \boldsymbol P(\boldsymbol T)\boldsymbol \zeta - \Bf_{\lambda_0}^*\left(\varphi(\boldsymbol{T})\right) \\
&= \Bf_0(\boldsymbol T) + \boldsymbol P(\boldsymbol T)\boldsymbol \zeta - \Bf_{0}\left(\boldsymbol{T}\right) = \boldsymbol P(\boldsymbol T)\boldsymbol \zeta.
\end{align*}
Thus, conditioning on $\boldsymbol{M}_{\lambda_0}=\boldsymbol m$ (equivalently $\boldsymbol T=\varphi^{-1}(\boldsymbol m)$), we have
\[
\mathrm{Cov}(\boldsymbol{X} \mid \boldsymbol{M}_{\lambda_0}=m)
= \mathrm{Cov}\Big(\boldsymbol P(\boldsymbol T)\boldsymbol \zeta \,\Big\vert\, \boldsymbol T=\varphi^{-1}(\boldsymbol{m}) \Big)
= \sigma^2 \boldsymbol P\bigl(\varphi^{-1}(\boldsymbol{\boldsymbol{m}})\bigr).
\]
Hence, \eqref{eq:resid-second-moment} implies that
\[
V_{\lambda_0}(\boldsymbol{m})
= \operatorname{tr}\,\mathrm{Cov}(\boldsymbol{X} \mid \boldsymbol{M}_{\lambda_0}=\boldsymbol{m})
= \sigma^2 \operatorname{tr}\Bigl(\boldsymbol P\bigl(\varphi^{-1}(\boldsymbol{m})\bigr)\Bigr)
= (D-d)\sigma^2,
\]
which is constant in $\boldsymbol{m}$. Substituting into \eqref{eq:resid-second-moment} yields
\[
\Phi(\lambda_0,\boldsymbol{m})
= (D-d)\sigma^2,
\]
i.e., the map $\boldsymbol{m}\mapsto \Phi(\lambda_0,\boldsymbol{m})$ is constant.
\end{proof}

\begin{proof}[\textbf{Proof of Theorem \ref{thm: continuity of the minimizer wrt the tuning parameter}}]
It suffices to show the following: for any sequence $\{\lambda_n\}_{n\in\mathbb{N}}\subseteq [\underline{\lambda},\overline{\lambda}]$ with $\lim_{n\rightarrow\infty}\lambda_n=\lambda\in [\underline{\lambda},\overline{\lambda}]$, we have 
\begin{align}\label{eq: continuity of the minimizer wrt the tuning parameter}
\lim_{n\rightarrow\infty}\max_{\boldsymbol{m}\in\mathfrak{M}} \Vert \boldsymbol{f}_{\lambda_n}^*(\boldsymbol{m}) - \boldsymbol{f}_{\lambda}^*(\boldsymbol{m}) \Vert = 0.
\end{align}
Using Lemma \ref{lemma: elementary mathematical analysis lemma}, we only need to show that any subsequence $\{\boldsymbol{f}_{\lambda_{n,k}}^*\}_{k\in\mathbb{N}}$ of $\{\boldsymbol{f}_{\lambda_n}^*\}_{n\in\mathbb{N}}$ has a further subsequence that converges to $\boldsymbol{f}_{\lambda}^*$ under the supremum norm.

\noindent\underline{Step 1: Boundedness of the subsequence $\{\boldsymbol{f}_{\lambda_{n,k}}^*\}_{k\in\mathbb{N}}$ in $H^2(\mathfrak{M})$.}\\
Fix any $\boldsymbol{g}\in \mathscr{F}(\mathbb{P})$. Since $\boldsymbol{f}_{\lambda_{n,k}}^*$ minimizes $\mathcal{L}_{\lambda_{n,k}}(\Bf)$ over $\mathscr{F}(\mathbb{P})$, we have
\begin{align*}
    \lambda_{n,k} \cdot \Vert \nabla^2 \boldsymbol{f}_{\lambda_{n,k}}^* \Vert^2_{L^2(\mathfrak{M})} \le \mathcal{L}_{\lambda_{n,k}}(\boldsymbol{f}_{\lambda_{n,k}}^*) &\le \mathcal{L}_{\lambda_{n,k}}(\boldsymbol{g}) =\mathbb{E}\,\dist(\boldsymbol{X},\boldsymbol{g})+\lambda_{n,k} \cdot \left\Vert \nabla^2 \boldsymbol{g}\right\Vert^2_{L^2(\mathfrak{M})},
\end{align*}
which implies that
\begin{align*}
    \limsup_{k\rightarrow\infty}\Vert \nabla^2 \boldsymbol{f}_{\lambda_{n,k}}^* \Vert^2_{L^2(\mathfrak{M})} &\le \lim_{k\rightarrow\infty} \frac{1}{\lambda_{n,k}} \left\{\mathbb{E}\,\dist(\boldsymbol{X},\boldsymbol{g})+\lambda_{n,k} \cdot \left\Vert \nabla^2 \boldsymbol{g}\right\Vert^2_{L^2(\mathfrak{M})}\right\} = \frac{\mathcal{L}_\lambda(\boldsymbol{g})}{\lambda} < +\infty.
\end{align*}
Therefore, $\sup_{k\in\mathbb{N}} \Vert \nabla^2 \boldsymbol{f}_{\lambda_{n,k}}^* \Vert^2_{L^2(\mathfrak{M})} <\infty$. In addition, the definition of $\mathscr{F}(\mathbb{P})$ implies that $\sup_{k\in\mathbb{N}}\Vert \boldsymbol{f}_{\lambda_{n,k}}^* \Vert_{L^2(\mathfrak{M})}<+\infty$. Hence, the sequence $\{\boldsymbol{f}_{\lambda_{n,k}}^*\}_{k\in\mathbb{N}}$ is bounded in $H^2(\mathfrak{M})$.

\noindent\underline{Step 2: Weak convergence in $H^2(\mathfrak{M})$ and strong convergence in $C(\mathfrak{M})$.}\\
Theorem 3.18 of \cite{brezis2011functional}, together with the result from Step 1, implies that there exists a further subsequence $\{\boldsymbol{f}_{\lambda_{n,k,l}}^*\}_{l\in\mathbb{N}}$ and $\tilde{\boldsymbol{f}}\in H^2(\mathfrak{M})$ such that
\begin{align}\label{eq: weak convergence, proof of the continuity in lambda}
    \boldsymbol{f}_{\lambda_{n,k,l}}^* \overset{w}{\rightharpoonup} \tilde{\boldsymbol{f}} \text{ in } H^2(\mathfrak{M}) \quad \text{ as }l\rightarrow\infty.
\end{align}
Theorem \ref{thm: Rellich–Kondrachov embedding}, together with Remark 2 of Chapter 6 of \cite{brezis2011functional}, implies that $\tilde{\boldsymbol{f}} \in C(\mathfrak{M})$ and
\begin{align}\label{eq: uniform limit for the continuity wrt lambda}
\lim_{l\rightarrow\infty}\max_{\boldsymbol{m}\in\mathfrak{M}} \Vert \boldsymbol{f}_{\lambda_{n,k,l}}^*(\boldsymbol{m}) - \tilde{\boldsymbol{f}}(\boldsymbol{m}) \Vert = 0.
\end{align}
Moreover, since each $\boldsymbol{f}_{\lambda_{n,k,l}}^*\in \mathscr{F}(\mathbb{P})$ satisfies the uniform bound $\max_{\boldsymbol{m}\in\mathfrak{M}}\Vert \boldsymbol{f}_{\lambda_{n,k,l}}^*(\boldsymbol{m}) \Vert \le 2\cdot\operatorname{rad}_0(\operatorname{supp}\left(\mathbb{P})\right)$ defining $\mathscr{F}(\mathbb{P})$, the uniform limit $\tilde{\boldsymbol{f}}$ in \eqref{eq: uniform limit for the continuity wrt lambda} satisfies the same bound. Thus, $\tilde{\boldsymbol{f}}\in \mathscr{F}(\mathbb{P})$.

\noindent\underline{Step 3: Convergence of the fitting-error term.}\\
Lemma \ref{lemma: continuity of Delta wrt f}, together with \eqref{eq: uniform limit for the continuity wrt lambda}, implies that
\begin{equation}\label{eq:datafit_conv}
\lim_{l\rightarrow\infty}\mathbb{E}\,\dist(\boldsymbol{X}, \boldsymbol{f}_{\lambda_{n,k,l}}^*) = \mathbb{E}\,\dist(\boldsymbol{X},\tilde{\boldsymbol{f}}).
\end{equation}

\noindent\underline{Step 4: Lower semicontinuity of the penalty.}\\
\eqref{eq: uniform limit for the continuity wrt lambda} implies $\lim_{l\rightarrow\infty}\Vert \boldsymbol{f}_{\lambda_{n,k,l}}^* - \tilde{\boldsymbol{f}} \Vert_{L^2(\mathfrak{M})}=0$. In addition, the weak convergence in \eqref{eq: weak convergence, proof of the continuity in lambda}, together with the weak lower semicontinuity of the norm $\Vert \cdot \Vert_{H^2(\mathfrak{M})}$ \citep[][Proposition 3.5(iii)]{brezis2011functional}, implies that
\begin{align*}
    \Vert \tilde{\boldsymbol{f}} \Vert_{L^2(\mathfrak{M})}^2 +  \|\nabla^2 \tilde{\boldsymbol{f}}\|_{L^2(\M)}^2 &=\Vert \tilde{\boldsymbol{f}} \Vert_{H^2(\mathfrak{M})}^2\\ 
    &\le \liminf_{l\rightarrow\infty} \Vert \boldsymbol{f}_{\lambda_{n,k,l}}^* \Vert_{H^2(\mathfrak{M})}^2 \\
    &= \Vert \tilde{\boldsymbol{f}} \Vert_{L^2(\mathfrak{M})}^2 + \liminf_{l\rightarrow\infty} \|\nabla^2 \boldsymbol{f}_{\lambda_{n,k,l}}^*\|_{L^2(\M)}^2.
\end{align*}
Therefore, we have that
\begin{equation}\label{eq:lsc_penalty}
\|\nabla^2 \tilde{\boldsymbol{f}}\|_{L^2(\M)}^2 \le \liminf_{l\rightarrow\infty} \|\nabla^2 \boldsymbol{f}_{\lambda_{n,k,l}}^*\|_{L^2(\M)}^2.
\end{equation}

\noindent\underline{Step 5: The subsequential limit is a minimizer at $\lambda$.}
Fix any $\boldsymbol{h}\in \mathscr{F}(\mathbb{P})$. Optimality of $\boldsymbol{f}_{\lambda_{n,k,l}}^*$ for $\mathcal{L}_{\lambda_{n,k,l}}(\Bf)$ gives
\[
\mathbb{E}\,\dist(\boldsymbol{X}, \boldsymbol{f}_{\lambda_{n,k,l}}^*)+\lambda_{n,k,l} \cdot\|\nabla^2 \boldsymbol{f}_{\lambda_{n,k,l}}^*\|^2_{L^2(\M)} \le \mathbb{E}\,\dist(\boldsymbol{X}, \boldsymbol{h})+\lambda_{n,k,l}\cdot \|\nabla^2 \boldsymbol{h}\|^2_{L^2(\M)}.
\]
Take $\liminf_{l\to\infty}$ on the left-hand and right-hand sides, use \eqref{eq:datafit_conv} and \eqref{eq:lsc_penalty}, and note that $\lim_{l\rightarrow\infty}\lambda_{n,k,l}=\lambda$. Then, we have 
\begin{align*}
    \mathcal{L}_{\lambda}(\tilde{\boldsymbol{f}}) &= \mathbb{E}\,\dist(\boldsymbol{X}, \tilde{\boldsymbol{f}})+\lambda \cdot \|\nabla^2 \tilde{\boldsymbol{f}}\|^2_{L^2(\M)} \\
    & = \lim_{l\rightarrow\infty}\mathbb{E}\,\dist(\boldsymbol{X}, \boldsymbol{f}_{\lambda_{n,k,l}}^*)+\left(\lim_{l\rightarrow\infty}\lambda_{n,k,l}\right) \cdot \|\nabla^2 \tilde{\boldsymbol{f}}\|^2_{L^2(\M)} \\
    &\le \liminf_{l\to\infty} \left\{ \mathbb{E}\,\dist(\boldsymbol{X}, \boldsymbol{f}_{\lambda_{n,k,l}}^*)+\lambda_{n,k,l} \cdot\|\nabla^2 \boldsymbol{f}_{\lambda_{n,k,l}}^*\|^2_{L^2(\M)} \right\} \\
    &\le \mathbb{E}\,\dist(\boldsymbol{X}, \boldsymbol{h})+\lambda\cdot \|\nabla^2 \boldsymbol{h}\|^2_{L^2(\M)}.
\end{align*}
That is, $\mathcal{L}_{\lambda}(\tilde{\boldsymbol{f}})
\le \mathcal{L}_{\lambda}(\boldsymbol{h})$. Since $\boldsymbol{h}\in \mathscr{F}(\mathbb{P})$ was arbitrary, $\tilde{\boldsymbol{f}}\in\arg\min_{\boldsymbol{h}\in \mathscr{F}(\mathbb{P})} \mathcal{L}_\lambda(\boldsymbol{h})$.

\noindent\underline{Step 6: Uniqueness implies full convergence and continuity.}
Since we have assumed that the minimizer $\arg\min_{\boldsymbol{h}\in \mathscr{F}(\mathbb{P})} \mathcal{L}_\lambda(\boldsymbol{h})$ is unique, we have $\tilde{\boldsymbol{f}}=\boldsymbol{f}_\lambda^*$. By \eqref{eq: uniform limit for the continuity wrt lambda},
we have shown that every subsequence of $\{\Bf_{\lambda_n}^*\}_{n\in\mathbb{N}}$ admits a further subsequence converging uniformly to $\Bf_\lambda^*$. By Lemma \ref{lemma: elementary mathematical analysis lemma}, this completes the proof.
\end{proof}

\subsection{Proofs of Results in Appendix \ref{appendix: Reproducing Kernel Hilbert Spaces}}

\begin{proof}[\textbf{Proof of Lemma \ref{lemma: finite dimensionality of the kernel space}}]
Obviously, $\mathcal{H}_0$ is a linear subspace of $H^{2}(\M)=H^2(\M;\mathbb{R}^1)$.

\noindent\underline{Part 1: from $\nabla^{2}f=0$ to $\Delta f=0$ and interior smoothness.}
Let $f\in \mathcal{H}_0$. Since $f\in H^{2}(\M)$, its Hessian $\nabla^{2}f$ is well-defined as an $L^{2}$-tensor
(in the weak derivative sense). Taking the trace $\mathrm{tr}_g(\cdot)$ defined in \eqref{eq: def of Laplace-Beltrami operator}, we obtain
\[
\Delta f \;=\; \operatorname{tr}_{g}(\nabla^{2}f) \;=\; 0
\]
in the weak derivative sense on $\M$, where $\Delta$ is the Laplace--Beltrami operator.

By standard interior elliptic regularity for second-order elliptic equations \citep[][Chapter 6, Theorem 3, ``Infinite differentiability in the interior'']{evans1998pde}, weakly harmonic functions are smooth in the interior. That is, for every open set $U\subsetneq \M$, we have $f\in C^{\infty}(U)$. Hence, the identity
$\nabla^{2}f=0$ holds pointwise on $\mathrm{int}(\M):=$ the interior of $\M$.

\noindent\underline{Part 2: $\nabla^{2}f=0$ implies $\nabla f$ is parallel on $\mathrm{int}(\M)$.}
Let $X,Y$ be smooth vector fields on $\mathrm{int}(\M)$.
Recall:

\begin{itemize}
\item The gradient $\nabla f$ is characterized by
\[
df(Y)=Y(f)=\langle \nabla f,\,Y\rangle_g
\qquad \text{for all vector fields }Y.
\]

\item The Hessian is the covariant derivative of the $1$-form $df$:
\[
(\nabla^2 f)(X,Y):=(\nabla df)(X,Y)
= X\bigl(df(Y)\bigr) - df(\nabla_X Y).
\]
\end{itemize}
Therefore, $(\nabla^2 f)(X,Y)
= X\bigl(df(Y)\bigr) - df(\nabla_X Y) = X\bigl(\langle \nabla f,\,Y\rangle_g\bigr) - \langle \nabla f,\,\nabla_X Y\rangle_g$. Use the metric compatibility of the Levi--Civita connection, i.e., $\nabla g=0$, which yields the product rule $X\langle U,V\rangle_g
= \langle \nabla_X U,\,V\rangle_g + \langle U,\,\nabla_X V\rangle_g$ for all vector fields $U$ and $V$. Applying this with $U=\nabla f$ and $V=Y$ gives $X\bigl(\langle \nabla f,\,Y\rangle_g\bigr)
= \langle \nabla_X(\nabla f),\,Y\rangle_g + \langle \nabla f,\,\nabla_X Y\rangle_g$. Substituting back, we obtain
\begin{align*}
(\nabla^2 f)(X,Y)
&= \Bigl(\langle \nabla_X(\nabla f),\,Y\rangle_g + \langle \nabla f,\,\nabla_X Y\rangle_g\Bigr)
 - \langle \nabla f,\,\nabla_X Y\rangle_g \\
&= \langle \nabla_X(\nabla f),\,Y\rangle_g.
\end{align*}
Since $\nabla^{2}f\equiv 0$, it follows that $\langle \nabla_{X}(\nabla f),\,Y\rangle_{g} = 0$ for any $X, Y$. Hence,
\begin{align}\label{eq: nabla f is parallel}
    \nabla_{X}(\nabla f)=0\qquad \text{ for all }\,X \text{ on } \mathrm{int}(\M).
\end{align}

\noindent\underline{Part 3: injectivity of evaluation at one point.} Fix a point $p\in \mathrm{int}(\M)$ and define a map
\begin{align*}
    T:\ \ & \mathcal{H}_0 \longrightarrow \mathbb{R}\times T_{p}\M,\\
& f \mapsto T(f):=\bigl(f(p),\,\nabla f(p)\bigr).
\end{align*}
Obviously, $T$ is linear. We claim that $T$ is injective.

Indeed, suppose $f\in \mathcal{H}_0$ such that $T(f)=0$, i.e., $f(p)=0$ and $\nabla f(p)=0$.
Let $q\in \mathrm{int}(\M)$ be arbitrary. Since $\M$ is connected, there exists a smooth curve
$\gamma:[0,1]\to \mathrm{int}(M)$ with $\gamma(0)=p$ and $\gamma(1)=q$.
Along $\gamma$, \eqref{eq: nabla f is parallel} implies
\[
\nabla_{\dot\gamma(t)}(\nabla f)=0,
\]
that is, $\nabla f(\gamma(t))$ is obtained by parallel transport of $\nabla f(p)=0$; hence
\[
\nabla f(\gamma(t))\equiv 0 \quad \text{for all } t\in[0,1].
\]
Therefore, $\frac{d}{dt} f(\gamma(t))
= df_{\gamma(t)}(\dot\gamma(t))
= \langle \nabla f(\gamma(t)),\,\dot\gamma(t)\rangle_{g}
=0$. Thus, $f(\gamma(t))$ is constant in $t$, and since $f(\gamma(0))=f(p)=0$, we conclude $f(q)=0$.
Because $q\in \mathrm{int}(\M)$ was arbitrary, $f\equiv 0$ on $\mathrm{int}(M)$. Then, the embedding result in Theorem \ref{thm: Rellich–Kondrachov embedding} implies that $f\equiv 0$
on all of $\M$. Hence, $\ker T=\{0\}$, and $T$ is injective.

\noindent\underline{Part 4: dimension bound.}
Since $T$ is an injective linear map into a finite-dimensional space,
\[
\dim \mathcal{H}_0 \;\le\; \dim\bigl(\mathbb{R}\times T_{p}\M\bigr)
\;=\; 1+\dim \M.
\]
In particular, $\mathcal{H}_0$ is finite-dimensional.
\end{proof}

\begin{proof}[\textbf{Proof of Equation~\eqref{eq: Sobolev norm equivalence}}]
We first show that there exists \(C>0\) such that
\begin{equation}\label{eq: coercive estimate on H1 in proof}
\|u\|_{L^2(\M)}\le C\|\nabla^2 u\|_{L^2(\M)}
\qquad\text{for all }u\in\mathcal H_1.
\end{equation}
Suppose, by contradiction, that \eqref{eq: coercive estimate on H1 in proof} fails. Then there exists a sequence
\(\{\tilde{u}_n\}_{n\in\mathbb{N}}\subset\mathcal H_1\) such that $\Vert \tilde{u}_n\Vert_{L^2(\M)}> n\cdot\Vert \nabla^2\tilde{u}_n\Vert_{L^2(\M)}$. Define $\{u_n:=\tilde{u}_n/\Vert \tilde{u}_n\Vert_{L^2(\M)}\}_{n\in\mathbb{N}}$, then
\begin{align}\label{eq: norm 1 hessian 0}
    \|u_n\|_{L^2(\M)}=1
\qquad\text{and}\qquad
\|\nabla^2 u_n\|_{L^2(\M)} < \frac{1}{n}\to 0.
\end{align}
Hence, \(\{u_n\}_{n\in\mathbb{N}}\) is bounded in \(H^2(\M)\). Theorem 3.18 of \cite{brezis2011functional} implies that there exist a subsequence $\{u_{n,k}\}_{k\in\mathbb{N}}$ and $u^*\in H^2(\M)$ such that $u_{n,k} \overset{w}{\rightharpoonup} u^*$ in $H^2(\M)$ as $k\rightarrow\infty$. Furthermore, Theorem \ref{thm: Rellich–Kondrachov embedding} implies 
\begin{align}\label{eq: C convergence of u nk}
    \lim_{k\rightarrow\infty} \Vert u_{n,k}-u^*\Vert_{C(\M)} = 0.
\end{align}
Then, for any $v\in\mathcal{H}_0$, we have
\begin{align*}
    \left\vert \int_{\M} u_{n,k}\cdot v - \int_{\M} u^*\cdot v  \right\vert &\le \int_{\M} \vert u_{n,k}-u^*\vert\cdot\vert v\vert \le \Vert u_{n,k}-u^*\Vert_{C(\M)}\cdot \int_{\M}\vert v\vert \rightarrow0.
\end{align*}
as $k\rightarrow\infty$. Because $\{u_{n,k}\}_{k\in\mathbb{N}} \subseteq \mathcal{H}_1 = \mathcal{H}_0^{\bot_{L^2}}$, we have
\begin{align*}
    \int_{\M} u^*\cdot v = \lim_{k\rightarrow\infty} \int_{\M} u_{n,k}\cdot v = 0 \quad \text{ for all }v\in\mathcal{H}_0.
\end{align*}
Therefore, \(u^*\in\mathcal H_1\). \eqref{eq: C convergence of u nk} implies $\lim_{k\rightarrow\infty} \Vert u_{n,k}-u^*\Vert_{L^2(\M)}=0$. The weak lower semicontinuity of the norm $\Vert \cdot \Vert_{H^2(\mathfrak{M})}$ \citep[][Proposition 3.5(iii)]{brezis2011functional} implies that
\begin{align*}
    \Vert u^*\Vert_{L^2(\M)}^2+\Vert \nabla^2 u^*\Vert_{L^2(\M)}^2 =\Vert u^*\Vert_{H^2(\M)}^2 &\le \liminf_{k\rightarrow\infty} \Vert u_{n,k}\Vert_{H^2(\M)}^2 \\
    & =\liminf_{k\rightarrow\infty} \left( \Vert u_{n,k}\Vert_{L^2(\M)}^2+\Vert \nabla^2 u_{n,k}\Vert_{L^2(\M)}^2 \right) \\
    & = \Vert u^*\Vert_{L^2(\M)}^2 + \liminf_{k\rightarrow\infty} \Vert \nabla^2 u_{n,k}\Vert_{L^2(\M)}^2.
\end{align*}
Then, we have $\Vert \nabla^2 u^*\Vert_{L^2(\M)}^2 \le \liminf_{k\rightarrow\infty} \Vert \nabla^2 u_{n,k}\Vert_{L^2(\M)}^2$. Using \eqref{eq: norm 1 hessian 0}, we have $\Vert \nabla^2 u^*\Vert_{L^2(\M)}=0$, which implies $\nabla^2 u^*=0$ almost everywhere on $\M$, i.e., $u^*\in\mathcal{H}_0$. Therefore,
\[
u^*\in\mathcal H_0\cap\mathcal H_1=\{0\}.
\]
However, \eqref{eq: norm 1 hessian 0} and \eqref{eq: C convergence of u nk} imply that $0=\Vert u^*\Vert_{L^2(\M)}=\lim_{k\rightarrow\infty} \Vert u_{n,k}\Vert_{L^2(\M)}=1$, which is a contradiction. Hence \eqref{eq: coercive estimate on H1 in proof} holds.

Let \(f\in H^2(\M)\). By the direct sum decomposition $H^2(\M)=\mathcal H_0\oplus \mathcal H_1$, we may write \(f=f_0+f_1\) with \(f_0\in\mathcal H_0\) and \(f_1\in\mathcal H_1\). Since \(\mathcal H_1=\mathcal H_0^{\perp_{L^2}}\), we have
\begin{align}\label{eq: L2 orthogonality of f0 and f1}
    \|f\|_{L^2(\M)}^2=\|f_0\|_{L^2(\M)}^2+\|f_1\|_{L^2(\M)}^2
\end{align}
Moreover, because \(\nabla^2 f_0=0\), we have $\nabla^2 f=\nabla^2 f_1$. Hence,
\begin{align}\label{eq: nabla2 f and nabla f1}
    \|\nabla^2 f\|_{L^2(\M)}=\|\nabla^2 f_1\|_{L^2(\M)}.
\end{align}
Using \eqref{eq: coercive estimate on H1 in proof} and \eqref{eq: L2 orthogonality of f0 and f1}, we get
\[
\|f\|_{L^2(\M)}
=
\bigl(\|f_0\|_{L^2(\M)}^2+\|f_1\|_{L^2(\M)}^2\bigr)^{1/2}
\le
\bigl(\|f_0\|_{L^2(\M)}^2+C^2\|\nabla^2 f_1\|_{L^2(\M)}^2\bigr)^{1/2}.
\]
Then, using \eqref{eq: nabla2 f and nabla f1}, we have
\begin{align*}
    \|f\|_{H^2(\M)}^2
=
\|f\|_{L^2(\M)}^2 + \|\nabla^2 f\|_{L^2(\M)}^2
&\le
\|f_0\|_{L^2(\M)}^2+C^2\|\nabla^2 f_1\|_{L^2(\M)}^2
+\|\nabla^2 f_1\|_{L^2(\M)}^2 \\
&= \|f_0\|_{L^2(\M)}^2 + (C^2+1)\cdot\|\nabla^2 f_1\|_{L^2(\M)}^2
\end{align*}
Therefore, there exists \(B>0\) such that
\[
\|f\|_{H^2(\M)}\le B\|f\|_R.
\]
For the reverse inequality, note that
\begin{align*}
    & \|f_0\|_{L^2(\M)}\le \|f\|_{L^2(\M)}\le \|f\|_{H^2(\M)}, \\
    & \|\nabla^2 f_1\|_{L^2(\M)}=\|\nabla^2 f\|_{L^2(\M)}\le \|f\|_{H^2(\M)}.
\end{align*}
Hence, $\|f\|_R
=
\bigl(\|f_0\|_{L^2(\M)}^2+\|\nabla^2 f_1\|_{L^2(\M)}^2\bigr)^{1/2}
\le
\sqrt{2}\,\|f\|_{H^2(\M)}$. Thus,
\[
A\cdot\|f\|_R\le \|f\|_{H^2(\M)},
\]
where \(A:=1/\sqrt2\). The proof is completed.
\end{proof}

\begin{proof}[\textbf{Proof of Corollary \ref{cor: minimizerkernels}}]
Suppose $\M = [0,1]$. The space $H^2(\M)$ can be decomposed into two orthogonal components, $\mathcal{H}_0$ and $\mathcal{H}_1$, as defined in \eqref{eq: def of H_0} and \eqref{eq: def of H_1}. Section 1.2 of \cite{wahba1990spline} implies the following
\begin{itemize}
    \item $\mathcal{H}_0$ is spanned by the functions $\phi_1(t)=1, \phi_2(t) = t$.
    \item The reproducing kernel $R_1$ is given by $R_1(s,t)= \int_0^1 G_2(t,u)G_2(s,u)du$,  where $G_2(t,u) = (t-u)_+$ and $t_+ = \mathbbm{1}(t\ge0)$.
\end{itemize}
We now compute a closed form of $R_1$ by evaluating the integral:
    \begin{align*}
        R_1(s,t) &= \int_0^1 (t-u)_+ (s-u)_+ du \\ 
        & =\int_0^{\min(s,t)}(t-u) (s-u)du \\ 
        & = \int_0^{\min(s,t)}( u^2 - u(s+t) + st )du\\
        &= \frac13 \min(s,t)^3 - \frac12\min(s,t)(s+t) + \min(s,t)(st).
    \end{align*}
\end{proof}

\newcommand{\Stwo}{\mbS^2}
\newcommand{\Ltwo}{L^2(\Stwo)}
\newcommand{\Cinftwo}{C^{\infty}(\Stwo)}
\newcommand{\Htwo}{H^2(\Stwo)}
\begin{proof}[\textbf{Proof of Lemma \ref{lemma: hessian laplace equivalence}}]
To show
    \begin{align*}
        \sum_{j=1}^D\norm{\nabla^2 f_j}_{L^2(\mbS^2)} \leq \sum_{j=1}^D\norm{\Delta f_j}_{L^2(\mbS^2)} \leq \sqrt 2 \sum_{j=1}^D \norm{\nabla^2 f_j}_{L^2(\mbS^2)},
    \end{align*}
    it suffices to establish the inequality for each component of $\Bf=(f_1,\ldots, f_D)$ separately, i.e.,
    \begin{align}\label{eq: goal of hessian laplace equivalence}
        \norm{\nabla^2 f}_{L^2(\mbS^2)} \leq \norm{\Delta f}_{L^2(\mbS^2)} \leq \sqrt 2 \norm{\nabla^2 f}_{L^2(\mbS^2)}\quad \text{ for all }f\in H^2(\mathbb{S}^2).
    \end{align}
    We first prove the result for smooth functions $f \in C^\infty(\mbS^2)$ and then show it holds for $H^2(\mbS).$ Let $g$ be the Riemannian metric associated with $\mbS^2$, induced by the Euclidean metric on $\mathbb{R}^3$. For notational purposes, denote $g(u,v)=\inner{u}{v}_g$ and $\lvert u\rvert_g = \sqrt{g(u,u)}.$ 
    
    For any $f \in C^{\infty}(\mbS^2)$, Bochner's formula \citep[][Chapter 3]{li2012geometric} gives
\begin{align}\label{eq: Bochner's formula in a proof}
    \int_{\mbS^2} (\Delta u)^2\, d\mathrm{vol}_g
=
\int_{\mbS^2} \left( \lvert \nabla^2 u \rvert^2 + \mathrm{Ric}(\nabla u,\nabla u) \right)\, d\mathrm{vol}_g.
\end{align}
On a sphere of radius $1$, the Riemannian curvature can be expressed as
$$R(X,Y)Z = (g(Y,Z)X - g(X,Z)Y)$$
for tangent vectors $X,Y,Z \in \Gamma(T\mbS^2)$ \citep[][Section 4.2.1]{petersen2006riemannian}. Let $p \in \mbS^2$ arbitrary. Let $u, v \in T_p\mbS^2$ be arbitrary tangent vectors and $e_1,e_2 \in T_p\mbS^2$ an orthonormal basis.  Applying the definition of Ricci curvature \citep[][Section 3.1.4]{petersen2006riemannian}, we have
\begin{align*}
\Ric(v,w) &= \sum_{i=1}^2  g_p(R(e_i, w)v, e_i) \\
 &= \sum_{i=1}^2 g_p\big( g_p(w, v)e_i - g_p(e_i, v)w, e_i \big)\\
 &= \sum_{i=1}^2 g_p\big( g_p(w, v)e_i - g_p(e_i, v)w, e_i \big)\\
 &= \sum_{i=1}^2 g_p(w,v) g_p(e_i, e_i) - \sum_{i=1}^2 g_p(e_i,v)g_p(w,e_i)\\
 &= 2 g_p(w,v) -  g_p(w,v) \\
 &= g_p(w,v).
\end{align*}
This implies $\Ric(\nabla f, \nabla f) = | \nabla f|_g^2$. Hence, \eqref{eq: Bochner's formula in a proof} implies
 $$\intsph{|\nabla^2f|_g}  = \intsph{(\Delta f)^2} - \intsph{ | \nabla f|_g^2},$$
and by rewriting in terms of norms,
\begin{align}\label{eq: inequality source}
 \norm{\nabla^2 f}_{L^2(\mbS^2)}^2 = \norm{\Delta f}_{L^2(\mbS^2)}^2 - \norm{\nabla f}_{L^2(\mbS^2)}^2.  
\end{align}
It follows immediately that
    \begin{align}\label{eq: inequality A}
\norm{\nabla^2 f}_{L^2(\mbS^2)}  \leq \norm{\Delta f}_{L^2(\mbS^2)}.
    \end{align}
We now show the other direction. We begin with a spectral decomposition of the Laplace-Beltrami operator on $\mbS^2$. Letting $f \in C^{\infty}(\mbS^2),$ the decomposition is
$$f = \sum_{\ell \geq 0} \sum_{m=-\ell}^\ell a_{\ell m}Y_{\ell m}$$
where $Y_{\ell m}$ are the spherical harmonics given by $- \Delta Y_{\ell m} = \lambda_\ell Y_{\ell m}$ for $\lambda_\ell = \ell (\ell + 1) \geq 2, \,\,\,\ell \geq 1$ \citep[][Section 2.2]{wahba1990spline}. Now, we evaluate the following norm using the eigenbasis expansion:
\begin{align*}
  \norm{\nabla f}_{L^2(\mbS^2)}^2 &= \intsph{\lvert\nabla f\rvert_g^2}    \\
    &= \intsph{\left\lvert \sum_{\ell \geq 1} \sum_{m=-\ell}^\ell a_{\ell m}  \nabla Y_{\ell m}\right\rvert_g^2}  \\
    &= \sum_{\ell \geq 1} \sum_{k \geq 1} \sum_{m=-\ell}^\ell \sum_{n=-k}^k  a_{\ell m}a_{kn} \intsph{\inner{\nabla Y_{\ell m}}{\nabla Y_{kn}}_g}.\\
\end{align*}
By integration by parts, we furthermore have
\begin{align*}
    \norm{\nabla f}_{L^2(\mbS^2)}^2 &= -\sum_{\ell \geq 1} \sum_{k \geq 1} \sum_{m=-\ell}^\ell \sum_{n=-k}^k  a_{\ell m}a_{kn} \intsph{ Y_{\ell m}\Delta Y_{kn}}\\
    &= \sum_{\ell \geq 1} \sum_{k \geq 1} \sum_{m=-\ell}^\ell \sum_{n=-k}^k  a_{\ell m}a_{kn} \lambda_k \intsph{ Y_{\ell m} Y_{kn}}\\
    &= \sum_{\ell \geq 1} \sum_{k \geq 1} \sum_{m=-\ell}^\ell \sum_{n=-k}^k  a_{\ell m}a_{kn} \lambda_k   \delta_{\ell k} \delta_{mn} \\
    &= \sum_{\ell \geq 1} \sum_{m = -\ell}^\ell a_{\ell m}^2 \lambda_{\ell}.
\end{align*}
We perform a similar calculation on the Laplace-Beltrami term:
\begin{align*}
  \norm{\Delta f}_{L^2(\mbS^2)}^2 &= \intsph{\lvert\Delta f\rvert_g^2}    \\
    &= \intsph{\left\lvert \sum_{\ell \geq 1} \sum_{m=-\ell}^\ell a_{\ell m}  \Delta Y_{\ell m}\right\rvert_g^2}  \\
     &= \intsph{\left\lvert - \sum_{\ell \geq 1} \sum_{m=-\ell}^\ell a_{\ell m}   \lambda_{\ell} Y_{\ell m}\right\rvert_g^2}  \\
    &= \sum_{\ell \geq 1} \sum_{k \geq 1} \sum_{m=-\ell}^\ell \sum_{n=-k}^k  a_{\ell m}a_{kn} \lambda_k \lambda_\ell \intsph{ Y_{\ell m} Y_{kn}}\\
    &= \sum_{\ell \geq 1} \sum_{k \geq 1} \sum_{m=-\ell}^\ell \sum_{n=-k}^k  a_{\ell m}a_{kn} \lambda_k  \lambda_\ell \delta_{\ell k} \delta_{mn} \\
    &= \sum_{\ell \geq 1} \sum_{m = -\ell}^\ell a_{\ell m}^2 \lambda_{\ell}^2.
\end{align*}
We compare the two norm calculations above. Noting that for all $\ell \geq 1$, we have $\lambda_\ell\geq 2$, and thus
$$ \norm{\nabla f}_{L^2(\mbS^2)}^2 = \sum_{\ell \geq 1} \sum_{m = -\ell}^\ell a_{\ell m}^2 \lambda_{\ell} \leq \frac12 \sum_{\ell \geq 1} \sum_{m = -\ell}^\ell a_{\ell m}^2 \lambda_{\ell}^2 = \frac12 \norm{\Delta f}_{L^2(\mbS^2)}^2.$$
Substituting this back into \eqref{eq: inequality source},
$$\norm{\nabla^2 f}_{L^2(\mbS^2)}^2 = \norm{\Delta f}_{L^2(\mbS^2)}^2 - \norm{\nabla f}_{L^2(\mbS^2)}^2 \geq \frac12 \norm{\Delta f}_{L^2(\mbS^2)}^2.$$
Then,
\begin{align}\label{eq: inequality B}
\norm{\Delta f}_{L^2(\mbS^2)} \leq \sqrt 2 \norm{\nabla^2 f}_{L^2(\mbS^2)}.
\end{align}
Combining \eqref{eq: inequality A} and \eqref{eq: inequality B} gives the desired result on functions in $\Cinftwo$:
\begin{align*}
\norm{\nabla^2 f}_{\Ltwo}  \leq \norm{\Delta f}_{L^2(\mbS^2)} \leq \sqrt 2 \norm{\nabla^2 f}_{\Ltwo}.
\end{align*}
Since $C(\mathbb{S}^2)$ is dense in $H^2(\mathbb{S}^2)$, \eqref{eq: goal of hessian laplace equivalence} holds.
\end{proof}

\begin{proof}[\textbf{Proof of Lemma \ref{lemma:  closed form penalty circle}}]

Recall the decomposition $H_2(\M) = \mcH_0 \oplus \mcH_1$ from Section \ref{appendix: An Orthogonal Decomposition of H2}. By the representer theorem (Theorem \ref{thm: wahba representation theorem}), the optimal $\Bf_{N,\lambda}^{(n+1)}=(f_{N,\lambda,1}^{(n+1)}, \ldots, f_{N,\lambda, D}^{(n+1)})$ at a given iteration $n$ can be written as
$$f_{N,\lambda,j}^{(n+1)} = q_j + \sum_{i=1}^N \alpha_{ij} R_1(\cdot, \boldsymbol{m}_i) \quad \text{ for all }j\in[D],$$
where $\{q_j\}_{j\in[D]} \subseteq \mcH_0$ and $\{\alpha_{ij}\}_{i \in [N],\, j \in [D]} \subseteq \mbR.$ Applying the Hessian operator, we have
$$\nabla^2 f_{N,\lambda,j}^{(n+1)} = \sum_{i=1}^N \alpha_{ij}  \nabla^2 R_1(\cdot, \boldsymbol{m}_i).$$
We then compute the norm of this term.
\begin{align*}
 \norm{\nabla^2 f_{N,\lambda,j}^{(n+1)} }_{L^2(\M)}^2 &= \inner{\nabla^2 f_{N,\lambda,j}^{(n+1)}}{\nabla^2 f_{N,\lambda,j}^{(n+1)} }_{L^2(\M)}\\
 &=\sum_{i=1}^N \sum_{\ell=1}^N \alpha_{ij} \alpha_{\ell,j} \inner{\nabla^2 R_1(\cdot, \boldsymbol{m}_i)}{\nabla^2 R_1(\cdot, \boldsymbol{m}_\ell)}_{L^2(\M)}\\
 &= \sum_{i=1}^N \sum_{\ell=1}^N \alpha_{ij} \alpha_{\ell,j}  \inner{R_1(\cdot, \boldsymbol{m}_i)}{R_1(\cdot, \boldsymbol{m}_\ell)}_{R_1}\\
 &= \sum_{i=1}^N \sum_{\ell=1}^N \alpha_{ij} \alpha_{\ell,j}   R_1(\boldsymbol{m}_i, \boldsymbol{m}_\ell),
\end{align*}
where the last equality follows from \eqref{eq: L2 representation of R1}. Since $(\boldsymbol{K})_{ij} = R_1(\boldsymbol{m}_i,\boldsymbol{m}_j)$, we have 
\begin{align*}
    \norm{\nabla^2 f_{N,\lambda,j}^{(n+1)} }_{L^2(\M)}^2 = \boldsymbol{\alpha}_{j}^\T\boldsymbol{K}\boldsymbol{\alpha}_j,
\end{align*}
which implies \eqref{eq: closed form penalty circle}.
\end{proof}

\section{Data-Generating Mechanisms for Numerical Experiments}\label{appendix: Data-Generating Mechanisms for Numerical Experiments}

We provide detailed descriptions of the data-generating mechanisms used in the numerical experiments throughout the article. We refer to the point cloud as $\{\boldsymbol{X}_i\}_{i\in[N]}$ where $N$ is the sample size. When describing the dataset, $d$ specifies the intrinsic dimension of the manifold, and $D$ the ambient dimension for $\mbR^D$.

\subsection{Half Circle Point Cloud ($d=1$, $D=2$)}\label{appendix: Half Circle Point Cloud}

\begin{align*}
    \boldsymbol{X}_i=\big(\cos(\pi t_i),\, \sin(\pi t_i) \big) + (\epsilon_{i,1},\, \epsilon_{i,2}), \quad \text{ where}
\end{align*}
\begin{itemize}
    \item $t_1,\ldots,t_N \overset{iid}{\sim}\mathrm{Unif}(0,1)$,
    \item $\epsilon_{1,1},\ldots,\epsilon_{N,1}, \epsilon_{1,2}, \ldots, \epsilon_{N,2} \overset{iid}{\sim} \mathcal{N}(0,\sigma^2)$.
\end{itemize}

In Figure \ref{fig:interval_S1_full}, $N= 2500$ and $\sigma^2 = 4\times10^{-4}$.

\subsection{Boundary of a Flower/Star ($d=1$, $D=2$)}\label{appendix: Boundary of a Flower/Star}

\begin{align*}
    \boldsymbol{X}_i=\big(r(t_i) \cos(2\pi t_i),\, r(t_i) \sin(2\pi t_i) \big) + (\epsilon_{i,1},\, \epsilon_{i,2}), \quad \text{ where}
\end{align*}
\begin{itemize}
    \item $t_1,\ldots,t_N \overset{iid}{\sim}\mathrm{Unif}(0,1)$,
    \item $r(t_i) = 1 + 0.3 \sin(2\pi p t_i)$ for $i=1,\ldots,N$,
    \item $p=5$ is the number of petals,
    \item $\epsilon_{1,1},\ldots,\epsilon_{N,1}, \epsilon_{1,2},\ldots, \epsilon_{N,2} \overset{iid}{\sim} \mathcal{N}(0,\sigma^2)$.
\end{itemize}

Both figures \ref{fig:interval_S1_full} and \ref{fig:pme_demo_cv_lambda_selection} use $N=2500$ and $\sigma^2 = 10^{-4}.$

\subsection{Surface of a Flower/Star ($d=2$, $D=3$)}\label{appendix: Surface of a Flower/Star (2d, 3D)}

\begin{align*}
    \boldsymbol{X}_i=\left( r(\theta_i) \cos(\theta_i) \sqrt{1-z_i^2},\,  r(\theta_i) \sin(\theta_i) \sqrt{1-z_i^2},\, 0.5z_i\right) + (\epsilon_{i,1},\, \epsilon_{i,2}, \epsilon_{i,3}), \quad \text{ where}
\end{align*}
\begin{itemize}
    \item $\theta_1 ,\ldots, \theta_N  \overset{iid}{\sim} \mathrm{Unif}(0, 2 \pi)$,
    \item $z_1,\ldots, z_N  \overset{iid}{\sim} \mathrm{Unif}(-1, 1)$,
    \item $r(\theta_i) =(1 + 0.3\cos(p \theta_i))$ for $i =1,\ldots,N$,
    \item $p$ is the number of petals.
    \item $\epsilon_{1,1},\ldots,\epsilon_{N,1}, \epsilon_{1,2},\ldots,\epsilon_{N,2}, \epsilon_{1,3}, \ldots, \epsilon_{N,3} \overset{iid}{\sim} \mathcal{N}(0,\sigma^2)$.
\end{itemize}

 Figure \ref{fig:2d3D_star_moon} uses $N=300$, $p=6$, and $\sigma^2 = 10^{-4}$. In Figure \ref{fig:sph_results_panel_8}, we use $N=2500$, $p=5$, and $\sigma^2 = 0.004$, but  $\theta_i$ and $z_i$ are generated differently. Suppose we draw $N$ points from the Fibonacci sphere in Euclidean coordinates $(x_i, y_i, z_i)$.  We take $\theta_i$ as the azimuthal angle of the $i$th draw and $z_i$ its Euclidean $z$-coordinate.

\subsection{Surface of a Moon/Cashew ($d=2$, $D=3$)}\label{section: Surface of a Moon/Cashew (2d, 3D)}

\begin{align*}
\boldsymbol{X}_i =
\Big(
\cos\!\left(\frac{b\,x_i}{2R}\right)(\rho R + y_i),\;
\sin\!\left(\frac{b\,x_i}{2R}\right)(\rho R + y_i),\;
z_i
\Big)
+
(\epsilon_{i,1},\,\epsilon_{i,2},\,\epsilon_{i,3}), 
\quad \text{ where}
\end{align*}

\begin{itemize}
\item $\phi_1,\ldots,\phi_N \overset{iid}{\sim} \mathrm{Unif}(0,2\pi)$,
\item $u_1,\ldots,u_N \overset{iid}{\sim} \mathrm{Unif}(-1,1)$,
\item $\theta_i = \arccos(u_i)$ for $i=1,\ldots,N$,
\item $
(x_i,y_i,z_i)
=
\big(
R\sin(\theta_i)\cos(\phi_i),\;
R\sin(\theta_i)\sin(\phi_i),\;
R\cos(\theta_i)
\big)$,
\item $b = 1.2\pi$ determines the angle of the bend,
\item $\rho = 2$ controls the radius at which the bend occurs,
\item  $R = 1$ is the overall radius,
\item $\epsilon_{1,1},\ldots,\epsilon_{N,1},\epsilon_{1,2},\ldots,\epsilon_{N,2},\epsilon_{1,3},\ldots,\epsilon_{N,3}
\overset{iid}{\sim}\mathcal{N}(0,\sigma^2).$
\end{itemize}
 Figure \ref{fig:2d3D_star_moon} uses $N= 300$ and $\sigma^2 = 10^{-4}$.

\end{appendix}

\bibliography{References}

\end{document}

%% file: preamble.tex
\usepackage{amsmath}
\usepackage{graphicx}
\usepackage{enumerate}
\usepackage{natbib}
\usepackage{url} 
\usepackage{bbm}
\usepackage{titletoc}

\usepackage{xr-hyper}
\makeatletter
\newcommand*{\addFileDependency}[1]{
  \typeout{(#1)}
  \@addtofilelist{#1}
  \IfFileExists{#1}{}{\typeout{No file #1.}}
}
\makeatother

\newcommand*{\myexternaldocument}[1]{
    \externaldocument{#1}
    \addFileDependency{#1.tex}
    \addFileDependency{#1.aux}
}

\newcommand{\Bf}{\boldsymbol{f}}
\newcommand{\Bg}{\boldsymbol{g}}
\newcommand{\Bh}{\boldsymbol{h}}
\newcommand{\Bpi}{\boldsymbol{\pi}}
\newcommand{\fstar}{\boldsymbol{f}^*}
\newcommand{\norm}[1]{\Vert #1 \Vert}
\newcommand{\fnk}{\boldsymbol{f}^{(n,k)}}
\newcommand{\fn}{\boldsymbol{f}^{(n)}}
\newcommand{\fnkj}{f_j^{(n,k)}}

\newcommand{\vol}{\mathrm{vol}}

\newcommand{\M}{\mathfrak{M}}
\newcommand{\Lnorm}[1]{\norm{#1 }_{L^2(\M)}}

\newcommand{\radsuppP}{\operatorname{rad}_0(\operatorname{supp}\left(\mathbb{P})\right)}

\newcommand{\inner}[2]{\left\langle#1, #2\right\rangle}
\newcommand{\Ric}{\operatorname{Ric}} 
\newcommand{\intsph}[1]{\int_{\mbS^2} #1 d\vol_g}

\newcommand{\R}{\mathbb{R}}
\newcommand{\II}{\boldsymbol{\mathrm{II}}}
\newcommand{\nablab}{\bar{\nabla}} 
\newcommand{\nablaf}{\nabla^{\boldsymbol{f}}}   

\newcommand{\dist}{\varrho}

\newcommand{\Deltaxf}[2]{\dist(\boldsymbol{#1}, \boldsymbol{#2})}
\newcommand{\Deltadotf}[1]{\dist(\cdot, \boldsymbol{#1})}
\newcommand{\Deltadotfj}{\dist(\cdot, \Bf^{(j)})}
\newcommand{\Lnfunc}{\mcL_{N,\lambda}}
\newcommand{\Lfunc}{\mcL_{\lambda}}

\usepackage{amsmath,amssymb}
\usepackage{xcolor}
\usepackage{mathrsfs}
\definecolor{mygreen}{RGB}{0,140,110}

\newcommand{\mF}{\mathscr{F}}
\newcommand{\mG}{\mathscr{G}}

\newcommand{\mU}{\mathscr{U}}

\newcommand{\mcB}{\mathcal{B}}

\newcommand{\mcH}{\mathcal{H}}

\newcommand{\mcL}{\mathcal{L}}
\newcommand{\mcM}{\mathcal{M}}

\newcommand{\mcO}{\mathcal{O}}
\newcommand{\mcP}{\mathcal{P}}

\newcommand{\mbE}{\mathbb{E}}

\newcommand{\mbN}{\mathbb{N}}

\newcommand{\mbP}{\mathbb{P}}

\newcommand{\mbR}{\mathbb{R}}
\newcommand{\mbS}{\mathbb{S}}

\myexternaldocument{appendix}

\RequirePackage[colorlinks,citecolor=blue,urlcolor=blue, linkcolor=purple]{hyperref}

\usepackage{amsmath, amssymb, amscd, amsthm, amsfonts}
\usepackage[perpage,symbol*]{footmisc}
\usepackage{float}
\usepackage{hyperref}
\usepackage{pgfplots}
\usepackage{color}  
\usepackage{tikz} 
\usetikzlibrary{decorations.pathreplacing,calc,angles,quotes}
\usepackage{algorithm,algcompatible,amsmath}
\DeclareMathOperator*{\argmin}{\arg\!\min}
\algnewcommand\INPUT{\item[\textbf{Input:}]}%
\algnewcommand\OUTPUT{\item[\textbf{Output:}]}%
\usepackage{authblk}
\usepackage{enumitem}
\usepackage[english]{babel}
\setcitestyle{authoryear,open={(},close={)}}
\usepackage{colortbl,dcolumn}
\usepackage{listings} 
\usepackage{xcolor} 
\usepackage{amsfonts,amssymb}
\usepackage{mathrsfs}
\usepackage{comment}

\usetikzlibrary{hobby}

\newtheorem{theorem}{Theorem}

\newtheorem{remark}{Remark}

\newtheorem{corollary}{Corollary}

\newtheorem{definition}{Definition}

\newtheorem{lemma}{Lemma}
\newtheorem{assumption}{Assumption}

\numberwithin{equation}{section} 
\numberwithin{theorem}{section}
\numberwithin{lemma}{section} 
\numberwithin{corollary}{section}
\numberwithin{definition}{section}
\numberwithin{proposition}{section} 
\numberwithin{remark}{section}
\numberwithin{example}{section}

\numberwithin{table}{section}
\numberwithin{figure}{section}

\newcommand{\T}{\intercal}

\newcommand{\blind}{0}

